\newcommand{\R}{\mathbb{R}}
\newcommand{\Rbar}{\overline{\R}}
\newcommand{\Z}{\mathbb{Z}}
\newcommand{\lob}{\mbox{\fontencoding{OT2}\fontfamily{wncyr}\fontseries{m}\fontshape{n}\selectfont L}}
\newcommand{\Teich}{\mathcal{T}}
\newcommand{\decTeich}{\widetilde{\Teich}}
\newcommand{\Tm}{Teich\-m{\"u}l\-ler}
\newcommand{\Deltil}{\widetilde{\Delta}}
\newcommand{\Deltilo}{\Deltil^{\circ}}
\newcommand{\Delbar}{\bar{\Delta}}
\newcommand{\lamtil}{\tilde{\lambda}}
\newcommand{\lambar}{\bar{\lambda}}
\newcommand{\elltil}{\tilde{\ell}}
\newcommand{\Thetatil}{\widetilde{\Theta}}
\newcommand{\Del}{\operatorname{\textit{Del}}}
\newcommand{\Acal}{\mathcal{A}}
\newcommand{\Hcal}{\mathcal{H}}
\newcommand{\Ecal}{\mathcal{E}}
\newcommand{\Ecalbar}{\bar{\Ecal}}
\newcommand{\Hsf}{\scalebox{0.94}{$\mathsf{H}$}}
\newcommand{\Esf}{\scalebox{0.94}{$\mathsf{E}$}}
\newcommand{\Vtet}{V_{\mathit{tet}}}
\newcommand{\Vo}{V^{\circ}}
\newcommand{\ubar}{\bar{u}}
\newcommand{\xbar}{\bar{x}}
\newcommand{\etal}{{et al.}}
\newcommand{\dtil}{\tilde{d}}
\newcommand{\RP}{\R\mathrm{P}}
\theoremstyle{plain}
\newtheorem{theorem}{Theorem}[section]
\newtheorem{proposition}[theorem]{Proposition}
\newtheorem{lemma}[theorem]{Lemma}
\newtheorem{corollary}[theorem]{Corollary}
\newtheorem{defprop}[theorem]{Definition and Proposition}
\theoremstyle{definition}
\newtheorem{problem}[theorem]{Problem}
\newtheorem{definition}[theorem]{Definition}
\newtheorem{remark}[theorem]{Remark}
\newtheorem{remarks}[theorem]{Remarks}
\title{Ideal hyperbolic polyhedra and\\ discrete uniformization}
\author{Boris Springborn}
\date{}
\begin{document}

\maketitle

\begin{abstract}
  We provide a constructive, variational proof of Rivin's realization
  theorem for ideal hyperbolic polyhedra with prescribed intrinsic
  metric, which is equivalent to a discrete uniformization theorem for
  spheres. The same variational method is also used to prove a
  discrete uniformization theorem of Gu~\etal\ and a corresponding
  polyhedral realization result of Fillastre. The variational
  principles involve twice continuously differentiable functions on
  the decorated \Tm{} spaces $\decTeich_{g,n}$ of punctured surfaces,
  which are analytic in each Penner cell, convex on each fiber over
  $\Teich_{g,n}$, and invariant under the action of the mapping class
  group.
  
  \vspace{\baselineskip}\noindent%
  57M50, 52B10, 52C26
\end{abstract}


\section{Introduction}

This article is concerned with two types of problems that are in fact
equivalent: realization problems for ideal hyperbolic polyhedra with
prescribed intrinsic metric, and discrete uniformization problems. We
develop a variational method to prove the respective existence and
uniqueness theorems. Special attention is paid to the case of genus
zero, because it turns out to be the most difficult one. 
In particular, we provide a constructive variational proof of Rivin's
realization theorem for convex ideal polyhedra with prescribed
intrinsic metric:

\begin{theorem}[Rivin~\cite{rivin94:_intrin}]
  \label{thm:rivin}
  Every complete hyperbolic surface $S$ of finite area that is
  homeomorphic to a punctured sphere can be realized as a convex ideal
  polyhedron in three-dimensional hyperbolic space $H^{3}$. The
  realization is unique up to isometries of $H^{3}$.
\end{theorem}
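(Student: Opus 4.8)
The strategy is to recast the theorem as a discrete uniformization statement on the decorated \Tm{} space $\decTeich_{0,n}$ and to settle the latter by a convex variational principle. First I would set up the dictionary. In the projective (Klein) model a convex ideal polyhedron $P\subset H^{3}$ is a convex Euclidean polytope inscribed in a sphere, and in the light-cone model its $n$ ideal vertices, once decorated by horospheres, become $n$ null vectors $v_{1},\dots,v_{n}$ in Minkowski space $\R^{3,1}$ in convex position. The decorated intrinsic metric on $\partial P$, a sphere with $n$ punctures, is determined by the Penner $\lambda$-lengths along the edges of the Delaunay decomposition of $P$, which are explicit functions of the products $\langle v_{i},v_{j}\rangle$; this produces a point of $\decTeich_{0,n}$, and, forgetting the decoration, a map from labelled convex ideal polyhedra modulo $\mathrm{Isom}(H^{3})$ to $\Teich_{0,n}$. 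Since every complete hyperbolic surface of finite area homeomorphic to a punctured sphere is a point of $\Teich_{0,n}$ and admits decorations, Theorem~\ref{thm:rivin} follows once we show: for every $[S]\in\Teich_{0,n}$ there is exactly one decorated convex ideal polyhedron whose decorated intrinsic metric lies in the fiber of $\decTeich_{0,n}$ over $[S]$ --- i.e.\ there is a unique ``discrete uniformizing'' decoration of $S$.

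To prove this I would fix an ideal triangulation $T$ of the $n$-punctured sphere, so that $\lambda$-lengths give global coordinates $\decTeich_{0,n}\cong\R_{>0}^{E(T)}$, and attach to each decorated hyperbolic structure a candidate convex ideal polyhedron by developing the ideal triangles of $T$ into $H^{3}$ and assembling them with the dihedral angles dictated by the local Delaunay (empty circumsphere) condition --- a recipe that depends only on the decorated metric and automatically respects convexity, the structure being genuinely realized precisely when the assembly closes up around every puncture, which amounts to the vanishing of one scalar quantity per puncture. I would then introduce a function $\mathcal E$ on $\decTeich_{0,n}$ built triangle by triangle from the Lobachevsky function $\lob$, namely --- up to a Legendre transform exchanging the dihedral angles with the decorated edge lengths $\tilde\ell_{e}$ in the Schläfli differential $-\tfrac12\sum_{e}\tilde\ell_{e}\,d\theta_{e}$ --- the suitably normalized volume of the associated ideal tetrahedra, arranged so that the partial derivative of $\mathcal E$ in each fiber direction over $\Teich_{0,n}$ is exactly the closing-up residue at the corresponding puncture. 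A critical point of the restriction of $\mathcal E$ to the fiber over $[S]$ then yields a convex ideal polyhedron realizing $S$. The analytic facts to establish are that $\mathcal E$ is mapping-class-group invariant, analytic in the interior of each Penner cell, extends to a $C^{2}$ function on all of $\decTeich_{0,n}$, and is convex on each fiber over $\Teich_{0,n}$ --- strictly so transverse to the harmless one-dimensional direction of simultaneous rescaling of all horospheres, which leaves the polyhedron unchanged. This strict convexity gives at most one critical point on the fiber, hence uniqueness of the realizing polyhedron.

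Two points carry the technical load. The first is the $C^{2}$ regularity: the formula for $\mathcal E$ depends on the combinatorial type of the Delaunay decomposition, so across a Penner cell wall one must check that the first and second derivatives agree, which reduces to a computation using the Ptolemy relation for the coordinate change under a diagonal flip together with the vanishing contribution of a degenerate ideal tetrahedron. The second --- the genuine obstacle, and the reason genus zero is singled out in the introduction --- is properness of $\mathcal E$ on the fiber over $[S]$, needed to produce the critical point as a minimum. In higher genus the base of $\decTeich_{g,n}\to\Teich_{g,n}$ is negatively curved and coercivity is comparatively soft; for the punctured sphere (positive Euler characteristic) these arguments fail, and along a minimizing sequence of decorations one must rule out every degeneration: $\lambda$-lengths tending to $0$ or $\infty$, collapsing horospheres, ideal vertices colliding on $\partial_{\infty}H^{3}$, or the configuration flattening onto a circle. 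I expect the bulk of the work to be a careful case analysis organized by which faces of the Penner cell decomposition a divergent sequence can approach, after normalizing away the residual $\mathrm{PSL}_{2}(\mathbb{C})$ symmetry. Once properness is in hand, the minimum of $\mathcal E$ on the fiber exists, is unique, and --- via the dictionary of the first paragraph --- furnishes the unique convex ideal polyhedron realizing $S$, proving Theorem~\ref{thm:rivin}.
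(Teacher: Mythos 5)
Your overall framework is the right one (a Lobachevsky-function energy on the decorated \Tm{} space, analytic on Penner cells, $C^{2}$ across cell walls via a Ptolemy-flip computation, convex along the fibers of $\decTeich_{0,n}\to\Teich_{0,n}$, with uniqueness from convexity modulo simultaneous rescaling of horospheres), but the core of your scheme --- an \emph{unconstrained} critical point on the fiber over $[S]$, characterized by the vanishing of one ``closing-up residue'' per puncture, with all $n$ punctures treated symmetrically --- cannot work in genus zero, and this is precisely the difficulty the paper's construction is built to circumvent. The fiber derivative of such an energy is $\Theta_{v}-\Thetatil_{v}$, and asking every residue to vanish with $\Theta_{v}=2\pi$ for all $v$ contradicts Gauss--Bonnet on the sphere: $\sum_{v}(2\pi-\Thetatil_{v})=4\pi\neq 0$. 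Equivalently, with this symmetric choice the scaling relation~\eqref{eq:Ecalscale} gives $\Ecal_{\Theta,\Delta,\lambda}(u+h\,\mathbf{1}_{V})=\Ecal_{\Theta,\Delta,\lambda}(u)+4\pi h$, so the function has no critical point and no minimum on the fiber at all; no properness analysis can repair this, and the degenerations you propose to rule out are not where the obstruction lies. What is missing is the asymmetric, constrained formulation: a distinguished vertex $v_{\infty}$ with $\Theta_{v_{\infty}}=0$ and $\Theta_{v}=2\pi$ otherwise, the limit $u_{v_{\infty}}\to+\infty$ removing the horocycle at $v_{\infty}$ (which requires Akiyoshi's convex hull construction for partially decorated surfaces and the generalized Penner coordinates), and the bounds constraints $u_{v}\geq\delta_{\Delta,\lambda}(v,v_{\infty})$ on the remaining $n-1$ variables. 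These constraints are not a technicality: since even $\Ecalbar^{v_{\infty}}_{\Delta,\lambda}$ is strictly monotone along $\mathbf{1}_{\Vo}$, a minimizer must have active constraints, and the active set is exactly the set of vertices adjacent to $v_{\infty}$; the first-order \emph{equalities} at interior vertices give $\Thetatil_{v}=2\pi$, while the KKT-type \emph{inequalities} at constrained vertices give $\Thetatil_{v}\leq\pi$, which is where the convexity of the polyhedron along the boundary of the disk $\Deltilo$ actually comes from (your claim that the Delaunay assembly ``automatically respects convexity'' glosses over this). The degenerate case in which all constraints are active, producing a two-sided ideal polygon, also has to be handled separately.

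Two further calibration points. First, once the constrained problem is set up, coercivity is comparatively easy in the paper (lower bounds come from the constraints, and divergence of the energy when some $u_{v}\to+\infty$ follows from the limit computation for $f$ together with Akiyoshi's finiteness theorem), so the ``bulk of the work'' is not the degeneration analysis you anticipate; the genuinely technical ingredients are the $C^{2}$ matching of second derivatives across Penner walls (which you did identify) and the correct distinguished-vertex/constraint mechanism just described. Second, uniqueness is also slightly subtler than ``strict convexity transverse to scaling'': at a constrained minimum one argues via the positive semidefinite Hessian with kernel $\R\mathbf{1}_{\Vo}$ combined with the fact that the scaling direction strictly increases the energy, and separately in the linear-graph (two-sided polygon) case where $u$ is pinned entirely by the active constraints.
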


The realizing polyhedron is allowed to degenerate to a \emph{two-sided
  ideal polygon}. The uniqueness statement of Theorem~\ref{thm:rivin}
implies that this is the case if and only if $S$ admits an orientation
reversing isometry mapping each cusp to itself.

An analogous realization result for convex euclidean polyhedra was
proved by Alexandrov~\cite[pp.~99--100]{alexandrov05:_convex}, and
Rivin's original proof of Theorem~\ref{thm:rivin} follows the general
approach introduced by Alexandrov: First, show that the realization is
unique if it exists. Then use this rigidity result to show that the
space of realizable metrics is open and closed in the connected space
of all metrics. This topological argument does not provide a method of
actually constructing a polyhedron with prescribed intrinsic metric,
and to find such a method was posed as a problem for further
research~\cite{rivin94:_intrin}.

The proof of Theorem~\ref{thm:rivin} presented here is variational in nature. It proceeds by
transforming the realization problem into a finite dimensional
nonlinear convex optimization problem with bounds constraints (see
Theorem~\ref{thm:variational}). This optimization problem is then
shown to have an adequately unique solution (see Section~\ref{sec:proof}). The number of variables is $n-1$ for a
sphere with $n$ cusps. The target function
$\Ecalbar^{v_{\infty}}_{\Delta,\lambda}$ (see Definition~\ref{def:Ecalbar}) is twice continuously
differentiable and piecewise analytic (see
Proposition~\ref{prop:Ecalbar}). The main work of proving the
differentiability statement is done in Section~\ref{sec:difflemma}.

Calculating a value of the target
function involves Epstein and Penner's convex hull
construction~\cite{epstein88,penner87,penner12} for surfaces. For the purposes of
this article, it is necessary to translate this construction into the
language of ideal Delaunay decompositions (see Sections~\ref{sec:delaunay}
and~\ref{sec:akiyoshi}). An ideal Delaunay triangulation
can be found using Weeks's edge flip algorithm~\cite{weeks93}. Once the
Delaunay triangulation is known, the target function and its first and
second derivatives are given by explicit equations (see
Proposition~\ref{prop:Ecalbar}).

A variational proof of Alexandrov's realization theorem for euclidean
polyhedra was given by Bobenko and
Izmestiev~\cite{bobenko08:_alexan_delaun}. Their proof also provides a
constructive method to produce polyhedral realizations, and there are
some similarities between their approach and ours. The variational
principles are analogous, and Delaunay triangulations play an
important role, too. But there is one important difference: The
variational principle of Bobenko and Izmestiev involves a non-convex
target function, while the target function considered here is
convex. This makes the case of ideal polyhedra actually simpler than
the case of euclidean polyhedra, for which no convex variational
principle is known.

In Section~\ref{sec:uniform} we turn to the other side of the theory,
discrete conformal maps. The realization Theorem~\ref{thm:rivin} is
equivalent to a discrete uniformization theorem for spheres,
Theorem~\ref{thm:unisphere}. The equivalence of discrete conformal
mapping problems and realization problems for ideal hyperbolic
polyhedra was established in a previous
article~\cite{bobenko15}. Previously, we treated conformal mapping
problems and polyhedral realization problems with fixed
triangulations. In this article, we require the variable
triangulations to be Delaunay. The previously established variational
principle extends to the setting of variable triangulations. For
discrete conformal maps, this extension can be described roughly as
follows: Minimize the same function as in~\cite{bobenko15}, but flip
to a Delaunay triangulation before evaluating it. However, instead of
using standard euclidean edge flips that do not change the piecewise
euclidean metric of the triangulation, use \emph{Ptolemy flips}:
Update the length of the flipped edge using Ptolemy's
relation~\eqref{eq:ptolemy}. This does change the piecewise euclidean
metric, except if the adjacent triangles are inscribed in the same
circle. Nevertheless, Ptolemy flips are the right thing to do because
they do not change the induced hyperbolic metric (see
Proposition~\ref{prop:dcehyp} and Definition~\ref{def:dce2}).

In Section~\ref{sec:highergenus} we use the variational approach to
prove the uniformization theorem of Gu \etal{}~\cite{luo13} (see
Theorem~\ref{thm:luo13}) and a polyhedral realization result for tori
(see Theorem~\ref{thm:uniformtori}) that was proved by
Fillastre~\cite[Theorem~B]{fillastre08} using Alexandrov's method. Note that
Rivin's Theorem~\ref{thm:rivin} and Theorem~\ref{thm:unisphere} on the
uniformization spheres are not covered by the work of Gu
\etal{}~\cite{luo14,luo13}.


A few other cases are known in which a variational principle reduces a
polyhedral realization problem to convex optimization. In the
euclidean setting, Izmestiev~\cite{izmestiev08:cap} observed that the
variational approach to Alexandrov's
theorem~\cite{bobenko08:_alexan_delaun} leads to a convex optimization
problem if one considers the realization of convex caps instead of
convex polyhedra.

In the hyperbolic setting, a very general realization result for
polyhedral surfaces of arbitrary genus and with finite, ideal or
hyperideal vertices is due to Fillastre~\cite{fillastre08}, who
applied Alexandrov's method, building on Schlen\-ker's
work~\cite{schlenker01} on an even more general result. In principle,
it is possible to derive variational principles in this general
setting from Schl\"afli's differential volume formula for hyperbolic
tetrahedra (see, e.g., \cite[Sec.~5.5]{bobenko15}). But only in some
special cases will this lead to \emph{convex} optimization
problems. This is due to the fact that the volume of a hyperbolic
tetrahedron is a concave function of its dihedral angles only in some
special cases.

A few of the cases allowing polyhedral realization by convex
optimization have already been treated. Izmestiev and
Fillastre~\cite{fillastre09} consider the case of hyperbolic
polyhedral surfaces of genus one with finite vertices. (The volume of
a hyperbolic tetrahedron with one ideal and three finite vertices is
concave.)  Most recently, Prosanov~\cite{prosanov18} has treated
hyperbolic polyhedral surfaces of genus $\geq 2$ with ideal
vertices. (The building blocks are ideal tetrahedra with one
hyperideal and three ideal vertices, truncated at the polar plane of
the hyperideal vertex.)  Prosanov's work also provides a variational
proof of the uniformization theorem involving piecewise hyperbolic
surfaces by Gu et al.~\cite{luo14}. While Prosanov does not provide
explicit formulas for his Hilbert--Einstein functional in terms of
triangulations and lengths, it is straightforward to adapt the known
variational principle for a fixed
triangulation~\cite[Sec.~6]{bobenko15} to obtain such expressions (see
also Remark~\ref{rem:hilbert-einstein}). Such explicit formulas are
useful if one wants to use the variational principle to solve the
realization/uniformization problems numerically.

An interesting sub-case of Fillastre's realization
theorem~\cite{fillastre08} that has not yet been treated by the
variational method is the realization problem for hyperideal polyhedra
with prescribed metric. Since the volume of a hyperideal tetrahedron is
concave~\cite{Schlenker_Rigidity}, this leads to a convex variational
principle. Simultaneously, this would provide a constructive proof of
an existence and uniqueness statement for hyperideal circle
patterns~\cite{BowersBowersPratt}, but with variable
triangulation. Inversive distance circle patterns in the euclidean
plane correspond to polyhedra with hyperideal vertices except for one
ideal vertex. Note that the related realization result for hyperideal
polyhedra with given combinatorial type and \emph{prescribed dihedral
  angles}~\cite{BaoBonahon} has already been treated
variationally~\cite{BobenkoDimitrovSechel,Springborn_Hyperideal}.

Note that even if the variational method is applied to realize
three-dimension\-al polyhedra, the problems are all essentially
two-dimensional. Whether any truly three-dimension\-al problem can be
treated in a similar fashion seems to be one of the most interesting
questions raised by this approach. In particular, ``deformations'' of
two-dimensional problems, like problems involving quasi-Fuchsian
manifolds~\cite{moroianu09}, might have a chance to be tractable.

\section{Overview: Polyhedral realizations from realizable
  coordinates}
\label{sec:overview}

We want to show that the following problem has a unique solution
up to isometries of $H^{3}$:

\begin{problem}
  \label{prob:realize1}
  Given a complete finite area hyperbolic surface $S$ homeomorphic to
  a sphere with $n\geq 3$ punctures, find a realization as convex
  ideal polyhedron.
\end{problem}

We assume the hyperbolic surface $S$ is specified in Penner
coordinates $(\Delta,\lambda)$, consisting of a triangulation $\Delta$
of the sphere with $n$ marked points and a function
$\lambda:E_{\Delta}\rightarrow\R$ on the set $E_{\Delta}$ of edges
(see Section~\ref{sec:penner}). These coordinates determine the
hyperbolic surface $S$ together with an ideal triangulation~$\Delta$
and a choice of horocycle at each cusp.  For each
edge~$e\in E_{\Delta}$, the coordinate~$\lambda_{e}$ is the signed
distance of the horocycles at the ends of $e$ (see
Figure~\ref{fig:signdist}, Remark~\ref{rem:notation}).

\begin{remark}[shear coordinates]
  Rivin~\cite{rivin94:_intrin} assumes that the surface $S$ is
  specified by shear coordinates. This is not an issue because it is
  straightforward to convert between shear coordinates and Penner
  coordinates (see Proposition~\ref{prop:pennershear}).
\end{remark}

The basic idea of our approach is the following: For any polyhedral
realization of $S$ and any choice of a distinguished vertex
$v_{\infty}$, there are special adapted coordinates
$(\Deltil,\lamtil)$ with certain characteristic properties. We call
them \emph{realizable coordinates with distinguished vertex
  $v_{\infty}$} (see Definition~\ref{def:realizable}). The realizable
coordinates $(\Deltil,\lamtil)$ describe the same hyperbolic surface
$S$ as the given coordinates $(\Delta,\lambda)$, but the ideal
triangulation and the choice of horocycles are in general
different. Conversely, if realizable coordinates for~$S$ are known, it
is straightforward to reconstruct the polyhedral realization. Thus,
solving Problem~\ref{prob:realize1} turns out to be equivalent to the
problem of finding realizable coordinates:

\begin{problem}
  \label{prob:realize2}
  Given Penner coordinates $(\Delta,\lambda)$ of a complete finite
  area hyperbolic surface $S$ homeomorphic to a sphere with $n\geq 3$
  punctures and a chosen distinguished cusp $v_{\infty}$, find
  realizable coordinates $(\Deltil,\lamtil)$ with distinguished vertex
  $v_{\infty}$ for the same surface.
\end{problem}

Definition~\ref{def:realizable} characterizes realizable coordinates
in terms of the intrinsic geometry of the surface $S$. This
characterization relies on Epstein and Penner's convex hull
construction~\cite{epstein88,penner87,penner12} for cusped hyperbolic
surfaces decorated with horocycles, and Akiyoshi's~\cite{akiyoshi01}
generalization, allowing partially decorated surfaces: Horocycles may
be missing at some (but not all) cusps. The necessary background will
be reviewed in Sections~\ref{sec:delaunay} and~\ref{sec:akiyoshi} in
the language of ideal Delaunay decompositions.

While the intrinsic characterization of realizable coordinates
requires some preparation, it is more straightforward to explain how a
polyhedral realization gives rise to adapted Penner coordinates
from which the realization can easily be reconstructed. Consider a
convex ideal polyhedron $P$ realizing the hyperbolic surface $S$ in
the half-space model of hyperbolic space (see Figure~\ref{fig:poly}).
\begin{figure}
  \centering
  \includegraphics[width=4in]{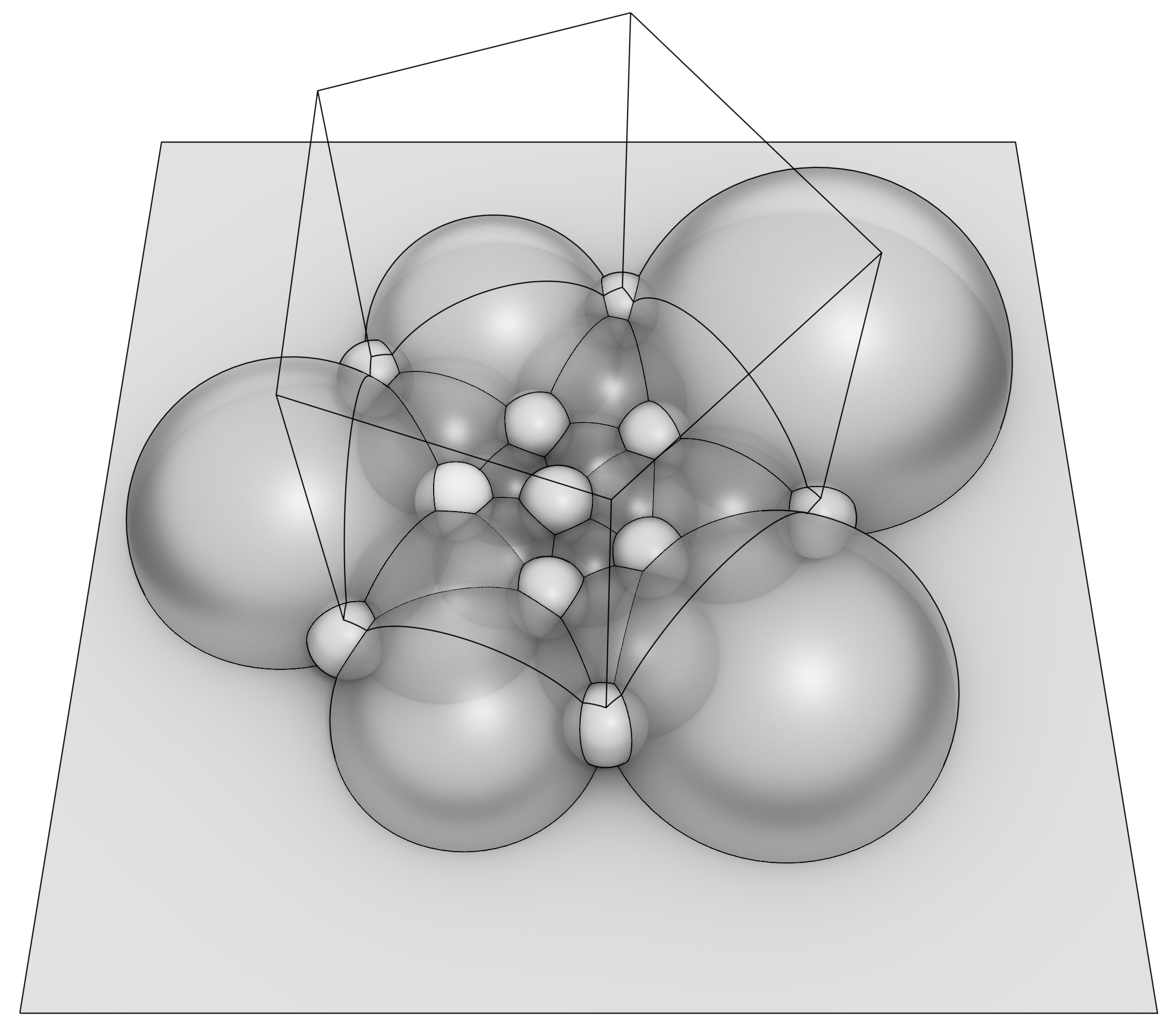}
  \caption{Ideal polyhedron (bounded by the transparent hyperbolic
    planes) decorated with horospheres (white) at ideal vertices}
  \label{fig:poly}
\end{figure}
Assume that one ideal vertex,~$v_{\infty}$, is the point at infinity
in the half-space model. The faces of $P$ are ideal polygons. Should
any face have more than three sides, triangulate it by adding diagonal
ideal arcs. The diagonals may be chosen arbitrarily, except in
vertical faces incident with~$v_{\infty}$, which should be
triangulated by adding the vertical diagonals incident
with~$v_{\infty}$.

For every ideal vertex $v$ of $P$, choose a horosphere $s_{v}$
centered at $v$ as follows: Choose an arbitrary horosphere
$s_{v_{\infty}}$ at $v_{\infty}$ (not shown in
Figure~\ref{fig:poly}). For all other vertices~$v\not=v_{\infty}$
let~$s_{v}$ be the horosphere centered at~$v$ that touches
$s_{v_{\infty}}$ (white spheres in Figure~\ref{fig:poly}).  For each
edge $e$ not incident with~$v_{\infty}$ let $\lamtil_{e}$ be the
signed distance (see Figure~\ref{fig:signdist}) of the horospheres at the
ends of $e$.
\begin{figure}
  \centering
  \labellist
  \small\hair 3pt 
  \pinlabel {$\lambda>0$} [b] <0pt,0pt> at 85 42 
  \pinlabel {$\lambda<0$} [b] at 218 21
  \endlabellist
  \centering
  \includegraphics{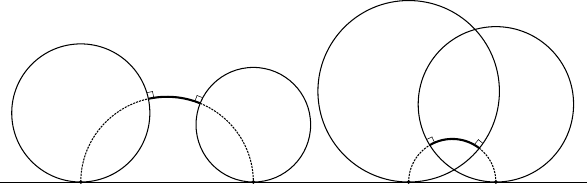}
  \caption{Signed distance $\lambda$ of disjoint and
    intersecting horocycles}
  \label{fig:signdist}
\end{figure}
For each edge $e$ incident with $v_{\infty}$, let $\lamtil_{e}=\infty$.

Intrinsically, the ideal polyhedron~$P$ is the hyperbolic surface
$S$. The triangulated faces of $P$ form an ideal triangulation
$\Deltil$ of $S$. The horospheres $s_{v}$ at the vertices of $P$
intersect the surface $S$ in horocycles $h_{v}=s_{v}\cap P$. The
signed distance $\lamtil_{e}$ of horospheres in $H^{3}$ is also the
intrinsic signed distance of the corresponding horocycles in $S$ along
the ideal arc $e$. By Proposition~\ref{prop:poly2realizable},
$(\Deltil,\lamtil)$ are realizable coordinates with distinguished
vertex $v_{\infty}$ as defined in Definition~\ref{def:realizable}.

To reconstruct the realization $P$ from $(\Deltil,\lamtil)$, proceed
as follows: For each triangle $t\in T_{\Deltil}$ that is not incident
with $v_{\infty}$, construct an ideal tetrahedron as shown in
Figure~\ref{fig:idealtet}.
\begin{figure}
  \labellist
  \small\hair 2pt
  \pinlabel {$\raisebox{10pt}{\Big\uparrow} v_{\infty}$} [ ] at 130 270
  \pinlabel {$v_{1}$} [t] at 69 108
  \pinlabel {$v_{2}$} [t] at 156 77
  \pinlabel {$v_{3}$} [t] at 138 164
  \pinlabel {$\lambda_{3}$} [t] at 110 109
  \pinlabel {$\ell_{3}$} [b] at 115 118
  \pinlabel {$\lambda_{1}$} [r] at 150.5 133.5
  \pinlabel {$\ell_{1}$} [l] <-3pt,7pt> at 154 129
  \pinlabel {$\lambda_{2}$} [tl] at 101 152
  \pinlabel {$\ell_{2}$} [br] at 98 159
  \pinlabel {$\alpha_{1}$} [ ] at 78 135
  \pinlabel {$\alpha_{2}$} [ ] at 151 112
  \pinlabel {$\alpha_{3}$} [ ] at 137 181
  \pinlabel {$A_{1}$} [r] at 66 134
  \pinlabel {$A_{2}$} [l] at 159 104
  \pinlabel {$A_{3}$} [l] at 141 193
  \endlabellist
  \centering
  \includegraphics{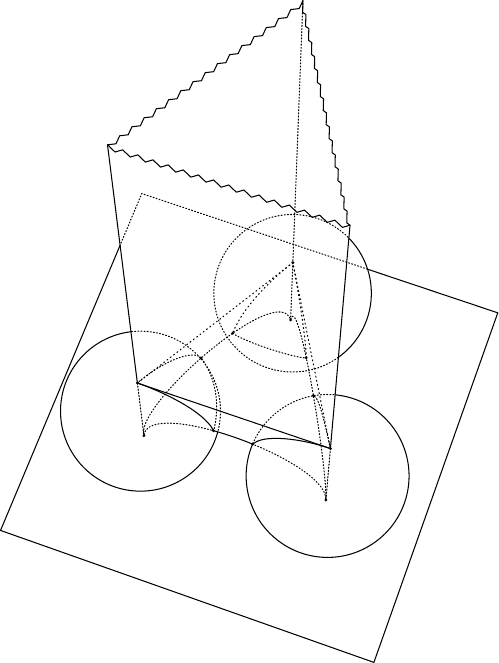}
  \caption{Ideal tetrahedron decorated with horospheres at the
    vertices. The horosphere at $v_{\infty}$ is the horizontal plane at height
    $1$. The horospheres at $v_{1}$, $v_{2}$, $v_{3}$ touch the
    horosphere at $v_{\infty}$. The horosphere at $v_{\infty}$
    intersects the tetrahedron in a euclidean triangle with sides
    $\ell_{i}=e^{\frac{1}{2}\lambda_{i}}$, where $\lambda_{i}$ are
    the signed distances between horospheres (see Figure~\ref{fig:lambdaell}). 
  }
  \label{fig:idealtet}
\end{figure}
\begin{figure}
  \labellist
  \small\hair 2pt
  \pinlabel {$\ell=e^{\frac{1}{2}\lambda}$} [b] at 63 45
  \pinlabel {$\lambda$} [t] at 63 33
  \endlabellist
  \centering
  \includegraphics{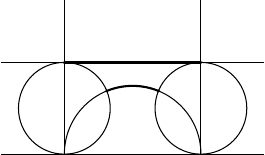}
  \caption{Signed distance $\lambda$ and horocyclic arc length
    $\ell$ in a configuration of two horocycles that are tangent to a
    third horocycle}
  \label{fig:lambdaell}
\end{figure}
These tetrahedra fit together to form the ideal polyhedron~$P$. This
construction of a realizing polyhedron works for any coordinates that
are realizable in the sense of Definition~\ref{def:realizable}
(see Proposition~\ref{prop:realizable2poly}).

The intrinsic Problem~\ref{prob:realize2} of finding realizable
coordinates turns out to be equivalent to a convex optimization
problem (see Theorem~\ref{thm:variational}). This leads to a proof of
Theorem~\ref{thm:rivin} (see Section~\ref{sec:proof}).

\begin{remark}[Hilbert--Einstein functional]
  \label{rem:hilbert-einstein}
  Consider the tetrahedral building block shown in
  Figure~\ref{fig:idealtet}. In this paper, the truncated lengths of
  the base triangles are variable, while the truncated lengths of the
  vertical edges are fixed at zero. Alternatively, we could fix the
  truncated edge lengths of the base triangles and consider the
  truncated lengths of the vertical edges as the variables of the
  optimization problem. This apporach, which was taken
  in~\cite[Sec.~5.4]{bobenko15}, may at first seem more intuitive. It
  also leads to an interpretation of the variational principle in
  terms of a discrete Hilbert--Einstein functional as
  in~\cite{bobenko08:_alexan_delaun,izmestiev08:cap,fillastre09,prosanov18}. Nevertheless,
  we avoid this point of view in this article because it makes it
  harder to understand the role of horocyclic Delaunay triangulations
  (see, e.g., Theorem~\ref{thm:idealeucdel}), which is difficult to
  explain and understand in any case. Moreover, the point of view
  taken here---fixing the vertical lengths at zero and varying the
  base lengths---reduces the three-dimensional realization problem to
  an intrinsic, two-dimensional problem.
\end{remark} 

\section{Penner coordinates}
\label{sec:penner}

In Sections~\ref{sec:penner}--\ref{sec:akiyoshi}, we review known
results from {\Tm} theory. The aim is to fix notation, to collect
required background material and equations for reference, and to
translate the convex hull construction into the language of Delaunay
decompositions. For the reader's convenience, we indicate proofs
whenever we see a way to do so by a short comment or a suggestive
picture. For a more thorough treatment, we refer to the literature. In
this section, we review Penner's coordinates for the decorated {\Tm}
spaces of punctured surfaces~\cite{penner87, penner12}.

Let $S_{g}$ be the oriented surface of genus $g$, let
$V\subseteq S_{g}$ be a finite nonempty subset of $n=|V|$ points and
let $S_{g,n}=S_{g}\setminus V$ be the oriented surface of genus~$g$
with~$n$ punctures. The pair $(S_{g},V)$ is the surface of genus $g$
with $n$ marked points. A triangulation of $(S_{g},V)$ is a
triangulation with vertex set $V$. We will denote the set of edges by
$E_{\Delta}$ and the set of triangles by $T_{\Delta}$. We write
$e_{1}(t)$, $e_{2}(t)$, $e_{3}(t)$ for the edges of a triangle $t$ in
cyclic order, $e_{1}(v),\ldots,e_{\deg(v)}(v)$ for the edges emanating
from a vertex $v$ in cyclic order, and $v_{1}(e)$, $v_{2}(e)$ for the
vertices of an edge $e$ (in arbitrary order).

The \emph{{\Tm} space}~$\Teich_{g,n}$ is the space of equivalence
classes of complete finite area hyperbolic metrics on $S_{g,n}$, where
two such metrics are considered equivalent if they are related by an
automorphism of $S_{g,n}$ that is isotopic to the identity. Loosely
speaking, $\Teich_{g,n}$ is the space of hyperbolic surfaces of genus
$g$ with $n$ cusps. A \emph{decorated hyperbolic surface with cusps}
is a hyperbolic surface with cusps together with a horocycle at each
cusp. The \emph{decorated {\Tm} space}~$\decTeich_{g,n}$ is the space
of decorated hyperbolic surfaces of genus $g$ with $n$ cusps. The
decorated {\Tm} space~$\decTeich_{g,n}$ is a trivial $\R^{n}$-bundle
over the ordinary {\Tm} space $\Teich_{g,n}$.

\emph{Penner coordinates} $(\Delta,\lambda)$ on the decorated
{\Tm} space~$\decTeich_{g,n}$ consist of a
triangulation~$\Delta$ of $(S_{g},V)$ and a function
$\lambda\in\R^{E_{\Delta}}$. The Penner coordinates describe a
complete hyperbolic surface with finite area of genus $g$ with $n$
cusps, marked by $S_{g,n}$, and decorated with a horocycle at each cusp,
together with an ideal triangulation. The cusps correspond to the
vertices $v\in V$ and the ideal triangulation corresponds to a
triangulation in the isotopy class of $\Delta$.  For simplicity, we
will identify cusps with vertices and the ideal triangulation with
$\Delta$.  The value $\lambda_{e}=\lambda(e)$ for an edge
$e\in E_{\Delta}$ is the signed distance (see Figure~\ref{fig:signdist})
of the horocycles at its ideal vertices $v_{1}(e)$, $v_{2}(e)$ as
measured in a lift to the universal cover $H^{2}$.

For two triangulations $\Delta$, $\Deltil$, we denote the \emph{chart
  transition function} 
by
\begin{equation}
  \label{eq:tau}
  \tau_{\Delta,\Deltil}:\R^{E_{\Delta}}\rightarrow\R^{E_{\Deltil}}.
\end{equation}
That is, the Penner coordinates $(\Delta,\lambda)$ and
$(\Deltil,\lamtil)$ describe the same surface if and only if
$\lamtil=\tau_{\Delta,\Deltil}(\lambda)$.

\begin{remark}[notation warning]
  \label{rem:notation}
  Our $\lambda$s denote hyperbolic lengths, not
  Penner's \emph{lambda-lengths}~\cite{penner12}. The lambda-lengths
  are $e^{\lambda/2}$, and we denote them by $\ell$ in this paper. In
  earlier articles~\cite{penner87}, the lambda-lengths were defined as
  $\sqrt{2}\,e^{\lambda/2}$.
\end{remark}

The decorated {\Tm} space $\decTeich_{g,n}$ is a fiber bundle over the
ordinary {\Tm} space $\Teich_{g,n}$. The projection map
$\pi:\decTeich_{g,n}\rightarrow\Teich_{g,n}$ simply forgets about the
decoration. The fibers, whose points correspond to choices of
decorating horocycles, are naturally affine spaces. If a decorated
surface with Penner coordinates~$(\Delta,\lambda)$ is chosen as the
origin in its fiber, then there is a natural parametrization of the
fiber by $\R^{V}$:

\begin{proposition}[parametrizing a fiber of $\decTeich_{g,n}$]
  \label{prop:fiber}
  Let $\Lambda^{(\Delta,\lambda)}$ be the function
  \begin{equation*}
    \Lambda^{(\Delta,\lambda)}:
    \R^{V}\longrightarrow\R^{E_{\Delta}},
    \qquad
    u\longmapsto \Lambda^{(\Delta,\lambda)}(u)
  \end{equation*}
  where the value of $\Lambda^{(\Delta,\lambda)}(u)$ for
  $e\in E_{\Delta}$ is
  \begin{equation}
    \label{eq:Lambda}
    \Lambda^{(\Delta,\lambda)}_{e}(u)
    =\lambda_{e}+u_{v_{1}(e)}+u_{v_{2}(e)}.
  \end{equation}
  Then the decorated surfaces in the fiber of the surface with
  coordinates $(\Delta,\lambda)$ are precisely the surfaces with
  Penner coordinates $(\Delta,\Lambda^{\Delta,\lambda}(u))$ for some
  $u\in\R^{V}$, and~$u$ is uniquely determined by $(\Delta,\lambda)$
  and the decorating horocycles. The horocycle of the decorated
  surface $(\Delta,\Lambda^{\Delta,\lambda}(u))$ at a vertex~$v$ is a
  distance~$u_{v}$ away from the horocycle of $(\Delta,\lambda)$ at
  $v$, measured in the direction of the cusp.
\end{proposition}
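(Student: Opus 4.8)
The plan is to reduce the statement to a concrete computation in the upper half-plane model of $H^2$, using the geometry of a single horocyclic neighborhood of a cusp. Recall the basic fact underlying Penner coordinates: if we fix a cusp of $S$ and lift it to the point $\infty$ in the upper half-plane, then a horocycle at that cusp lifts to a horizontal line $\{\operatorname{Im} z = c\}$, and moving the horocycle a signed distance $t$ toward the cusp replaces $c$ by $e^{t}c$. More generally, the signed distance between two horocycles centered at distinct points $p, q$ on the boundary, lifted so that $p=\infty$ with horocycle height $c_p$ and $q$ has a horocycle that is a Euclidean circle of diameter $d_q$ tangent to $\R$ at $q$, is $\lambda = -\log(c_p d_q)$ up to the fixed normalization convention used in the paper (this is exactly the relation behind $\ell = e^{\lambda/2}$ in Figures~\ref{fig:signdist} and~\ref{fig:lambdaell}).

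First I would set up notation: let $(\Delta,\lambda)$ be the given Penner coordinates, representing a decorated hyperbolic surface, and let a second decoration of the \emph{same} underlying hyperbolic surface (same point of $\Teich_{g,n}$, same ideal triangulation $\Delta$) be given by choosing, at each vertex $v$, a new horocycle obtained from the old one by pushing it a signed distance $u_v$ toward the cusp. I then need to show two things. (1) The new decorated surface, in the coordinates associated to the \emph{same} triangulation $\Delta$, has Penner coordinates $\Lambda^{(\Delta,\lambda)}(u)$ as in \eqref{eq:Lambda}; and (2) every decorated surface in the fiber over $\pi(\Delta,\lambda)$ arises this way for a unique $u\in\R^V$, and the stated geometric description of $u_v$ holds. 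For (1), fix an edge $e$ with endpoints $v_1 = v_1(e)$, $v_2 = v_2(e)$, lift $e$ to a geodesic in $H^2$ with ideal endpoints $p_1, p_2$, and compute the signed distance between the two new horocycles. Pushing the horocycle at $p_i$ a distance $u_{v_i}$ toward $p_i$ changes its "size parameter" multiplicatively by $e^{u_{v_i}}$; feeding this into the distance formula of the previous paragraph gives $\lambda_e^{\mathrm{new}} = \lambda_e + u_{v_1} + u_{v_2}$, which is precisely \eqref{eq:Lambda}. The computation is a one-line check once the half-plane normalization is in place, and it is symmetric in the two endpoints, so it is well defined independently of the (arbitrary) ordering of $v_1(e), v_2(e)$.

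For (2): a decoration of a fixed cusped hyperbolic surface is exactly a choice of one horocycle per cusp, and the space of horocycles centered at a given cusp is a $1$-parameter family parametrized by signed distance; hence decorations are parametrized by $\R^V$, and the parametrization sending $u$ to "push the $(\Delta,\lambda)$-horocycle at $v$ by $u_v$" is a bijection $\R^V \to \{\text{decorations}\}$ with the reference decoration corresponding to $u=0$. Composing with the coordinate chart for $\Delta$ and invoking part (1) identifies this bijection with $u \mapsto (\Delta, \Lambda^{(\Delta,\lambda)}(u))$, and $u$ is recovered from any decoration as the vector of signed distances from the reference horocycles, measured toward the cusps — which is the asserted geometric meaning of $u_v$. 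I expect the main (very mild) obstacle to be purely bookkeeping: pinning down the normalization constant in the horocycle-distance formula so that the additive relation \eqref{eq:Lambda} comes out with coefficient exactly $1$ on each $u_{v_i}$ and no spurious additive constant, i.e. making sure the convention $\ell = e^{\lambda/2}$ is used consistently; once that is fixed everything is immediate. No subtle point of \Tm{} theory is actually needed beyond the standard description of Penner coordinates recalled at the start of Section~\ref{sec:penner}.
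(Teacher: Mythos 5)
The paper states Proposition~\ref{prop:fiber} without proof; Section~\ref{sec:penner} is explicitly a review of known facts about Penner coordinates, so there is no paper proof to compare against. Your overall strategy---reduce to a direct computation for a single edge in the upper half-plane model, with one endpoint lifted to $\infty$ and the other to a finite boundary point, then argue the bijection from decorations to $\R^{V}$ in part~(2)---is exactly the standard way to verify this and is sound.

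However, your two key intermediate formulas both carry sign errors, and the errors do not cancel. Lifting one cusp to $\infty$ with horocycle at Euclidean height $c$ and the other to $0$ with horocycle of Euclidean diameter $d$, the two horocycles cross the connecting vertical geodesic at heights $c$ and $d$, so the signed distance is
\begin{equation*}
  \lambda=\int_{d}^{c}\frac{dy}{y}=\log c-\log d,
\end{equation*}
not $-\log(cd)$. And while pushing the horocycle at $\infty$ a distance $u$ toward its cusp does replace $c$ by $e^{u}c$, pushing the horocycle at $0$ a distance $u$ toward \emph{its} cusp shrinks it, replacing $d$ by $e^{-u}d$ (consistent with equation~\eqref{eq:cu}, which records that the total horocycle length decreases by the factor $e^{-u}$). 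So the $\infty$-cusp and the finite-cusp scalings go in opposite directions, and ``size parameter scales by $e^{u}$'' cannot be applied uniformly. Plugging your two formulas into each other as stated gives $\lambda-u_{1}-u_{2}$, the wrong sign; with the corrected versions, $\lambda^{\mathrm{new}}=\log(e^{u_{1}}c)-\log(e^{-u_{2}}d)=\lambda+u_{1}+u_{2}$, which is~\eqref{eq:Lambda}. You flagged the normalization as bookkeeping to be pinned down, which is fair in principle, but a sign discrepancy is more than a multiplicative constant: a clean write-up must track the asymmetry between how the height of the $\infty$-horocycle and the diameter of a finite-cusp horocycle respond to ``moving toward the cusp.''
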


The following two propositions provide relations between $\lambda$s
and the lengths of horocyclic arcs.

\begin{proposition}[horocyclic arcs in a decorated triangle]
  \label{prop:arc}
  Consider a decorated ideal triangle with signed horocycle distances
  $\lambda_{1}$, $\lambda_{2}$, $\lambda_{3}$ as shown in
  Figure~\ref{fig:ideal_triang}.
  \begin{figure}
    \labellist
    \small\hair 2pt
    \pinlabel {$\lambda_{1}$} [t] at 64 45
    \pinlabel {$\lambda_{2}$} [bl] at 83 63
    \pinlabel {$\lambda_{3}$} [br] at 44 63
    \pinlabel {$\alpha_{1}$} [t] at 65.5 78
    \pinlabel {$\alpha_{2}$} [bl] at 34 44
    \pinlabel {$\alpha_{3}$} [br] <-0.5pt, -1.5pt> at 96 44.5
    \endlabellist
    \hfill
    \includegraphics{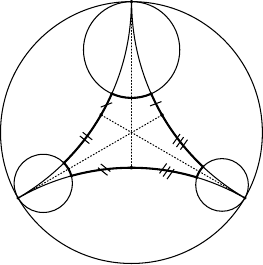}
    \labellist
    \small\hair 2pt
    \pinlabel {$\lambda_{1}$} [bl] <-0.5pt,-0.5pt> at 88 52
    \pinlabel {$\lambda_{2}$} [l] at 131 85
    \pinlabel {$\lambda_{3}$} [r] at 44 85
    \pinlabel {$\alpha_{1}$} [t] at 90.5 117
    \pinlabel {$\alpha_{2}$} [bl] <-2pt,-1pt> at 56 52.5
    \pinlabel {$\alpha_{3}$} [br] <1.5pt, 1.5pt> at 126 44
    \pinlabel {$0$} [t] at 44 9
    \pinlabel {$1$} [t] at 131 9
    \pinlabel {$i$} [br] at 42 92
    \pinlabel {$1+i$} [bl] at 132 92
    \endlabellist
    \hfill
    \includegraphics{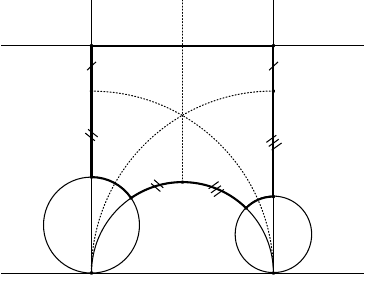}
    \hspace*{\fill}
    \caption{Decorated ideal triangle in the Poincar{\'e} disk model (left) and
      in the half-plane model (right)}
    \label{fig:ideal_triang}
  \end{figure}
  Each horocycle intersects the triangle in an arc of length
  $\alpha_{k}$. The signed horocycle distances $\lambda$ determine the
  horocyclic arc lengths $\alpha$, and vice versa, via the relations
  \begin{align}
    \label{eq:alpha}
    \alpha_{i}&=e^{\frac{1}{2}(\lambda_{i}-\lambda_{j}-\lambda_{k})},\\
    \label{eq:lambdafromalpha}
    \lambda_{i}&=-\log\alpha_{j}-\log\alpha_{k},
  \end{align}
  where $(i,j,k)$ is a permutation of $(1,2,3)$.
\end{proposition}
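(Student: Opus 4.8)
The plan is to compute everything in the upper half-plane model and then leverage the affine transformation law for decorating horocycles that is already recorded in Proposition~\ref{prop:fiber}. First I would normalize: applying an isometry of $H^{2}$, place the three ideal vertices at $0$, $1$, $\infty$, so that the edges through $\infty$ lie on the vertical lines $\operatorname{Re}z=0$ and $\operatorname{Re}z=1$, and the third edge is the semicircle of radius $\tfrac12$ over $[0,1]$. In this picture each decorating horocycle is described by one number: the horocycle at $\infty$ is a horizontal line at some height $t$, and the horocycle at a finite vertex $v$ is the Euclidean circle tangent to $\R$ at $v$ of some diameter $d_{v}$. Two elementary facts then handle everything except the slanted edge: the horocyclic arc at $\infty$ has hyperbolic length $(\text{Euclidean width})/(\text{height})$, and the signed distance between the horocycle at $\infty$ and a horocycle at a finite vertex along the joining vertical geodesic equals $\log\bigl(\text{height}/\text{diameter}\bigr)$, with the correct sign whether or not the horocycles overlap.

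Rather than also treating the slanted edge head‑on, I would reduce to a single normalized configuration. Call a decorated ideal triangle \emph{tight} if its three horocycles are pairwise tangent, i.e.\ $\lambda_{1}=\lambda_{2}=\lambda_{3}=0$. Step one is to show that in a tight triangle every horocyclic arc has length $1$: with the normalization above, tightness forces $t=d_{v_{2}}=d_{v_{3}}=1$, so the arc at $\infty$ is $1/1=1$ by the width‑over‑height formula, and the other two arcs equal $1$ as well — either by a one‑line integral, or more conceptually because the decorated triangle with vertices $0,1,\infty$ and these three horocycles admits isometric symmetries realizing every permutation of the vertices. Step two: an arbitrary decorated triangle is obtained from the tight one by pushing the horocycle at $v_{i}$ a signed distance $u_{i}$ toward its cusp, for unique $u_{i}$. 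By \eqref{eq:Lambda} the signed distances become $\lambda_{i}=u_{j}+u_{k}$, a linear system with determinant $2$, hence $u_{i}=\tfrac12(-\lambda_{i}+\lambda_{j}+\lambda_{k})$. Since such a move at $v_{i}$ scales the arc at $v_{i}$ by $e^{-u_{i}}$ and leaves the arcs at $v_{j},v_{k}$ unchanged, we obtain $\alpha_{i}=1\cdot e^{-u_{i}}=e^{\frac12(\lambda_{i}-\lambda_{j}-\lambda_{k})}$, which is \eqref{eq:alpha}. Equation \eqref{eq:lambdafromalpha} is then pure algebra: adding $\log\alpha_{j}=\tfrac12(\lambda_{j}-\lambda_{k}-\lambda_{i})$ and $\log\alpha_{k}=\tfrac12(\lambda_{k}-\lambda_{i}-\lambda_{j})$ gives $\log\alpha_{j}+\log\alpha_{k}=-\lambda_{i}$.

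I expect the only genuine obstacle to be the base case — verifying that the arcs of a tight decorated ideal triangle have length exactly $1$ (equivalently, fixing the normalization so that the prefactor in \eqref{eq:alpha} is $1$ rather than some universal constant). Everything else is bookkeeping in the half-plane model together with the elementary affine law of Proposition~\ref{prop:fiber}. As an alternative to the symmetry reduction, one can instead compute $\alpha_{1}$ and the two vertical-edge $\lambda$'s directly, and use Penner's lambda-length formula $e^{\lambda}=(v-w)^{2}/(d_{v}d_{w})$~\cite{penner12} for the slanted edge, then solve the resulting $3\times3$ system for $\alpha_{1}$ in terms of the $\lambda_{i}$; this is shorter but imports the lambda-length formula rather than deriving the special case needed here.
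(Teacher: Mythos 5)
Your proof is correct, and it is essentially the argument the paper indicates: Figure~\ref{fig:ideal_triang} (right) is precisely your normalized configuration with vertices $0,1,\infty$ and the tangent horocycles at height $1$ and diameter $1$, from which the general case follows by the half-plane computation. Your explicit reduction to the tight case via the fiber action of Proposition~\ref{prop:fiber}, with $\lambda_i=u_j+u_k$ and arc scaling by $e^{-u_i}$, is a clean way of writing out exactly what the suggestive picture is meant to convey.
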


Summing the horocyclic arc lengths around one vertex, one obtains the
total horocycle length at a cusp:

\begin{proposition}[horocycle length at a cusp]
  The total length of the horocycle at a cusp $v\in V$ of the
  decorated hyperbolic surface with Penner coordinates
  $(\Delta,\lambda)$ is 
  \begin{equation}
    \label{eq:c}
    c_{v}(\Delta,\lambda)=\sum_{i=0}^{d-1}
    e^{\frac{1}{2}
      \left(
        \lambda_{\widehat{e}_{i}(v)}-\lambda_{e_{i}(v)}-\lambda_{e_{i+1}(v)}
      \right)},
  \end{equation}
  where $e_{1}(v),\ldots, e_{d}(v)=e_{0}(v)$ are the edges emanating
  from $v$ in cyclic order and
  $\hat{e}_{1}(v),\ldots, \hat{e}_{d}(v)=\hat{e}_{0}(v)$ are the edges
  opposite $v$ in cyclic order, so that the $i$-th triangle around $v$
  looks like this:
  \begin{center}
    \begin{pgfpicture}
      \pgfsetlinewidth{0.5pt}
      \begin{pgfscope}
        \pgfpathmoveto{\pgfpointorigin}
        \pgfpathlineto{\pgfpoint{30pt}{15pt}}
        \pgfpathlineto{\pgfpoint{30pt}{-15pt}} 
        \pgfpathclose
        \pgfusepath{stroke,clip} 
        \pgfpathcircle{\pgforigin}{10pt}
        \pgfusepath{stroke}
      \end{pgfscope}
      \pgfpathcircle{\pgforigin}{1.5pt}
      \pgfusepath{fill,stroke}
      \pgftext[x=-5pt]{$v$}
      \pgftext[x=38pt]{$\hat{e}_{i}$}
      \pgftext[x=12pt,y=15pt]{$e_{i+1}$}
      \pgftext[x=13pt,y=-14pt]{$e_{i}$}
    \end{pgfpicture}
  \end{center}
\end{proposition}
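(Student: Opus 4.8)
The plan is to slice the horocycle $h_{v}$ at the cusp $v$ into the arcs cut out by the triangles of $\Delta$ incident with $v$, and to compute the length of each arc by Proposition~\ref{prop:arc}. Concretely, let $t_{1},\dots,t_{d}$ be the triangles around $v$ in cyclic order, $t_{i}$ being bounded by the edges $e_{i}(v)$, $e_{i+1}(v)$ emanating from $v$ and by the opposite edge $\hat e_{i}(v)$, exactly as in the picture in the statement. Passing to a lift in $H^{2}$, a punctured neighbourhood of the cusp is tiled by the lifted triangles, and the horocycle $h_{v}$ meets them in $d$ consecutive arcs $h_{v}\cap t_{1},\dots,h_{v}\cap t_{d}$ that overlap only in their endpoints. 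Hence $c_{v}(\Delta,\lambda)$ is the sum of the hyperbolic lengths of these $d$ arcs.

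For the $i$-th triangle, apply Proposition~\ref{prop:arc} with $v$ in the role of the vertex carrying the arc $\alpha_{1}$ in Figure~\ref{fig:ideal_triang}. The edge of $t_{i}$ opposite $v$ is $\hat e_{i}(v)$, so $\lambda_{1}=\lambda_{\hat e_{i}(v)}$, while the two edges of $t_{i}$ at $v$ are $e_{i}(v)$ and $e_{i+1}(v)$, so $\{\lambda_{2},\lambda_{3}\}=\{\lambda_{e_{i}(v)},\lambda_{e_{i+1}(v)}\}$. Equation~\eqref{eq:alpha} then says that the length of $h_{v}\cap t_{i}$ equals
\[
  \alpha_{1}=e^{\frac{1}{2}\left(\lambda_{\hat e_{i}(v)}-\lambda_{e_{i}(v)}-\lambda_{e_{i+1}(v)}\right)}.
\]
Summing over $i=1,\dots,d$ yields formula~\eqref{eq:c}.

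The only point that needs care — and the main, if modest, obstacle — is justifying the partition step: one must know that the decorating horocycle determined by the Penner coordinates $(\Delta,\lambda)$ is embedded and, pushed far enough toward the cusp, meets each incident triangle in a single arc while remaining disjoint from the non-incident triangles, so that the decomposition above is genuine and the sum of the arc lengths really is the total horocycle length. This is built into the definition of Penner coordinates and is precisely the setting in which Proposition~\ref{prop:arc} was stated (the relation~\eqref{eq:alpha} being read off in a lift to $H^{2}$); granting it, the remainder is the index bookkeeping carried out above.
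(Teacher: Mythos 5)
Your proposal is correct and is essentially the paper's own argument: the paper justifies \eqref{eq:c} precisely by the remark that summing the horocyclic arc lengths of Proposition~\ref{prop:arc} (equation~\eqref{eq:alpha}, with the edge opposite $v$ playing the role of $\lambda_{1}$) over the $d$ triangles around $v$ gives the total horocycle length. Your additional care about the arcs partitioning the horocycle (read off in a lift to $H^{2}$) is exactly the point the paper leaves implicit.
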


\emph{Shear coordinates} $(\Delta,\sigma)$ on the {\Tm} space
$\Teich_{g,n}$ also consist of a triangulation $\Delta$ of $(S_{g},V)$
and a function $\sigma\in\R^{E_{\Delta}}$. Shear coordinates describe
a marked surface together with an ideal triangulation but without
decorating horocycles. For each edge $e\in E_{\Delta}$,
$\sigma_{e}=\sigma(e)$ is the shear with which the ideal triangles are
glued together along $e$ (see Figure~\ref{fig:shear}).
\begin{figure}
\labellist
\small\hair 2pt
 \pinlabel {$\lambda_{a}$} [tl] at 72 48
 \pinlabel {$\lambda_{b}$} [bl] at 71 82
 \pinlabel {$\lambda_{c}$} [br] at 42 64
 \pinlabel {$\lambda_{d}$} [tr] at 45 31
 \pinlabel {$\sigma_{e}$} [l] at 57 55
 \pinlabel {\scriptsize$\alpha$} [b] at 59.5 26
 \pinlabel {$\beta$} [b] at 52 23
\endlabellist
\centering
\includegraphics[scale=1.0]{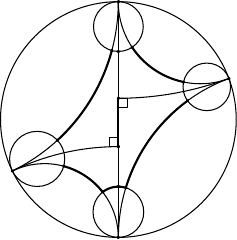}
\caption{Penner coordinates and shear coordinates}
\label{fig:shear}
\end{figure}
The shear coordinates of a complete hyperbolic surface sum to zero
around every cusp:
\begin{equation}
  \label{eq:sigmasum}
  \sum_{i=1}^{\deg(v)} \sigma_{e_{i}(v)}=0.
\end{equation}

\begin{proposition}[Penner coordinates and shear coordinates]
  \label{prop:pennershear}
  (i) The shear coordinates $(\Delta,\sigma)$ of a decorated surface
  with Penner coordinates $(\Delta,\lambda)$ are
  \begin{equation}
    \label{eq:shearfrompenner}
    \sigma_{e}=\frac{1}{2}(\lambda_{a}-\lambda_{b}+\lambda_{c}-\lambda_{d})
    =\log\beta-\log\alpha,
  \end{equation}
  where $a$, $b$, $c$, $d$ are the edges adjacent to edge $e$, and
  $\alpha$, $\beta$ are the horocycle arc lengths, as shown
  in Figure~\ref{fig:shear}.
  
  (ii) Conversely, if $(\Delta,\sigma)$ are the shear coordinates of a
  complete finite area hyperbolic surface $S$, then one obtains Penner
  coordinates $(\Delta,\lambda)$ for $S$, decorated with some choice
  of horocycles, as follows. First, note that the shear coordinates
  determine the length ratios of adjacent horocycle arcs $\alpha$,
  $\beta$ as shown in Figure~\ref{fig:shear} by
  equation~\eqref{eq:shearfrompenner}.  Use the
  relations~\eqref{eq:shearfrompenner} to determine compatible arc
  lengths around each vertex. (The
  equations~\eqref{eq:shearfrompenner} are compatible
  by~\eqref{eq:sigmasum}, but under-determined: At each vertex,
  exactly one arc length may be chosen arbitrarily, and this choice
  fixes the decorating horocycle.) Then use
  equations~\eqref{eq:lambdafromalpha} to determine $\lambda$.
\end{proposition}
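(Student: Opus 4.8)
The plan is to prove (i) by a direct computation in the upper half-plane model and (ii) by verifying that the procedure described in the statement really produces a well-defined $\lambda\in\R^{E_{\Delta}}$, with the solvability condition being exactly~\eqref{eq:sigmasum}. For (i), lift the two ideal triangles adjacent to $e$ so that $e$ becomes the geodesic from $0$ to $\infty$; by an automorphism fixing $0$ and $\infty$ normalize the third vertex of one triangle to $-1$, so that the third vertex of the other sits at $e^{\pm\sigma_{e}}$, this being the defining property of the shear coordinate. Decorate $v_{\infty}=\infty$ with the horocycle $y=h$; the two horocyclic arcs it cuts out of the two triangles are horizontal segments at height $h$, of lengths $1/h$ and $e^{\pm\sigma_{e}}/h$, so the ratio of the arc in the second triangle to the one in the first is $e^{\pm\sigma_{e}}$. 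Reading these as $\beta$ and $\alpha$ in the notation of Figure~\ref{fig:shear} gives $\sigma_{e}=\log\beta-\log\alpha$. Finally substitute the arc lengths from Proposition~\ref{prop:arc}, equation~\eqref{eq:alpha}, computed in the two triangles: the $\lambda$-value of the shared edge $e$ cancels and one is left with $\sigma_{e}=\tfrac12(\lambda_{a}-\lambda_{b}+\lambda_{c}-\lambda_{d})$, which is~\eqref{eq:shearfrompenner}.

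For (ii), fix a vertex $v$ of degree $d$ and let $t_{1},\dots,t_{d}$ be the triangles around $v$ in cyclic order, with $e_{i}$ the edge between $t_{i-1}$ and $t_{i}$. Write $\gamma_{i}>0$ for the length of the horocyclic arc that the sought decoration cuts out of $t_{i}$ at $v$. By part (i) in the form $\sigma_{e}=\log\beta-\log\alpha$, compatibility with the prescribed shear coordinates forces $\log\gamma_{i}-\log\gamma_{i-1}=\sigma_{e_{i}}$ for each $i$ (with a uniform sign dictated by the orientation of $S$, which I would pin down in the half-plane picture). Telescoping around $v$, these relations are simultaneously solvable if and only if $\sum_{i}\sigma_{e_{i}}=0$, which is precisely~\eqref{eq:sigmasum}; when they are solvable the solution set is one-dimensional, so exactly one arc length at $v$ may be chosen freely. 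Doing this at every vertex fixes a positive number $\alpha$ for every triangle-corner.

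It remains to check that feeding these arc lengths into~\eqref{eq:lambdafromalpha} triangle by triangle yields a consistent value on each edge. For an edge $e$ with endpoints $v,v'$ and adjacent triangles $t,t'$, evaluating~\eqref{eq:lambdafromalpha} in $t$ gives $-\log\alpha^{(t)}_{v}-\log\alpha^{(t)}_{v'}$ and in $t'$ gives $-\log\alpha^{(t')}_{v}-\log\alpha^{(t')}_{v'}$; their difference is $\bigl(\log\alpha^{(t')}_{v}-\log\alpha^{(t)}_{v}\bigr)+\bigl(\log\alpha^{(t')}_{v'}-\log\alpha^{(t)}_{v'}\bigr)$, and the two bracketed terms equal $+\sigma_{e}$ and $-\sigma_{e}$ respectively — the shear along $e$ twists the two adjacent triangles in opposite senses as seen from its two endpoints — so the difference vanishes and $\lambda$ is well defined. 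The data $(\Delta,\lambda)$ always represents a complete finite-area decorated hyperbolic surface, since Penner coordinates parametrize $\decTeich_{g,n}$ (Section~\ref{sec:penner}). Its shear coordinates are, by part (i), $\tfrac12(\lambda_{a}-\lambda_{b}+\lambda_{c}-\lambda_{d})=\log\beta-\log\alpha=\sigma_{e}$ — here $\alpha,\beta$ recovered from $\lambda$ via~\eqref{eq:alpha} are the chosen arc lengths, because~\eqref{eq:alpha} and~\eqref{eq:lambdafromalpha} are mutually inverse — so the underlying surface is $S$, as claimed.

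The step I expect to be fussiest is the sign bookkeeping: making precise that $\sigma_{e}=\log\beta-\log\alpha$ holds with sign $+\sigma_{e}$ at one endpoint of $e$ and $-\sigma_{e}$ at the other, and that going once around a vertex all the relations $\log\gamma_{i}-\log\gamma_{i-1}=\pm\sigma_{e_{i}}$ carry the same sign, so that solvability is governed by~\eqref{eq:sigmasum} and edge-consistency follows. This is purely a matter of matching the orientation conventions implicit in Figures~\ref{fig:shear} and~\ref{fig:ideal_triang}; the geometric content is entirely contained in Proposition~\ref{prop:arc} together with the one-line half-plane computation of the shear in part (i).
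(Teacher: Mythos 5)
The paper states this proposition without proof --- it belongs to the background review in Sections~\ref{sec:penner}--\ref{sec:akiyoshi}, where proofs are only ``indicated'' by a short comment or picture (here Figure~\ref{fig:shear}), and the recipe in part~(ii) of the statement already serves as its own proof outline. Your argument is correct and fills in exactly that outline: the half-plane normalization $(-1,0,\infty,e^{\pm\sigma_{e}})$ gives $\sigma_{e}=\log\beta-\log\alpha$, substituting~\eqref{eq:alpha} eliminates $\lambda_{e}$ and yields $\tfrac12(\lambda_{a}-\lambda_{b}+\lambda_{c}-\lambda_{d})$, the telescoping of $\log\gamma_{i}-\log\gamma_{i-1}=\sigma_{e_{i}}$ around a vertex reduces solvability to~\eqref{eq:sigmasum}, and the edge-consistency of~\eqref{eq:lambdafromalpha} follows because the shear at $e$ contributes with opposite sign at its two endpoints. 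The sign bookkeeping you flag is indeed the only delicate point and you handle it correctly.
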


The Penner coordinates of a decorated surface with respect to
triangulations $\Delta$ and $\Delta'$ that differ by a single edge
flip are related by Ptolemy's relation~\eqref{eq:ptolemy}:
\begin{figure}
  \begin{minipage}[t]{0.5\linewidth}
    \labellist
    \small\hair 2pt 
    \pinlabel {$a$} [t] at 66 46 
    \pinlabel {$b$} [l] at 107 59 
    \pinlabel {$c$} [bl] at 94 78 
    \pinlabel {$d$} [br] at 45 66 
    \pinlabel {$e$} [br] at 71 60 
    \pinlabel {$f$} [tr] <0pt,4pt> at 89 55 
    \pinlabel {\footnotesize$\alpha_{1}$} [t] at 63 80 
    \pinlabel {\footnotesize$\alpha_{2}$} [tl] at 75.5 84
    \endlabellist
    \centering
    \includegraphics{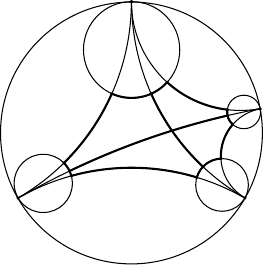}
    \caption{Ptolemy relation}
    \label{fig:ptolemy}
  \end{minipage}
  \begin{minipage}[t]{0.5\linewidth}
    \labellist
    \small\hair 2pt 
    \pinlabel {$\alpha$} [ ] at 42 69
    \pinlabel {$\alpha'$} [ ] at 110 64
    \pinlabel {$\beta$} [ ] at 69 40
    \pinlabel {$\beta'$} [ ] at 81 41.5
    \pinlabel {$\gamma$} [ ] at 57.5 86
    \pinlabel {$\gamma'$} [ ] at 70 87.5
    \pinlabel {$e$} [l] at 69 64
    \pinlabel {$v_{a}$} [r] at 2 74
    \pinlabel {$v_{b}$} [t] at 88 4
    \pinlabel {$v_{a'}$} [l] at 126 62
    \pinlabel {$v_{c}$} [b] at 63 126
    \endlabellist
    \centering
    \includegraphics{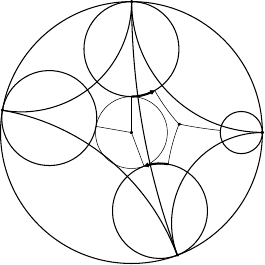}
    \caption{Local Delaunay condition}
    \label{fig:localdel}
  \end{minipage}
\end{figure}

\begin{proposition}[Ptolemy relation]
  \label{prop:ptolemy}
  Consider a decorated ideal quadrilateral with edges $a$, $b$, $c$,
  $d$ and diagonals $e$, $f$ as shown in Figure~\ref{fig:ptolemy}.
  Let $\lambda_{a},\ldots,\lambda_{f}$ be the respective signed
  horocycle distances. Then $\ell_{a},\ldots,\ell_{f}$ defined
  by 
  \begin{equation}
    \label{eq:ell}
    \ell=e^{\frac{1}{2}\lambda}
  \end{equation}
  satisfy Ptolemy's relation
  \begin{equation}
    \label{eq:ptolemy}
    \ell_{e}\ell_{f}=\ell_{a}\ell_{c}+\ell_{b}\ell_{d}.
  \end{equation}
\end{proposition}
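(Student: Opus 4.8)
\medskip
\noindent\emph{Proof proposal.} The plan is to work in the upper half-plane model of $H^{2}$ and to reduce Ptolemy's relation \eqref{eq:ptolemy} to the classical Ptolemy identity for four points in convex position on a line. The only input I need is the standard formula for a lambda-length in terms of horocycle data: if two decorated ideal points sit at $x,y\in\R$ carrying horocycles of Euclidean diameter $\rho_{x}$ and $\rho_{y}$, then their signed horocycle distance satisfies $\ell_{xy}=e^{\lambda_{xy}/2}=|x-y|/\sqrt{\rho_{x}\rho_{y}}$, whereas if one point is $\infty$ with horocycle the horizontal line at height $h$, then $\ell_{\infty y}=\sqrt{h/\rho_{y}}$. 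This is standard (see~\cite{penner12}); alternatively it follows from a one-line computation after an isometry moving the geodesic joining the two points to the imaginary axis, along which the signed horocycle distance is read off directly (compare Figure~\ref{fig:lambdaell} and the relations \eqref{eq:alpha}, \eqref{eq:lambdafromalpha}).

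Granting this, the steps are as follows. First, label the vertices of the ideal quadrilateral $V_{1},V_{2},V_{3},V_{4}$ in the cyclic order in which they occur on its boundary, so that $\{a,c\}$ and $\{b,d\}$ are the two pairs of opposite sides and $e=V_{1}V_{3}$, $f=V_{2}V_{4}$ are the two diagonals. Then apply an isometry of $H^{2}$ sending $V_{1}$ to $\infty$ and normalizing its horocycle to a horizontal line at some height $h$; let $v_{2},v_{3},v_{4}\in\R$ be the resulting positions of the remaining vertices and $\rho_{2},\rho_{3},\rho_{4}$ the Euclidean diameters of their horocycles. A genuine ideal quadrilateral is convexly embedded, so its vertices occur on the circle $\partial H^{2}=\R\cup\{\infty\}$ in the same cyclic order $\infty,v_{2},v_{3},v_{4}$; hence $v_{3}$ lies between $v_{2}$ and $v_{4}$ on the line, and
\begin{equation*}
  |v_{2}-v_{4}|=|v_{2}-v_{3}|+|v_{3}-v_{4}|.
\end{equation*}

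Finally, I would substitute the lambda-length formulas for $\ell_{a},\dots,\ell_{f}$ into the two sides of \eqref{eq:ptolemy}. Each of the three products $\ell_{e}\ell_{f}$, $\ell_{a}\ell_{c}$, $\ell_{b}\ell_{d}$ picks up the common factor $\sqrt{h}\,(\rho_{2}\rho_{3}\rho_{4})^{-1/2}$ and leaves behind, respectively, $|v_{2}-v_{4}|$, $|v_{3}-v_{4}|$, and $|v_{2}-v_{3}|$; the displayed collinear identity then yields $\ell_{e}\ell_{f}=\ell_{a}\ell_{c}+\ell_{b}\ell_{d}$.

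I do not expect a real obstacle here: once the lambda-length formula is in hand the argument is a short cancellation, and the only points needing a little care are fixing the sign convention so that $\ell=e^{\lambda/2}$ is consistent with \eqref{eq:ell}, and noting that the quadrilateral being convexly embedded is precisely what makes the collinear Ptolemy relation an equality rather than an inequality. If one prefers to avoid the half-plane bookkeeping, the same proof runs in Penner's Minkowski-space model: a decorated ideal point is a future-pointing null vector $u$, one has $\ell_{uv}=\sqrt{-\langle u,v\rangle}$, and parametrizing the null cone turns each $\ell_{uv}$ into $(\mathrm{const})\sqrt{t_{u}t_{v}}\,|x_{u}-x_{v}|$, so that \eqref{eq:ptolemy} again collapses to the collinear identity. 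The half-plane version is more elementary and fits the pictures already used in this section.
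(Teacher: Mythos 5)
Your proposal is correct, and it runs along a genuinely different line from the paper's. The paper proves the Ptolemy identity intrinsically, using the horocyclic arc-length formula~\eqref{eq:alpha} from Proposition~\ref{prop:arc}: the diagonal $f$ splits the horocycle arc subtended by the diagonal $e$ at one vertex into two sub-arcs, and the identity
\begin{equation*}
  e^{\frac{1}{2}(\lambda_{a}-\lambda_{d}-\lambda_{f})}
  +e^{\frac{1}{2}(\lambda_{b}-\lambda_{c}-\lambda_{f})}
  =e^{\frac{1}{2}(\lambda_{e}-\lambda_{c}-\lambda_{d})}
\end{equation*}
expressing that additivity turns into~\eqref{eq:ptolemy} upon multiplying through by $e^{\frac{1}{2}(\lambda_{c}+\lambda_{d}+\lambda_{f})}$. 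You instead pass to the upper half-plane, send one ideal vertex to $\infty$, and use the explicit coordinate expression for lambda-lengths ($\ell_{xy}=|x-y|/\sqrt{\rho_{x}\rho_{y}}$, $\ell_{\infty y}=\sqrt{h/\rho_{y}}$), so that~\eqref{eq:ptolemy} collapses to the trivial collinear identity $|v_{2}-v_{4}|=|v_{2}-v_{3}|+|v_{3}-v_{4}|$. Both are short; the paper's argument is coordinate-free and reuses material already set up in the same section, so it needs no further input, whereas your argument requires importing (or briefly verifying) the half-plane lambda-length formula. What your route buys is a cleaner conceptual link to the classical Euclidean Ptolemy theorem, and it makes vivid exactly where convexity of the quadrilateral enters: it is the betweenness of $v_{3}$ that turns the collinear triangle inequality into an equality. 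Your bookkeeping of opposite sides versus diagonals also matches the paper's convention ($a,c$ opposite, $b,d$ opposite, $e,f$ the diagonals), so no adjustment is needed.
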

\begin{proof}
  This follows with~\eqref{eq:alpha} from
  \begin{equation*}
    e^{\frac{1}{2}(\lambda_{a}-\lambda_{d}-\lambda_{f})}
    +e^{\frac{1}{2}(\lambda_{b}-\lambda_{c}-\lambda_{f})}
    =\alpha_{1}+\alpha_{2}
    =e^{\frac{1}{2}(\lambda_{e}-\lambda_{c}-\lambda_{d})}
  \end{equation*}
  by multiplying both sides with
  $e^{\frac{1}{2}(\lambda_{c}+\lambda_{d}+\lambda_{f})}$.
\end{proof}

\section{Ideal Delaunay triangulations}
\label{sec:delaunay}

Epstein and Penner's \emph{convex hull
  construction}~\cite{epstein88,penner87,penner12} is a fundamental
tool for the polyhedral realization method presented here. The
construction works for cusped hyperbolic manifolds of arbitrary
dimension, but some aspects are simpler for surfaces (compare, e.g.,
the local condition~\eqref{eq:local2} with the generalized tilt
formula~\cite{sakuma95}), and other aspects (like the edge flip
algorithm~\cite{weeks93}) have no adequate counterpart in higher
dimensions. In this section, we will review the relevant results
focusing on the two-dimensional case.

Given a finite area hyperbolic surface with cusps, decorated with a
horocycle at each cusp, the construction of Epstein and Penner returns
an ideal cell decomposition of the given surface, called the
\emph{canonical cell decomposition} of the decorated surface. Roughly,
the construction works as follows: Consider the universal cover of the
given decorated surface in the hyperboloid model of the hyperbolic
plane. Each decorated cusp is represented by an orbit of points in the
positive light cone in a $(2+1)$-dimensional Lorentz space. The convex
hull of these points projects to an ideal cell decomposition of the
surface. We refer to the original articles for a detailed
exposition~\cite{epstein88,penner87,penner12}.

In order to apply the convex hull construction to the polyhedral
realization problems at hand, we need to translate it into the
language of Delaunay decompositions. Accordingly we will use the term
``ideal Delaunay decomposition'' (Definition~\ref{def:idealdel})
instead of ``canonical cell decomposition'', but these terms are
synonymous. The situation is analogous to the classical case of power
diagrams and weight\-ed Delaunay decompositions in euclidean $n$-space,
which can be obtained by a convex hull construction in
$(n+1)$-dimensional space~\cite{aurenhammer13}.

For Epstein and Penner's convex hull construction, the translation
into the language of Delaunay decompositions is a straightforward
application of the pole-polar relationship of projective geometry and
Proposition~\ref{prop:orthotouch} below. It works as follows. In the
hyperboloid model of the hyperbolic plane, circles are intersections
of the hyperboloid with affine planes. Planes intersecting the
hyperboloid correspond, via the pole-polar relationship, to points
outside the hyperboloid. Two circles intersect orthogonally if the
pole of one circle's plane lies in the other circle's plane. The
vertices of the Epstein--Penner convex hull correspond to horocycles,
while the faces correspond to orthogonally intersecting circles. (It
may be necessary to scale the convex hull up to make all faces
intersect the hyperboloid.)  Convexity translates to the condition
that a face-circle intersects all horocycles corresponding to
non-incident vertices less than orthogonally. So far, everything is
analogous to the standard theory of power diagrams and weighted
Delaunay triangulations in euclidean space.
Proposition~\ref{prop:orthotouch} allows us to express the Delaunay
condition in terms of oriented contact instead of orthogonality. This
only works for Delaunay triangulations in hyperbolic space and with
horospheres as sites.

\begin{proposition}[orthogonality and contact]
  \label{prop:orthotouch}
  Let $h_{1},\ldots,h_{n}$ be horocycles with different centers in the
  hyperbolic plane. Of the following statements, any two labeled with
  the same letter are equivalent. If $n\geq 3$, statements labeled
  with different letters are mutually exclusive.
  \begin{compactitem}[$(b_2)$]
    \medskip
  \item[$(a_1)$] There is a circle that intersects every horocycle~$h_{1},\ldots,h_{n}$ orthogonally.
  \item[$(a_2)$] There is a circle that touches every horocycle~$h_{1},\ldots,h_{n}$
    externally.
    \medskip
  \item[$(b_1)$] There is a horocycle that intersects every horocycle~$h_{1},\ldots,h_{n}$ orthogonally.
  \item[$(b_2)$] There is a horocycle that touches every
    horocycle~$h_{1},\ldots,h_{n}$.
    \medskip
  \item[$(c_1)$] There is a hypercycle that intersects every
    horocycle~$h_{1},\ldots,h_{n}$ orthogonally.
  \item[$(c_2)$] There is a hypercycle or geodesic that touches every
    horocycle~$h_{1},\ldots,h_{n}$ on the same side.
  \end{compactitem}
  (A hypercycle is a complete curve at a constant nonzero distance
  from a geodesic.)
\end{proposition}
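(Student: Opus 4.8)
\emph{Proof strategy.} The plan is to work in the hyperboloid model $H^{2}=\{x\in\R^{2,1}:\langle x,x\rangle=-1,\ x_{0}>0\}$ of the hyperbolic plane, where $\langle\cdot,\cdot\rangle$ is the Minkowski form of signature $(2,1)$, and to reduce all six statements to the question whether the linear system $\langle v,y_{i}\rangle=-1$ ($i=1,\dots,n$) has a solution $v$ of a prescribed causal type. First I would set up the dictionary between curves and vectors: each horocycle $h_{i}$ equals $\{x\in H^{2}:\langle x,y_{i}\rangle=-1\}$ for a unique future-null $y_{i}$, with bounding horodisk $\{x:\langle x,y_{i}\rangle\ge-1\}$, and $h_{i},h_{j}$ have the same center iff $y_{i},y_{j}$ are positively proportional. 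More generally, for $v\neq 0$ the set $C_{v}=\{x\in H^{2}:\langle x,v\rangle=-1\}$ is a circle when $v$ is future timelike with $-1<\langle v,v\rangle<0$, a horocycle when $v$ is future null, and a hypercycle when $v$ is spacelike; geodesics appear as $\{x\in H^{2}:\langle x,v\rangle=0\}$, $v$ spacelike.

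Next I would establish the two algebraic facts that drive everything. At an intersection point $x_{0}$ of $C_{v}$ and $h_{i}$ one has $\langle x_{0},v\rangle=\langle x_{0},y_{i}\rangle=-1$, so the tangential projections to $T_{x_{0}}H^{2}$ are $v-x_{0}$ and $y_{i}-x_{0}$, and a one-line computation gives $\langle v-x_{0},y_{i}-x_{0}\rangle=\langle v,y_{i}\rangle+1$; hence \emph{$C_{v}$ and $h_{i}$ meet orthogonally iff $\langle v,y_{i}\rangle=-1$}. A discriminant computation for the line cut out of the hyperboloid by the two affine planes $\langle\cdot,v\rangle=-1$ and $\langle\cdot,y_{i}\rangle=-1$ shows that \emph{$C_{v}$ is tangent to $h_{i}$ iff $\langle v,y_{i}\rangle=-1\pm\sqrt{1+\langle v,v\rangle}$}, and a short separate check of which side of $C_{v}$ the horodisk lies on identifies the branch $\langle v,y_{i}\rangle=-1-\sqrt{1+\langle v,v\rangle}$ with the externally touching configuration (disjoint interiors of the horodisk and of the disk, resp.\ the lune, bounded by $C_{v}$). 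For geodesics the corresponding computation with the plane $\langle\cdot,v\rangle=0$ gives tangency iff $\langle v,y_{i}\rangle=\pm\sqrt{\langle v,v\rangle}$, all horodisks on one side corresponding to a fixed sign.

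The same-letter equivalences now follow by rescaling. If $C_{v}$ touches every $h_{i}$ externally, set $s=1+\sqrt{1+\langle v,v\rangle}$ and $v'=v/s$: then $\langle v',y_{i}\rangle=-1$ for all $i$, the vector $v'$ has the same causal type as $v$, and $\langle v',v'\rangle=\langle v,v\rangle/s^{2}$. One checks that as $\langle v,v\rangle$ runs through $(-1,0)$ the quantity $\langle v,v\rangle/s^{2}$ runs bijectively through $(-1,0)$, and that $\langle v,v\rangle=0$ gives $\langle v',v'\rangle=0$; so $C_{v'}$ is a circle, resp.\ horocycle, orthogonal to every $h_{i}$, and conversely every such $C_{v'}$ rescales back (via $v=\tfrac{2}{1-\langle v',v'\rangle}v'$, resp.\ $v=2v'$) to an externally touching curve of the same type. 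This yields $(a_{1})\Leftrightarrow(a_{2})$ and $(b_{1})\Leftrightarrow(b_{2})$. For $(c)$ the bookkeeping is identical except that a hypercycle has no canonical inside, so ``touches on the same side'' is realized with the horodisks lying on either of its two sides; the two branches of the tangency condition, together with the geodesic case ($\langle v',v'\rangle=1$), show that $(c_{2})$ holds exactly when there is a spacelike $v'$ with $\langle v',y_{i}\rangle=-1$ for all $i$, i.e.\ exactly when $(c_{1})$ holds. For the mutual exclusivity when $n\ge 3$: distinct centers force $y_{1},\dots,y_{n}$ to span $\R^{2,1}$ (the span of two non-proportional future-null vectors is a Lorentzian $2$-plane containing exactly two null rays, so a third future-null vector in it would share a center), hence the system $\langle v,y_{i}\rangle=-1$ has at most one solution $v^{\ast}$; since the inner product of two future-null vectors is $\le 0$, the condition $\langle v^{\ast},y_{i}\rangle=-1<0$ rules out $v^{\ast}$ being past-directed, so $v^{\ast}$ has a single causal character and at most one of the three types of curve occurs.

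The step I expect to be the main obstacle is the tangency analysis: choosing the correct branch of the square root and correctly identifying ``externally'' in $(a_{2})$ and ``on the same side'' in $(c_{2})$ across the circle, horocycle, hypercycle and geodesic cases, and verifying that the induced maps on the parameters $\langle v,v\rangle$ are the right bijections of intervals. This is exactly where those two qualifiers are indispensable — without them the ranges of $\langle v,v\rangle$ would not line up — and it is easy to slip by a branch or a reflection.
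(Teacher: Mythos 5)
The paper states this proposition without proof --- it belongs to the review of background material in Sections~3--5 --- and the paragraph that precedes it merely sketches the hyperboloid-model / pole--polar framework that you carry out in detail, so your approach is the one the paper has in mind. Your proof is correct: the orthogonality criterion $\langle v,y_i\rangle=-1$ follows from the tangent-plane projection exactly as you compute, and the tangency threshold $\langle v,y_i\rangle=-1\pm\sqrt{1+\langle v,v\rangle}$ is also right; a tidy way to obtain it is to projectivize the hyperboloid in $\RP^{3}$ with the quadratic form $q(x,t)=\langle x,x\rangle+t^{2}$, where tangency of the line through the poles $[v:1]$ and $[y_{i}:1]$ to the quadric reads $(\langle v,y_{i}\rangle+1)^{2}=(\langle v,v\rangle+1)(\langle y_{i},y_{i}\rangle+1)$, and one sets $\langle y_{i},y_{i}\rangle=0$.

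The two spots you flag as the likely obstacle are indeed where care is needed, but both work out. In case $(c)$, writing $u=\sqrt{1+\langle v,v\rangle}>1$, the external branch $\langle v,y_i\rangle=-1-u$ rescales (via $s=1+u$) to $\langle v',v'\rangle=(u-1)/(u+1)\in(0,1)$, the other branch (via $s=1-u<0$) to $\langle v',v'\rangle=(u+1)/(u-1)\in(1,\infty)$, and the geodesic case gives $\langle v',v'\rangle=1$; so all three are needed to hit every spacelike $v'$ with $\langle v',y_i\rangle=-1$, and the qualifier ``on the same side'' is exactly what permits both branches. For the exclusivity argument, the three $y_i$'s with distinct centers span $\R^{2,1}$ as you say, so the linear system has at most one solution $v^{\ast}$, which cannot be past-directed; note also that when $v^{\ast}$ fails to exist or satisfies $\langle v^{\ast},v^{\ast}\rangle<-1$ (empty circle) none of the six statements holds, which is consistent with and only reinforces the claimed mutual exclusivity.
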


The following definitions and theorems summarize Epstein
and Penner's results in the language of Delaunay decompositions.

\begin{definition}[ideal Delaunay decomposition]
  \label{def:idealdel}
  Let $S$ be a complete finite area hyperbolic surface with at least
  one cusp, decorated with a horocycle at each cusp. First, assume
  that the horocycles are small enough so that they bound disjoint
  cusp neighborhoods. An ideal cell decomposition $D$ of $S$ is an
  \emph{ideal Delaunay decomposition}, if its lift~$\hat{D}$ to~$H^{2}$
  and the lifted horocycles satisfy the \emph{global Delaunay
    condition}:
  \begin{compactitem}[(gD)]
  \item For every face $\hat{f}$ of $\hat{D}$ there is a circle that
    touches all lifted horocycles centered at the vertices of
    $\hat{f}$ externally and does not meet any other lifted
    horocycles.
  \end{compactitem}
  If the horocycles are not small enough to bound disjoint cusp
  neighborhoods, the cell decomposition $D$ is an ideal Delaunay
  decomposition if condition~(gD) holds for smaller horocycles at
  equal distance to the original horocycles. The distance is arbitrary
  as long as it is large enough for the new horocycles to bound
  disjoint cusp neighborhoods.
\end{definition}

\begin{theorem}[existence and uniqueness]
  \label{thm:idealdel}
  For each decorated complete finite area hyperbolic surface with at
  least one cusp there exists a unique ideal Delaunay
  decomposition. 
\end{theorem}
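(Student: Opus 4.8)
The plan is to deduce both existence and uniqueness directly from the Epstein–Penner convex hull construction, using the dictionary between the hyperboloid model and circle geometry described above. First I would set up the construction: lift the decorated surface to $H^{2}$, realize $H^{2}$ as one sheet of the hyperboloid $\{\langle x,x\rangle=-1\}$ in Minkowski space $\R^{2,1}$, and represent each horocycle by the unique future-pointing light-like vector whose associated horocycle it is. The set of these light-like vectors, one for each lifted horocycle, is invariant under the (discrete, cofinite) action of the surface group $\Gamma$ by Minkowski isometries. The Epstein–Penner construction takes the closed convex hull $C$ of this $\Gamma$-invariant set of light-like points. The key structural facts, which I would quote from \cite{epstein88,penner87,penner12}, are that $C$ is a closed convex set whose boundary $\partial C$, viewed from the origin, is a $\Gamma$-invariant polyhedral surface with finitely many faces modulo $\Gamma$, all of whose vertices are among the chosen light-like points, and that radial projection of $\partial C$ to the hyperboloid (followed by projection to $H^{2}$ and quotienting by $\Gamma$) yields a genuine cell decomposition $D$ of $S$. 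Existence of \emph{some} canonical decomposition is thus immediate; the work is to identify it as a Delaunay decomposition in the sense of Definition~\ref{def:idealdel}.

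The second step is the translation into circle geometry. A face of $\partial C$ spans an affine plane $P$; via the pole–polar relationship $P$ corresponds to a point $p$ outside (or on) the hyperboloid, hence to a circle, horocycle, or hypercycle $c_{P}$ in $H^{2}$. Because the vertices of the face lie on $P$, the curve $c_{P}$ passes through — equivalently, by Proposition~\ref{prop:orthotouch}, is orthogonal to, equivalently touches — exactly the horocycles centered at the vertices of that face. Convexity of $C$ says every other light-like point lies on the far side of $P$ from the origin, which translates (again via pole–polar, and scaling $C$ up if necessary so that all supporting planes meet the hyperboloid) into the statement that $c_{P}$ meets every non-incident horocycle less than orthogonally; by the contact reformulation in Proposition~\ref{prop:orthotouch} this is precisely condition~(gD): the face-circle touches the incident horocycles externally and misses the rest. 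So $D$ satisfies the global Delaunay condition. The case where the chosen horocycles are too large to bound embedded cusp neighborhoods is handled exactly as in Definition~\ref{def:idealdel}: shrinking all horocycles by a common hyperbolic distance $t$ scales the light-like representatives by $e^{-t}$, which scales $C$ by $e^{-t}$ and changes neither $\partial C$ projectively nor the resulting decomposition, so (gD) for the shrunk horocycles — which do bound embedded neighborhoods once $t$ is large — is equivalent to the defining condition and again holds.

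For uniqueness, suppose $D'$ is any ideal Delaunay decomposition of the same decorated surface. Each face $f'$ of $D'$ comes, by (gD), with a circle touching exactly the horocycles at the vertices of $f'$ externally and meeting no others; pulling this back through the pole–polar correspondence assigns to $f'$ a support plane $P_{f'}$ of the convex hull $C$ such that the vertices of $f'$ lie on $P_{f'}$ and every other light-like point lies strictly on the origin-side — i.e.\ $P_{f'}$ is a genuine supporting hyperplane of $C$ and its contact set with $C$ is exactly the convex hull of the vertices of $f'$. Hence each face of $D'$ is (the projection of) a face of $\partial C$, so $D'$ refines $D$ or rather coincides with it face by face; running the same argument with the roles reversed, or simply noting that the faces of $\partial C$ are intrinsically determined, gives $D'=D$. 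I would phrase this by observing that the radial function $\partial C\to H^{2}$ is a bijection and that a cell decomposition all of whose closed faces are supported by distinct faces of the fixed convex surface $\partial C$ is uniquely determined.

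The main obstacle is not conceptual but bookkeeping: one must make sure the light-like representatives are well defined and $\Gamma$-equivariant (the horocycle $\leftrightarrow$ light-like vector correspondence has to be the normalized one so that the common-distance shrink corresponds to scalar multiplication), that the convex hull $C$ genuinely has the claimed polyhedral boundary with finitely many $\Gamma$-orbits of faces and no "vertical" faces through the origin after the possible rescaling, and that "touches externally / on the same side / less than orthogonally" matches the sign conventions in Proposition~\ref{prop:orthotouch}. Once these normalizations are pinned down, existence is the Epstein–Penner theorem verbatim and uniqueness is the standard fact that the faces of a convex body are intrinsic. I would therefore spend most of the written proof carefully stating the dictionary and then invoke \cite{epstein88,penner87,penner12} for the hard analytic input that $C$ is locally finite and radially a cell decomposition.
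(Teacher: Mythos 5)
Your proposal is correct and follows essentially the same route as the paper: the paper presents Theorem~\ref{thm:idealdel} as a restatement of the Epstein--Penner convex-hull results in the language of Delaunay decompositions, deferring the analytic input to \cite{epstein88,penner87,penner12} and using the pole--polar dictionary together with Proposition~\ref{prop:orthotouch} as the translation mechanism, exactly as you describe. Your write-up simply makes that translation and citation explicit and adds a clean supporting-hyperplane argument for uniqueness, which is consistent with what the paper leaves implicit.
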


The faces of the ideal Delaunay decomposition are ideal polygons. An
ideal Delaunay triangulation is obtained by adding ideal arcs to
triangulate the non-triangular faces:

\begin{definition}[ideal Delaunay triangulation]
  \label{def:deltriang}
  An ideal triangulation $\Delta$ of a decorated complete finite area
  hyperbolic surface is a called an \emph{ideal Delaunay
    triangulation} if it is a refinement of the ideal Delaunay
  decomposition $D$, that is, if $E_{D}\subseteq E_{\Delta}$. An edge
  $e\in E_{\Delta}$ is an \emph{essential edge} of the Delaunay
  triangulation if $e\in E_{D}$. The edges in
  $E_{\Delta}\setminus E_{D}$ are \emph{nonessential}.
\end{definition}

The decorated \Tm{} spaces $\decTeich_{g,n}$ decompose into cells
consisting of all decorated surfaces with the same ideal Delaunay
decomposition:

\begin{definition}[Penner cell]
  \label{def:pennercell}
  For a triangulation $\Delta$ of $(S_{g},V)$, $|V|=n$, let the
  \emph{Penner cell} $\mathcal{C}(\Delta)$ be the set of decorated
  surfaces in $\decTeich_{g,n}$ for which $\Delta$ is an ideal
  Delaunay triangulation. 
\end{definition}

\begin{theorem}[canonical cell decomposition of $\decTeich_{g,n}$]
  \label{thm:pennerdecomp}
  The Penner cells $\mathcal{C}(\Delta)\subseteq\decTeich_{g,n}$ are
  the top-dimensional closed cells of a cell decomposition of
  $\decTeich_{g,n}$.
\end{theorem}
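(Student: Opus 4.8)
The plan is to deduce Theorem~\ref{thm:pennerdecomp} from Theorem~\ref{thm:idealdel} (existence and uniqueness of ideal Delaunay decompositions) together with standard facts about the chart transition functions $\tau_{\Delta,\Deltil}$ and the structure of $\decTeich_{g,n}$ in Penner coordinates. The guiding picture is that a Penner cell $\mathcal{C}(\Delta)$ is carved out inside a single coordinate chart $\R^{E_\Delta}$ by the inequalities expressing that $\Delta$ is Delaunay, which by Proposition~\ref{prop:orthotouch} can be written as the local (oriented-contact) Delaunay conditions on each edge; these are the inequalities one reads off from Figure~\ref{fig:localdel} and equation~\eqref{eq:local2}. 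So the first step is to fix a triangulation $\Delta$, work in the chart $\R^{E_\Delta}$, and identify $\mathcal{C}(\Delta)$ with the closed region $\{\lambda : \text{the local Delaunay inequality holds at every edge } e\in E_\Delta\}$. Each such inequality is of the form (sum of two $e^{\frac12(\cdots)}$ terms) $\geq$ (sum of two others) — explicitly the inequality $\alpha+\alpha'\le\beta+\beta'$ type condition coming from Ptolemy data of the two triangles sharing $e$ — hence defines a closed half-space-like region with analytic (in fact log-convex) boundary, and $\mathcal{C}(\Delta)$ is the intersection of finitely many of them: a closed region that is a topological closed cell of full dimension $|E_\Delta| = 6g-6+3n$.

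Next I would verify the three things needed to call this a cell decomposition: (1) \emph{covering} — every decorated surface lies in some $\mathcal{C}(\Delta)$; (2) \emph{the cells are genuine closed cells meeting along faces}; (3) \emph{the interiors are disjoint and the identification across charts is by the transition maps $\tau$}. For (1), given a decorated surface, Theorem~\ref{thm:idealdel} produces its unique ideal Delaunay decomposition $D$; refining $D$ to a triangulation $\Delta$ (Definition~\ref{def:deltriang}) exhibits the surface in $\mathcal{C}(\Delta)$. For (3), two triangulations $\Delta,\Deltil$ give the same point of $\decTeich_{g,n}$ iff $\lamtil=\tau_{\Delta,\Deltil}(\lambda)$, and a surface lies in $\mathcal{C}(\Delta)\cap\mathcal{C}(\Deltil)$ iff both $\Delta$ and $\Deltil$ refine its Delaunay decomposition $D$, i.e.\ iff all edges of $E_\Delta\setminus E_D$ and $E_\Deltil\setminus E_D$ are nonessential (the corresponding local Delaunay inequalities are equalities); thus the common part is the face of $\mathcal{C}(\Delta)$ where those inequalities degenerate, and the face maps to the analogous face of $\mathcal{C}(\Deltil)$ under $\tau_{\Delta,\Deltil}$, which is affine on each shared subcell. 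The lower-dimensional cells of the decomposition are then indexed by the ideal cell decompositions $D$ themselves: the cell of $D$ is $\bigcap_{\Delta\supseteq D}\mathcal{C}(\Delta)$, of dimension $|E_D|$.

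The main obstacle I anticipate is not the combinatorial bookkeeping but verifying the \emph{local-to-global} equivalence: that the global condition~(gD) of Definition~\ref{def:idealdel} — existence, for every lifted face, of a circle touching exactly the horocycles at its vertices — is equivalent to the conjunction of the purely local conditions at each edge. One direction (global $\Rightarrow$ local) is immediate by restricting the global touching circle to an adjacent pair of triangles. The converse requires a ``patching'' argument showing that edge-by-edge local Delaunayness forces a single circle to work for a whole face and to avoid all other horocycles; this is where Proposition~\ref{prop:orthotouch} and an argument in the pole--polar picture (the Epstein--Penner convex hull is convex iff every codimension-one face satisfies its local convexity inequality) do the real work. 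I would handle this by transporting everything to the hyperboloid/projective model: local Delaunayness at every edge says the piecewise-linear hypersurface spanned by the horocycle-points is convex across every ridge, and a standard convexity lemma then gives global convexity of the lift, which is precisely~(gD). Once this equivalence is in hand, the remaining claims — that the $\mathcal{C}(\Delta)$ are top-dimensional, closed, cover, and fit together along faces via the $\tau$'s — follow formally, and mapping-class-group invariance of the whole picture is automatic since the Delaunay decomposition is a canonical (isometry-natural) object.
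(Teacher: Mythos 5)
The paper does not prove Theorem~\ref{thm:pennerdecomp} at all: Section~\ref{sec:delaunay} explicitly says that Sections~\ref{sec:penner}--\ref{sec:akiyoshi} ``review known results from {\Tm} theory\ldots\ For a more thorough treatment, we refer to the literature,'' and the theorem is stated as Penner's result without argument. So there is no paper proof to compare your proposal against; I can only evaluate the proposal on its own merits.

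Your outline follows Penner's general strategy and correctly isolates the crux, but two of the steps you wave through are exactly where the real work lives. First, you assert that the region in $\R^{E_\Delta}$ cut out by the finitely many local Delaunay inequalities~\eqref{eq:local2} is ``a topological closed cell of full dimension.'' That does not follow just from each constraint having analytic boundary; the functions appearing in~\eqref{eq:local2} are differences of sums of exponentials $e^{\frac12(\lambda_i-\lambda_j-\lambda_k)}$ and are \emph{not} convex in $\lambda$ (nor, after the substitution $\ell=e^{\lambda/2}$, linear in $\ell$ --- clearing denominators yields a degree-$4$ polynomial inequality), so the region is not an intersection of convex sublevel sets and there is no free lunch that makes it a cell. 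Establishing contractibility, boundedness-modulo-the-scaling-direction, and the face structure is precisely what Penner's convex-hull argument accomplishes. Second, the ``standard convexity lemma'' you appeal to for the local-to-global equivalence is the content of Theorem~5.1 of~\cite{penner87} (referenced right after Theorem~\ref{thm:localdel} in this paper), and it is a substantial theorem, not a generic fact about convex piecewise-linear hypersurfaces: one has to control infinitely many horocycle lifts in $H^2$ and show that no distant horocycle invades the candidate tangent circle, which requires the completeness/finite-area assumptions. If you want a self-contained proof of Theorem~\ref{thm:pennerdecomp}, you should work directly with the Epstein--Penner convex hull in the light cone of $\R^{2,1}$ --- the cell structure is then read off from the facial structure of a polyhedral convex body, and both your gaps are handled at once --- rather than trying to reconstruct it from the edge-local inequalities in the $\lambda$-chart.
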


Like in the standard euclidean theory, the global Delaunay condition
(gD) is equivalent to edge-local conditions:

\begin{theorem}[local Delaunay conditions]
  \label{thm:localdel}
  Let $S$ be a complete finite area hyperbolic surface with at least
  one cusp, decorated with small enough horocycles so they bound
  disjoint cusp neighborhoods. An ideal triangulation $\Delta$ is a
  Delaunay triangulation if and only if for every edge $e$, one and
  hence all of the equivalent conditions (lD1)--(lD3) are satisfied.
  Note that the directions left and right relative to $e$ are only
  defined once an orientation is chosen for $e$, but the truth values
  of conditions~(lD1) and~(lD2) are independent of this choice.
  \begin{compactenum}[(lD1)]
  \item The circle touching the three horocycles of the triangle to
    the left of $e$ and the remaining horocycle of the triangle to the
    right of $e$ are disjoint or externally tangent.
  \item The center of the circle touching the three horocycles of the
    triangle to the left of $e$ is to the left of, or coincides
    with, the center of the circle touching the horocycles of the
    triangle to the right of $e$.
  \item The total length of the horocyclic arcs incident with $e$ is
    not greater than the total length of the horocyclic arcs opposite to
    $e$, as shown in Figure~\ref{fig:localdel}:
  \begin{equation}
    \label{eq:local2}
    \alpha+\alpha'\leq\beta+\beta'+\gamma+\gamma'.
  \end{equation}
  \end{compactenum}
  Moreover, if $\Delta$ is a Delaunay triangulation, then an edge $e$
  is nonessential if and only if tangency holds in (lD1), or
  equivalently, if the circle centers coincide in condition~(lD2), or
  equivalently, if equality holds in~\eqref{eq:local2}.
\end{theorem}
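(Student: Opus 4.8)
The plan is to establish the local equivalences (lD1)$\Leftrightarrow$(lD2)$\Leftrightarrow$(lD3) by an explicit computation in the upper half-plane model, and then to derive the global statement and the characterization of essential edges from the Epstein--Penner convex hull construction. For the local part, fix an edge~$e$ of $\Delta$ and let $T_{L}$, $T_{R}$ be the two incident triangles, with $T_{L}$ carrying the vertex opposite~$e$ on the left. Lift the decorated ideal quadrilateral $T_{L}\cup T_{R}$ to $H^{2}$ and normalize by an isometry so that one endpoint of~$e$ is the point at infinity and the other is $0\in\partial H^{2}$; then $e$ is a vertical geodesic and the decorating horocycles at the four ideal vertices are a horizontal line together with three euclidean circles tangent to the real line. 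The first step is to record that the circle $C_{T_{L}}$ touching the three horocycles of~$T_{L}$ externally is, in this model, a euclidean circle tangent from below to the horizontal horocycle (possibly a horocycle, hypercycle, or geodesic in limiting cases, as permitted by Proposition~\ref{prop:orthotouch}); that its euclidean center and radius are explicit functions of the $\lambda$'s of~$T_{L}$; and that its hyperbolic center has the same first coordinate~$a_{L}$ as its euclidean center. Likewise one obtains $C_{T_{R}}$ and a number~$a_{R}$. With this normalization, the statement that the center of $C_{T_{L}}$ lies to the left of~$e$ means $a_{L}\le 0$, and condition (lD2) reads $a_{L}\le a_{R}$.

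The second step is the tangency computation. Writing condition (lD1) --- that $C_{T_{L}}$ and the fourth horocycle of~$T_{R}$ are disjoint or externally tangent --- as an inequality between the distance of euclidean centers and the sum of euclidean radii, and simplifying by means of the two relations expressing that $C_{T_{L}}$ is already tangent to the horocycles at the two endpoints of~$e$, all quadratic terms cancel and the inequality collapses to $a_{L}\le a_{R}$; this proves (lD1)$\Leftrightarrow$(lD2), with equality in one exactly when $a_{L}=a_{R}$, which forces the euclidean radii to agree as well and hence $C_{T_{L}}=C_{T_{R}}$. For (lD3), I would use Proposition~\ref{prop:arc} to express the six horocyclic arc lengths $\alpha,\alpha',\beta,\beta',\gamma,\gamma'$ in terms of the $\lambda$'s, multiply $\alpha+\alpha'\le\beta+\beta'+\gamma+\gamma'$ by $e^{\lambda_{e}/2}$, and rewrite it with lambda-lengths $\ell=e^{\lambda/2}$ as $(\ell_{e}^{2}-\ell_{p}^{2}-\ell_{q}^{2})/(\ell_{p}\ell_{q})+(\ell_{e}^{2}-\ell_{p'}^{2}-\ell_{q'}^{2})/(\ell_{p'}\ell_{q'})\le 0$, where $p,q$ and $p',q'$ are the edges of $T_{L}$ and $T_{R}$ other than~$e$; each summand is, up to a positive factor, minus the tilt of the corresponding face of the Epstein--Penner convex hull along~$e$, and a short computation with the half-plane data of the first step shows that this inequality too is equivalent to $a_{L}\le a_{R}$, with the same equality case $C_{T_{L}}=C_{T_{R}}$. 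This closes the loop of local equivalences.

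For the global statement, one direction is immediate: if $\Delta$ is Delaunay, then condition (lD1) at every edge follows from the global condition~(gD) in Definition~\ref{def:idealdel}, applied to the faces of the ideal Delaunay decomposition~$D$ containing $T_{L}$ and $T_{R}$ (the circle $C_{T_{L}}$ avoids the fourth vertex of~$T_{R}$ if $e$ is essential, and is externally tangent to it if $e$ is nonessential). The converse is where the real work lies. I would pass to the Epstein--Penner lift: in the Minkowski model of $H^{2}$, each decorating horocycle becomes a point on the positive light cone; lift each triangle of the lift $\widehat{\Delta}$ to the euclidean triangle spanned by the three light-cone points of its horocycles, and observe that these triangles assemble into a piecewise-linear surface $\Sigma$ that projects radially and bijectively onto the hyperboloid. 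Unwinding the definitions, the local condition at~$e$ says precisely that $\Sigma$ bends convexly across the lift of~$e$. The main obstacle is then the lifting lemma that a surface of this type which is locally convex along every edge is globally convex; this relies on the simple connectivity of $H^{2}$ and is the core of the Epstein--Penner construction. Granting it, $\Sigma$ coincides with the relevant part of the boundary of the Epstein--Penner convex hull, so $\Delta$ refines~$D$ and is therefore Delaunay. Finally, an edge~$e$ of a Delaunay triangulation is nonessential exactly when $T_{L}$ and $T_{R}$ lie in a common face of~$D$, that is, when $T_{L}\cup T_{R}$ is inscribed in a single empty circle, that is, when $C_{T_{L}}=C_{T_{R}}$ --- which, by the first two steps, is exactly the equality case of each of (lD1), (lD2), and (lD3).
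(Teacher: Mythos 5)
Your proposal is correct and structurally the same as the paper's own (very brief) treatment: the implication from the global condition (gD) to (lD1) is immediate, the local equivalences (lD1)$\Leftrightarrow$(lD2)$\Leftrightarrow$(lD3) are elementary (your lambda-length rewriting of~\eqref{eq:local2} and the half-plane tangency computation are correct, and your equality-case analysis matches the ``moreover'' part), and the substantive step --- that the local conditions imply (gD) --- is deferred to the local-to-global convexity argument of the Epstein--Penner construction, which is exactly the step the paper also does not prove but cites as Penner's Theorem~5.1. The only differences are cosmetic: you verify (lD2)$\Leftrightarrow$(lD3) via euclidean Delaunay angles and lambda-lengths where the paper uses a one-line observation about the oriented length of the horocyclic arcs in Figure~\ref{fig:localdel}, and you write out the computation behind the paper's ``not difficult to see'' for (lD1)$\Leftrightarrow$(lD2).
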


The global Delaunay condition~(gD) obviously implies~(lD1), and it is
not difficult to see that (lD1) and (lD2) are equivalent. To see
that~(lD2) and~(lD3) are equivalent, note that the oriented length of
the thick horocyclic arcs in Figure~\ref{fig:localdel} is
$-\alpha-\alpha'+\beta+\beta'+\gamma+\gamma'$. It remains to show that
the local conditions imply the global condition~(gD);
see~\cite[Theorem~5.1]{penner87} \cite[p.~128]{penner12}.

\begin{theorem}[Weeks's flip alogrithm~\cite{weeks93}]
  \label{thm:flip}
  An ideal Delaunay triangulation of the decorated surface
  $(\Delta,\lambda)$ can be found by the flip algorithm: Iteratively
  flip any edge violating the local condition~\eqref{eq:local2} and
  update $\lambda$ using Ptolemy's relation~\eqref{eq:ptolemy} until
  no such edge remains.
\end{theorem}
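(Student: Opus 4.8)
\emph{Plan.} Two things have to be shown: \textbf{(i)} if the algorithm halts then its output is an ideal Delaunay triangulation of the decorated surface $(\Delta,\lambda)$, and \textbf{(ii)} the algorithm halts after finitely many flips, no matter which violating edge is chosen at each step. Part (i) is short. Each flip leaves the decorated surface unchanged: by the paragraph preceding Proposition~\ref{prop:ptolemy} together with that proposition, the chart transition between Penner coordinates before and after a single edge flip is exactly the update $\ell_{f}=(\ell_{a}\ell_{c}+\ell_{b}\ell_{d})/\ell_{e}$ performed by the algorithm, so every $(\Delta,\lambda)$ produced describes one and the same decorated surface~$S$. (A flip along an edge is always well defined: in the universal cover $H^{2}$ a lift of the edge separates two distinct ideal triangles, and one flips all its lifts $\Gamma$-equivariantly; this is implicit in the left/right phrasing of (lD1)--(lD3).) When the algorithm halts, \eqref{eq:local2}, that is condition (lD3), holds at every edge, so by Theorem~\ref{thm:localdel} the final triangulation is an ideal Delaunay triangulation of~$S$.

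For part (ii) I would argue via the Epstein--Penner convex hull in the hyperboloid model $H^{2}\subset\R^{2,1}$, with light cone $L^{+}$ and deck group~$\Gamma$. Lifting the decorating horocycles yields a $\Gamma$-invariant locally finite set $B\subset L^{+}$ for which $-\langle b,b'\rangle$ is a fixed positive multiple of $\ell_{e}^{2}$ whenever $b,b'\in B$ are joined by an edge~$e$; Ptolemy's relation is the corresponding Pl\"ucker identity. Let $C=\operatorname{conv}(B)$ and let $\Sigma_{C}\subset\partial C$ be its ``far'' boundary component, the $\Gamma$-invariant convex polyhedral surface projecting radially and bijectively onto $H^{2}$; by Epstein--Penner~\cite{epstein88,penner87} (see also Theorem~\ref{thm:idealdel}), $\Sigma_{C}$ realizes the ideal Delaunay decomposition $D$ of~$S$. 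To a triangulation $\Delta$ attach the $\Gamma$-invariant piecewise-linear radial graph $\Sigma_{\Delta}$ over $H^{2}$ that over each lifted triangle $t$ follows the affine plane $\Pi_{t}$ through the three points of $B$ at its vertices; since $\Pi_{t}$ is spanned by points of $C$, one has $\Sigma_{\Delta}\le\Sigma_{C}$ everywhere, with equality throughout exactly when $\Delta$ refines $D$. The decisive fact is that \emph{flipping an edge $e$ violating \eqref{eq:local2} strictly raises $\Sigma_{\Delta}$}: violation of (lD1) at $e$ says that, among the four points $b_{1},b_{2},b_{3},b_{4}$ over the quadrilateral $t_{1}\cup t_{2}$ adjacent to $e$ (with $e=b_{1}b_{3}$ and $b_{4}$ the vertex of $t_{2}$ opposite $e$), the point $b_{4}$ lies strictly on the far side of $\Pi_{t_{1}}$; hence the far boundary of $\operatorname{conv}\{b_{1},\dots,b_{4}\}$ is carried by the flipped diagonal $b_{2}b_{4}$ rather than by $b_{1}b_{3}$, so over the quadrilateral the new surface equals this far boundary and lies strictly above the two old triangles in the interior. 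Thus $\Sigma_{\Delta_{0}}\le\Sigma_{\Delta_{1}}\le\cdots\le\Sigma_{C}$ is strictly increasing: the algorithm cannot cycle, and the surfaces $\Sigma_{\Delta_{k}}$ increase toward $\Sigma_{C}$.

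The same picture shows that \emph{an essential edge, once present, is never flipped}. If $e\in E_{D}$ occurs in some $\Delta_{k}$ with adjacent triangles $t_{1},t_{2}$ (notation as above) and $F$ is the face of $\Sigma_{C}$ on the $t_{1}$-side of $e$, with supporting functional $\phi_{F}$ (normalized to $1$ on $\Pi_{F}$, hence $\le 1$ on $B$ and $<1$ at points of $B$ not on $F$), then $\phi_{t_{1}}-\phi_{F}$ is a linear functional that vanishes on $\operatorname{span}\{b_{1},b_{3}\}$ and is $\ge 0$ at $b_{2}$ (because $\phi_{t_{1}}(b_{2})=1$); since $b_{2}$ and $b_{4}$ lie in opposite half-spaces of $\operatorname{span}\{b_{1},b_{3}\}$ (whose trace on $H^{2}$ is the geodesic $e$), this forces $\phi_{t_{1}}(b_{4})\le\phi_{F}(b_{4})<1$, i.e.\ (lD1) holds strictly at $e$. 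Hence, as soon as a triangulation contains all of $E_{D}$ it refines $D$ and is therefore an ideal Delaunay triangulation, so by Theorem~\ref{thm:localdel} no edge violates \eqref{eq:local2} and the algorithm halts.

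It remains to show that \emph{the current triangulation contains all of $E_{D}$ after finitely many flips}; this is the substance of Weeks's termination argument~\cite{weeks93}, and the step I expect to be the main obstacle. The mechanism one wants to make precise is that the surfaces $\Sigma_{\Delta_{k}}$ can increase toward $\Sigma_{C}$ only if the triangulations eventually acquire the creases of $\Sigma_{C}$, i.e.\ its essential edges, because over any fixed compact region each $\Sigma_{\Delta_{k}}$ consists of only boundedly many linear pieces (all $\Delta_{k}$ have the same number of triangles modulo $\Gamma$). Turning this into a finite bound, and in particular controlling the portions of the essential edges that run out into the cusps, is the delicate point, and the reason it is cleanest to cite~\cite{weeks93} for termination.
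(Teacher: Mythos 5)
The paper does not actually prove Theorem~\ref{thm:flip}; it cites it from Weeks~\cite{weeks93} and follows it only with Remarks~\ref{rem:flip}. So there is no ``paper's own proof'' to compare against, and your attempt is filling in something the paper deliberately leaves as a black box.

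Your reconstruction via the Epstein--Penner lift is the standard approach and the parts you carry out are correct: a single flip leaves the decorated surface fixed because the Ptolemy update is exactly the Penner chart transition for an edge flip; if the algorithm halts, Theorem~\ref{thm:localdel} gives that the output is an ideal Delaunay triangulation; the PL surface $\Sigma_{\Delta}$ sits below $\Sigma_{C}$ and strictly rises over the flipped quadrilateral, so the algorithm never revisits a triangulation; and your functional argument that an essential edge, once present, satisfies (lD1) strictly (hence is never flipped) is also sound.

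But there is a genuine gap at the end, and you have correctly located it. Absence of cycling does not imply termination here, because the set of ideal triangulations of the surface is infinite --- the paper itself notes in Remark~\ref{rem:flip}~(iii) that the flip graph has infinite diameter. Strict monotonicity of $\Sigma_{\Delta_{k}}$ toward $\Sigma_{C}$, by itself, does not force the sequence to reach $\Sigma_{C}$ after finitely many flips; a priori the surfaces could increase forever and accumulate strictly below $\Sigma_{C}$ without ever acquiring all the crease edges of $\Sigma_{C}$. Your two observations (the surface increases strictly; the set of essential edges present is non-decreasing) do not by themselves produce new essential edges at a bounded rate, and the heuristic in your last paragraph --- that a bounded number of linear pieces per fundamental domain somehow forces the creases to appear --- is precisely the step that needs a real argument, particularly in the cusps where edges become arbitrarily long. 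Closing this requires an additional finiteness or compactness input (Weeks's termination argument, or some substitute such as bounding which triangulations can occur as intermediates), and you are right that simply citing~\cite{weeks93} is the honest move; but one should then say clearly that the proposal proves halting is equivalent to correctness and rules out cycling, while the actual finiteness of the run is not established.
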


\begin{remarks}
  \label{rem:flip}
  (i) Issues of numerical instability that plague the euclidean flip
  algorithm~\cite{edelsbrunner01,fortune95} are also relevant
  in this setting.

  (ii) No other algorithm for computing ideal Delaunay
  triangulations seems to be known.

  (iii) Because the diameter of the flip-graph is infinite, the number
  of flips necessary to arrive at a Delaunay triangulation, depending
  on an arbitrary initial triangulation, is unbounded. (The only
  exception is the sphere with three punctures, which admits only
  three ideal triangulations.)

  (iv) To analyze the complexity of a variational algorithm to solve
  the realization problem~\ref{prob:realize1}, it would be important
  to bound the number of steps in the flip algorithm under the
  condition that the initial triangulation is a Delaunay triangulation
  for a different choice of horocycles.

  (v) Little seems to be known about the following related
  question, except that the number is finite~\cite{akiyoshi01}: How
  many ideal Delaunay decompositions arise for a fixed hyperbolic
  surface as the decoration varies over all possible choices of
  horocycles? A recent result says that the lattice of Delaunay
  decompositions of a fixed punctured Riemann surface is the face
  lattice of an associated secondary polyhedron~\cite{loewe}.

  (vi) Recently, an analogous flip algorithm for surfaces with real projective
  structure was analyzed by Tillmann and Wong~\cite{tillmannwong:2016}.
\end{remarks}

\begin{definition}
  \label{def:Del}
  Let
  \begin{equation*}
    \Del:(\Delta,\lambda)\longmapsto (\Deltil,\lamtil)
  \end{equation*}
  be a function that maps the Penner coordinates $(\Delta,\lambda)$
  of a decorated surface to the Penner coordinates
  $(\Deltil,\lamtil)$ of the same decorated surface with respect to an 
  ideal Delaunay triangulation $\Deltil$.
\end{definition}

Such a function $\Del$ can be computed using the flip algorithm
(see Theorem~\ref{thm:flip}). The function $\Del$ is not uniquely
determined because an ideal Delaunay triangulation is in general
not unique. We will use $\Del$ in situations in which
it makes no difference which ideal Delaunay triangulation is chosen.

\bigskip%
The last main point in this Section is Theorem~\ref{thm:idealeucdel},
which establishes a one-to-one correspondence between
\begin{compactenum}[(a)]
\item ideal Delaunay triangulations of decorated complete hyperbolic
  surfaces, and
\item Delaunay triangulations of closed piecewise euclidean surfaces
  with cone singularities.
\end{compactenum}
Delaunay triangulations and Voronoi diagrams of piecewise euclidean
surfaces with cone singularities were invented and reinvented many
times in the context of different applications, for example
in~\cite{bobenko07,indermitte01,masur91,thurston98}. It seems that
Theorem~\ref{thm:idealeucdel} ought to be known, too, but we do not have a
reference.

First we need the following observation, which is due to
Penner~\cite[Lemma~5.2]{penner87}:

\begin{lemma}[ideal Delaunay triangulations and triangle inequalities]
  \label{lem:triang}
  Let $\Delta$ be a triangulation of $(S_{g},V)$, let
  $\lambda\in\R^{E_{\Delta}}$, and let $\ell\in\R_{>0}^{E_{\Delta}}$ be defined
  by~\eqref{eq:ell}. If the local Delaunay condition~\eqref{eq:local2}
  is satisfied for every edge of the decorated hyperbolic surface with
  Penner coordinates $(\Delta,\lambda)$, then $\ell$ satisfies the
  triangle inequalities for every triangle $t\in T_{\Delta}$:
  \begin{equation}
    \label{eq:triang}
    \begin{split}
      -\ell_{e_{1}(t)}+\ell_{e_{2}(t)}+\ell_{e_{3}(t)}&>0\\
      \ell_{e_{1}(t)}-\ell_{e_{2}(t)}+\ell_{e_{3}(t)}&>0\\
      \ell_{e_{1}(t)}+\ell_{e_{2}(t)}-\ell_{e_{3}(t)}&>0.
    \end{split}
  \end{equation}
\end{lemma}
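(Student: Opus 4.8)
The plan is to reduce the inequalities \eqref{eq:triang} to the single local Delaunay inequality \eqref{eq:local2}, applied to each edge of the triangle $t$ in turn, using the arc-length formula \eqref{eq:alpha} from Proposition~\ref{prop:arc}. Fix a triangle $t\in T_\Delta$ with edges labelled so that $e_i=e_i(t)$, and write $\lambda_i=\lambda_{e_i}$, $\ell_i=\ell_{e_i}=e^{\lambda_i/2}$. The key observation is that the inequality $\ell_i+\ell_j-\ell_k>0$ we want is, after dividing by a suitable positive exponential, exactly a statement comparing horocyclic arc lengths inside $t$ with an arc length contributed by the neighbouring triangle across $e_k$. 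More precisely, by \eqref{eq:alpha} the arc of the horocycle at the vertex opposite $e_k$ cut off inside $t$ has length $\alpha_k = e^{\frac12(\lambda_k-\lambda_i-\lambda_j)}$, and I want to rewrite $-\ell_k+\ell_i+\ell_j$ in terms of such $\alpha$'s.

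First I would divide the inequality $\ell_i+\ell_j-\ell_k>0$ through by $\ell_k = e^{\lambda_k/2}$, obtaining the equivalent claim
\begin{equation*}
  e^{\frac12(\lambda_i-\lambda_k)}+e^{\frac12(\lambda_j-\lambda_k)}>1.
\end{equation*}
Now multiply and divide appropriately by the third coordinate $\lambda_\ell$ of the opposite vertex across $e_k$: writing $a,b$ for the two edges of the triangle $t'$ adjacent to $e_k$ on the far side and $c=e_k$ for the shared edge, the left-hand side, after multiplying through by $e^{-\frac12\lambda_{c}}$ times the relevant exponential in the far-side coordinates, becomes $\alpha+\alpha'$ in the notation of Figure~\ref{fig:localdel}, while the right-hand side $1$ becomes a sum of the form $\beta+\beta'+\gamma+\gamma'$ built from \eqref{eq:alpha}. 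This is precisely the content of the identity that turns $-\alpha-\alpha'+\beta+\beta'+\gamma+\gamma'$ into an oriented horocyclic arc length, already invoked in the paragraph following Theorem~\ref{thm:localdel}. Hence the strict inequality \eqref{eq:local2} for the edge $e_k$ is equivalent to $\ell_i+\ell_j-\ell_k>0$. Running this for $k=1,2,3$ gives all three inequalities in \eqref{eq:triang}.

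The main thing to get right is the bookkeeping of which exponential factor to multiply by so that the local Delaunay inequality for a given edge translates cleanly into the triangle inequality for that same edge; this is routine but the cyclic labelling of edges around the two triangles sharing $e_k$, and the consistent orientation conventions of Figures~\ref{fig:shear} and~\ref{fig:localdel}, must be tracked carefully. An alternative, perhaps cleaner route avoiding the neighbouring triangle altogether: observe that \eqref{eq:alpha} gives $\alpha_i\alpha_j = e^{\frac12(\lambda_i+\lambda_j)}\cdot e^{-\lambda_k} \cdot(\text{stuff})$, so that $\ell_i\ell_j$ and the products $\alpha_i\alpha_j$ etc.\ are related by multiplication by a single common positive constant depending only on $\lambda_1+\lambda_2+\lambda_3$; then the triangle inequalities for $\ell$ are equivalent to triangle inequalities for the quantities $\sqrt{\alpha_j\alpha_k}$, and these in turn follow from \eqref{eq:local2} at each edge after a short manipulation. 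Either way, no genuinely hard step is involved — the content is entirely in Proposition~\ref{prop:arc} and the edge-local reformulation \eqref{eq:local2} of the Delaunay condition — so I expect the proof to be no more than a few lines once the exponential identity is written out, exactly as one would expect for a lemma quoted verbatim from \cite{penner87}.
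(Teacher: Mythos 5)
Your central step is not correct: the local Delaunay condition \eqref{eq:local2} at an edge $e_k$ of $t$ is \emph{not} equivalent to the single triangle inequality $\ell_i+\ell_j-\ell_k>0$ for $t$, and no bookkeeping of exponential factors can make it so, because \eqref{eq:local2} at $e_k$ genuinely involves the $\lambda$'s of the two far edges of the neighbouring triangle, which do not appear in the triangle inequality at all. Concretely, if the two triangles adjacent to $e$ have edges $(e,b,c)$ and $(e,b',c')$, then by \eqref{eq:alpha} the condition \eqref{eq:local2} at $e$ reads
\begin{equation*}
  \frac{\ell_{e}}{\ell_{b}\ell_{c}}+\frac{\ell_{e}}{\ell_{b'}\ell_{c'}}
  \;\leq\;
  \frac{\ell_{b}}{\ell_{c}\ell_{e}}+\frac{\ell_{c}}{\ell_{b}\ell_{e}}
  +\frac{\ell_{b'}}{\ell_{c'}\ell_{e}}+\frac{\ell_{c'}}{\ell_{b'}\ell_{e}},
  \qquad\text{i.e.}\qquad
  \ell_{b'}\ell_{c'}\bigl(\ell_{e}^{2}-\ell_{b}^{2}-\ell_{c}^{2}\bigr)
  \leq
  \ell_{b}\ell_{c}\bigl(\ell_{b'}^{2}+\ell_{c'}^{2}-\ell_{e}^{2}\bigr).
\end{equation*}
For $(\ell_{e},\ell_{b},\ell_{c},\ell_{b'},\ell_{c'})=(10,1,1,100,1)$ this inequality holds ($9800\leq 9901$), yet $\ell_{e}>\ell_{b}+\ell_{c}$, so the Delaunay condition at that edge coexists with a violated triangle inequality in an adjacent triangle; one can even arrange \eqref{eq:local2} at all three edges of $t$ while $t$ violates \eqref{eq:triang}. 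The same flaw reappears at the end of your ``alternative route'' (where, incidentally, $\sqrt{\alpha_{j}\alpha_{k}}=1/\ell_{i}$, not a rescaled $\ell_{i}$; the usable identity is $\alpha_{i}=\ell_{i}^{2}/(\ell_{1}\ell_{2}\ell_{3})$): the triangle inequalities simply do not follow edge-by-edge by a local manipulation.

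The lemma is a genuinely global statement, and the paper does not reprove it but cites Penner's Lemma~5.2, whose argument you would need to reproduce: assume some triangle violates \eqref{eq:triang}, say $\ell_{e}\geq\ell_{b}+\ell_{c}$; then $\ell_{e}^{2}-\ell_{b}^{2}-\ell_{c}^{2}\geq 2\ell_{b}\ell_{c}$, and the displayed form of \eqref{eq:local2} at $e$ forces $\ell_{b'}^{2}+\ell_{c'}^{2}-\ell_{e}^{2}\geq 2\ell_{b'}\ell_{c'}$, i.e.\ $|\ell_{b'}-\ell_{c'}|\geq\ell_{e}$, so the triangle across $e$ also violates its triangle inequality and its longest edge is strictly longer than $\ell_{e}$. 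Iterating this step produces a sequence of edges with strictly increasing $\ell$-values, contradicting the finiteness of $E_{\Delta}$. So the missing idea in your proposal is precisely this propagation-plus-finiteness argument; the purely local reduction you propose cannot work.
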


\begin{remark}
  This is a global statment. The triangle
  inequalities~\eqref{eq:triang} for any particular triangle $t$ are
  satisfied if the local Delaunay conditions~\eqref{eq:local2} are
  satisfied for \emph{all} edges of the triangulation. It is not
  sufficent to assume the local Delaunay condition for, say, the edges
  of triangle $t$, or in some neighborhood.
\end{remark}

If $\Delta$ is a triangulation of $(S_{g},V)$ and
$\ell\in \R_{>0}^{E_{\Delta}}$ satisfies the triangle
inequalities~\eqref{eq:triang} for every triangle $t\in T_{\Delta}$,
then there is a piecewise euclidean metric on $S_{g}$ that turns every
triangle in $T_{\Delta}$ into a euclidean triangle and every edge
$e\in E_{\Delta}$ into a straight line segment of length~$\ell_{e}$.

\begin{definition}
  \label{def:triangpweucsurf}
  We will refer to the surface $S_{g}$ equipped with the piecewise
  euclidean metric described in the previous paragraph and together
  with the triangulation $\Delta$ as \emph{the triangulated piecewise
    euclidean surface $(\Delta,\ell)$}.
\end{definition}

\begin{theorem}[ideal and euclidean Delaunay triangulations]
  \label{thm:idealeucdel}
  Let $\Delta$ be a triangulation of $(S_{g},V)$, let
  $\lambda\in\R^{E_{\Delta}}$, and let $\ell\in\R_{>0}^{E_{\Delta}}$
  be defined by~\eqref{eq:ell}.

  (i) If $\Delta$ is an ideal Delaunay triangulation of the decorated
  hyperbolic surface with Penner coordinates $(\Delta,\lambda)$, then
  $\Delta$ is also a euclidean Delaunay triangulation of the piecewise
  euclidean surface $(\Delta, \ell)$, which exists by
  Lemma~\ref{lem:triang}.

  (ii) Conversely, if $\ell$ satisfies the triangle inequalities for
  every triangle $t\in T_{\Delta}$, and if $\Delta$ is a Delaunay
  triangulation of the piecewise euclidean surface $(\Delta,\ell)$,
  then $\Delta$ is also an ideal Delaunay triangulation of the
  decorated hyperbolic surface with Penner coordinates
  $(\Delta,\lambda)$.
\end{theorem}

\begin{proof}
  First note that a decorated ideal tetrahedron with horosphere
  distances~$\lambda_{i}$ as shown in Figure~\ref{fig:idealtet} exists
  if and only if the $\ell_{i}$ defined by~\eqref{eq:ell} satisfy the
  triangle inequalities. Now consider two decorated ideal tetrahedra
  as shown in Figure~\ref{fig:idealtet} glued along two matching
  vertical faces. The local Delaunay condition (lD2) for the decorated
  ideal triangles in the base planes is equivalent to the local
  Delaunay condition for the euclidean triangles in which the
  horocycle at~$v_{\infty}$ intersects the tetrahedra. To see this,
  note that the circumcenters of the euclidean triangles project
  vertically down to the highest points of the hyperbolic base planes,
  and that these points are the centers of circles touching the
  incident horocycles.
\end{proof}

\section{Akiyoshi's compactification}
\label{sec:akiyoshi}

Every triangulation $\Delta$ of $(S_{g},V)$ with $n=|V|>0$, $2-2g-n<0$
is the Delaunay decomposition for some decorated complete finite area
hyperbolic metric on $S_{g,n}$. For example, the decorated surface
$(\Delta,\lambda)$ with Penner coordinates $\lambda=0$ has Delaunay
decomposition $\Delta$. There are infinitely many triangulations of
$(S_{g},V)$, unless $g=0$ and $n\leq3$. However, a fixed hyperbolic
surface supports only a finite number of ideal Delaunay
triangulations:

\begin{theorem}[Akiyoshi~\cite{akiyoshi01}]
  \label{thm:akiyoshi}
  Let $S$ be a complete hyperbolic surface of finite area with at
  least one cusp. There are only finitely many ideal triangulations
  $\Delta$ of $S$ such that there exists a decoration of $S$ with
  horocycles such that $\Delta$ is a Delaunay triangulation of the
  decorated surface. 
\end{theorem}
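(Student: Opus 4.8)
The plan is to exploit the cell decomposition of $\decTeich_{g,n}$ from Theorem~\ref{thm:pennerdecomp} together with properness of the decoration-forgetting projection $\pi:\decTeich_{g,n}\to\Teich_{g,n}$ restricted to a normalized slice. Fix the hyperbolic surface $S$, i.e.\ fix a point $[S]\in\Teich_{g,n}$. The ideal triangulations $\Delta$ that occur as Delaunay triangulations of $(S,\text{some decoration})$ are exactly those $\Delta$ for which the Penner cell $\mathcal{C}(\Delta)$ meets the fiber $\pi^{-1}([S])$. So the claim is: \emph{only finitely many Penner cells meet a given fiber.} The fiber is an affine space of dimension $n$, parametrized (after choosing a basepoint $(\Delta_0,\lambda_0)$) by $u\in\R^V$ via $\Lambda^{(\Delta_0,\lambda_0)}$ as in Proposition~\ref{prop:fiber}. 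Scaling all horocycles simultaneously, $u_v\mapsto u_v+t$ for all $v$, does not change the Delaunay triangulation (the Delaunay condition~\eqref{eq:local2} is homogeneous under $\ell_e\mapsto e^{t/2}\ell_e$, since~\eqref{eq:local2} scales by $e^{t/2}$ on both sides — indeed a Ptolemy-type scaling shows the arc lengths $\alpha,\beta,\gamma$ scale as $e^{t/2}$ as well, being products of $\ell$'s over $\ell$'s... more carefully, $\alpha_i=e^{\frac12(\lambda_i-\lambda_j-\lambda_k)}$ is invariant under $\lambda\mapsto\lambda+2t$ only if... one must track this, but the homogeneity of~\eqref{eq:local2} under adding a constant to all $\lambda_e$ is the point). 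Hence the set of Delaunay triangulations of $S$ is determined by the image of the fiber in the quotient $\R^V/\R\cdot(1,\dots,1)$, an $(n-1)$-dimensional space; better yet, it suffices to intersect with the compact region where all horocycles bound disjoint embedded cusp neighborhoods and, say, the shortest such horocycle has a fixed normalized length.

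The key step is then a \textbf{local finiteness} statement: the cell decomposition of $\decTeich_{g,n}$ is locally finite, so every compact subset of $\decTeich_{g,n}$ meets only finitely many Penner cells. Given local finiteness, I would argue as follows. Consider the subset $F\subseteq\pi^{-1}([S])$ of decorations of $S$ for which the horocycles bound disjoint embedded cusp neighborhoods and the total horocycle length at every cusp $v$ satisfies $1\le c_v\le C$ for a suitable constant; using~\eqref{eq:c} and the fact that, up to the simultaneous scaling above, one can always normalize into such a range, $F$ is a compact subset of the fiber (the constraints $c_v\le C$ are closed by~\eqref{eq:c}, the embeddedness constraints are closed, and $c_v\ge 1$ prevents escape to the boundary where a horocycle degenerates). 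Crucially, \emph{every} ideal Delaunay triangulation $\Delta$ of $S$ is the Delaunay triangulation of some decoration in $F$: start from any decoration realizing $\Delta$, shrink all horocycles uniformly until they are embedded and disjoint without changing $\Delta$, then scale uniformly into the length range $[1,C]$ — again without changing $\Delta$. Therefore the Delaunay triangulations of $S$ are among the $\Delta$ with $\mathcal{C}(\Delta)\cap F\ne\emptyset$, and by local finiteness (compactness of $F$) there are only finitely many such $\Delta$.

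The \textbf{main obstacle} is establishing local finiteness of the Penner cell decomposition of $\decTeich_{g,n}$, or equivalently justifying that $F$ as above is compact and meets finitely many cells. This is essentially a geometric limit / compactness argument: if infinitely many distinct triangulations $\Delta^{(k)}$ were Delaunay for decorations $d_k\in F$, pass to a subsequence with $d_k\to d_\infty\in F$; the hyperbolic surface stays fixed and the decoration converges, so the Epstein--Penner convex hulls converge, forcing the combinatorial types $\Delta^{(k)}$ to stabilize (the limiting convex hull has finitely many faces, and nearby decorations can only subdivide them), contradicting distinctness. Alternatively, one can run the argument purely in terms of the edge-flip structure: the number of edges of any ideal triangulation of $(S_g,V)$ is the fixed number $6g-6+3n$, and one bounds the possible lengths $\ell_e$ (equivalently $\lambda_e$) of Delaunay edges over $F$ using the local condition~\eqref{eq:local2} plus the triangle inequalities from Lemma~\ref{lem:triang}; a diameter/systole bound on $S$ together with these constraints confines all Delaunay edges to a bounded set of isotopy classes with bounded lengths, and there are only finitely many isotopy classes of arcs of bounded length on a fixed hyperbolic surface. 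I would present the convex-hull compactness version as the clean argument and relegate the explicit length estimates to a remark, since the former is exactly the content of Epstein--Penner's construction already invoked throughout Sections~\ref{sec:delaunay}--\ref{sec:akiyoshi}.
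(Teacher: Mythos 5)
There is a genuine gap, and it sits exactly where the real content of Akiyoshi's theorem lies. Your reduction to the compact set $F$ does not work: the only symmetry you use is the simultaneous scaling $u\mapsto u+t\,\mathbf{1}_{V}$, and this scaling preserves all ratios $c_{v}/c_{v'}$ of horocycle lengths (by~\eqref{eq:cu}, every $c_{v}$ is multiplied by the same factor $e^{-t}$). So from a decoration realizing a given Delaunay triangulation $\Delta$ you can shrink the horocycles until they are embedded and disjoint, but you cannot then ``scale uniformly into the length range $[1,C]$'': if the decoration has $c_{v_{1}}/c_{v_{2}}$ enormous, no uniform scaling brings both lengths into a fixed interval. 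The fiber $\pi^{-1}([S])$ modulo scaling is a non-compact $(n-1)$-dimensional family, and there is no a priori constant $C$ for which every Delaunay triangulation of $S$ is realized by a decoration with all $c_{v}\in[1,C]$; such a $C$ exists only a posteriori, as a consequence of the finiteness you are trying to prove, so invoking it is circular. Consequently, local finiteness of the Penner decomposition (Theorem~\ref{thm:pennerdecomp}) on compact subsets of $\decTeich_{g,n}$ --- which is fine, and your geometric-limit argument for it at interior points is plausible --- does not suffice: a sequence of decorations realizing infinitely many distinct Delaunay triangulations can escape every compact $F$ by letting some horocycles vanish relative to others ($u_{v}\to+\infty$ for a proper subset of the cusps), and that is precisely the regime your argument excludes.

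Handling that degeneration is Akiyoshi's actual contribution, and it is also how the paper treats the matter: the theorem is quoted with a citation, and Section~\ref{sec:akiyoshi} explains that the proof rests on extending the Epstein--Penner convex hull construction to \emph{partially decorated} surfaces (missing horocycles), i.e.\ on compactifying exactly the non-compact directions of the fiber modulo scaling and showing the Delaunay decompositions stabilize there (compare Theorem~\ref{thm:idealdel2}, Proposition~\ref{prop:punctured}, Proposition~\ref{prop:Dellim}); the paper also points to a simpler two-dimensional proof in~\cite{luo13}. Your alternative sketch --- bounding the (truncated) lengths $\lambda_{e}$ of Delaunay edges with respect to a fixed decoration of $S$ and using that only finitely many isotopy classes of arcs have bounded truncated length --- is in fact much closer to a complete proof than the ``convex-hull compactness version'' you chose to present, but the required uniform length estimate over \emph{all} decorations is the heart of the matter and cannot be relegated to a remark; as written, your main argument does not establish the theorem.
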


In fact, Akiyoshi proved a more general result, the
generalization of Theorem~\ref{thm:akiyoshi} to hyperbolic manifolds
of arbitrary dimension. A simpler proof for the two-dimensional case
can be found in~\cite{luo13}.

Akiyoshi's proof is based on the observation that the convex hull
construction of Epstein and Penner generalizes naturally to
\emph{partially decorated surfaces}, that is, complete hyperbolic
surfaces of finite area, decorated with horocycles at some, but at
least one, of the cusps. A missing horocycle should be interpreted as
the limit of horocycles vanishing at infinity.

We need to consider partially decorated surfaces, their Delaunay
decompositions, and Penner coordinates in some detail because the
definition of \emph{realizable coordinates}
(Definition~\ref{def:realizable}) is based on this: Roughly,
realizable coordinates are Penner coordinates with respect to a
Delaunay triangulation of a partially decorated punctured sphere with
exactly one horocycle missing (condition (r1) of
Definition~\ref{def:realizable}), that satisfy and additional
condition (condition (r2)). In the remainder of this sections, we
collect the relevant definitions facts. We state the results without
proof because they are either straightforward translations of known
facts into the language of ideal Delaunay decompositions, or they are
relatively simple consequences.

\begin{remark}
  The need to deal with partially decorated surfaces is one reason why
  the case of genus zero is more complicated. It is not necessary to
  consider partially decorated surfaces to treat the higher genus
  cases (Section~\ref{sec:highergenus}). Readers interested only in
  the higher genus cases may safely skip the remainder of this section
  and everything in Section~\ref{sec:variational} that has to do with
  missing horocycles.
\end{remark}

The existence and
uniqueness Theorem~\ref{thm:idealdel} generalizes to partially
decorated surfaces:

\begin{theorem}[existence and uniqueness]
  \label{thm:idealdel2}
  Every partially decorated surface has a unique ideal Delaunay
  decomposition.
\end{theorem}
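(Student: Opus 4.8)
The plan is to reduce the partially decorated case to the fully decorated case of Theorem~\ref{thm:idealdel} by a limiting argument, as Akiyoshi does. Suppose $S$ has cusps $v_1,\dots,v_n$, decorated with horocycles $h_1,\dots,h_k$ at the first $k\geq 1$ cusps and no horocycle at $v_{k+1},\dots,v_n$. For each $t>0$, complete the decoration by adding at $v_j$ (for $j>k$) the horocycle $h_j^{(t)}$ obtained by pushing a fixed reference horocycle a distance $t$ toward the cusp, so that $h_j^{(t)}$ shrinks and recedes to infinity as $t\to\infty$. Theorem~\ref{thm:idealdel} gives a unique ideal Delaunay decomposition $D_t$ for each fully decorated surface $(S; h_1,\dots,h_k,h_{k+1}^{(t)},\dots,h_n^{(t)})$. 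First I would show that there is a sequence $t_m\to\infty$ along which $D_{t_m}$ is eventually constant, equal to some fixed decomposition $D$; this follows from the fact (used already via Theorems~\ref{thm:pennerdecomp} and~\ref{thm:akiyoshi} in spirit, but here we need it on the nose) that only finitely many ideal cell decompositions occur as the decoration ranges over a one-parameter family, or more simply from the observation that the relevant half-line in the fiber over $[S]\in\Teich_{g,n}$ meets only finitely many Penner cells. Then $D$ is the candidate ideal Delaunay decomposition of the partially decorated surface.

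Next I would verify that $D$ satisfies the appropriate global Delaunay condition for the partially decorated surface — the natural analogue of (gD) where a missing horocycle is treated as the limit of vanishing horocycles, i.e.\ for every face $\hat f$ of the lift $\hat D$ there is a circle (or horocycle/hypercycle, in degenerate cases) through the ideal vertices of $\hat f$ that separates the lifted decorating horocycles at decorated cusps from... more precisely, the face-circle touches all lifted horocycles at vertices of $\hat f$ and meets no other lifted horocycle, with the convention that an undecorated cusp contributes no horocycle. Passing to the limit $t_m\to\infty$ in condition (gD) for $D_{t_m}$ gives this: the touching circles for the faces converge (after the uniform choice of truncation in Definition~\ref{def:idealdel}), and the non-meeting condition is closed under the limit since the horocycles at undecorated cusps vanish. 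Here one must be slightly careful that the face-circles do not themselves degenerate in a way that destroys the combinatorics of $D$; this is controlled because along the tail of the sequence the Penner cell is fixed, so the decomposition $D_{t_m}=D$ is genuinely realized with non-degenerate data for each large $m$, and only the undecorated-cusp horocycles are moving.

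For uniqueness, suppose $D'$ is another ideal Delaunay decomposition of the partially decorated surface in the limiting sense above. Reversing the construction, I would show that $D'$ must equal $D_t$ for all sufficiently large $t$: the limiting global condition for $D'$, together with the fact that pushing the undecorated horocycles out by a further finite amount only strengthens the separation inequalities (the vanishing horocycles move strictly away from every face-circle), implies that $D'$ satisfies the honest condition (gD) for $(S;h_1,\dots,h_k,h_{k+1}^{(t)},\dots,h_n^{(t)})$ once $t$ is large. By the uniqueness half of Theorem~\ref{thm:idealdel}, $D'=D_t=D$. I expect the main obstacle to be the careful bookkeeping in the limiting argument: making precise the sense in which face-circles converge and in which condition (gD) is ``closed'' as the undecorated horocycles vanish, and ruling out the degenerate possibility that in the limit a face-circle becomes a geodesic or collapses so that two faces of $\hat D$ merge — equivalently, showing that the limiting decomposition is still a genuine cell decomposition and not a coarsening. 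Akiyoshi's treatment and the simpler two-dimensional account in~\cite{luo13} handle exactly this point, and I would follow their argument.
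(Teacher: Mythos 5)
The paper itself does not prove Theorem~\ref{thm:idealdel2}; Sections~\ref{sec:penner}--\ref{sec:akiyoshi} are explicitly a review of known material, and the theorem is stated as a consequence of Akiyoshi's work. So there is no ``paper's proof'' to compare against line by line, only the single sentence the paper offers as a pointer: \emph{``Akiyoshi's proof is based on the observation that the convex hull construction of Epstein and Penner generalizes naturally to partially decorated surfaces \dots\ A missing horocycle should be interpreted as the limit of horocycles vanishing at infinity.''} Your proposal is a good-faith unpacking of the second half of that sentence, but it diverges from the first half in a way that opens a real gap.

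The gap is your Step~2, the stabilization of $D_{t}$ along a subsequence. You offer two justifications, and neither works as stated. Invoking Theorem~\ref{thm:akiyoshi} is circular: by the paper's own account, Akiyoshi \emph{derives} that finiteness theorem from the generalization of the convex hull construction to partially decorated surfaces, i.e.\ from (the construction behind) Theorem~\ref{thm:idealdel2}. Your fallback, ``the relevant half-line in the fiber meets only finitely many Penner cells,'' is exactly the crux, and it is not elementary. The Penner decomposition of $\decTeich_{g,n}$ is locally finite, but your ray escapes to infinity in the fiber, so local finiteness gives nothing; moreover the cells are convex in the $\ell=e^{\lambda/2}$ coordinates, not in the $u$-coordinates in which your ray is affine, so one cannot even conclude cheaply that the ray meets each cell in an interval. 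The paper's own Proposition~\ref{prop:Dellim}, which is essentially the stabilization you need, is stated \emph{after} Theorem~\ref{thm:idealdel2} and presupposes the notion of an adjusted Delaunay triangulation of a partially decorated surface, so it cannot be used here either. You are candid that you would ``follow Akiyoshi's argument'' at this point, but that is precisely the content of the theorem, not a detail to defer.

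What the paper is pointing at is a different, more direct route that sidesteps the limit entirely. In the Epstein--Penner picture, a horocycle corresponds to a point on the upper light cone $L^{+}\subset\R^{2,1}$, and pushing the horocycle toward the cusp scales the point toward the origin along the null ray. A \emph{missing} horocycle then corresponds not to a sequence of points but to the null \emph{direction} itself, i.e.\ to a point of the projectivized light cone. Akiyoshi's observation is that the boundary of the convex hull of the $\Gamma$-orbit of a finite set of points on $L^{+}$ together with a finite set of null directions at infinity is still a well-defined locally finite polyhedral surface, and its projection to $H^{2}/\Gamma$ gives the desired decomposition into ideal polygons and ideal polygons with a cusp, with uniqueness coming for free from uniqueness of the convex hull. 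This yields Theorem~\ref{thm:idealdel2} directly, and only \emph{afterwards} does one obtain the finiteness statements (Theorem~\ref{thm:akiyoshi}, Proposition~\ref{prop:Dellim}) that your limiting argument would have needed as input. If you want to salvage a limiting proof, the cleanest path is to first carry out the direct convex hull construction with null directions, then observe \emph{a posteriori} that the result is the limit of the $D_{t}$; trying to run the limit first puts the cart before the horse.

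Your discussion of the Delaunay condition in the limit (the non-meeting condition being closed, the face-circles possibly degenerating to horocycles or hypercycles per Proposition~\ref{prop:orthotouch}, the worry about two faces merging) is the right list of things to check and would slot into the direct argument as well.
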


However, Definition~\ref{def:idealdel} of an ideal Delaunay
decompositions has to be modified quite radically: an ideal Delaunay
decomposition of a partially decorated surface with missing horocycles
is not an ideal cell decomposition (see
Definition~\ref{def:delpart}). A cusp with missing horocycle is
contained in a Delaunay face that is not an ideal polygon, but an ideal
polygon with a cusp in the sense of the following definition:

\begin{definition}
  \label{def:punctured}
  An \emph{ideal polygon with a cusp} is a hyperbolic
  manifold-with-boundary $f$ of finite area whose interior is
  homeomorphic to an open disk with one puncture such that a
  neighborhood of the puncture corresponds to a cusp neighborhood in
  $f$ and the boundary is a union of complete geodesics
  (see Figure~\ref{fig:puncturedface}).
\end{definition}
\begin{figure}
  \centering
  \includegraphics{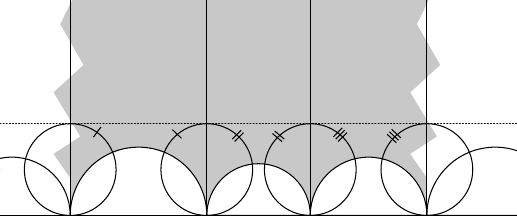}
  \caption{An ideal polygon with a cusp (shaded region), which is a
    face of a Delaunay decomposition of a partially decorated
    surface. The left and right boundary are identified by an isometry
    $z\mapsto z+c$.  The vertical edges are nonessential edges of an
    adjusted Delaunay triangulation.}
  \label{fig:puncturedface}
\end{figure}

However, the Delaunay decomposition of a partially decorated surface
\emph{is} a cell decomposition when viewed as a decomposition of
$(S_{g},V)$, the closed surface of genus $g$ with marked points. The
faces are cells, but the marked points $v\in V$ that correspond to
cusps without decorating horocycle are not vertices of the
decomposition. A face contains at most one marked point in its
interior, which we call the \emph{central vertex} of the face
(although it is not a vertex of the decomposition). Let us call the
regular vertices of an ideal polygon with a cusp its \emph{peripheral
  vertices} to distinguish them from its central vertex.

\begin{definition}[Delaunay decomposition of a partially
  decorated surface]
  \label{def:delpart}
  Let~$D$ be a decomposition of a partially decorated surface into
  ideal polygons and ideal polygons with a cusp, called the
  \emph{faces} of the decomposition. Suppose first that the decorating
  horocycles are disjoint. Then the decomposition $D$ is called a
  \emph{Delaunay decomposition}, if its lift $\hat{D}$ to $H^{2}$ and
  the lifted horocycles satisfy the following conditions:
  \begin{compactitem}[(gD$'$)]
  \item[(gD)] If $\hat{f}$ is the lift of a face of $D$ that is an ideal
    polygon, then there is a circle that touches all lifted horocycles
    at the vertices of $\hat{f}$ externally and does not meet any
    other lifted horocycles. 
  \item[(gD$'$)] If $\hat{f}$ is the lift of a face $f$ of $D$ that is
    an ideal polygon with a cusp, then every peripheral vertex of $f$
    is decorated with a horocycle, and there exists a horocycle
    centered at the lifted central vertex of $\hat{f}$ that touches
    the lifted horocycles at the lifted peripheral vertices of
    $\hat{f}$ and does not meet any other lifted horocycles.
  \end{compactitem}
  Extend this Definition to the case of intersecting horocycles like
  in Definition~\ref{def:idealdel}.
\end{definition}

\begin{remark}
  Generically, the punctured Delaunay faces are punctured
  monogons. Such punctured monogons were already considered by Penner
  to construct coordinates on the \Tm{} space of partially decorated
  punctured surfaces~\cite[Addendum]{penner87} \cite[Sec.~5.1]{penner12}.
\end{remark}

Even if a cusp is not decorated with a horocycle, differences of
distances to horocycles at other cusps are well defined. This can be
used to characterize the punctured Delaunay
faces (see Figure~\ref{fig:puncturedface}): 

\begin{proposition}[characterization of punctured Delaunay faces]
  \label{prop:punctured}
  Let $S$ be a partially decorated surface, let $v$ be a cusp with
  missing horocycle, and let $p$ be an ideal polygon with a cusp in
  $S$ whose central vertex is $v$. Then $p$ is a face of the Delaunay
  decomposition of $S$ if and only if all peripheral vertices of $p$
  are decorated with horocycles, and these horocycles have all the
  same distance to $v$, and this distance is strictly smaller than the
  distance of any other horocycle to $v$.
\end{proposition}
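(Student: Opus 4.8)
The plan is to reduce the statement to the limiting case of the ordinary (decorated) Delaunay condition from Definition~\ref{def:delpart}, condition (gD$'$), by understanding a missing horocycle as the limit of horocycles shrinking toward the cusp. First I would fix the cusp $v$ with missing horocycle and an ideal polygon $p$ with central vertex $v$ and peripheral vertices $w_1,\dots,w_m$. Working in the upper half-plane model of the universal cover, I would lift so that $v$ is the point at infinity; then the lifted peripheral vertices $\hat w_i$ lie on the real line, the boundary geodesics of $\hat p$ are vertical lines and semicircles over $\R$, and a horocycle centered at $v$ becomes a horizontal line at some height $H$. The key elementary observation is that a horizontal line at height $H$ touches a horocycle of euclidean diameter $\delta_i$ centered at $\hat w_i\in\R$ precisely when $H=\delta_i$, and that the signed distance from the missing horocycle ``at $v$'' to the horocycle at $w_i$, normalized consistently across all peripheral vertices, is a monotone function of $\delta_i$ alone (this is exactly the content of the ``differences of distances are well defined'' remark preceding the proposition, and is made precise by the Penner-coordinate bookkeeping of Section~\ref{sec:penner}). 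Hence ``there is a horocycle at $v$ touching all the lifted peripheral horocycles'' is equivalent to ``all $\delta_i$ are equal,'' i.e.\ to ``all peripheral horocycles have the same distance to $v$.''

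Next I would handle the ``does not meet any other lifted horocycle'' clause of (gD$'$). Given that the $\delta_i$ are all equal to a common value $\delta$, the horizontal line at height $\delta$ either is disjoint from, tangent to, or crosses each remaining lifted horocycle, according to whether that horocycle's diameter is $<\delta$, $=\delta$, or $>\delta$; translating back through the normalization, this says precisely that the common peripheral distance is $\le$, $=$, or $>$ the distance of the other horocycle to $v$. So the non-crossing condition is equivalent to: the common peripheral distance is $\le$ the distance of every other horocycle to $v$, with the faces of the Delaunay decomposition being exactly the maximal such polygons, which forces the strict inequality ``$<$'' for horocycles at vertices genuinely outside $p$ (a horocycle exactly tangent to the line at height $\delta$ would correspond to a peripheral vertex of the same face, by the uniqueness part of Theorem~\ref{thm:idealdel2} and the cell-decomposition structure described after Definition~\ref{def:punctured}). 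I would then invoke Theorem~\ref{thm:idealdel2}, which guarantees the Delaunay decomposition of $S$ exists and is unique, to conclude that $p$ is \emph{the} Delaunay face through $v$ iff it satisfies conditions (gD$'$) with the non-crossing clause, which by the above chain of equivalences is exactly the stated condition on distances.

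For the ``only if'' direction I would argue contrapositively using uniqueness: if $p$ does not satisfy the distance condition — either because some peripheral vertex is undecorated, or the peripheral distances disagree, or one of them is $\ge$ the distance of an outside horocycle — then the line at height equal to the smallest peripheral $\delta_i$ fails to be tangent to all peripheral horocycles while avoiding the rest, so $p$ violates (gD$'$); since the genuine Delaunay face through $v$ does satisfy (gD$'$) and the decomposition is unique, $p$ cannot be it. The main obstacle, and the step requiring the most care, is the precise normalization of ``distance to a missing horocycle'': one must check that the limiting procedure in Definition~\ref{def:delpart} (replace all horocycles by equidistant shrunken copies before applying the condition) really does make the quantity ``distance of $h_{w_i}$ to $v$'' independent of the arbitrary scale, so that the comparison in Proposition~\ref{prop:punctured} is well posed; once that bookkeeping is in place, the geometry in the half-plane model is entirely elementary.
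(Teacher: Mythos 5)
Your argument is correct and is essentially the paper's own: the paper states Proposition~\ref{prop:punctured} without a written proof, as a direct consequence of condition (gD$'$) in Definition~\ref{def:delpart} read in the upper half-plane picture of Figure~\ref{fig:puncturedface} (central cusp lifted to $\infty$, a horocycle at $v$ becoming a horizontal line, tangency governed by the euclidean diameters of the lifted horocycles), which is exactly the computation you spell out. The only point to keep explicit is the one you already flag: ``distance to $v$'' is realized by maximal-diameter lifts, so another lift tangent at the same height forces additional peripheral vertices of the true face rather than contradicting the criterion, and the comparison of two polygons at the same cusp both satisfying (gD$'$) pins down the face uniquely.
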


Definition~\ref{def:deltriang} of \emph{ideal Delaunay triangulations}
and \emph{essential} and \emph{nonessential edges} remains valid
without change. However, it will be useful to triangulate
the punctured Delaunay faces in a canonical way:

\begin{definition}[adjusted Delaunay triangulation]
  \label{def:adjusted}
  An ideal triangulation $\Delta$ of a partially decorated surface is
  called an \emph{adjusted Delaunay triangulation} if it refines the
  ideal Delaunay decomposition and every punctured face is
  triangulated by adding the ideal arcs connecting the central vertex
  with the peripheral vertices.
\end{definition}

Theorem~\ref{thm:localdel} on
\emph{local Delaunay conditions} remains valid after the following
modifications: A triangle of a Delaunay triangulation has at most one
vertex with missing horocycle. For an ideal triangle with missing
horocycle at one vertex~$v$, one has to consider the horocycle
centered at $v$ and touching the triangle's two horocycles, instead of
a circle touching three horocycles. If a horocycle is missing, the
respective arc length in condition~\eqref{eq:local2} is zero.

The flip algorithm (see Theorem~\ref{thm:flip}) remains valid, but
some details have to be modified (see
Proposition~\ref{prop:flippartial}) because Penner coordinates
$(\Delta,\lambda)$ do not describe partially decorated
surfaces. Instead, we may use the parametrization of a fiber of
$\decTeich_{g,n}$ described in Proposition~\ref{prop:fiber}, which
extends nicely to partially decorated surfaces:

\begin{definition}[parametrizing an extended fiber of
  $\decTeich_{g,n}$]
  \label{def:extendedfiber}
  Let $(\Delta,\lambda)$ be Penner coordinates for a decorated surface
  in $\decTeich_{g,n}$, let
  \begin{equation}
    \label{eq:Rbar}
    \Rbar=\R\cup\{+\infty\}, 
  \end{equation}
  and
  \begin{equation*}
    u\in\Rbar^{V}\setminus\{+\infty_{V}\},
  \end{equation*}
  where $+\infty_{V}$ denotes the constant function $+\infty$ on
  $V$. Let the \emph{partially decorated surface $(\Delta,\lambda,u)$}
  be the surface obtained from the decorated surface
  $(\Delta,\lambda)$ by moving, for each vertex $v$, the horocycle at
  $v$ a distance $u(v)$ in the direction of the cusp $v$. If
  $u(v)=+\infty$, the horocycle at $v$ is missing. In particular, for
  $u\in\R^{V}$, the surface $(\Delta,\lambda,u)$ is just the decorated
  surface with Penner coordinates
  $(\Delta,\Lambda^{\Delta,\lambda}(u))$.
\end{definition}

If~\eqref{eq:local2} is the local Delaunay condition at edge
$e\in E_{\Delta}$ for the decorated surface $(\Delta,\lambda)$, then
the local Delaunay condition at $e$ for the partially decorated surface
$(\Delta,\lambda,u)$ is 
\begin{equation}
  \label{eq:localpar}
  \alpha e^{-u(v_{a})}+\alpha'e^{-u(v_{a'})}
  \leq(\beta+\beta')e^{-u(v_{b})}+(\gamma+\gamma')e^{-u(v_{c})},
\end{equation}
where $e^{-\infty}=0$. 

\begin{proposition}[flip algorithm for partially decorated surfaces]
  \label{prop:flippartial}
  An ideal Delaunay triangulation of the partially decorated surface
  $(\Delta,\lambda,u)$ can be found by Weeks's flip algorithm (see
  Theorem~\ref{thm:flip}) with the local condition~\eqref{eq:local2}
  replaced by~\eqref{eq:localpar}. An adjusted Delaunay triangulation
  can then be found by iteratively flipping all nonessential edges
  opposite an undecorated cusp until no such edge remains.
\end{proposition}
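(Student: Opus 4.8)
The statement (Proposition on the flip algorithm for partially decorated surfaces) has two parts. For the first part — finding an ideal Delaunay triangulation of $(\Delta,\lambda,u)$ — the plan is to run Weeks's flip algorithm verbatim as in Theorem~\ref{thm:flip}, but using the modified local condition~\eqref{eq:localpar} in place of~\eqref{eq:local2}, and updating the data by Ptolemy. The first thing to check is that the algorithm is even well-posed on partially decorated surfaces: since Penner coordinates do not directly encode a partially decorated surface, one works instead with the triple $(\Delta,\lambda,u)$, where $u\in\Rbar^V\setminus\{+\infty_V\}$, and observes that a single edge flip, combined with the Ptolemy update of $\lambda$ together with the unchanged $u$, again describes the \emph{same} partially decorated surface with respect to the new triangulation. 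This is the content behind equation~\eqref{eq:localpar}: it is exactly condition~\eqref{eq:local2} with each horocyclic arc length $\alpha$ rescaled by $e^{-u(v)}$ at its incident vertex $v$, which is the effect on arc lengths of moving the horocycle at $v$ a distance $u(v)$ toward the cusp (with $e^{-\infty}=0$ encoding a missing horocycle). Once this is in place, the correctness of the flip algorithm is a matter of invoking the existence and uniqueness Theorem~\ref{thm:idealdel2} for partially decorated surfaces together with the local Delaunay criterion (the partially decorated analogue of Theorem~\ref{thm:localdel}, stated in the preceding paragraph): the algorithm terminates at a triangulation all of whose edges satisfy~\eqref{eq:localpar}, hence at an ideal Delaunay triangulation, which exists and is (as a decomposition) unique.

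\textbf{Termination.} The one genuine point requiring care — and the step I expect to be the main obstacle — is termination: one must argue that only finitely many flips occur. Here the argument should parallel Weeks's original proof. The cleanest route is to pick smaller horocycles bounding disjoint cusp neighborhoods at equal distance from the horocycles of $(\Delta,\lambda,u)$ (for the decorated cusps), together with horocycles deep in the cusps for the undecorated ones, and to use the convex hull picture of Epstein--Penner--Akiyoshi: each triangulation encountered in the flip sequence corresponds to a simplicial cone refinement of the boundary of the convex hull of the corresponding light-cone/horocycle points, each flip strictly increases (lexicographically, or in terms of the total ``tilt''/height functional) a quantity bounded above by the height of the genuine convex hull. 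Equivalently, one can cite that each flip strictly decreases the total length of the lifted triangulation's edges, or that the sequence of triangulations cannot repeat because the local-to-global step shows the process is monotone toward the unique Delaunay decomposition. I would keep this brief and defer the details to the references (\cite{weeks93}, \cite{akiyoshi01}), since the monotonicity mechanism is not altered by allowing missing horocycles — a missing horocycle just contributes zero arc length throughout.

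\textbf{The adjusted triangulation.} For the second part, start from any ideal Delaunay triangulation $\Deltil$ produced above. By Definition~\ref{def:delpart} and Proposition~\ref{prop:punctured}, each face of the Delaunay decomposition containing an undecorated cusp $v$ in its interior is an ideal polygon with a cusp; inside such a face, $\Deltil$ restricts to some triangulation, and every edge of $\Deltil$ lying strictly inside the face is nonessential (it is not in $E_D$). I claim that flipping any nonessential edge that is opposite an undecorated cusp — i.e.\ an edge of a triangle having $v$ as a vertex, lying inside $v$'s punctured face — again yields an ideal Delaunay triangulation, and strictly increases the number of triangles incident to $v$ inside that face, until the only triangulation left is the ``fan'' from $v$ to the peripheral vertices, which is precisely the adjusted triangulation of Definition~\ref{def:adjusted}. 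The key points to verify are: (a) such a flip stays within the Delaunay cell, because a nonessential edge has equality in~\eqref{eq:localpar}, so both triangulations refine the same Delaunay decomposition; (b) the flip is always possible and makes progress — this follows because inside the punctured face the combinatorics is that of a once-punctured polygon, and as long as some triangle inside the face does not have $v$ as a vertex, it has an edge opposite $v$ that can be flipped, decreasing the number of such ``bad'' triangles; and (c) the process terminates, since the number of bad triangles is a nonnegative integer that strictly decreases. When no flippable nonessential edge opposite an undecorated cusp remains, every triangle inside every punctured face has its central vertex as a vertex, which is exactly the defining property of an adjusted Delaunay triangulation.
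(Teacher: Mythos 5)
Your overall structure is sound, but you take a genuinely different route from the paper on the termination question, and that route has a soft spot. The paper's argument is a reduction: Proposition~\ref{prop:Dellim} says that replacing the missing horocycles by sufficiently deep finite ones (i.e.\ taking $\tilde u(v)>M$ at the undecorated cusps) does not change the adjusted Delaunay triangulation, so the correctness of the modified algorithm is inherited from the already-established correctness of Weeks's algorithm for fully decorated surfaces. You briefly mention this route (\emph{``horocycles deep in the cusps for the undecorated ones''}) but then abandon it in favour of a direct re-derivation of termination from the Epstein--Penner--Akiyoshi convex-hull monotonicity, and here you list several candidate monotone quantities (lexicographic/height functional, total lifted edge length, ``monotone toward the unique Delaunay decomposition'') without committing to one. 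These are not equivalent, not all of them are obviously monotone under Ptolemy flips, and your claim that \emph{``the monotonicity mechanism is not altered by allowing missing horocycles''} is exactly the point that needs an argument: a missing horocycle changes not just the local condition by a zero arc length, but the global convex-hull picture (a light-cone \emph{ray} replaces a point), so Weeks's height argument must be re-run in Akiyoshi's generalized setting. If you want a self-contained direct proof, commit to the convex-hull height functional and check it in Akiyoshi's setting; otherwise, follow the paper and use Proposition~\ref{prop:Dellim} to reduce to the decorated case. On the second half of the statement your treatment is correct and in fact more explicit than the paper, which does not spell out the fan-flip termination inside punctured faces; your observations (a)--(c) are the right ones, and the progress measure (number of triangles in a punctured face not incident with its central vertex) is valid.
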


With Proposition~\ref{prop:Dellim} below, the correctness of this
algorithm for partially decorated surfaces can be deduced from the
correctness of the original algorithm for decorated surfaces.

\begin{proposition}
  \label{prop:Dellim}
  If $\Deltil$ is an adjusted Delaunay triangulation for the partially
  decorated surface $(\Delta,\lambda,u)$, then there is an $M\in\R$
  such that $\Deltil$ is also an ideal Delaunay triangulation for the
  decorated surfaces $(\Delta,\Lambda^{\Delta,\lambda}(\tilde{u}))$
  with $\tilde{u}\in\R^{V}$ satisfying
  \begin{alignat*}{2}
    \tilde{u}(v)&=u(v)\quad&
    &\text{if}\quad u(v)<+\infty,\\
    \tilde{u}(v)&>M\quad&
    &\text{if}\quad u(v)=+\infty.
  \end{alignat*}
\end{proposition}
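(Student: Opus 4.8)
The plan is to verify the defining conditions of an ideal Delaunay triangulation — the local conditions (lD1)–(lD3) of Theorem \ref{thm:localdel}, together with the statement that essential/nonessential edges are characterized by equality in \eqref{eq:local2} — for the decorated surface $(\Delta,\Lambda^{\Delta,\lambda}(\tilde u))$, given that they hold for the partially decorated surface $(\Delta,\lambda,u)$. The starting observation is that the local Delaunay condition at an edge $e$ for $(\Delta,\lambda,u)$ is \eqref{eq:localpar}, while the local condition for $(\Delta,\Lambda^{\Delta,\lambda}(\tilde u))$ is the same inequality \eqref{eq:localpar} with $u$ replaced by $\tilde u\in\R^{V}$; this is because translating each horocycle toward its cusp by $\tilde u(v)$ multiplies the incident horocyclic arc lengths by $e^{-\tilde u(v)}$ (Proposition \ref{prop:fiber} and equation \eqref{eq:alpha}), and the arc lengths $\alpha,\alpha',\beta,\beta',\gamma,\gamma'$ in Figure \ref{fig:localdel} are those of $(\Delta,\lambda)$. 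So the whole proposition reduces to a statement purely about the inequalities \eqref{eq:localpar} as functions of the parameter vector in $\R^{V}$ or $\Rbar^{V}\setminus\{+\infty_{V}\}$.

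Next I would split the edges of $\Deltil$ into three types according to how they sit relative to the punctured faces of the adjusted Delaunay decomposition: (a) edges incident with no undecorated cusp, i.e.\ all four of $v_a,v_{a'},v_b,v_c$ are decorated; (b) the "spoke" edges added inside a punctured face, connecting the central (undecorated) vertex to a peripheral vertex; and (c) essential edges on the boundary of a punctured face, which have at most one undecorated endpoint among $v_b,v_c$ and none among $v_a,v_{a'}$. For type (a), \eqref{eq:localpar} involves only finite $u$-values that are unchanged when passing to $\tilde u$, so the condition — and whether equality holds — is literally preserved; such an edge is essential for $(\Delta,\lambda,u)$ iff strict inequality holds iff it is essential for $\tilde u$. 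For type (b) spoke edges, which are the nonessential edges of the adjusted triangulation, the condition \eqref{eq:localpar} for $(\Delta,\lambda,u)$ holds with \emph{equality} because the two triangles sharing such an edge lie inside one punctured face and the central vertex carries a horocycle at equal distance to all peripheral ones (Proposition \ref{prop:punctured}); concretely, with the central vertex undecorated, $e^{-u(\text{central})}=0$ kills two of the four terms, and what remains is the equality expressing that the touching horocycle at the central vertex is the same on both sides. The point is that for these edges the relevant subset of the inequality \eqref{eq:localpar} does not see the central vertex's coordinate at all, so replacing $+\infty$ by any finite $\tilde u(\text{central})$ still gives equality; hence the spoke edges remain nonessential (and the local condition remains satisfied) for every $\tilde u$.

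The only edges where a genuine limiting argument is needed are those of type (c): an essential edge $e$ with, say, $v_b$ undecorated ($u(v_b)=+\infty$) and $v_a,v_{a'},v_c$ decorated. For $(\Delta,\lambda,u)$ the condition \eqref{eq:localpar} reads $\alpha e^{-u(v_a)}+\alpha'e^{-u(v_{a'})} \le (\gamma+\gamma')e^{-u(v_c)}$ with strict inequality (since $e$ is essential — by Proposition \ref{prop:punctured} the peripheral horocycles are strictly closer to $v_b$ than anything else, which is exactly what makes the boundary of the punctured face Delaunay in the strict sense). When we pass to $\tilde u$ with $\tilde u(v_b)$ large and finite we \emph{add back} a positive term $(\beta+\beta')e^{-\tilde u(v_b)}$ on the right-hand side, so the inequality becomes $\alpha e^{-u(v_a)}+\alpha'e^{-u(v_{a'})} < (\gamma+\gamma')e^{-u(v_c)}+(\beta+\beta')e^{-\tilde u(v_b)}$, which is implied a fortiori by the strict inequality we already have — no smallness of $e^{-\tilde u(v_b)}$ is even required here, and $e$ stays essential. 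The case $v_c$ undecorated is symmetric. The only place a lower bound $M$ is actually consumed is to guarantee that no \emph{nonlocal} obstruction appears, i.e.\ that after this edge-by-edge check the triangulation $\Deltil$ is still a refinement of the \emph{global} Delaunay decomposition of $(\Delta,\Lambda^{\Delta,\lambda}(\tilde u))$ and that no previously undecorated cusp, once equipped with a small enough horocycle $\tilde u(v)>M$, fails to lie inside the cell of $\Deltil$ it is supposed to; equivalently, by Theorem \ref{thm:localdel} once all local conditions hold the triangulation \emph{is} Delaunay, so it suffices to choose $M$ large enough that the finitely many strict inequalities of type (c) above persist — and since each of them is of the form $X < Y + Z e^{-\tilde u(v)}$ with $X<Y$ already, this is automatic, so in fact any $M$ works and the existence of $M$ is trivial once the book-keeping is done. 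The main obstacle, then, is not any estimate but the careful case analysis classifying the edges of an adjusted Delaunay triangulation relative to its punctured faces and matching each case to the correct degenerate form of \eqref{eq:localpar}; the equality-in-\eqref{eq:local2} characterization of nonessential edges for the spoke edges (type (b)) is the subtlest bookkeeping point, since one must check that the equality is robust under filling in the central vertex's horocycle.
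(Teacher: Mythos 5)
Your overall approach---verify the local Delaunay inequalities \eqref{eq:localpar} edge by edge, splitting into cases by how the edge sits relative to the punctured faces---is a viable alternative route. The paper itself deduces the proposition in one sentence from the characterization of punctured Delaunay faces in terms of horocycle distances (Proposition~\ref{prop:punctured}), which is cleaner but less hands-on. However, your case analysis inverts the geometry of Figure~\ref{fig:localdel}, and the inversion is fatal exactly where the constant $M$ matters. In \eqref{eq:localpar} the edge $e$ joins $v_{b}$ and $v_{c}$: these are the vertices whose horocyclic arcs are cut in two by $e$ (hence the split pairs $\beta,\beta'$ and $\gamma,\gamma'$), while $v_{a}$ and $v_{a'}$ are the vertices opposite $e$ in the two adjacent triangles and carry a single undivided arc each. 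For a boundary edge of a punctured Delaunay face, the undecorated central vertex is therefore one of the \emph{opposite} vertices $v_{a}$ or $v_{a'}$, and its term $\alpha\,e^{-u(v_{a})}$ sits on the \emph{left}-hand side of \eqref{eq:localpar}, the side that must be small. Restoring a finite horocycle adds the strictly positive term $\alpha\,e^{-\tilde u(v_{a})}$ on the left, which \emph{tightens} the strict inequality rather than relaxing it; it survives only because $\alpha\,e^{-\tilde u(v_{a})}\to 0$ as $\tilde u(v_{a})\to+\infty$. This is where the lower bound $M$ is genuinely consumed: one must take $M$ large enough that, for each of the finitely many essential edges incident with an undecorated opposite vertex, the restored term stays below the positive gap in that strict inequality. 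Your claim that the restored term lands on the larger side, so that ``no smallness is even required'' and ``any $M$ works and the existence of $M$ is trivial,'' reverses the truth of the matter.

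Two further corrections. For a spoke edge the central vertex is an endpoint of $e$, hence one of $v_{b},v_{c}$; restoring its horocycle adds a positive term to the \emph{right}-hand side of \eqref{eq:localpar}, turning equality into strict inequality. The correct observation for type~(b) is therefore that the spoke becomes an \emph{essential} edge of the Delaunay decomposition of $(\Delta,\Lambda^{\Delta,\lambda}(\tilde u))$ (the decomposition refines into triangles even though the triangulation $\Deltil$ is unchanged), not that ``the relevant subset of the inequality does not see the central vertex's coordinate'' or that equality persists; and $e^{-u(\text{central})}=0$ kills exactly one of the four terms of \eqref{eq:localpar}, namely $(\beta+\beta')e^{-u(v_{b})}$, not two. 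Finally, you should also handle the case of a boundary edge shared by two distinct punctured faces, where both $v_{a}$ and $v_{a'}$ are undecorated and both restored left-hand terms must be controlled by $M$.
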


This is a corollary of the characterization of punctured Delaunay
faces in terms of horocycle distances
(see Proposition~\ref{prop:punctured}).

\begin{definition}[generalized Penner coordinates]
  \label{def:genpenner}
  If $\Delta$ is an arbitrary triangulation of a partially decorated
  surface, then the surface is in general not determined by $\Delta$
  and the function $\lambda\in\Rbar^{E}$ of horocycle distances, which
  takes the value $+\infty$ if the horocycle at one or both ends is
  missing. However, if $\Delta$ is an adjusted Delaunay triangulation,
  then the pair $(\Delta,\lambda)$ determines the partially decorated surface
  uniquely (see Proposition~\ref{prop:punctured}), and we call such a
  pair \emph{generalized Penner coordinates}.
\end{definition}

\begin{definition}
  \label{def:Del2}
  Let $\Del$ also denote a function
  \begin{equation*}
    \Del:(\Delta,\lambda,u)\longmapsto (\Deltil,\lamtil)
  \end{equation*}
  that maps the parameters $(\Delta,\lambda,u)$ of a partially
  decorated surface to generalized Penner coordinates
  $(\Deltil,\lamtil)$ of the same partially decorated surface, where
  $\Deltil$ is an adjusted Delaunay triangulation. Hence
  \begin{equation*}
    \lamtil=\Lambda^{\Deltil,\tau_{\Delta,\Deltil}(\lambda)}(u)\,
    \in\Rbar^{E_{\Deltil}},
  \end{equation*}
  where
  $\tau_{\Delta,\Deltil}:\R^{E_{\Delta}}\rightarrow\R^{E_{\Deltil}}$
  is the chart transition function mapping Penner coordinates with
  respect to $\Delta$ to Penner coordinates with respect to $\Deltil$.
\end{definition}

Such a function $\Del$ can be computed by the modified flip algorithm
of Proposition~\ref{prop:flippartial}.

The bounds constraints in the variational principle of
Theorem~\ref{thm:variational} involve signed distances of horocycles in a
decorated surface, which are defined as follows:

\begin{definition}[signed distance of horocycles]
  \label{def:delta}
  Let $\delta_{\Delta,\lambda}(v_{1},v_{2})$ denote the signed
  distance of the horocycles at the vertices $v_{1},v_{2}\in V$ in the
  decorated surface with Penner coordinates $(\Delta,\lambda)$. More
  precisely, let
  \begin{equation}
    \label{eq:delta}
    \delta_{\Delta,\lambda}(v_{1},v_{2})=\min_{h_{1},h_{2}}d(h_{1},h_{2})
  \end{equation}
  where $d$ denotes the signed distance of horocycles in $H^{2}$ and
  the minimum is taken over all pairs of horocycles $h_{1}\not=h_{2}$
  in $H^{2}$ that are lifts of the horocycles at $v_{1}$ and~$v_{2}$,
  respectively.
\end{definition}

\begin{remark}
  The distance $\delta_{\Delta,\lambda}(v_{1},v_{2})$ is well defined
  and non-trivial even for $v_{1}=v_{2}$, but we will not need this.
\end{remark}

Proposition~\ref{prop:punctured} implies that the horocycle distances
$\delta_{\Delta,\lambda}$ can be calculated using the flip algorithm:

\begin{proposition}[Calculating $\delta_{\Delta,\lambda}$]
  \label{prop:calcdelta}
  Let $(\Delta,\lambda)$ be Penner coordinates for a decorated surface,
  let~$v_{1},v_{2}\in V$, let 
    \begin{equation*}
    u\in\Rbar^{V},\qquad
    u(v)=
    \begin{cases}
      0 & \text{if }v=v_{2}\\
      +\infty & \text{otherwise},
    \end{cases}
  \end{equation*}
  and let $(\Deltil,\lamtil,u)$ be the output of the modified flip
  algorithm (see Proposition~\ref{prop:flippartial}). Then $v_{1}$ is
  the central vertex of a punctured face of $\Deltil$ and all
  peripheral vertices are $v_{2}$. For any edge $e\in E_{\Deltil}$
  connecting $v_{1}$ and $v_{2}$
  \begin{equation*}
    \delta_{\Delta,\lambda}(v_{1},v_{2})=\lamtil_{e}.
  \end{equation*}
\end{proposition}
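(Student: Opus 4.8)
The plan is to combine the characterization of punctured Delaunay faces (Proposition~\ref{prop:punctured}) with the definition of the signed distance $\delta_{\Delta,\lambda}$ (Definition~\ref{def:delta}). First I would observe that the given $u$ describes the partially decorated surface in which only the cusp $v_2$ carries a horocycle, namely the one prescribed by $(\Delta,\lambda)$, and all other horocycles (in particular that at $v_1$) are missing. The modified flip algorithm of Proposition~\ref{prop:flippartial} terminates at generalized Penner coordinates $(\Deltil,\lamtil)$ with $\Deltil$ an adjusted Delaunay triangulation of this partially decorated surface, and by Definition~\ref{def:adjusted} and Proposition~\ref{prop:punctured}, the cusp $v_1$ — having a missing horocycle — sits inside a punctured Delaunay face whose peripheral vertices are all decorated; since $v_2$ is the only decorated cusp, all those peripheral vertices must be $v_2$, and the face is triangulated by the arcs joining $v_1$ to $v_2$. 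This establishes the first assertion.

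Next I would identify $\lamtil_e$ with the horocycle distance. Proposition~\ref{prop:punctured} says precisely that the horocycles at the peripheral vertices of the punctured face all have the \emph{same} distance to $v_1$, and that this common distance is strictly smaller than the distance of any other lifted horocycle to $v_1$. Each edge $e\in E_{\Deltil}$ connecting $v_1$ and $v_2$ is one of the arcs bounding the triangulated punctured face, and $\lamtil_e$ is by definition (generalized Penner coordinates, Definition~\ref{def:genpenner}) the signed distance along $e$ between the horocycle at $v_1$ and the horocycle at $v_2$ — but here the horocycle at $v_1$ is missing, so $\lamtil_e$ should be read as the distance from the horocycle at $v_2$ (lifted appropriately) to the central cusp $v_1$, measured along the lift of $e$. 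The content of Proposition~\ref{prop:punctured} is exactly that this measurement is independent of which peripheral copy of $v_2$ one uses and equals the \emph{minimum} over all lifts, which is the defining expression~\eqref{eq:delta} for $\delta_{\Delta,\lambda}(v_1,v_2)$.

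The one point that needs care — and the step I expect to be the main obstacle — is matching conventions: $\delta_{\Delta,\lambda}(v_1,v_2)$ in Definition~\ref{def:delta} is defined purely intrinsically on the \emph{fully decorated} surface $(\Delta,\lambda)$, using the horocycle at $v_1$ that the original coordinates prescribe, whereas in the algorithm that horocycle has been pushed to infinity. I would reconcile these by invoking Proposition~\ref{prop:Dellim}: moving the horocycle at $v_1$ out by a distance $\tilde u(v_1)$ (and leaving $v_2$ fixed) keeps $\Deltil$ Delaunay for the decorated surface, and shifts $\lamtil_e$ for every $v_1v_2$-edge by exactly $\tilde u(v_1)$, since by~\eqref{eq:Lambda} the coordinate on such an edge changes additively by $u_{v_1}+u_{v_2}=\tilde u(v_1)$. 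The distance from a fixed $v_2$-horocycle to the $v_1$-horocycle at parameter $\tilde u(v_1)$ is therefore (distance to the cusp) minus $\tilde u(v_1)$, and all these are equal across peripheral copies and minimal; letting $\tilde u(v_1)\to 0$ recovers precisely $\min_{h_1,h_2}d(h_1,h_2)=\delta_{\Delta,\lambda}(v_1,v_2)$, with the value at parameter $0$ being $\lamtil_e$ as computed by the algorithm. This closes the argument.
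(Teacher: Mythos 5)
Your first paragraph is fine, and your key ingredient---Proposition~\ref{prop:punctured}---is exactly what the paper itself invokes for this statement. The trouble is in the identification of $\lamtil_{e}$, which is the actual content of the second assertion. In the output $(\Deltil,\lamtil,u)$ of the modified flip algorithm the component $u$ is carried along unchanged, while the finite Penner coordinates of the underlying \emph{fully decorated} surface are updated by Ptolemy's relation at each flip; thus $\lamtil=\tau_{\Delta,\Deltil}(\lambda)\in\R^{E_{\Deltil}}$, and $\lamtil_{e}$ is literally the signed distance, along a lift of $e$, of the original horocycles of $(\Delta,\lambda)$ at $v_{1}$ and $v_{2}$. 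Your reading via Definition~\ref{def:genpenner} makes $\lamtil_{e}$ a generalized Penner coordinate of the partially decorated surface, which on an edge incident with the undecorated vertex $v_{1}$ is $+\infty$; reinterpreting it as ``the distance from the $v_{2}$-horocycle to the cusp $v_{1}$'' is not a well-defined finite number, and it contradicts your own closing sentence, where $\lamtil_{e}$ is suddenly the coordinate ``at parameter $0$''. With the correct reading no limit is needed at all: Proposition~\ref{prop:punctured} says the lifts of the $v_{2}$-horocycle at the peripheral vertices of the lifted punctured face are exactly the closest lifted horocycles to the lift of $v_{1}$, in the renormalized sense in which differences of distances to an undecorated cusp are well defined; restoring the original horocycle at $v_{1}$ changes all these distances by one common constant, so the same peripheral lifts minimize $d(h_{1},h_{2})$ in \eqref{eq:delta}, and the minimum is attained along the lifted $v_{1}v_{2}$-edges of $\Deltil$ and equals $\lamtil_{e}$.

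The repair you propose through Proposition~\ref{prop:Dellim} would not work as written. That proposition only guarantees that $\Deltil$ stays Delaunay for decorated surfaces obtained by choosing $\tilde u(v)>M$ at \emph{all} undecorated vertices (not just at $v_{1}$), and only for sufficiently large values; it gives nothing as $\tilde u(v_{1})$ decreases to $0$, where the Delaunay triangulation will in general change, so ``keeps $\Deltil$ Delaunay \ldots\ letting $\tilde u(v_{1})\to 0$'' is not available. Moreover, Delaunayness of a decorated surface at some finite parameter is not by itself what yields ``equal across peripheral copies and minimal''; that is the content of Proposition~\ref{prop:punctured} for the partially decorated surface, transported to any position of the $v_{1}$-horocycle by the constant-shift observation above (note also that pushing the $v_{1}$-horocycle towards the cusp \emph{increases} the distances, so the sign in ``(distance to the cusp) minus $\tilde u(v_{1})$'' is the wrong way around). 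None of this extra machinery is needed once $\lamtil_{e}$ is identified correctly.
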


\section{Realizable coordinates}
\label{sec:realizable}

In this section, we characterize the Penner coordinates that may be
obtained from an ideal polyhedron by the construction described in
Section~\ref{sec:overview}: They are realizable coordinates by
Definition~\ref{def:realizable} (see
Proposition~\ref{prop:poly2realizable}). Conversely, for given
realizable coordinates, a corresponding ideal polyhedron is uniquely
determined by an explicit construction (see
Proposition~\ref{prop:realizable2poly}).

\begin{definition}[realizable coordinates]
  \label{def:realizable}
  Let $S$ be a complete finite area hyperbolic surface of
  genus $0$ with $n\geq 3$ cusps. \emph{Realizable coordinates for $S$
    with distinguished vertex $v_{\infty}\in V$} are a pair
  $(\Deltil,\lamtil)$ consisting of a triangulation~$\Deltil$ of
  $(S_{0},V)$ and a function $\lamtil=\Rbar^{E_{\Deltil}}$, where
  $\Rbar=\R\cup\{+\infty\}$, satisfying the following conditions (r1)
  and (r2):
  \begin{compactenum}[(r1)]
  \item $\Deltil$ is an adjusted Delaunay triangulation for a
    decoration of~$S$ with exactly one missing horocycle at
    $v_{\infty}$, and $(\Deltil,\lamtil)$ are the corresponding
    generalized Penner coordinates (see Definition~\ref{def:genpenner}).

  \item Let $\Deltilo$ be the subcomplex of $\Deltil$ consisting of all
    closed cells not incident with~$v_{\infty}$, i.e.,
    \begin{align}
      V_{\Deltilo}&=V\setminus\{v_{\infty}\},\\
      \label{eq:Eo}
      E_{\Deltilo} &=\{e\in E_{\Deltil}\;|\; e \text{ not incident
                      with }v_{\infty}\},\\
      \label{eq:To}
      T_{\Deltilo} &=\{t\in T_{\Deltil}\;|\; t\text{ not incident
                      with }v_{\infty}\}.
    \end{align}
    Then either (r2a) or (r2b) are true:
    \begin{compactitem}[(r2a)]
    \item[(r2a)] $T_{\Deltilo}=\emptyset$, and $\Deltilo$ is a linear
      graph, that is, a graph of the form
      \begin{equation*}
        \bullet-\bullet-\cdots-\bullet.
      \end{equation*}
    \item[(r2b)] $T_{\Deltilo}\not=\emptyset$ and $\Deltilo$ is a
      triangulation of a closed disk. Moreover,
      \begin{alignat}{2}
        \label{eq:Thetatileq}
          \Thetatil_{v}&=2\pi\quad
          &&\text{if $v$ is an interior vertex of $\Deltilo$,}\\
        \label{eq:Thetatilineq}
          \Thetatil_{v}&\leq \pi\quad
          && \text{if $v$ is a boundary vertex of $\Deltilo$,}
      \end{alignat}
      where 
      \begin{equation}
        \label{eq:Thetatil}
        \Thetatil_{v}=\text{ sum of angles around $v$ in $\Deltilo{}$},
      \end{equation}
      measured in the piecewise euclidean metric that
      turns each triangle in $T_{\Deltilo}$ into a euclidean triangle
      and each edge in $e\in E_{\Deltilo}$ into a euclidean line segment of
      length $\elltil_{e}$, where
      \begin{equation}
        \label{eq:elltil}
        \elltil=e^{\frac{1}{2}\lamtil}.
      \end{equation}
    \end{compactitem}
  \end{compactenum}    
\end{definition}

\begin{proposition}[ideal polyhedron $\rightarrow$ realizable coordinates]
  \label{prop:poly2realizable}
  Let $P$ be a three-dimensional convex ideal polyhedron or a
  two-sided ideal polygon in $H^{3}$ realizing a surface
  $S\in\Teich_{0,n}$, which has Penner coordinates $(\Delta,\lambda)$
  for some decoration.  Let~$v_{\infty}$ be an ideal vertex of
  $P$. Let $\Deltil$ be a triangulation obtained by adding diagonals
  to triangulate all non-triangular faces of $P$. The choice of
  diagonals is arbitrary except for non-triangular faces incident with
  $v_{\infty}$, in which the diagonals incident with $v_{\infty}$ are
  be chosen. Let $s_{\infty}$ be a horosphere centered at
  $v_{\infty}$. For any other vertex $v$, let $s_{v}$ be the
  horosphere centered at $v$ and touching $s_{\infty}$. For each edge
  $e\in E_{\Delta}$ not incident with $v_{\infty}$, let $\lamtil_{e}$
  be the signed distance of the horospheres at the ends of $e$. If $e$
  is incident with $v_{\infty}$, let $\lamtil_{e}=\infty$.

  Then $(\Deltil,\lamtil)$ are realizable coordinates for $S$.
\end{proposition}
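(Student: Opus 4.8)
The plan is to place $P$ in the upper half-space model with $v_\infty$ at infinity, so that the faces of $P$ incident with $v_\infty$ are vertical half-planes and the remaining faces are euclidean hemispheres resting on $\partial H^3$, and then to read off both conditions of Definition~\ref{def:realizable} from the geometry of the horizontal slice $s_{v_\infty}\cap P$. First I would record a harmless normalization: replacing $s_{v_\infty}$ by another horosphere at $v_\infty$ changes each finite $\lamtil_e$ by a common additive constant, which amounts to moving all horocycles $s_v$, $v\neq v_\infty$, outward by a common distance; this scales all horocyclic arc lengths, and the metric on $\Deltilo$, by a common factor, so it affects neither the Delaunay conditions~\eqref{eq:localpar} nor the angles $\Thetatil_v$. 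Hence I may assume that, as a horizontal plane $\{t=c\}$, $s_{v_\infty}$ lies above every hemispherical face of $P$.

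The key picture: for each $t\in T_{\Deltilo}$, coning $t$ to $v_\infty$ produces an ideal tetrahedron, and by the reconstruction recalled in Section~\ref{sec:overview} these tetrahedra tile $P$; the plane $s_{v_\infty}$ cuts the tetrahedron over $t$ in a euclidean triangle whose side along an edge $e$ of $t$ has length $\elltil_e=e^{\lamtil_e/2}$ (Figure~\ref{fig:idealtet}). Therefore $Q:=s_{v_\infty}\cap P$ is a \emph{convex} euclidean polygon (a plane section of a convex body), triangulated by these triangles, and the resulting triangulated polygon is isometric to the piecewise euclidean complex $\Deltilo$ with edge lengths $\elltil$. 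Condition (r2) now follows at once. If $P$ is genuinely three-dimensional it has a face disjoint from $v_\infty$, so $T_{\Deltilo}\neq\emptyset$; then $\Deltilo\cong Q$ is a triangulated closed disk, the angle sum at an interior vertex is $2\pi$ because the disk is realized flat and embedded in a plane, which is~\eqref{eq:Thetatileq}, and the angle sum at a boundary vertex equals the interior angle of $Q$ there, which is $\le\pi$ by convexity of $Q$, which is~\eqref{eq:Thetatilineq}; so (r2b) holds. If instead $P$ degenerates to a two-sided ideal polygon with ideal vertices $v_\infty,v_2,\dots,v_n$ in cyclic order, then both $n$-gon faces meet $v_\infty$ and are fan-triangulated from it, so every triangle of $\Deltil$ meets $v_\infty$, $T_{\Deltilo}=\emptyset$, and $E_{\Deltilo}$ consists exactly of the boundary arcs $v_2v_3,\dots,v_{n-1}v_n$, a linear graph; so (r2a) holds.

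For condition (r1) I would consider $S$ equipped with the partial decoration consisting of the horocycles $h_v=s_v\cap\partial P$ for $v\neq v_\infty$ and no horocycle at $v_\infty$; its generalized Penner coordinates with respect to $\Deltil$ are exactly $(\Deltil,\lamtil)$, since along an edge $e$ not incident with $v_\infty$ the signed distance of $s_{v_1(e)},s_{v_2(e)}$ in $H^3$ along $e$ equals the intrinsic one in $S$, while for $e$ incident with $v_\infty$ the missing horocycle forces $\lamtil_e=\infty$. It then suffices to show that $\Deltil$ refines the ideal Delaunay decomposition of this partially decorated surface, in the adjusted way of Definition~\ref{def:adjusted}. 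I claim this Delaunay decomposition has faces the hemispherical faces of $P$ together with one punctured face, namely the union of the vertical faces of $P$, with central vertex $v_\infty$. That the hemispherical faces satisfy condition (gD) of Definition~\ref{def:delpart} is the Epstein--Penner convex hull construction~\cite{epstein88,penner87,penner12}: a face of the convex polyhedron $P$ has every other ideal vertex on the far side of its supporting plane, which via the pole--polar relationship and Proposition~\ref{prop:orthotouch} says precisely that its face-circle meets no other lifted horocycle. That the union of the vertical faces is a punctured Delaunay face is checked by Proposition~\ref{prop:punctured}: its peripheral vertices are exactly the ideal vertices adjacent to $v_\infty$, all decorated; if $v$ is adjacent to $v_\infty$ then $s_v$ touches $s_{v_\infty}$ at the point where the vertical edge $v_\infty v$ crosses $s_{v_\infty}$, a point of $\partial P$, so $h_v$ touches $h_{v_\infty}=\partial Q$ and the horocycle distance from $v$ to $v_\infty$ (relative to the reference $s_{v_\infty}$) is $0$; whereas if $v$ is not adjacent to $v_\infty$ then the vertical line over $v$ meets $s_{v_\infty}$ in the interior of $Q$, hence not on $\partial P$, so $h_v$ stays strictly below level $c$, no lift of $h_v$ touches a lift of $h_{v_\infty}$, and the distance is $>0$ --- exactly the hypotheses of Proposition~\ref{prop:punctured}. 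Finally the prescribed triangulation refines this decomposition with the vertical faces triangulated by the arcs from $v_\infty$ to its neighbours, i.e.\ by joining the central vertex to the peripheral vertices; hence $\Deltil$ is an adjusted Delaunay triangulation and $(\Deltil,\lamtil)$ are the corresponding generalized Penner coordinates, which is (r1). The degenerate case is handled the same way, the punctured face now being the union of the two polygon faces with the arcs meeting $v_\infty$ removed.

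The step I expect to be the main obstacle is (r1): identifying the ideal Delaunay decomposition of the \emph{partially} decorated surface and confirming that the face decomposition of $P$, with the vertical faces coalescing into one punctured face once $s_{v_\infty}$ is forgotten, really is it. This is where one must invoke the Epstein--Penner correspondence between convexity and the global Delaunay condition and the characterization of punctured Delaunay faces; the half-space-model computation of horocycle distances to $v_\infty$ is the concrete input that makes Proposition~\ref{prop:punctured} applicable. By contrast, condition (r2) is comparatively routine once the slice $s_{v_\infty}\cap P$ is recognized as a flat, embedded, convex polygon triangulated by $\Deltilo$.
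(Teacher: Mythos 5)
Your argument for (r2) is the same as the paper's: slice $P$ with the horosphere $s_{v_\infty}$, recognize $Q=s_{v_\infty}\cap P$ as a convex euclidean polygon triangulated (up to the innocuous scale factor you noted) by $\Deltilo$ with edge lengths $\elltil$, and read off~\eqref{eq:Thetatileq} and~\eqref{eq:Thetatilineq} from flatness and convexity. Your treatment of the punctured face in (r1) is also sound and matches the paper: tangency of $s_v$ and $s_{v_\infty}$ at $(v,c)\in\partial P$ for $v$ adjacent to $v_\infty$, versus $(v,c)$ in the interior of $P$ otherwise, is exactly the hypothesis of Proposition~\ref{prop:punctured}.

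Where you and the paper diverge is the hemispherical faces, and this is where your proof has a gap. You assert in one sentence that the hemispherical faces satisfy the \emph{global} condition~(gD) ``by the Epstein--Penner convex hull construction, via pole--polar and Proposition~\ref{prop:orthotouch}: a face of $P$ has every other ideal vertex on the far side of its supporting plane, so its face-circle meets no other lifted horocycle.'' This conflates two different objects. Condition~(gD) is an \emph{intrinsic} statement about the developed picture in $\widetilde{S}\cong H^{2}$: the circle in $H^{2}$ touching the developed horocycles at the lifted vertices of $\hat f$ must miss every other developed horocycle, and the position of a far-away developed horocycle in $H^{2}$ is governed by the unfolding of a chain of faces of $P$, not by the position of the corresponding horosphere in $H^{3}$. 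The pole--polar correspondence you cite (Section~\ref{sec:delaunay}) lives in the hyperboloid model of $H^{2}$ inside $\R^{2,1}$; the supporting plane of a face of $P$ lives in $H^{3}\subset\R^{3,1}$. There is no direct dictionary between ``all ideal vertices on the far side of the supporting plane of $f$'' and ``the developed face-circle misses all other developed horocycles,'' so the sentence does not constitute a proof. The paper sidesteps the whole issue by checking the \emph{local} condition~(lD2) instead: the closest point to $s_{\infty}$ in the plane of a face is the center of the touching circle, and one then compares the centers of the two circles across an edge $e$---a computation involving only the two faces adjacent to $e$, where the unfolding is a single reflection and the intrinsic and extrinsic pictures are easy to relate. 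Local implies global by Theorem~\ref{thm:localdel}. If you want to keep your structure, you should replace the appeal to~(gD) by a verification of~(lD2) along these lines.
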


\begin{proof}
  In the case of a two-sided polygon, the statement follows easily
  from the characterization of punctured Delaunay faces,
  Proposition~\ref{prop:punctured}. It remains to consider the case
  of $P$ being a three-dimensional polyhedron.

  To show (r1), first note that for an edge $e$ not incident with
  $v_{\infty}$, $\lambda_{e}$ is also the intrinsic signed distance of
  the horocycles $h_{v}=s_{v}\cap P$ at the vertices of~$e$. It
  remains to show that $\Deltil$ is an adjusted Delaunay
  triangulation. First, the union of triangles incident with
  $v_{\infty}$ is the Delaunay face around the vertex $v_{\infty}$
  with vanished horocycle. Indeed, the horocycle $h_{\infty}$ touches
  the horocycles at adjacent vertices and does not meet the horocycles
  at all other vertices.
    
  It remains to show that the local Delaunay conditions (see
  Theorem~\ref{thm:localdel}) are satisfied for all edges between two
  triangles that are not incident with $v_{\infty}$. We may assume
  without loss of generality that the horosphere $s_{\infty}$ was
  chosen large enough so that the horospheres at the other vertices
  are pairwise disjoint. Consider a face $f$ of $P$ that is not
  incident with $v_{\infty}$. The point in the hyperbolic plane of $f$
  that is closest to $s_{\infty}$ is the center of the circle in this
  plane that touches all horospheres at the vertices of~$f$
  externally. (See Figure~\ref{fig:poly}: The points closest to
  $s_{\infty}$ are the highest points in the hemispheres.) Using this
  fact, it is not difficult to see that the local convexity of the
  edge $e$ in $P$ is equivalent to the local Delaunay condition (lD2)
  of Theorem~\ref{thm:localdel}.
  
  (r2) Since we assume that $P$ is a three-dimensional polyhedron,
  $\Deltilo$ is a triangulation of a closed disk. Now (r2b) follows by
  decomposing $P$ into ideal tetrahedra as shown in
  Figure~\ref{fig:idealtet}, one tetrahedron for each triangle in
  $\Deltilo$. The horosphere $s_{\infty}$ intersects these tetrahedra
  in euclidean triangles with side lengths~$\elltil$ determined
  by~\eqref{eq:elltil}. This implies~\eqref{eq:Thetatileq}
  and~\eqref{eq:Thetatilineq} because $P$ intersects $s_{\infty}$ in a
  convex euclidean polygon.
\end{proof}

\begin{proposition}[realizable coordinates $\rightarrow$ ideal polyhedron]
  \label{prop:realizable2poly}
  Let $(\Deltil,\lamtil)$ be realizable coordinates of a surface
  $S\in\Teich_{0,n}$. Let $\Deltilo$ be the subcomplex of $\Deltil$
  defined in Definition~\ref{def:realizable}~(r2).
  
  (i) If $\Deltilo$ is a triangulation of a closed disk, one obtains a
  polyhedral realization of $S$ as follows: Construct a decorated
  ideal tetrahedron as shown in Figure~\ref{fig:idealtet} for each
  triangle of $\Deltilo$. These ideal tetrahedra fit
  together to form a polyhedron that realizes $S$.

  (ii) If $\Deltilo$ is a linear graph then $\Deltil$ is a
  decomposition of $S$ into partially decorated ideal triangles that
  fit together to form a realization of~$S$ as two-sided ideal
  polygon.
\end{proposition}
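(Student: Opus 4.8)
The plan is to realize the construction sketched around Figure~\ref{fig:idealtet} in the upper half-space model, with $v_\infty$ at the point $\infty$, and then to read off convexity and the induced metric directly. I treat the two cases of Definition~\ref{def:realizable}~(r2) separately.

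\emph{Case (i): $\Deltilo$ triangulates a disk.} Since $\Deltil$ is a Delaunay triangulation by (r1), Lemma~\ref{lem:triang} gives the triangle inequalities \eqref{eq:triang} for $\elltil$ on every $t\in T_{\Deltilo}$; hence, as in the proof of Theorem~\ref{thm:idealeucdel}, each such $t$ bounds a decorated ideal tetrahedron as in Figure~\ref{fig:idealtet}, with $s_\infty$ at height $1$ cutting out a euclidean triangle of side lengths $\elltil_{e_i(t)}$ and with base edge invariants $\lamtil_{e_i(t)}$. Equip $\Deltilo$ with the piecewise euclidean metric making each $t$ into such a triangle. By \eqref{eq:Thetatileq} this metric is flat at every interior vertex, so---$\Deltilo$ being simply connected---it develops by a locally isometric immersion $\phi$ into $\mathbb{C}$ (identified with the horosphere $s_\infty$), unique up to a euclidean motion. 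Discrete Gauss--Bonnet together with \eqref{eq:Thetatilineq} shows that $\phi(\partial\Deltilo)$ is a closed polygonal curve whose exterior angles $\pi-\Thetatil_v$ are nonnegative and sum to $2\pi$, hence it bounds a convex polygon $Q$ (genuinely two-dimensional, as $T_{\Deltilo}\neq\emptyset$); being a local homeomorphism that is injective on the boundary, $\phi$ is then, by a degree argument, an embedding onto $\overline Q$.

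Over each $\phi(t)\subseteq\mathbb{C}$ I place the ideal tetrahedron with apex $\infty$ and base $\phi(v_1(t)),\phi(v_2(t)),\phi(v_3(t))$, whose ``bottom'' face is the hemisphere through these three points restricted to $\phi(t)$. Two such hemispheres over a shared edge restrict along that edge to the same semicircle (the one having the edge as diameter), so the bottom faces assemble into the graph of a single continuous function over $\overline Q$; together with the vertical geodesic half-planes over $\partial Q$ they bound a region $P$. I claim $P$ is a convex ideal polyhedron realizing $S$. Convexity along the vertical edges over $\partial Q$ is convexity of $Q$; convexity along an interior edge $\phi(e)$ is equivalent, just as in the proof of Proposition~\ref{prop:poly2realizable}, to the local Delaunay condition (lD2) of Theorem~\ref{thm:localdel} at $e$, which holds because $\Deltil$ is Delaunay; all dihedral angles being at most $\pi$ and $P$ a topological ball, $P$ is convex. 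Its ideal vertices are $\infty=v_\infty$ and the $\phi(v)$, $v\in V\setminus\{v_\infty\}$: those with $v$ on $\partial\Deltilo$ are vertices of $Q$, and those with $v$ interior lie where several bottom faces meet, condition \eqref{eq:Thetatileq} being exactly what makes $\phi(v)$ lie in the interior of $Q$ rather than producing a fold. Finally, with the decoration $s_\infty$ at height $1$ and the $s_v$ ($v\ne v_\infty$) tangent to it, the induced decorated metric on $\partial P$ has triangulation $\Deltil$ and horocycle invariant $\lamtil_e$ along every edge $e$ not meeting $v_\infty$; letting $s_\infty$ recede toward $v_\infty$ turns $\partial P$ into the partially decorated surface with generalized Penner coordinates $(\Deltil,\lamtil)$, which is $S$ by (r1). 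Hence $P$ realizes $S$.

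\emph{Case (ii): $\Deltilo$ is a linear graph.} Then $T_{\Deltilo}=\emptyset$, every triangle of $\Deltil$ is incident with $v_\infty$, and $\Deltilo$ is a path $v_1-\cdots-v_m$. I lay its vertices on a line, $\phi(v_i)\in\mathbb{R}\subseteq\mathbb{C}$, with consecutive distances $\elltil_{v_iv_{i+1}}$, and let $R\subseteq H^2$ be the ideal polygon with vertices $\infty,\phi(v_1),\dots,\phi(v_m)$, triangulated by the fan from $\infty$. Doubling $R$ along its edges gives a two-sided ideal polygon $P$; decorating $s_\infty$ at height $1$ and the $s_{\phi(v_i)}$ tangent to it, the edge invariants along $v_iv_{i+1}$ are $e^{\lamtil_{v_iv_{i+1}}/2}=\elltil_{v_iv_{i+1}}$, and exactly as in Case (i) the surface $\partial P$ is the partially decorated surface with coordinates $(\Deltil,\lamtil)$, i.e.\ $S$. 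So $P$ is the required two-sided polygon realization. \emph{Expected main obstacle:} the crux is the geometric step in Case (i)---showing, via the angle conditions \eqref{eq:Thetatileq}--\eqref{eq:Thetatilineq} and discrete Gauss--Bonnet, that the flat disk $(\Deltilo,\elltil)$ develops to an \emph{embedded convex} euclidean polygon and that coning it to $\infty$ yields a genuinely convex polyhedron; the remaining points (matching of adjacent bottom hemispheres along edges, edge convexity $\Leftrightarrow$ the Delaunay condition, and identification of the intrinsic metric with $S$) are local computations in the half-space model.
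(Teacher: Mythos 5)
Your proof is correct and follows essentially the same route the paper indicates in its (deliberately omitted) proof sketch: tetrahedra exist by Lemma~\ref{lem:triang}, they fit together because of (r2b), and convexity comes from~\eqref{eq:Thetatilineq} together with the Delaunay property, exactly as in the discussion around Proposition~\ref{prop:poly2realizable}. You merely supply the details the paper leaves out (the development of the flat disk onto an embedded convex polygon via Gauss--Bonnet and a degree argument, the matching of adjacent hemispherical faces, and the identification of $\partial P$ with $S$ through generalized Penner coordinates), and these are all sound.
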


We omit a detailed proof. The tetrahedra in (i) exist by
Lemma~\ref{lem:triang}. They fit together to form an ideal polyhedron
realizing $S$ by (r2b). That the polyhedron is convex follows from
inequality~\eqref{eq:Thetatilineq} and from the fact that~$\Deltil$ is
a Delaunay triangulation.

The realizable coordinates $(\Deltil,\lamtil)$ with distinguished
vertex $v_{\infty}$ that are obtained from a polyhedron or two-sided
polygon by Proposition~\ref{prop:poly2realizable} are not uniquely
determined:
\begin{compactitem}
\item Non-triangular faces that are not incident with
  $v_{\infty}$ may be triangulated in different ways. 
\item A different choice of horosphere $s_{\infty}$ leads to
  realizable coordinates
  $(\Deltil,\lamtil+h\mathbf{1}_{E_{\Deltil}})$ for some $h\in\R$.
\end{compactitem}
But these are the only sources of ambiguity: If $(\Deltil,\lamtil)$
and $(\Deltil',\lamtil')$ are both realizable coordinates obtained
from the same polyhedron or two-sided polygon with the same
distinguished vertex $v_{\infty}$ by
Proposition~\ref{prop:poly2realizable}, then
\begin{compactitem}[(u2)]
\item[(u1)] $\Deltil$ and $\Deltil'$ are both adjusted Delaunay
  triangulations of the same ideal Delaunay decomposition,
\item[(u2)]
  $\lamtil'=\tau_{\Deltil,\Deltil'}(\lamtil+h\mathbf{1}_{E_{\Deltil}})$
  for some $h\in\R$.
\end{compactitem}
Conversely, the polyhedra obtained from different realizable
coordinates $(\Deltil,\lamtil)$ and $(\Deltil',\lamtil')$ with the
same distinguished vertex $v_{\infty}$ are congruent (as polyhedra
marked by $(S_{0},V)$) if and only if conditions (u1) and (u2) are
satisfied.

\begin{definition}[equivalent realizable coordinates]
  \label{def:realizequiv}
  Realizable coordinates $(\Deltil,\lamtil)$ and $(\Deltil',\lamtil')$
  with distinguished vertex $v_{\infty}$ are \emph{equivalent} if they
  satisfy conditions (u1) and (u2).
\end{definition}

Realizable coordinates are equivalent if and only if they
correspond to congruent realizations.

\section{The variational principle}
\label{sec:variational}

In this section we present a variational principle
(see Theorem~\ref{thm:variational}) for Problem~\ref{prob:realize2} of
finding realizable coordinates. The variational principle involves the
function~$\Ecalbar^{v_{\infty}}_{\Delta,\lambda}(u)$
(see Definition~\ref{def:Ecalbar}). The variables
$u\in\R^{V\setminus\{v_{\infty}\}}$ parametrize a part of the extended
fiber of $\widetilde{\Teich}_{g,n}$ over the surface
$(\Delta,\lambda)$. More precisely, they parametrize the horocycle
decorations of that surface with missing horocycle at
$v_{\infty}$. The variables are subject to bounds constraints, which
ensure that the horocycles do not intersect an arbitrary but fixed
horocycle at $v_{\infty}$. The definition of
$\Ecalbar^{v_{\infty}}_{\Delta,\lambda}(u)$ requires some
preparation. After the following brief summary, a detailed account
begins with Definition~\ref{def:f}.

The function $f(x_{1},x_{2},x_{3})$ (see Definition~\ref{def:f}) provides
a variational encoding of euclidean trigonometry: If the variables
$x_{i}$ are the logarithmic side lengths of a euclidean triangle, then
the partial derivatives of $f$ are its angles
(see Proposition~\ref{prop:f}). 

Using this building block, the function
$\lambda\mapsto\Hsf_{\Theta}(\Delta,\lambda)$
(see Definition~\ref{def:Hsf}) is defined on an open subset $A_{\Delta}$
of the decorated {\Tm} space $\decTeich_{g,n}$. The function
$\Esf_{\Theta,\Delta,\lambda}(u)$ (see Definition~\ref{def:Esf}) is the
restriction of $\Hsf_{\Theta}(\Delta,\,\cdot\,)$ to the intersection
of $A_{\Delta}$ with the fiber of $\decTeich_{g,n}$ over the surface
$(\Delta,\lambda)$. This function was already used in a previous
article (see Remark~\ref{rem:Esf}).

Next we extend the function $\Hsf_{\Theta}(\Delta,\lambda)$ to the
whole decorated \Tm{} space and the function
$\Esf_{\Theta,\Delta,\lambda}(u)$ to a whole fiber by adapting the
triangulation appropriately. To this end, consider the restriction of
$\Hsf_{\Theta}(\Delta,\,\cdot\,)$ to the closed Penner cell of all
$\lambda\in\R^{E_{\Delta}}$ for which $\Delta$ is a Delaunay
triangulation of the decorated surface~$(\Delta,\lambda)$. For
different triangulations $\Delta$, these restrictions of
$\Hsf_{\Theta}(\Delta,\,\cdot\,)$ fit together to define a
$C^{2}$ function $\Hcal_{\Theta}$ on the whole decorated {\Tm}
space~$\decTeich_{g,n}$ (see Corollary~\ref{cor:Hcal}). We denote by
$\Hcal_{\Theta,\Delta}(\lambda)$ the representation of
$\Hcal_{\Theta}$ in the global Penner coordinate system belonging to
the ideal triangulation $\Delta$ (see Definition and
Proposition~\ref{defprop:Hcal}). The convex
function~$\Ecal_{\Theta,\Delta,\lambda}(u)$
(see Definition~\ref{def:Ecal}) is the restriction of
$\Hcal_{\Theta,\Delta}(\lambda)$ to the fiber of $\decTeich_{g,n}$
over $(\Delta,\lambda)$. Finally,
$\Ecalbar^{v_{\infty}}_{\Delta,\lambda}(u)$ is obtained by setting
$\Theta_{v_{\infty}}=0$ and $\Theta_{v}=2\pi$ for $v\not=v_{\infty}$,
and taking the limit $u_{v_{\infty}}\rightarrow+\infty$
(see Definition~\ref{def:Ecalbar}).

\begin{figure}
  \labellist
  \small\hair 2pt
  \pinlabel {$x_{1}$} [l] at 241 120
  \pinlabel {$x_{2}$} [b] at 120 240
  \pinlabel {$\Acal$} [ ] at 137 142
  \endlabellist
  \centering
  \includegraphics[width=0.4\linewidth]{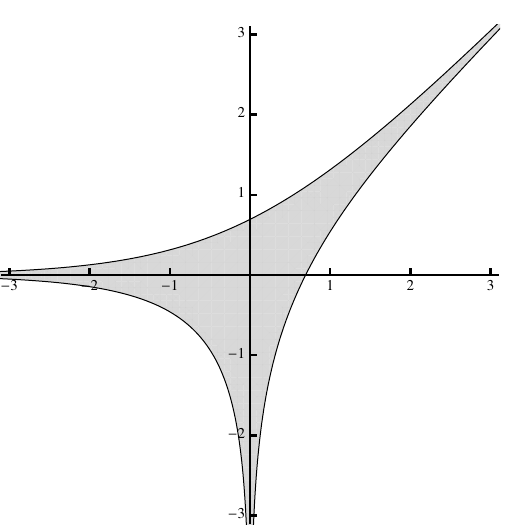}
  \caption{Intersection of the domain $\Acal$ with the plane
    $x_{3}=0$. The domain~$\Acal$ is invariant with respect to translations in
    the scaling direction $(1,1,1)$.}
  \label{fig:Acal}
\end{figure}

\begin{definition}[the triangle function $f$]
  \label{def:f}
  Let $f$ be the
  function
  \begin{gather}
    \notag
    f:\R^{3}\supseteq\Acal\longrightarrow\R,\\
    f(x_{1},x_{2},x_{3})=\alpha_{1} x_{1} + \alpha_{2} x_{2} +
    \alpha_{3} x_{3} + \lob(\alpha_{1}) + \lob(\alpha_{2}) +
    \lob(\alpha_{3}),
    \label{eq:f}
  \end{gather}
  where 
  \begin{equation*}
    \Acal=
    \left\{
      \begin{pmatrix}
        x_{1}\\x_{2}\\x_{3}
      \end{pmatrix}
      \in\R^{3}
      \ \middle|\ 
    \begin{aligned}
      e^{x_{1}}&>e^{x_{2}}+e^{x_{3}},\\
      e^{x_{2}}&>e^{x_{3}}+e^{x_{1}},\\
      e^{x_{3}}&>e^{x_{1}}+e^{x_{2}}
    \end{aligned}
    \right\}
  \end{equation*}
  (see Figure~\ref{fig:Acal}), $\lob$ is Milnor's Lobachevsky
  function~\cite{milnor82},
  \begin{equation}
    \label{eq:ML}
    \lob(\alpha) = -\int_{0}^{\alpha}\log\big|2\sin(t)\big|\,dt
  \end{equation}
  (see Figure~\ref{fig:lobachevskyplot}), and $\alpha_{j}$ are the
  angles in a triangle with sides~$e^{x_{j}}$ as shown in
  Figure~\ref{fig:triangle_f}.
\end{definition}

\begin{figure}
  \begin{minipage}[c]{0.6\linewidth}
    \labellist
    \small\hair 2pt
    \pinlabel {$\alpha$} [ ] at 280 81.5
    \pinlabel {$y$} [ ] at 28 168
    \pinlabel {$y=\lob(\alpha)$} [ ] at 130 150
    \endlabellist
    \centering
    \includegraphics[width=0.8\linewidth]{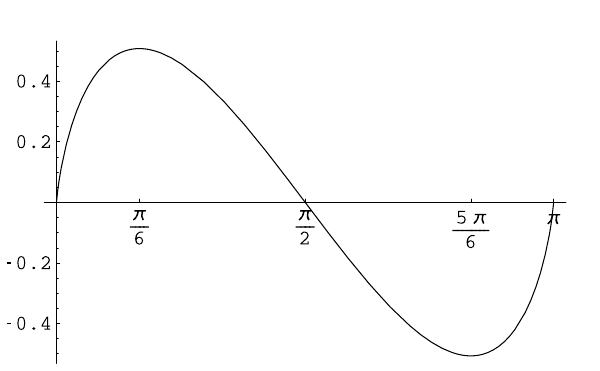}
  \end{minipage}
  \hfill
  \begin{minipage}[c]{0.35\linewidth}
    \labellist
    \small\hair 2pt
    \pinlabel {$e^{x_{1}}$} [bl] at 61 35
    \pinlabel {$e^{x_{2}}$} [br] <1pt,-1pt> at 23 31
    \pinlabel {$e^{x_{3}}$} [t] at 44 5
    \pinlabel {$\alpha_{1}$} [ ] at 13 7
    \pinlabel {$\alpha_{2}$} [ ] at 71 14.5
    \pinlabel {$\alpha_{3}$} [ ] at 43 45
    \endlabellist
    \centering
    \includegraphics{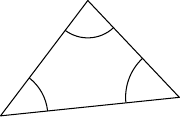}
  \end{minipage}\\
  \begin{minipage}[t]{0.6\linewidth}
    \caption{Milnor's Lobachevsky function $\lob$ is $\pi$-periodic,
      odd, and analytic except at $\alpha\in\pi\,\Z$, where the
      derivative tends to $+\infty$.}
    \label{fig:lobachevskyplot}
  \end{minipage}
  \hfill
  \begin{minipage}[t]{0.35\linewidth}
    \caption{Triangle with sides $e^{x_{1}}$, $e^{x_{2}}$, $e^{x_{3}}$}
    \label{fig:triangle_f}    
  \end{minipage}
\end{figure}

\begin{proposition}[properties of $f$]
  \label{prop:f}
  (i) The function $f$ is analytic, and it satisfies the \emph{scaling
    relation}
  \begin{equation}
    \label{eq:fscale}
    f(x_{1}+h,x_{2}+h,x_{3}+h) = f(x_{1},x_{2},x_{3}) + \pi h.
  \end{equation}
  
  (ii) The partial derivatives of $f$ are
  \begin{equation}
    \label{eq:df}
    \frac{\partial f}{\partial x_{i}}=\alpha_{i}.
  \end{equation}

  (iii) The second derivative of $f$ is 
  \begin{equation}
    \label{eq:d2f}
    D^{2}f|_{x}=\cot\alpha_{1}(dx_{2}-dx_{3})^{2}+\cot\alpha_{2}(dx_{3}-dx_{1})^{2}
    +\cot\alpha_{3}(dx_{1}-dx_{2})^{2}
  \end{equation}
  
  (iv) The second derivative $D^{2}f|_{x}$ is positive semidefinite
  with one-dimensional kernel spanned by $(1,1,1)$. In particular, $f$
  is locally convex.
\end{proposition}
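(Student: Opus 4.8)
The plan is to establish the four parts essentially in order, bootstrapping from the explicit formula~\eqref{eq:f} together with standard facts about the Lobachevsky function $\lob$ and the law of cosines.

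For~(i), the scaling relation~\eqref{eq:fscale} is the quickest: if $(x_1,x_2,x_3)\in\Acal$ then so is $(x_1+h,x_2+h,x_3+h)$ (the defining inequalities are scale-invariant, as noted in Figure~\ref{fig:Acal}), and rescaling all three sides of a euclidean triangle by $e^h$ does not change its angles $\alpha_i$. Hence in~\eqref{eq:f} the Lobachevsky terms are unchanged, and $f(x_1+h,x_2+h,x_3+h)-f(x_1,x_2,x_3)=(\alpha_1+\alpha_2+\alpha_3)h=\pi h$, since $f(x_1,x_2,x_3)$ with $h=0$ and the angle-sum identity give the claim (one checks the $h=0$ case is $f$ itself, so the difference is $\pi h$, and evaluating both sides shows the stated equality). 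Analyticity: each $\alpha_i$ is an analytic function of $(x_1,x_2,x_3)$ on $\Acal$ by the law of cosines, e.g.\ $\cos\alpha_1=(e^{2x_2}+e^{2x_3}-e^{2x_1})/(2e^{x_2+x_3})$, and on $\Acal$ one has $\alpha_i\in(0,\pi)$ strictly, so $\arccos$ is analytic there; and $\lob$ is analytic away from $\pi\Z$ (Figure~\ref{fig:lobachevskyplot}), so the composition $\lob(\alpha_i)$ is analytic on $\Acal$. Thus $f$ is analytic.

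For~(ii), differentiate~\eqref{eq:f} directly. Since $\lob'(\alpha)=-\log|2\sin\alpha|$, the chain rule gives
\begin{equation*}
  \frac{\partial f}{\partial x_i}
  =\alpha_i
  +\sum_{k=1}^{3}\Bigl(x_k-\log\bigl|2\sin\alpha_k\bigr|\Bigr)\frac{\partial\alpha_k}{\partial x_i}.
\end{equation*}
So the identity~\eqref{eq:df} reduces to showing that the second sum vanishes, i.e.\ that $\sum_k\bigl(x_k-\log|2\sin\alpha_k|\bigr)\,d\alpha_k=0$ as a $1$-form on $\Acal$. This is exactly the content of Milnor's computation underlying the Schläfli-type identity for the triangle; concretely, using $\sum_k d\alpha_k=0$ (angle sum is constant) one is left to show $\sum_k\bigl(x_k-\log(2\sin\alpha_k)\bigr)d\alpha_k = \sum_k x_k\,d\alpha_k -\sum_k \log(2\sin\alpha_k)\,d\alpha_k=0$, and the law of sines $e^{x_i}/\sin\alpha_i=2R$ gives $x_i=\log(2R)+\log\sin\alpha_i$, so $x_k-\log(2\sin\alpha_k)=\log R-\log 2$, a constant independent of $k$; pulling it out, $\sum_k(\text{const})\,d\alpha_k=(\text{const})\,d(\alpha_1+\alpha_2+\alpha_3)=0$. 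I expect this small law-of-sines observation to be the one genuinely non-mechanical step.

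For~(iii), differentiate~\eqref{eq:df} once more: $D^2f=\sum_i d\alpha_i\otimes dx_i$, symmetrized. One computes $d\alpha_i$ from the law of cosines (or more slickly from $\cot\alpha_i$ and the area); the standard identity is $d\alpha_1=\tfrac{1}{2}\bigl(\cot\alpha_2\,d(x_3-x_1)+\cot\alpha_3\,d(x_1-x_2)\bigr)$ up to bookkeeping, and substituting and collecting terms yields~\eqref{eq:d2f}; this is a routine but slightly lengthy trigonometric calculation which I would carry out by expressing everything in terms of cotangents using $\partial\alpha_i/\partial x_j$ formulas and then verifying the coefficient of each $(dx_j-dx_k)^2$. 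Finally~(iv) is immediate from~\eqref{eq:d2f}: on $\Acal$ all angles lie in $(0,\pi)$, but a triangle can have at most one obtuse angle, so at most one of $\cot\alpha_1,\cot\alpha_2,\cot\alpha_3$ is negative. A short linear-algebra lemma (or direct estimate using $\cot\alpha_1\cot\alpha_2+\cot\alpha_2\cot\alpha_3+\cot\alpha_3\cot\alpha_1=1$ for angles summing to $\pi$) shows the quadratic form $\cot\alpha_1\,p_1^2+\cot\alpha_2\,p_2^2+\cot\alpha_3\,p_3^2$ with $p_1=dx_2-dx_3$, $p_2=dx_3-dx_1$, $p_3=dx_1-dx_2$ is positive semidefinite; its kernel is where $p_1=p_2=p_3=0$, i.e.\ $dx_1=dx_2=dx_3$, spanned by $(1,1,1)$. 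Hence $D^2f$ is positive semidefinite with one-dimensional kernel $\langle(1,1,1)\rangle$, and $f$ is locally convex. The main obstacle, such as it is, is organizing the trigonometric bookkeeping in~(iii) cleanly; parts~(i), (ii), and~(iv) are short once the law-of-sines reduction in~(ii) is in hand.
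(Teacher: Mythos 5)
Your plan is correct and matches the approach of the reference (Bobenko--Pinkall--Springborn, Sec.~4.2) that the paper cites for this proposition: the law-of-sines observation in~(ii) is the genuinely non-mechanical step, and the remainder is direct differentiation plus the cotangent identity for angles summing to $\pi$. Two small points: the constant in your law-of-sines reduction is $\log R$ rather than $\log R-\log 2$ (harmless, since only its constancy in $k$ matters); and the scaling relation you actually derive, $f(x_1+h,x_2+h,x_3+h)=f(x_1,x_2,x_3)+\pi h$, is the intended statement---equation~\eqref{eq:fscale} as printed omits the term $f(x_1,x_2,x_3)$ on the right, as one sees by comparing with the analogous relation~\eqref{eq:Hsfscale}.
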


\noindent%
See~\cite[Sec.~4.2]{bobenko15} for proofs.

\begin{definition}[the function $\Hsf_{\Theta}$]
  \label{def:Hsf}
  For a triangulation $\Delta$ of $(S_{g},V)$ and $\Theta\in\R^{V}$,
  let 
  \begin{gather}
    \notag
    \Hsf_{\Theta}(\Delta,\,\cdot\,):A_{\Delta}\longrightarrow\R,
    \\
    \begin{split}
      \Hsf_\Theta{}(\Delta,\lambda)= 
      \sum_{t\in T_{\Delta}} 
      2f \Big(
      \frac{\lambda_{e_{1}(t)}}{2}, 
      \frac{\lambda_{e_{2}(t)}}{2},
      \frac{\lambda_{e_{3}(t)}}{2} 
      \Big) 
      &-\pi\sum_{e\in E_{\Delta}}\lambda_{e} \\
      &-\sum_{v\in V}\Theta_{v}\log c_{v}(\Delta,\lambda),
    \end{split}
    \label{eq:Hsf}
  \end{gather}
  where $A_{\Delta}\subseteq\R^{E_{\Delta}}$ is the subset
  \begin{equation}
    \label{eq:ADelta}
    A_{\Delta}=\big\{\lambda\in\R^{E_{\Delta}}
    \;\big|\;
    \tfrac{1}{2}\big(
    \lambda_{e_{1}(t)},
    \lambda_{e_{2}(t)},
    \lambda_{e_{3}(t)}
    \big)\in \Acal
    \text{ for all }
    t\in T_{\Delta}
    \big\},
  \end{equation}
  and $c_{v}$ is defined by~\eqref{eq:c}.
\end{definition}

\begin{proposition}[properties of $\Hsf_{\Theta}$]
  \label{prop:Hsf}
  (i) The function $\Hsf_{\Theta}(\Delta,\,\cdot\,)$ is analytic and
  satisfies the scaling relation
  \begin{equation}
    \label{eq:Hsfscale}
    \Hsf_{\Theta}(\Delta,\lambda+h\,\mathbf{1}_{E_{\Delta}})
    =\Hsf_{\Theta}(\Delta,\lambda)+
    h\,\pi
    \Big(
    |T_{\Delta}|-|E_{\Delta}|+\frac{1}{2\pi}\sum_{v\in V}\Theta_{v}
    \Big).
  \end{equation}
  
  (ii) For $\Theta=0$, the function $\Hsf_{0}(\Delta,\,\cdot\,)$ is
  convex, and the kernel of the positive semi-definite second
  derivative is one-dimensional and spanned by
  $\mathbf{1}_{E_{\Delta}}$.
\end{proposition}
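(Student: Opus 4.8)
The plan is to deduce all three claims from the properties of the triangle function $f$ in Proposition~\ref{prop:f}, handling the three summands of~\eqref{eq:Hsf} separately. For analyticity, observe that on $A_{\Delta}$ each triple $\tfrac12(\lambda_{e_{1}(t)},\lambda_{e_{2}(t)},\lambda_{e_{3}(t)})$ lies in $\Acal$ by the definition~\eqref{eq:ADelta}, so each summand $2f\big(\tfrac12\lambda_{e_{1}(t)},\tfrac12\lambda_{e_{2}(t)},\tfrac12\lambda_{e_{3}(t)}\big)$ is the composition of the analytic function $f$ (Proposition~\ref{prop:f}(i)) with a linear map and hence analytic; the term $-\pi\sum_{e}\lambda_{e}$ is linear; and each $c_{v}(\Delta,\lambda)$ in~\eqref{eq:c} is a finite sum of exponentials of linear forms in $\lambda$, hence analytic and strictly positive on all of $\R^{E_{\Delta}}$, so $\log c_{v}(\Delta,\lambda)$ is analytic. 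Therefore $\Hsf_{\Theta}(\Delta,\,\cdot\,)$ is analytic on $A_{\Delta}$.

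For the scaling relation~\eqref{eq:Hsfscale}, I would substitute $\lambda+h\,\mathbf{1}_{E_{\Delta}}$ and collect the contributions term by term. In each triangle summand the argument is translated by $(\tfrac h2,\tfrac h2,\tfrac h2)$, so by the scaling relation~\eqref{eq:fscale} for $f$ the term $2f$ gains $\pi h$, contributing $+\pi h\,|T_{\Delta}|$ in total. The term $-\pi\sum_{e}\lambda_{e}$ changes by $-\pi h\,|E_{\Delta}|$. In~\eqref{eq:c} each exponent $\tfrac12(\lambda_{\hat{e}_{i}(v)}-\lambda_{e_{i}(v)}-\lambda_{e_{i+1}(v)})$ carries one plus sign and two minus signs, so it drops by $\tfrac h2$; hence $c_{v}\mapsto e^{-h/2}c_{v}$, each $\log c_{v}$ drops by $\tfrac h2$, and $-\sum_{v}\Theta_{v}\log c_{v}$ gains $\tfrac h2\sum_{v}\Theta_{v}$. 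Summing the three contributions gives exactly~\eqref{eq:Hsfscale}.

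For part (ii), with $\Theta=0$ the last sum disappears and $\Hsf_{0}(\Delta,\lambda)=\sum_{t\in T_{\Delta}}2\,(f\circ L_{t})(\lambda)-\pi\sum_{e}\lambda_{e}$, where $L_{t}\colon\R^{E_{\Delta}}\to\R^{3}$, $\lambda\mapsto\tfrac12(\lambda_{e_{1}(t)},\lambda_{e_{2}(t)},\lambda_{e_{3}(t)})$, is linear. The linear term has vanishing Hessian, so $D^{2}\Hsf_{0}|_{\lambda}=\sum_{t}2\,L_{t}^{*}\big(D^{2}f|_{L_{t}\lambda}\big)$ is a sum of pullbacks of the positive semidefinite forms of Proposition~\ref{prop:f}(iv); hence it is positive semidefinite and $\Hsf_{0}(\Delta,\,\cdot\,)$ is convex. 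Since a finite sum of positive semidefinite forms vanishes on a vector $\mu$ if and only if each summand does, and $L_{t}^{*}\big(D^{2}f|_{L_{t}\lambda}\big)$ vanishes on $\mu$ if and only if $L_{t}\mu\in\ker D^{2}f|_{L_{t}\lambda}=\mathrm{span}(1,1,1)$ (Proposition~\ref{prop:f}(iv)), i.e.\ if and only if $\mu_{e_{1}(t)}=\mu_{e_{2}(t)}=\mu_{e_{3}(t)}$, a vector $\mu$ lies in $\ker D^{2}\Hsf_{0}|_{\lambda}$ exactly when it is constant on the edge set of every triangle of $\Delta$. Because $S_{g}$ is connected, any two triangles are linked by a chain of triangles sharing an edge, which forces such a $\mu$ to be globally constant; hence $\ker D^{2}\Hsf_{0}|_{\lambda}=\mathrm{span}(\mathbf{1}_{E_{\Delta}})$, which is one-dimensional.

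None of these steps is deep once Proposition~\ref{prop:f} is available. The parts that need the most care are the sign bookkeeping in the scaling computation, where the asymmetric appearance of the edges in~\eqref{eq:c} is what produces the $\tfrac{1}{2\pi}\sum_{v}\Theta_{v}$ term, and the connectedness argument that pins the kernel down to $\mathrm{span}(\mathbf{1}_{E_{\Delta}})$.
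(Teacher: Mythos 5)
Your argument is correct and is exactly the argument the paper sketches in its one-line proof (``This follows immediately from the corresponding properties of $f$ and the scaling behavior of $c_{v}$''), only written out in full: analyticity and scaling from $f$ and its scaling relation~\eqref{eq:fscale} applied triangle-by-triangle, the $\log c_{v}$ shift by $-h/2$, convexity from $D^{2}f\geq 0$ pulled back along the linear maps $L_{t}$, and the kernel identified via the connectedness of the dual graph. Incidentally, you have silently corrected a typo in the paper's equation~\eqref{eq:cscale}: as printed it should read $\log c_{v}(\Delta,\lambda+h\,\mathbf{1}_{E_{\Delta}})=\log c_{v}(\Delta,\lambda)-\tfrac{1}{2}h$ (equivalently $c_{v}\mapsto e^{-h/2}c_{v}$), which is what your computation gives.
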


This follows immediately from the corresponding properties of $f$ and
the scaling behavior of $c_{v}$:
\begin{equation}
  \label{eq:cscale}
  c_{v}(\Delta,\lambda+h\,\mathbf{1}_{E_{\Delta}})=
  c_{v}(\Delta,\lambda)-\tfrac{1}{2}h.
\end{equation}

\begin{definition}[the function $\Esf_{\Theta,\Delta,\lambda}$]
  \label{def:Esf}
  Let $\Esf_{\Theta,\Delta,\lambda}$ be the restriction of
  $\Hsf_{\Theta}$ to the fiber of~$\decTeich_{g,n}$
  over~$(\Delta,\lambda)$ parametrized by $\Lambda^{\Delta,\lambda}$
  as defined by~\eqref{eq:Lambda}, that is,
  \begin{gather}
    \notag
    \Esf_{\Theta,\Delta,\lambda}:
    \{u\in\R^{V}\,|\, \Lambda^{\Delta,\lambda}(u)\in A_{\Delta}\}
    \longrightarrow\R\\
      \Esf_{\Theta,\Delta,\lambda}(u)=\Hsf_{\Theta}(\Delta,\Lambda^{\Delta,\lambda}(u)).
    \label{eq:Esf}
  \end{gather}
\end{definition}

\begin{remark}
  \label{rem:Esf}
  The function $\Esf_{\Theta,\Delta,\lambda}(u)$ is up to an additive
  constant equal to the function $E_{\mathsf{T},\Theta,\lambda}(u)$
  (with $\mathsf{T}=\Delta$) defined in the previous article
  \cite[eq.~(4-6)]{bobenko15}. In that article, the domain of
  $E_{\mathsf{T},\Theta,\lambda}(u)$ is extended to the whole $\R^{V}$
  by exploiting the fact that the function $f$ can be extended to a
  convex function on the whole $\R^{3}$. Here, we do not need this
  extension. Instead, we will extend the functions
  $\Hsf_{\Theta}(\Delta,\,\cdot\,)$ and hence also
  $\Esf_{\Theta,\Delta,\lambda}(u)$ by changing the triangulation
  (see Definition and Proposition~\ref{defprop:Hcal} and
  Definition~\ref{def:Ecal}).
\end{remark}

\begin{proposition}[properties of $\Esf_{\Theta,\Delta,\lambda}$]
  \label{prop:Esf}
  (i) The function $\Esf_{\Theta,\Delta,\lambda}$ is analytic and satisfies
  the scaling relation
  \begin{equation}
    \label{Esfscale}
    \begin{aligned}
      \Esf_{\Theta,\Delta,\lambda}(u+h\,\mathbf{1}_{V})
      =\Esf_{\Theta,\Delta,\lambda}(u)
      +h\,2\pi
      \Big(
      |T_{\Delta}|-|E_{\Delta}|+\frac{1}{2\pi}\sum_{v\in V}\Theta_{v}
      \Big).
    \end{aligned}
  \end{equation}
  
  (ii) For a vertex $v\in V$, the partial derivative of
  $\Esf_{\Delta,\lambda}(u)$ with respect to $u_{v}$ is
  \begin{equation}
    \label{eq:Esfpartial}
    \frac{\partial}{\partial u_{v}}\,\Esf_{\Theta,\Delta,\lambda}(u)
    =\Theta_{v}-\widetilde{\Theta}_{v},
  \end{equation}
  where $\widetilde{\Theta}_{v}$ is the angle sum around vertex $v$
  measured in the piecewise euclidean metric that turns every triangle
  in $T_{\Delta}$ into a euclidean triangle and every edge
  $e\in E_{\Delta}$ into a straight line segment of length
  $\elltil_{e}$, where $\elltil$ is defined by~\eqref{eq:elltil} with
  \begin{equation*}
    \lamtil=\Lambda^{\Delta,\lambda}(u).
  \end{equation*}

  (iii) The second derivative of $\Esf_{\Theta,\Delta,\lambda}$ at $u$ is
  \begin{equation}
    \label{eq:d2Esf}
      \sum_{v,v'\in V}
      \frac{\partial^{2}\Esf_{\Theta,\Delta,\lambda}} {\partial
        u_{v}\partial u_{v'}} \,du_{v}\,du_{v'} =
      \frac{1}{4}\,\sum_{e\in E_{\Delta}}
      (\cot\alpha_{e}(u)+\cot\alpha_{e}'(u))
      \big(du_{v_{1}(e)}-du_{v_{2}(e)}\big)^{2},
  \end{equation}
  where $\alpha_{e}(u)$ and $\alpha'_{e}(u)$ are the angles opposite
  edge $e$ in the piecewise euclidean metric described in (ii).

  (iv) The function $\Esf_{\Theta,\Delta,\lambda}$ is locally convex. The
  second derivative is positive semidefinite with one-dimensional
  kernel spanned by $\mathbf{1}_{V}$. 
\end{proposition}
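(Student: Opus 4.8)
The proof is a bookkeeping exercise resting on the observation that, by Definition~\ref{def:Esf}, the function $\Esf_{\Theta,\Delta,\lambda}$ is the composition of the analytic function $\Hsf_{\Theta}(\Delta,\,\cdot\,)$ with the affine map $\Lambda^{\Delta,\lambda}$ of~\eqref{eq:Lambda}; all four claims then follow from the chain rule together with Propositions~\ref{prop:f} and~\ref{prop:Hsf}. For part~(i), the domain $\{u\in\R^{V}\mid\Lambda^{\Delta,\lambda}(u)\in A_{\Delta}\}$ is open as the preimage of the open set $A_{\Delta}$ under a continuous map, and a composition of analytic maps is analytic. The scaling relation follows by noting that $\Lambda^{\Delta,\lambda}_{e}(u+h\mathbf{1}_{V})=\Lambda^{\Delta,\lambda}_{e}(u)+2h$ for every edge $e$ (each edge has two ends) and substituting $2h$ for $h$ in the scaling relation~\eqref{eq:Hsfscale} of $\Hsf_{\Theta}$.

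For parts~(ii) and~(iii) I would differentiate the three summands of~\eqref{eq:Hsf}, pulled back through $\Lambda^{\Delta,\lambda}$, one at a time. The cusp summand is the simplest: substituting $\lambda_{e}=\Lambda^{\Delta,\lambda}_{e}(u)$ into~\eqref{eq:c}, the contributions of the variables $u_{v'}$ at the neighbours of $v$ cancel, leaving $c_{v}(\Delta,\Lambda^{\Delta,\lambda}(u))=e^{-u_{v}}c_{v}(\Delta,\lambda)$ (compare Proposition~\ref{prop:fiber}), so $-\sum_{v}\Theta_{v}\log c_{v}$ restricts to $\sum_{v}\Theta_{v}u_{v}$ plus a constant; this contributes the term $\Theta_{v}$ to $\partial_{u_{v}}\Esf_{\Theta,\Delta,\lambda}$ and nothing to the second derivative, which already explains why no $\Theta$ appears in~\eqref{eq:d2Esf}. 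The linear summand $-\pi\sum_{e}\lambda_{e}$ contributes $-\pi\deg(v)$ to the first derivative and nothing to the Hessian. For the triangle summand $\sum_{t}2f$ I would invoke Proposition~\ref{prop:f}(ii): in a triangle $t$ incident with $v$, the two edges of $t$ at $v$ are precisely the ones whose $\Lambda$-coordinates depend on $u_{v}$, and by~\eqref{eq:df} the relevant partial derivatives are the angles opposite those two edges, i.e.\ the two angles of $t$ that are \emph{not} at $v$; their sum is $\pi$ minus the angle of $t$ at $v$. Summing over the triangles at $v$ and using the standard surface-triangulation identity (number of triangles at $v$) $=\deg(v)$ yields $\pi\deg(v)-\widetilde\Theta_{v}$, and adding the three contributions gives $\partial_{u_{v}}\Esf_{\Theta,\Delta,\lambda}=\Theta_{v}-\widetilde\Theta_{v}$. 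For the Hessian only $\sum_{t}2f$ survives; here I would substitute $x_{i}=\tfrac12\Lambda^{\Delta,\lambda}_{e_{i}(t)}(u)$ into~\eqref{eq:d2f} and observe that, for cyclic $(i,j,k)$, $dx_{j}-dx_{k}=\pm\tfrac12\bigl(du_{v_{1}(e_{i}(t))}-du_{v_{2}(e_{i}(t))}\bigr)$ because $e_{j}(t)$ and $e_{k}(t)$ meet in the vertex opposite $e_{i}(t)$, so each term $\cot\alpha_{i}\,(dx_{j}-dx_{k})^{2}$ becomes a multiple of $\bigl(du_{v_{1}(e_{i}(t))}-du_{v_{2}(e_{i}(t))}\bigr)^{2}$; regrouping the sum over all triangles by edges gives~\eqref{eq:d2Esf}.

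Part~(iv) is then essentially free. Writing the Hessian per triangle rather than per edge, as in the previous step, the contribution of each triangle is the pullback of $D^{2}f$ under an affine map, hence positive semidefinite by Proposition~\ref{prop:f}(iv); a sum of positive semidefinite forms is positive semidefinite, so $\Esf_{\Theta,\Delta,\lambda}$ is locally convex. A vector $\dot u\in\R^{V}$ lies in the kernel of the Hessian iff it lies in the kernel of each triangle's contribution, i.e.\ iff its image under every one of these affine maps lies in $\ker D^{2}f=\R\,(1,1,1)$; on a single triangle this forces $\dot u$ to be constant on that triangle's three vertices, and since $\Delta$ is a connected triangulation of $S_{g}$ this forces $\dot u\in\R\,\mathbf{1}_{V}$, which does lie in the kernel (as also follows directly from the scaling relation of part~(i)). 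The one genuinely delicate point in the argument is the combinatorial bookkeeping in part~(ii) — in particular the cancellation of the two $\pi\deg(v)$ terms, which depends on the surface-triangulation identity relating vertex degrees, triangle counts and corner counts, and, relatedly, on being careful with edges whose two endpoints coincide; everything else is a routine application of the results quoted above.
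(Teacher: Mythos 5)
Your proposal is correct and takes essentially the same route as the paper, which derives (i) and (iv) from Proposition~\ref{prop:Hsf} together with the scaling behavior~\eqref{eq:Lambdascale} of $\Lambda^{\Delta,\lambda}$ and equation~\eqref{eq:cu}, and proves (ii) and (iii) by the same direct calculation from~\eqref{eq:df}, \eqref{eq:d2f}, \eqref{eq:Hsf} and~\eqref{eq:cu} (your identity $\tfrac{\partial}{\partial u_{i}}2f=\pi-\alpha_{i}$ is exactly the paper's~\eqref{eq:dfdu}). The only detail you leave implicit is the numerical constant in~\eqref{eq:d2Esf}, which comes out of the same substitution $dx_{j}-dx_{k}=\pm\tfrac12\bigl(du_{v_{1}(e_{i}(t))}-du_{v_{2}(e_{i}(t))}\bigr)$ that you already wrote down.
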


\begin{remark}
  \label{rem:d2Esf}
  The second derivative of $\Esf_{\Theta,\Delta,\lambda}$ is the
  Dirichlet energy of the piecewise linear function taking the value
  $u_{v}$ at vertex $v$~\cite[eq.~(8)]{duffin59} \cite{pinkall93}. A
  satisfactory explanation for this coincidence seems to be unknown.
\end{remark}

\begin{proof}[Proof of Proposition~\ref{prop:Esf}]
  (i) and (iv) follow from the corresponding properties of
  $\Hsf_{\Theta}$ (see Proposition~\ref{prop:Hsf}), the scaling behavior
  of $\Lambda^{\Delta,\lambda}$,
  \begin{equation}
    \label{eq:Lambdascale}
    \Lambda^{\Delta,\lambda}(u+h\,\mathbf{1}_{V})=
    \Lambda^{\Delta,\lambda}(u)+2h\cdot \mathbf{1}_{E_{\Delta}},
  \end{equation}
  and the equation
  \begin{equation}
    \label{eq:cu}
    \log c_{v}(\Delta,\Lambda^{\Delta,\lambda}(u))
    =\log c_{v}(\Delta,\lambda)-u_{v}.
  \end{equation}
  
  (ii) follows from \eqref{eq:df} and \eqref{eq:cu} by a direct
  calculation. Note that if $(i,j,k)$ is a permutation of $(1,2,3)$
  and
  \begin{equation*}
    x_{i}=\frac{\lambda_{i}+u_{j}+u_{k}}{2}
  \end{equation*}
  then, with the notation of Figure~\ref{fig:ideal_triang},
  \begin{equation}
    \label{eq:dfdu}
    \frac{\partial}{\partial u_{i}}\,
    2f(x_{1},x_{2},x_{3})
    =\alpha_{j}+\alpha_{k}=\pi-\alpha_{i}.
  \end{equation}

  (iii) follows from equations~\eqref{eq:d2f}, \eqref{eq:Hsf},
  and~\eqref{eq:cu} by a direct calculation (see
  also~\cite[Prop.~4.1.6]{bobenko15}).
\end{proof}

The functions $\Hsf_{\Theta}(\Delta,\,\cdot\,)$, restricted to the respective
Penner cells of $\Delta$, fit together to form a single
$C^{2}$ function on the decorated \Tm{} space:

\begin{defprop}[the function $\Hcal_{\Theta,\Delta}$]
  \label{defprop:Hcal}
  For a triangulation $\Delta$ of $(S_{g},V)$, let
  $\Hcal_{\Theta,\Delta}$ be the function
  \begin{gather}
    \notag
    \Hcal_{\Theta,\Delta}:\R^{E_{\Delta}}\longrightarrow\R,\\
    \Hcal_{\Theta,\Delta}(\lambda)=\Hsf_{\Theta}(\Del(\Delta,\lambda))
    \label{eq:Hcal}\,.
  \end{gather}
  ($\Del(\Delta,\lambda)$ is defined in Definition~\ref{def:Del}.)
  The function $\Hcal_{\Theta,\Delta}$ is well defined, twice
  continuously differentiable, analytic in each open Penner cell of
  $\decTeich_{g,n}$, and satisfies the scaling relations
  \begin{equation}
    \label{eq:Hcalscale}
    \Hcal_{\Theta,\Delta}(\lambda+h\,\mathbf{1}_{E_{\Delta}})
    =\Hcal_{\Theta,\Delta}(\lambda)
    +h\,\pi
    \Big(
    |T_{\Delta}|-|E_{\Delta}|+\frac{1}{2\pi}\sum_{v\in V}\Theta_{v}
    \Big).
  \end{equation}
\end{defprop}

\begin{corollary}
  \label{cor:Hcal}
  There is a $C^{2}$ function
  $\Hcal_{\Theta}:\decTeich_{g,n}\rightarrow\R$ on the decorated {\Tm}
  space, which is analytic on each open Penner cell, such that for
  each ideal triangulation $\Delta$, the function
  $\Hcal_{\Theta,\Delta}$ is the representation of $\Hcal_{\Theta}$ in
  the global Penner coordinate chart belonging to~$\Delta$. The
  function $\Hcal_{\Theta}$ is invariant under the action of the
  mapping class group.
\end{corollary}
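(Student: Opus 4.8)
The corollary is essentially a bookkeeping consequence of Definition and Proposition~\ref{defprop:Hcal}, so the main task is to assemble the local data (one function $\Hcal_{\Theta,\Delta}$ per triangulation $\Delta$, living on the global chart $\R^{E_{\Delta}}$) into a single globally defined function on $\decTeich_{g,n}$, and then to verify the three claimed properties (regularity, analyticity on open Penner cells, mapping-class-group invariance). The key observation that makes the gluing work is that the chart transition maps $\tau_{\Delta,\Deltil}$ of~\eqref{eq:tau} are precisely the coordinate changes on $\decTeich_{g,n}$, so it suffices to show that the family $(\Hcal_{\Theta,\Delta})_{\Delta}$ is \emph{compatible} in the sense that
\begin{equation*}
  \Hcal_{\Theta,\Deltil}\circ\tau_{\Delta,\Deltil}=\Hcal_{\Theta,\Delta}
  \qquad\text{for all pairs of triangulations }\Delta,\Deltil.
\end{equation*}

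First I would prove this compatibility. Unwinding the definition~\eqref{eq:Hcal}, both sides equal $\Hsf_{\Theta}(\Del(\,\cdot\,))$ evaluated on the \emph{same} decorated surface $S$: the point is that $(\Delta,\lambda)$ and $(\Deltil,\tau_{\Delta,\Deltil}(\lambda))$ are, by definition of the chart transition, two sets of Penner coordinates for one and the same point of $\decTeich_{g,n}$, and $\Del$ produces Penner coordinates for that point with respect to \emph{some} ideal Delaunay triangulation. The only subtlety is that $\Del$ is not single-valued when the Delaunay decomposition is not a triangulation; but Definition and Proposition~\ref{defprop:Hcal} already asserts that $\Hcal_{\Theta,\Delta}$ is well defined, i.e.\ that $\Hsf_{\Theta}$ takes the same value on any two ideal Delaunay triangulations of a fixed decorated surface (this is the substantive content proved there). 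Granting that, the value of $\Hcal_{\Theta,\Delta}(\lambda)$ depends only on the underlying point of $\decTeich_{g,n}$, which is exactly the compatibility identity. Hence the assignment
\begin{equation*}
  \Hcal_{\Theta}\colon\decTeich_{g,n}\longrightarrow\R,\qquad
  \Hcal_{\Theta}(S)=\Hcal_{\Theta,\Delta}(\lambda)\text{ for any Penner coordinates }(\Delta,\lambda)\text{ of }S,
\end{equation*}
is a well-defined function, and by construction $\Hcal_{\Theta,\Delta}$ is its representation in the Penner chart of $\Delta$.

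The regularity and analyticity statements are then immediate: $\decTeich_{g,n}$ is covered by the global Penner charts, in each of which $\Hcal_{\Theta}$ is represented by $\Hcal_{\Theta,\Delta}$, which Definition and Proposition~\ref{defprop:Hcal} states is $C^{2}$ and analytic in the interior of each Penner cell; since the transition maps $\tau_{\Delta,\Deltil}$ are real-analytic diffeomorphisms, these properties are chart-independent, so $\Hcal_{\Theta}$ is globally $C^{2}$ and analytic on each open Penner cell $\mathcal{C}(\Delta)^{\circ}$. For mapping-class-group invariance, recall that an element $\varphi$ of the mapping class group acts on $\decTeich_{g,n}$ and sends the Penner chart of $\Delta$ to that of $\varphi(\Delta)$, with $\varphi$ carrying the edge set $E_{\Delta}$ bijectively onto $E_{\varphi(\Delta)}$ and hence identifying $\R^{E_{\Delta}}$ with $\R^{E_{\varphi(\Delta)}}$; under this identification the combinatorial sums defining $\Hsf_{\Theta}$ in~\eqref{eq:Hsf} — over triangles, over edges, and over vertices with weights $\Theta_{v}$ — are simply relabelled, so $\Hsf_{\Theta}(\varphi(\Delta),\,\cdot\,)=\Hsf_{\Theta}(\Delta,\,\cdot\,)$ and $\Del$ commutes with $\varphi$ as well (a triangulation is Delaunay for a decorated surface iff its image under $\varphi$ is Delaunay for the image surface). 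Therefore $\Hcal_{\Theta,\varphi(\Delta)}\circ\varphi=\Hcal_{\Theta,\Delta}$, which is exactly invariance of $\Hcal_{\Theta}$.

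**Main obstacle.** There is essentially no hard analytic step left — the $C^{2}$ and piecewise-analyticity content is already packaged in Definition and Proposition~\ref{defprop:Hcal}, and Section~\ref{sec:difflemma} is where that work is done. The one place that needs care is making sure the well-definedness of $\Hcal_{\Theta,\Delta}$ (independence of the choice of Delaunay triangulation produced by $\Del$) is genuinely available and not circular: one should cite it from Definition and Proposition~\ref{defprop:Hcal} rather than reprove it here. The remaining points are routine chart-compatibility arguments, so the corollary follows in a few lines once compatibility is stated.
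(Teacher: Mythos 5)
Your argument is correct and is essentially the paper's (implicit) one: the corollary is stated without separate proof precisely because, as you observe, the well-definedness in Definition and Proposition~\ref{defprop:Hcal} (resting on Lemma~\ref{lem:Hsf}) means $\Hcal_{\Theta,\Delta}(\lambda)$ depends only on the underlying point of $\decTeich_{g,n}$, so the charts glue, regularity and cell-wise analyticity transfer through the analytic transitions $\tau_{\Delta,\Deltil}$, and mapping class group invariance follows from the purely combinatorial form of \eqref{eq:Hsf} and the equivariance of $\Del$. You also correctly identify that the only point needing care is citing, rather than reproving, the independence of the choice of Delaunay triangulation.
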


\begin{proof}[Proof of Proposition~\ref{defprop:Hcal}]
  The right hand side of equation~\eqref{eq:Hcal} is well-defined for
  all $\lambda\in\R^{E_{\Delta}}$ because
  $(\Deltil,\lamtil)=\Del(\Delta,\lambda)$ implies
  $\lamtil\in A_{\Deltil}$ by Lemma~\ref{lem:triang}.

  The function $\Hcal_{\Theta,\Delta}$ is analytic on open
  Penner cells because the functions $\Hsf_{\Theta}(\Deltil,\,\cdot\,)$ are
  analytic for all triangulations $\Deltil$, and so are the chart
  transition functions $\tau_{\Delta,\Deltil}:\lambda\mapsto\lamtil$
  for Penner coordinates with respect to different triangulations
  $\Delta$ and $\Deltil$.

  If the Delaunay triangulation for $(\Delta,\lambda)$ is not unique,
  then the value of the right hand side of~\eqref{eq:Hcal} and its
  first two derivatives are independent of the choice of Delaunay
  triangulation. This follows from Lemma~\ref{lem:Hsf}, which we defer
  to Section~\ref{sec:difflemma}. It also implies that
  $\Hcal_{\Theta,\Delta}$ is twice continuously differentiable.

  The scaling relation~\eqref{eq:Hcalscale} follows from the scaling
  relation for $\Hsf_{\Theta}$ (see Proposition~\ref{prop:Hsf}).
\end{proof}

\begin{definition}
  \label{def:Ecal}
  Let $\Ecal_{\Theta,\Delta,\lambda}$ be the restriction of
  $\Hcal_{\Theta,\Delta}$ to the fiber of~$\decTeich_{g,n}$
  over~$(\Delta,\lambda)$ parametrized by $\Lambda^{\Delta,\lambda}$,
  that is,
  \begin{gather}
    \notag
    \Ecal_{\Theta,\Delta,\lambda}:\R^{V}\longrightarrow\R,\\
    \label{eq:Ecal}
    \Ecal_{\Theta,\Delta,\lambda}(u)
    =\Hcal_{\Theta,\Delta}(\Lambda^{\Delta,\lambda}(u)).
  \end{gather}
\end{definition}

\begin{proposition}[properties of $\Ecal_{\Theta,\Delta,\lambda}$]
  \label{prop:Ecal}
  (i) The function $\Ecal_{\Theta,\Delta,\lambda}$ is twice
  continuously differentiable, analytic in the interior of each Penner
  cell, and satisfies the scaling relations
  \begin{equation}
    \label{eq:Ecalscale}
    \Ecal_{\Theta,\Delta,\lambda}(u+h\,\mathbf{1}_{V})
    =\Ecal_{\Theta,\Delta,\lambda}(u)
    +h\,2\pi
    \Big(
    |T_{\Delta}|-|E_{\Delta}|+\frac{1}{2\pi}\sum_{v\in V}\Theta_{v}
    \Big).
  \end{equation}

  (ii) The partial derivatives are
  \begin{equation}
    \label{eq:Ecalpartial}
    \frac{\partial}{\partial u_{v}}\,
    \Ecal_{\Theta,\Delta,\lambda}(u)
    =\Theta_{v}-\widetilde{\Theta}_{v},
  \end{equation}
  where $\widetilde{\Theta}_{v}$ is the total angle at $v$ when
  $S_{g}$ is equipped with the piecewise euclidean metric that turns
  every triangle in $T_{\Deltil}$ into a euclidean triangle and every
  edge $e\in E_{\Deltil}$ into a straight line segment of length
  $\tilde{\ell}_{e}$, where $\elltil$ is defined by \eqref{eq:elltil}
  and
  \begin{equation}
    \label{eq:DeltillamtilEcal}
    (\Deltil,\lamtil)=
    \Del(\Delta,\Lambda^{\Delta,\lambda}(u)).
  \end{equation}

  (iii) The second derivative of $\Ecal_{\Theta,\Delta,\lambda}$ at
  $u$ satisfies
    \begin{equation}
    \label{eq:d2Ecal}
    D^{2}\Ecal_{\Theta,\Delta,\lambda}\big|_{u}
    =D^{2}\Esf_{\Theta,\Deltil,\lamtil}\big|_{u},
  \end{equation}
  with $(\Deltil,\lamtil)$ defined by~\eqref{eq:DeltillamtilEcal}.

  (iv) The function $\Ecal_{\Theta,\Delta,\lambda}$ is convex. The
  second derivative is positive semidefinite with one-dimensional
  kernel spanned by~$\mathbf{1}_{V}$.
\end{proposition}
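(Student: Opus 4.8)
The plan is to deduce all four statements from the corresponding properties of $\Hcal_{\Theta,\Delta}$ established in Definition and Proposition~\ref{defprop:Hcal} and Corollary~\ref{cor:Hcal}, together with the affine nature of the parametrization $\Lambda^{\Delta,\lambda}$. Since $\Ecal_{\Theta,\Delta,\lambda}=\Hcal_{\Theta,\Delta}\circ\Lambda^{\Delta,\lambda}$ and $\Lambda^{\Delta,\lambda}$ is an affine map $\R^{V}\to\R^{E_{\Delta}}$ given by $u\mapsto\lambda+(u_{v_{1}(e)}+u_{v_{2}(e)})_{e}$, the composition is automatically $C^{2}$ wherever $\Hcal_{\Theta,\Delta}$ is, and it is analytic wherever $\Hcal_{\Theta,\Delta}$ is analytic, namely in the interior of each Penner cell; this gives the regularity claims in~(i). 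For the scaling relation in~(i), substitute $u+h\mathbf{1}_{V}$ into the definition and use~\eqref{eq:Lambdascale}, which gives $\Lambda^{\Delta,\lambda}(u+h\mathbf{1}_{V})=\Lambda^{\Delta,\lambda}(u)+2h\,\mathbf{1}_{E_{\Delta}}$; then apply the scaling relation~\eqref{eq:Hcalscale} for $\Hcal_{\Theta,\Delta}$ with shift $2h$, which produces exactly the factor $2\pi(|T_{\Delta}|-|E_{\Delta}|+\frac{1}{2\pi}\sum_{v}\Theta_{v})$ claimed in~\eqref{eq:Ecalscale}.

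For~(ii) and~(iii), the key point is that by Definition~\ref{def:Del} and Proposition~\ref{defprop:Hcal} we have $\Hcal_{\Theta,\Delta}(\mu)=\Hsf_{\Theta}(\Deltil,\tau_{\Delta,\Deltil}(\mu))$ where $(\Deltil,\tau_{\Delta,\Deltil}(\mu))=\Del(\Delta,\mu)$, and $\Hsf_{\Theta}(\Deltil,\,\cdot\,)$ describes the same hyperbolic surface with the Delaunay triangulation $\Deltil$. Setting $\mu=\Lambda^{\Delta,\lambda}(u)$ and writing $(\Deltil,\lamtil)$ as in~\eqref{eq:DeltillamtilEcal}, one checks that $\Lambda^{\Deltil,\tau_{\Delta,\Deltil}(\lambda)}(u)=\tau_{\Delta,\Deltil}(\Lambda^{\Delta,\lambda}(u))=\lamtil$: both sides are the Penner coordinates with respect to $\Deltil$ of the surface obtained from $(\Delta,\lambda)$ by moving each horocycle at $v$ a distance $u_{v}$, and this is a coordinate-independent description (Proposition~\ref{prop:fiber}). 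Consequently, in a neighborhood of $u$ where the Delaunay triangulation is $\Deltil$, we have $\Ecal_{\Theta,\Delta,\lambda}=\Esf_{\Theta,\Deltil,\lamtil_{0}}$ up to the chart transition, where $\lamtil_{0}=\tau_{\Delta,\Deltil}(\lambda)$; hence the first and second derivatives of $\Ecal_{\Theta,\Delta,\lambda}$ at $u$ coincide with those of $\Esf_{\Theta,\Deltil,\lamtil_{0}}$ at $u$. Now~\eqref{eq:Ecalpartial} follows from Proposition~\ref{prop:Esf}~(ii) applied with triangulation $\Deltil$, and~\eqref{eq:d2Ecal} follows from Proposition~\ref{prop:Esf}~(iii), with the piecewise euclidean metric being precisely the one built from the edge lengths $\elltil=e^{\lamtil/2}$, which are well defined and satisfy the triangle inequalities by Lemma~\ref{lem:triang} because $\Deltil$ is Delaunay.

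Finally,~(iv) follows from~(iii): by Proposition~\ref{prop:Esf}~(iv) the second derivative $D^{2}\Esf_{\Theta,\Deltil,\lamtil}\big|_{u}$ is positive semidefinite with one-dimensional kernel spanned by $\mathbf{1}_{V}$ (equivalently, it is a Dirichlet energy $\frac{1}{4}\sum_{e}(\cot\alpha_{e}+\cot\alpha'_{e})(du_{v_{1}(e)}-du_{v_{2}(e)})^{2}$, which is nonnegative by the local Delaunay inequality $\alpha_{e}+\alpha'_{e}\le\pi$ and which vanishes only on constant $u$ since the $1$-skeleton of $\Deltil$ is connected); since this holds on a dense open set and $\Ecal_{\Theta,\Delta,\lambda}$ is $C^{2}$, the Hessian is positive semidefinite everywhere, so the function is convex. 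I expect the main obstacle to be the careful bookkeeping in step~(ii)/(iii): one must verify that passing to the Delaunay triangulation commutes with the fiber parametrization $\Lambda$ and with the chart transitions $\tau$, and that the identity $D^{2}\Ecal=D^{2}\Esf_{\cdot,\Deltil,\lamtil}$ is consistent across the boundaries between adjacent Penner cells — but this consistency is exactly what Lemma~\ref{lem:Hsf} (invoked in the proof of Proposition~\ref{defprop:Hcal}) guarantees, so no new work beyond correct invocation is needed.
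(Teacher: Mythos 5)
Your proposal is correct and follows essentially the same route as the paper: it identifies $\Ecal_{\Theta,\Delta,\lambda}$ locally with $\Esf_{\Theta,\Deltil,\tau_{\Delta,\Deltil}(\lambda)}$ via the commutation $\tau_{\Delta,\Deltil}\circ\Lambda^{\Delta,\lambda}=\Lambda^{\Deltil,\tau_{\Delta,\Deltil}(\lambda)}$, deduces (ii) and (iii) from Proposition~\ref{prop:Esf}, and gets (i) and (iv) from the scaling and regularity of $\Hcal_{\Theta,\Delta}$ together with the $C^{2}$ matching across Penner cells provided by Lemma~\ref{lem:Hsf}. The only difference is cosmetic: your convexity argument via density of the open Penner cells is a slight elaboration of what the paper leaves implicit.
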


\begin{remark}
  \label{rem:d2Ecal}
  By equation~\eqref{eq:d2Ecal}, one can calculate the second
  derivative $D^{2}\Ecal_{\Theta,\Delta,\lambda}|_{u}$ by applying the
  flip algorithm (see Theorem~\ref{thm:flip}) to determine
  $(\Deltil,\lamtil)$ and then equation~\eqref{eq:d2Esf} for the
  second derivative of $\Esf_{\Theta,\Deltil,\lamtil}$.
\end{remark}

\begin{proof}
  The claims follow from the corresponding properties of
  $\Hcal_{\Theta,\Delta}$ and $\Esf_{\Theta,\Delta,\lambda}$. Note
  that the scaling action of $u\in\R^{V}$ on $\R^{E_{\Delta}}$
  commutes with the chart transition functions
  $\tau_{\Delta,\Deltil}$:
  \begin{equation}
    \label{eq:taucommute}
    \tau_{\Delta,\Deltil}(\Lambda^{\Delta,\lambda}(u))
    =\Lambda^{\Deltil,\tau_{\Delta,\Deltil}(\lambda)}(u).
  \end{equation}
  So with
  \begin{equation*}
    (\Deltil,\lamtil)=\Del(\Delta,\Lambda^{\Delta,\lambda}(u))
    =(\Deltil,\tau_{\Delta,\Deltil}\Lambda^{\Delta,\lambda}(u))
    =(\Deltil,\Lambda^{\Deltil,\tau_{\Delta,\Deltil}(\lambda)}(u))
  \end{equation*}
  one obtains
  \begin{equation}
    \label{eq:EcalEsf}
    \begin{split}
      \Ecal_{\Theta,\Delta,\lambda}(u)
      &=\Hcal(\Lambda^{\Delta,\lambda}(u))
      =\Hsf_{\Theta}(\Del(\Delta,\Lambda^{\Delta,\lambda}(u)))
      =\Hsf_{\Theta}(\Deltil,\Lambda^{\Deltil,\tau_{\Delta,\Deltil}(\lambda)}(u))\\
      &=\Esf_{\Theta,\Deltil,\tau_{\Delta,\Deltil}(\lambda)}(u).
    \end{split}
  \end{equation}

  Statements (ii) and (iii) follow with~\eqref{eq:EcalEsf} from the
  corresponding properties of $\Esf_{\Theta,\Delta,\lambda}$ (see
  Proposition~\ref{prop:Esf}).
\end{proof}

To formulate the variational principle of
Theorem~\ref{thm:variational} and for the variational existence proofs
(see Sections~\ref{sec:proof} and~\ref{sec:highergenus}) we need to
consider limits of $\Ecal_{\Theta,\Delta,\lambda}(u)$ as some
variables $u_{v}$ tend to infinity. It is enough to consider
$\Ecal_{0,\Delta,\lambda}$, that is, the case $\Theta=0$, because by
equation~\eqref{eq:cu},
\begin{equation}
  \label{eq:EcalThetaEcal0}
  \Ecal_{\Theta,\Delta,\lambda}(u)=\Ecal_{0,\Delta,\lambda}(u)
  -\sum_{v\in V}
  \Theta_{v}\big(\log c_{v}(\Delta,\lambda)-u_{v}\big).
\end{equation}

\begin{lemma}[limits of $\Ecal_{0,\Delta,\lambda}$]
  \label{lem:limEcal0}
  Let $\ubar\in\Rbar^{V}$, with $\Rbar$ defined by~\eqref{eq:Rbar},
  and assume $\ubar_{v}<+\infty$ for at least one vertex $v\in V$. Then
  \begin{equation}
    \label{eq:limEcal0}
    \lim_{u\rightarrow\ubar}\Ecal_{0,\Delta,\lambda}(u)=
    \sum_{t\in T_{\Deltilo}}
    2f \Big( \frac{\lamtil_{e_{1}(t)}}{2},
    \frac{\lamtil_{e_{2}(t)}}{2}, \frac{\lamtil_{e_{3}(t)}}{2} \Big)
    -
    \pi\sum_{e\in E_{\Deltilo}}\lamtil_{e},
  \end{equation}
  where 
  \begin{equation*}
    (\Deltil,\lamtil)=\Del(\Delta,\lambda,u), 
  \end{equation*}
  and $\Deltilo$ is the subcomplex of the adjusted Delaunay
  triangulation $\Deltil$ consisting of all closed cells that are not
  incident with an undecorated vertex, that is,
  \begin{align*}
    V_{\Deltilo}&=\{v\in V\;|\;u_{v}<+\infty\},\\
    E_{\Deltilo}&=\{e\in E_{\Deltil}\;|\;
                  \text{vertices of $e$ are contained in }V_{\Deltilo} 
                  \},\\
    T_{\Deltilo}&=\{t\in T_{\Deltil}\;|\;
                  \text{vertices of $t$ are contained in }V_{\Deltilo} 
                  \}.
  \end{align*}
\end{lemma}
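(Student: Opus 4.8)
The plan is to compute the limit by tracking which terms of $\Hsf_{0}(\Deltil,\lamtil)$ remain finite as $u\to\ubar$ and which blow up or cancel. By equation~\eqref{eq:EcalEsf} in the proof of Proposition~\ref{prop:Ecal} (with $\Theta=0$) and by Proposition~\ref{prop:Dellim}, for $u$ close enough to $\ubar$ the adjusted Delaunay triangulation $\Deltil=\Del(\Delta,\lambda,\ubar)$ is also an ideal Delaunay triangulation for the decorated surface $(\Delta,\Lambda^{\Delta,\lambda}(u))$; more precisely, $\Deltil$ stays combinatorially fixed while the horocycle at each vertex $v$ with $\ubar_{v}=+\infty$ recedes to the cusp. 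Hence for $u$ near $\ubar$ we may write, using Definition~\ref{def:Hsf} with $\Theta=0$,
\begin{equation*}
  \Ecal_{0,\Delta,\lambda}(u)=\Hsf_{0}(\Deltil,\lamtil(u))
  =\sum_{t\in T_{\Deltil}}2f\Big(\tfrac{\lamtil_{e_{1}(t)}(u)}{2},\tfrac{\lamtil_{e_{2}(t)}(u)}{2},\tfrac{\lamtil_{e_{3}(t)}(u)}{2}\Big)
  -\pi\sum_{e\in E_{\Deltil}}\lamtil_{e}(u),
\end{equation*}
where $\lamtil(u)=\Lambda^{\Deltil,\tau_{\Delta,\Deltil}(\lambda)}(u)$, so that $\lamtil_{e}(u)=\lamtil_{e}^{0}+u_{v_{1}(e)}+u_{v_{2}(e)}$ with $\lamtil^{0}=\tau_{\Delta,\Deltil}(\lambda)$ fixed.

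The key step is to split $T_{\Deltil}$ and $E_{\Deltil}$ according to incidence with the set $W=\{v\in V\mid \ubar_{v}=+\infty\}$ of ``vanishing'' vertices. For an edge $e\in E_{\Deltilo}$ (both endpoints decorated), $\lamtil_{e}(u)\to\lamtil_{e}^{0}+\ubar_{v_{1}(e)}+\ubar_{v_{2}(e)}=:\lamtil_{e}$ finitely; likewise every triangle $t\in T_{\Deltilo}$ contributes a finite limit $2f(\lamtil_{e_{1}(t)}/2,\lamtil_{e_{2}(t)}/2,\lamtil_{e_{3}(t)}/2)$, and these triangle terms are legitimate because $\Deltil$ being Delaunay forces the triangle inequalities on the $\elltil$ by Lemma~\ref{lem:triang}, so $\tfrac12(\lamtil_{e_1(t)},\lamtil_{e_2(t)},\lamtil_{e_3(t)})\in\Acal$ and $f$ is analytic there. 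It remains to show that the contributions of all cells incident with $W$ cancel in the limit. Because $\Deltil$ is an \emph{adjusted} Delaunay triangulation, every vertex $v\in W$ is the central vertex of a punctured face triangulated by the ``spokes'' from $v$ to its peripheral neighbours; so the triangles incident with $W$ form, around each $v\in W$, a fan of triangles each having exactly one vertex (namely $v$) or two vertices in $W$. For a triangle $t$ with exactly one vertex $v\in W$, say $t$ has vertices $v,v_{1},v_{2}$ with corresponding edges as in Figure~\ref{fig:ideal_triang}: the two edges at $v$ satisfy $\lamtil_{e}(u)\to+\infty$, while the opposite edge stays finite, and by equation~\eqref{eq:alpha} the arc length $\alpha$ of the horocycle at $v$ inside $t$ tends to $0$. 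One then checks that $2f(\tfrac{\lamtil_{1}}{2},\tfrac{\lamtil_{2}}{2},\tfrac{\lamtil_{3}}{2})-\pi(\text{contribution of the two edges at }v)$ has a finite limit, and that summing these finite limits over the fan around $v$, together with the term $-\pi\lamtil_{\hat e}$ for each spoke-opposite edge counted once, reconstructs exactly the punctured-face part that does \emph{not} appear on the right-hand side of~\eqref{eq:limEcal0}; the remaining genuinely divergent pieces must cancel.

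Concretely, I would carry out this bookkeeping using the asymptotics of $f$: writing $x_{i}=\tfrac12\lamtil_{e_{i}(t)}(u)$, when $x_{1}\to+\infty$ (the edge opposite $v_{1}$ stays bounded) while $x_{2},x_{3}$ also involve $u_{v}$, the identity $f(x+h,x_{2}+h,x_{3}+h)=\pi h$ from Proposition~\ref{prop:f}(i) together with $\partial f/\partial x_{i}=\alpha_{i}$ (Proposition~\ref{prop:f}(ii)) and $\lob(0)=\lob(\pi)=0$ gives, as the angle $\alpha$ at the vanishing vertex tends to $0$, the asymptotic $2f(x_{1},x_{2},x_{3})=\pi(x_{2}+x_{3})+o(1)$ — i.e. the divergent part of the triangle term is exactly $\pi$ times the sum of the half-lengths of the two edges at $v$. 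Since each such edge $e$ at $v$ is shared by two triangles of the fan and also appears once in $-\pi\sum_{E_{\Deltil}}\lamtil_{e}$, the coefficient of the divergent quantity $u_{v}$ (and of each $\lamtil_{e}^{0}$, $e$ at $v$) sums to $0$. After this cancellation only the $T_{\Deltilo}$ and $E_{\Deltilo}$ terms survive, giving~\eqref{eq:limEcal0}.

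The main obstacle I anticipate is the careful handling of triangles with \emph{two} vertices in $W$ (edges interior to a punctured face whose two endpoints are distinct undecorated cusps, possible when $|W|\ge 2$), and of edges of $\Deltil$ joining two vertices of $W$: one must verify both that the $f$-asymptotic above still applies (now $x_{i}$ may have two of the three arguments large, and one needs $\alpha\to 0$ at \emph{both} of those vertices, which follows from the punctured-face structure and Proposition~\ref{prop:punctured} forcing equal, minimal horocycle distances), and that the edge-count bookkeeping still produces an exact cancellation rather than a leftover finite constant. Verifying that no finite remainder is left over — i.e. that the $\lob$-terms and the subleading parts of $f$ at the vanishing vertices genuinely vanish in the limit rather than contributing a constant — is the delicate point, and I would do it by expressing everything in terms of the arc lengths $\alpha$ via equations~\eqref{eq:alpha} and~\eqref{eq:lambdafromalpha} and using continuity of $f$ up to the relevant part of $\partial\Acal$.
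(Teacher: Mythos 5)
Your overall strategy — split cells into those with all decorated vertices and those touching an undecorated vertex, then use asymptotics of $f$ to show that the divergent parts of the triangle terms are canceled by the $-\pi\lamtil_{e}$ edge terms — is essentially the paper's. Your limit $2f(x_{1},x_{2},x_{3})-\pi(x_{2}+x_{3})\to 0$ as $(x_{1},x_{2},x_{3})\to(\xbar_{1},+\infty,+\infty)$ matches the paper's limit~\eqref{eq:limf}. However, there is a genuine gap in the opening reduction, and it is not the one you anticipate at the end.

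You claim that Proposition~\ref{prop:Dellim} shows the adjusted Delaunay triangulation $\Deltil=\Del(\Delta,\lambda,\ubar)$ is an ideal Delaunay triangulation for $(\Delta,\Lambda^{\Delta,\lambda}(u))$ for all $u$ in a neighborhood of $\ubar$. It does not. Proposition~\ref{prop:Dellim} only covers $\tilde{u}$ that agree with $\ubar$ exactly on the finitely-decorated vertices (and are just sufficiently large on the rest), so it covers only a lower-dimensional family of approach directions to $\ubar$. As $u\to\ubar$ with all coordinates varying, the Delaunay triangulation can change, and it need not include the spokes of the adjusted triangulation: the punctured-face structure at $\ubar$ (peripheral vertices equidistant from the central vertex, Proposition~\ref{prop:punctured}) is a degenerate condition that is broken by generic perturbation of the finite coordinates. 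The paper addresses this by invoking Akiyoshi's finiteness theorem (Theorem~\ref{thm:akiyoshi}) to cover a neighborhood of $\ubar$ by finitely many closed sets, each consisting of the $u$ for which a fixed ideal triangulation $\Delbar$ is Delaunay, and then computing the limit along each such set separately. Because $\Delbar$ need not equal $\Deltil$, a third case arises that your framework has no room for: a triangle of $\Delbar$ inside a punctured Delaunay cell of $(\Delta,\lambda,\ubar)$ that is \emph{not} incident with the undecorated central vertex. Its $x$-coordinates converge to a point on $\partial\Acal$ rather than escaping to infinity, and one needs the separate boundary limit $f(x_{1},x_{2},x_{3})\to\pi\xbar_{1}$ where $e^{\xbar_{1}}=e^{\xbar_{2}}+e^{\xbar_{3}}$ to check that these terms match the corresponding $-\pi\lamtil_{e}$ contributions. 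Without this case analysis, the reduction to a single fixed triangulation is unjustified and the argument does not establish that the limit in~\eqref{eq:limEcal0} exists, let alone has the claimed value. (As for the issue you do flag, triangles with two undecorated vertices do not occur: the paper notes that any triangle of a Delaunay triangulation of a partially decorated surface has at most one vertex with missing horocycle, so that worry dissolves.)
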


\begin{corollary}
  \label{cor:limEcal}
  If $\Theta\geq 0$ and $\Theta_{v}>0$ for at least one $v\in V$
  with $\ubar_{v}=+\infty$, then 
  \begin{equation*}
    \lim_{u\rightarrow\ubar}\Ecal_{\Theta,\Delta,\lambda}(u)=+\infty.
  \end{equation*}
\end{corollary}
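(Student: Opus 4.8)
The plan is to derive everything from Lemma~\ref{lem:limEcal0} by means of the identity~\eqref{eq:EcalThetaEcal0}, which isolates the $\Theta$-dependence of $\Ecal_{\Theta,\Delta,\lambda}$ in a term that is linear in $u$. Rewriting~\eqref{eq:EcalThetaEcal0} as
\[
  \Ecal_{\Theta,\Delta,\lambda}(u)=\Ecal_{0,\Delta,\lambda}(u)+\sum_{v\in V}\Theta_{v}u_{v}-\sum_{v\in V}\Theta_{v}\log c_{v}(\Delta,\lambda),
\]
I would analyze the three summands separately as $u\to\ubar$. For the first summand, the standing hypothesis on $\ubar$ inherited from Lemma~\ref{lem:limEcal0} (at least one $\ubar_{v}<+\infty$) lets us apply that lemma, so $\Ecal_{0,\Delta,\lambda}(u)$ converges to the finite number on the right-hand side of~\eqref{eq:limEcal0}. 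The third summand does not depend on $u$ at all, and since $c_{v}(\Delta,\lambda)>0$ is the finite total horocycle length at $v$, this summand is a finite constant.

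It remains to see that the middle summand $\sum_{v}\Theta_{v}u_{v}$ diverges to $+\infty$. Split $V$ into the set of vertices with $\ubar_{v}<+\infty$ and the set with $\ubar_{v}=+\infty$. On the former, each term $\Theta_{v}u_{v}\to\Theta_{v}\ubar_{v}$ is finite because $\Theta\geq0$. On the latter, a vertex with $\Theta_{v}=0$ contributes the term $0$ identically, while for each vertex with $\Theta_{v}>0$ we have $\Theta_{v}u_{v}\to+\infty$; by hypothesis there is at least one such vertex. Hence $\sum_{v}\Theta_{v}u_{v}\to+\infty$, and adding the three contributions gives $\lim_{u\to\ubar}\Ecal_{\Theta,\Delta,\lambda}(u)=+\infty$, as claimed.

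Since all the substantive work—existence of the limit of $\Ecal_{0,\Delta,\lambda}$ and its identification with the sum over $\Deltilo$—is already contained in Lemma~\ref{lem:limEcal0}, I do not expect a genuine obstacle here. The only point that needs a moment's care is bookkeeping over which coordinates of $u$ diverge: one must observe that a divergent coordinate with $\Theta_{v}=0$ contributes nothing, so that positivity of $\Theta$ on a \emph{single} divergent coordinate is enough to force the divergence.
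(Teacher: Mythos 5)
Your proof is correct and is exactly the argument the paper has in mind: since this is stated as a corollary of Lemma~\ref{lem:limEcal0} immediately after equation~\eqref{eq:EcalThetaEcal0} is introduced, the intended route is precisely your decomposition into the finite limit from the lemma, the constant term, and the divergent linear term $\sum_v \Theta_v u_v$. You are also right to flag that the standing hypothesis $\ubar_v<+\infty$ for at least one $v$ is needed to invoke the lemma, and your bookkeeping—a divergent coordinate with $\Theta_v=0$ contributes nothing, so a single divergent coordinate with $\Theta_v>0$ suffices—is the only subtlety.
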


\begin{proof}[Proof of Lemma~\ref{lem:limEcal0}]
  By Akiyoshi's Theorem~\ref{thm:akiyoshi}, only finitely many ideal
  Delaunay decompositions arise from different decorations of the
  surface $(\Delta,\lambda)$. It is therefore enough to consider the
  limit of $\Ecal_{0,\Delta,\lambda}(u)$ as $u$ tends to $\ubar$ in
  the subset
  \begin{equation}
    \label{eq:usubset}
    \big\{u\in\R^{V}\;\big|\;
    \Delbar\text{ is an ideal Delaunay triangulation for }
    \big(\Delta,\Lambda^{\Delta,\lambda}(u)\big)
    \big\}\subseteq\R^{V}
   \end{equation}
   for some fixed ideal triangulation $\Delbar$. Then $\Delbar$ is
   also a Delaunay triangulation of the partially decorated surface
   $(\Delta,\lambda,\ubar)$, because the local Delaunay
   conditions~\eqref{eq:localpar} are non-strict inequalities, both
   sides of which extend continuously to $\Rbar^{V}$. In particular,
   $\Delbar$ and the adjusted Delaunay triangulation $\Deltil$ are
   ideal Delaunay triangulations of the same ideal Delaunay
   decomposition. For $u$ in the subset~\eqref{eq:usubset},
  \begin{multline}
    \label{eq:Ecal0H0}
    \Ecal_{0,\Delta,\lambda}(u)=\Hsf_{0}(\Delbar,\lambar(u))\\
    =\sum_{t\in T_{\Delbar}} 
      2f \Big(
      \frac{\lambar_{e_{1}(t)}(u)}{2}, 
      \frac{\lambar_{e_{2}(t)}(u)}{2},
      \frac{\lambar_{e_{3}(t)}(u)}{2} 
      \Big) 
      -\pi\sum_{e\in E_{\Delbar}}\lambar_{e}(u)
  \end{multline}
  where 
  \begin{equation*}
    \lambar(u)=\tau_{\Delta,\Delbar}\circ\Lambda^{\Delta,\lambda}(u).
  \end{equation*}
  In particular, for each triangle $t\in T_{\Delbar}$ and all $u$ in the
  subset~\eqref{eq:usubset}, 
  \begin{equation}
    \label{eq:lambartu}
    \tfrac{1}{2}
    \big(
    \lambar_{e_{1}(t)}(u), 
    \lambar_{e_{2}(t)}(u),
    \lambar_{e_{3}(t)}(u) 
    \big)
  \end{equation}
  is contained in $\Acal$ (see Definition~\ref{def:f}). There are
  three possibilities:
  \begin{compactenum}[(i)]
  \item Triangle $t$ is not contained in a punctured Delaunay cell of
    $(\Delta,\lambda,\ubar)$. Then~\eqref{eq:lambartu}
    converges to a point in $\Acal$.
  \item Triangle $t$ is contained in a punctured Delaunay cell of
    $(\Delta,\lambda,\ubar)$ and $t$ is incident with the undecorated
    central vertex. Then~\eqref{eq:lambartu} goes to infinity in
    $\Acal$. If $e_{i}(t)$ is the edge opposite the central vertex,
    and $e_{j}(t)$, $e_{k}(t)$ are the edges incident with the central
    vertex, then $\lambar_{e_{i}(t)}(u)$ has a finite limit while
    $\lambar_{e_{j}(t)}(u)$ and $\lambar_{e_{k}(t)}(u)$ tend to
    $+\infty$.
  \item Triangle $t$ is contained in a punctured Delaunay cell of
    $(\Delta,\lambda,\ubar)$ and $t$ is not incident with the
    undecorated central vertex. Then~\eqref{eq:lambartu} converges to
    a boundary point $(\xbar_{1},\xbar_{2},\xbar_{3})\in\partial\!\Acal$, that is,
    for some permutation $(i,j,k)$ of $(1,2,3)$,
    \begin{equation*}
      e^{\xbar_{i}}=
      e^{\xbar_{j}}+
      e^{\xbar_{k}}
    \end{equation*}
    (see Figures~\ref{fig:lambdaell} and~\ref{fig:triangdegen}).
  \end{compactenum}
  \begin{figure}
    \centering
    \includegraphics{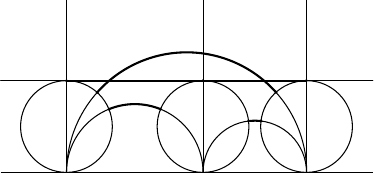}
    \caption{Ideal triangle contained in punctured Delaunay face, but
      not incident with the central vertex}
    \label{fig:triangdegen}
  \end{figure}

  Now equation~\eqref{eq:limEcal0} for the limit follows
  from equation~\eqref{eq:Ecal0H0} and the following limits
  of the function~$f$.

  \begin{compactenum}[(a)]
  \item As
    $(x_{1},x_{2},x_{3})\longrightarrow(\xbar_{1},+\infty,+\infty)$ in
    $\Acal$,
    \begin{equation}
      \label{eq:limf}
      f(x_{1},x_{2},x_{3})-\frac{\pi}{2}(x_{2}+x_{3})\longrightarrow 0.
    \end{equation}
    To see this, note that
    \begin{equation*}
      (\alpha_{1},\alpha_{2},\alpha_{3})\longrightarrow 
      \Big(0,\tfrac{\pi}{2}+\delta,\tfrac{\pi}{2}-\delta\Big)
    \end{equation*}
    for some $\delta\in[0,\frac{\pi}{2}]$. So
    \begin{equation*}
      \alpha_{1}x_{1}\longrightarrow 0
      \quad\text{and}\quad
      \lob(\alpha_{1})+\lob(\alpha_{2})+\lob(\alpha_{3})\longrightarrow 0
    \end{equation*}
    because $\lob(0)=0$ and
    $\lob(\frac{\pi}{2}+\delta)+\lob(\frac{\pi}{2}-\delta)=0$.
    Now~\eqref{eq:limf} follows from 
    \begin{equation*}
      \alpha_{2}x_{2}+\alpha_{3}x_{3}-\tfrac{\pi}{2}(x_{2}+x_{3})
      \longrightarrow 0.
    \end{equation*}
    To see this, note that 
    \begin{equation*}
      \alpha_{2}x_{2}+\alpha_{3}x_{3}-\tfrac{\pi}{2}(x_{2}+x_{3})
      =-\tfrac{1}{2}\alpha_{1}(x_{2}+x_{3})
      +\tfrac{1}{2}(\alpha_{2}-\alpha_{3})(x_{2}-x_{3}).
    \end{equation*}
    The triangle inequalities imply $(x_{2}-x_{3})\rightarrow 0$, and
    using the sine rule one obtains
    $-\tfrac{1}{2}\alpha_{1}(x_{2}+x_{3})\rightarrow 0$ from
    $\lim_{\alpha\rightarrow 0}\alpha\log\sin\alpha=0$.
  \item As
    $(x_{1},x_{2},x_{3})\longrightarrow
    (\xbar_{1},\xbar_{2},\xbar_{3})\in\partial\!\Acal$,
    where $ e^{\xbar_{1}}=e^{\xbar_{2}}+e^{\xbar_{3}}, $
    \begin{equation*}
      f(x_{1},x_{2},x_{3})\longrightarrow \pi\xbar_{1}.
    \end{equation*}
    This follows from
    $(\alpha_{1},\alpha_{2},\alpha_{3})\longrightarrow (\pi,0,0)$.
    \qedhere
  \end{compactenum}
\end{proof}

The variational principle (see Theorem~\ref{thm:variational}) involves the
function $\Ecal_{\Theta,\Delta,\lambda}$ with $\Theta_{v_{\infty}}=0$,
$\Theta_{v}=2\pi$ for all other vertices, and
$u_{v_{\infty}}\rightarrow+\infty$. We denote this function by
$\Ecalbar^{v_{\infty}}_{\Delta,\lambda}$
(see Definition~\ref{def:Ecalbar}) and collect its relevant properties
(see Proposition~\ref{prop:Ecalbar}).

\begin{definition}[$\Ecalbar^{v_{\infty}}_{\Delta,\lambda}$]
  \label{def:Ecalbar}
  For a triangulation $\Delta$ of $(S_{g},V)$, a vertex
  $v_{\infty}\in V$, and $\lambda\in\R^{E_{\Delta}}$, let
  \begin{equation}
    \label{eq:Vo}
    \Vo=V\setminus\{v_{\infty}\}, 
  \end{equation}
  let $\Theta\in\R^{V}$ be defined by
  \begin{equation}
    \label{eq:ThetaEcalbar}
    \Theta_{v}=
    \begin{cases}
      0 & \text{if }v=v_{\infty}\\
      2\pi & \text{if } v\in \Vo, 
    \end{cases}
  \end{equation}
  and define
  \begin{gather}
    \notag
    \Ecalbar^{v_{\infty}}_{\Delta,\lambda}:
    \R^{\Vo}\longrightarrow\R\\
    \label{eq:Ecalbarlim}
    \Ecalbar^{v_{\infty}}_{\Delta,\lambda}(u)
    =\lim_{x\rightarrow+\infty}
    \Ecal_{\Theta,\Delta,\lambda}(u|_{u_{v_{\infty}}=x}),
  \end{gather}
  where for $u\in\R^{\Vo}$, we write $u|_{u_{v_{\infty}}=x}$ for the
  function in $\R^{V}$ with value $x$ at $v_{\infty}$ and agreeing
  with $u$ on $\Vo$.

\end{definition}

\begin{proposition}[properties of $\Ecalbar^{v_{\infty}}_{\Delta,\lambda}$]
  \label{prop:Ecalbar}
  (i) The limit in equation~\eqref{eq:Ecalbarlim} exists and is equal to
  \begin{multline}
    \label{eq:Ecalbar}
    \Ecalbar^{v_{\infty}}_{\Delta,\lambda}(u) = \sum_{t\in
      T_{\Deltilo}} 2f \Big( \frac{\lamtil_{e_{1}(t)}}{2},
    \frac{\lamtil_{e_{2}(t)}}{2}, \frac{\lamtil_{e_{3}(t)}}{2} \Big)
    -\pi\sum_{e\in E_{\Deltilo}}\lamtil_{e}
    \\
    -2\pi\sum_{v\in \Vo}(\log c_{v}(\Delta,\lambda)-u_{v}),
  \end{multline}
  where 
  \begin{equation}
    \label{eq:Deltillamtil}
    (\Deltil,\lamtil)
    =\Del(\Delta,\lambda,u|_{u_{v_{\infty}}=+\infty})
  \end{equation}
  (see Definition~\ref{def:Del2}), and $E_{\Deltilo}$ and $T_{\Deltilo}$
  are defined by~\eqref{eq:Eo} and~\eqref{eq:To}.
  
  (ii) The function $\Ecalbar^{v_{\infty}}_{\Delta,\lambda}$ is twice
  continuously differentiable and analytic in the interior of each
  Penner cell.

  (iii) The partial derivatives are
  \begin{equation}
    \label{eq:dEcalbar}
    \frac{\partial}{\partial u_{v}}\,
    \Ecalbar^{v_{\infty}}_{\Delta,\lambda}(u)
    =
    -\Thetatil_{v}+\pi(\deg_{2}(v)-\deg_{1}(v)+2),
  \end{equation}
  where $\Thetatil$ is defined as in Definition~\ref{def:realizable},
  equation~\eqref{eq:Thetatil}, and
  \begin{equation*}
    \deg_{1}(v)=\text{ edge-degree of $v$},
  \end{equation*}
  that is, the number of edges emanating from $v$ counted with
  multiplicity, and
  \begin{equation*}
    \deg_{2}(v)=\text{ triangle-degree of $v$},
  \end{equation*}
  that is, the number of triangles around $v$ counted with
  multiplicity.

  (iv) The second derivative is 
  \begin{equation}
   \label{eq:d2Ecalbar}
    D^{2}\Ecalbar^{v_{\infty}}_{\Delta,\lambda}\big|_{u}
    = 
    \frac{1}{4}\,\sum_{e\in E_{\Deltilo}}
    w_{e}(u)\,\big(du_{v_{1}(e)}-du_{v_{2}(e)}\big)^{2},
  \end{equation}
  where, if edge $e$ is not a boundary edge of $\Deltilo$,
  \begin{equation*}
    w_{e}(u)=\cot\alpha_{e}(u)+\cot\alpha_{e}'(u)
  \end{equation*}
  and $\alpha_{e}(u)$, $\alpha_{e}'(u)$ are the angles opposite $e$ in
  the piecewise euclidean metric defined in
  Definition~\ref{def:realizable} (r2b). If $e$ is a boundary edge,
  then $e$ has one or zero opposite angles and
  $w_{e}(u)=\cot\alpha_{e}(u)$ or $w_{e}(u)=0$, respectively.

  (v) The function $\Ecalbar^{v_{\infty}}_{\Delta,\lambda}$ is
  convex and satisfies the scaling relation
  \begin{equation}
    \label{eq:Ecalbarscale}
    \Ecalbar^{v_{\infty}}_{\Delta,\lambda}(u+h\,\mathbf{1}_{\Vo})
    =\Ecalbar^{v_{\infty}}_{\Delta,\lambda}(u)+2\pi\,h.
  \end{equation}
\end{proposition}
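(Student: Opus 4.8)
The plan is to deduce part~(i) directly from Lemma~\ref{lem:limEcal0}. Specializing~\eqref{eq:EcalThetaEcal0} to the weights~\eqref{eq:ThetaEcalbar} gives
\[
  \Ecal_{\Theta,\Delta,\lambda}(u)=\Ecal_{0,\Delta,\lambda}(u)-2\pi\sum_{v\in\Vo}\bigl(\log c_v(\Delta,\lambda)-u_v\bigr),
\]
whose last term does not involve $u_{v_\infty}$. Letting $u_{v_\infty}\to+\infty$ and applying Lemma~\ref{lem:limEcal0} with $\ubar=u|_{u_{v_\infty}=+\infty}$---legitimate because $\ubar_v<+\infty$ for every $v\in\Vo$ and $\Vo\neq\emptyset$ (as $n\geq3$)---yields the existence of the limit and formula~\eqref{eq:Ecalbar}, once one observes that the subcomplex $\Deltilo$ appearing in Lemma~\ref{lem:limEcal0} agrees with the one defined by~\eqref{eq:Eo}--\eqref{eq:To} because $v_\infty$ is the only undecorated cusp.

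Parts~(ii)--(iv) I would establish first in the interior of a fixed Penner cell, where~\eqref{eq:Ecalbar} is analytic: there $(\Deltil,\lamtil)=\Del(\Delta,\lambda,u|_{u_{v_\infty}=+\infty})$ has $\Deltil$ locally constant, $\lamtil_e=\tau_{\Delta,\Deltil}(\lambda)_e+u_{v_1(e)}+u_{v_2(e)}$ affine in $u$, and $\tfrac12(\lamtil_{e_1(t)},\lamtil_{e_2(t)},\lamtil_{e_3(t)})\in\Acal$ for every $t\in T_{\Deltilo}$ by Lemma~\ref{lem:triang}. Differentiating~\eqref{eq:Ecalbar}: the term $-2\pi\sum_{v\in\Vo}(\log c_v(\Delta,\lambda)-u_v)$ contributes $2\pi$ to $\partial/\partial u_v$ and nothing to the Hessian; the term $-\pi\sum_{e\in E_{\Deltilo}}\lamtil_e$ contributes $-\pi\deg_1(v)$ and nothing to the Hessian; and for the $f$-terms, \eqref{eq:df} shows each triangle of $\Deltilo$ at $v$ contributes $\pi$ minus its angle at $v$ (its two edges at $v$ carry the two angles of that triangle not at $v$, whose sum is $\pi$ minus the angle at $v$), so summation over $T_{\Deltilo}$ gives $\pi\deg_2(v)-\Thetatil_v$; collecting terms yields~\eqref{eq:dEcalbar}. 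For the Hessian, \eqref{eq:d2f} together with the identity $d\lamtil_{e_j(t)}-d\lamtil_{e_k(t)}=\pm\bigl(du_{v_1(e_i(t))}-du_{v_2(e_i(t))}\bigr)$ for a permutation $(i,j,k)$ of $(1,2,3)$ (the $u$ at the vertex common to $e_j,e_k$ cancels) reorganizes the contribution of each triangle into a $\cot$-weighted sum of squares over its edges, and summing over $T_{\Deltilo}$ gives~\eqref{eq:d2Ecalbar}. Alternatively one can obtain~\eqref{eq:dEcalbar}--\eqref{eq:d2Ecalbar} by passing to the limit $u_{v_\infty}\to+\infty$ in~\eqref{eq:Ecalpartial} and~\eqref{eq:d2Ecal}, using Proposition~\ref{prop:punctured} to see that each triangle of $\Deltil$ incident with $v_\infty$ degenerates to one with two infinite sides of equal limiting length and one bounded side, so its two peripheral angles tend to $\tfrac\pi2$.

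The main work is upgrading these cell-interior statements to the global $C^{2}$ claim in~(ii), and I expect this to be the principal obstacle. Fix a compact $K\subseteq\R^{\Vo}$. By Akiyoshi's Theorem~\ref{thm:akiyoshi} only finitely many ideal Delaunay decompositions occur among the partially decorated surfaces $(\Delta,\lambda,u|_{u_{v_\infty}=+\infty})$ with $u|_{\Vo}\in K$, and Proposition~\ref{prop:Dellim} with compactness yields a single $M_K$ such that each occurring adjusted Delaunay triangulation $\Deltil$ is also an ideal Delaunay triangulation of $\bigl(\Delta,\Lambda^{\Delta,\lambda}(u|_{u_{v_\infty}=x})\bigr)$ for $x>M_K$. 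Hence on $K\cap\mathcal C(\Deltil)$ and for $x>M_K$ the function $u\mapsto\Ecal_{\Theta,\Delta,\lambda}(u|_{u_{v_\infty}=x})$ equals the single analytic expression $\Hsf_{\Theta}\bigl(\Deltil,\Lambda^{\Deltil,\tau_{\Delta,\Deltil}(\lambda)}(u|_{u_{v_\infty}=x})\bigr)$, which---after the cancellations of Lemma~\ref{lem:limEcal0}---converges together with all $u$-derivatives uniformly on that set as $x\to+\infty$; thus $\Ecalbar^{v_\infty}_{\Delta,\lambda}$ restricted to each closed Penner cell is $C^{2}$ and analytic in its interior. Finally the first and second $u$-derivatives agree on cell overlaps, because $\Ecal_{\Theta,\Delta,\lambda}$ is itself $C^{2}$ on all of $\R^{V}$ (Proposition~\ref{prop:Ecal}), so the limits of its derivatives do not depend on the cell used.

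For part~(v), convexity follows from~\eqref{eq:d2Ecalbar} once all weights $w_e\geq0$. For an interior edge $e$ of $\Deltilo$ this is $\cot\alpha_e+\cot\alpha_e'\geq0$, equivalent to $\alpha_e+\alpha_e'\leq\pi$, which holds since $\Deltil$ is an ideal Delaunay triangulation and hence, by Theorem~\ref{thm:idealeucdel}, the piecewise euclidean metric of Definition~\ref{def:realizable}~(r2b) makes the restriction of $\Deltil$ a euclidean Delaunay triangulation of $\Deltilo$; for a boundary edge the only possibly nonzero weight is $\cot\alpha_e$, and $\alpha_e\leq\tfrac\pi2$ is read off from the modified local Delaunay condition at the essential edge $e$ of $\Deltil$ whose other incident triangle is the degenerate triangle at $v_\infty$. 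The scaling relation follows from~\eqref{eq:Ecalscale}: since $u|_{u_{v_\infty}=x}+h\,\mathbf 1_{\Vo}=\bigl(u|_{u_{v_\infty}=x-h}\bigr)+h\,\mathbf 1_{V}$, applying~\eqref{eq:Ecalscale} and letting $x\to+\infty$ gives $\Ecalbar^{v_\infty}_{\Delta,\lambda}(u+h\,\mathbf 1_{\Vo})=\Ecalbar^{v_\infty}_{\Delta,\lambda}(u)+2\pi h\,(|T_\Delta|-|E_\Delta|+|V|-1)$, and $|T_\Delta|-|E_\Delta|+|V|=\chi(S_g)$, which equals $2$ in the genus-zero case for which this function is used, so the coefficient is $2\pi$.
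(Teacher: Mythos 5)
Your proposal tracks the paper's proof almost entirely. Part~(i) is the paper's own argument (Lemma~\ref{lem:limEcal0} plus the reduction~\eqref{eq:EcalThetaEcal0}), fleshed out correctly. The cell-interior computations you give for~(iii) and~(iv) are the ``direct calculations'' the paper alludes to, and your two routes to~(iii) (differentiating~\eqref{eq:Ecalbar} directly, or passing to the limit in~\eqref{eq:Ecalpartial} with each triangle at $v_\infty$ degenerating to two right angles) are both sound. Your treatment of~(v) matches the paper: the scaling coefficient comes out of~\eqref{eq:Ecalscale} with $|T_{\Deltil}|-|E_{\Deltil}|+|V|-1=1$ for a sphere, and convexity follows from nonnegativity of the cotangent weights in~\eqref{eq:d2Ecalbar} via the (euclidean) Delaunay property guaranteed by Theorem~\ref{thm:idealeucdel}.

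The one place you genuinely diverge is part~(ii). The paper handles the $C^{2}$-matching across Penner-cell boundaries ``as for $\Hcal_{\Theta,\Delta}$,'' i.e.\ by rerunning the Lemma~\ref{lem:Hsf}-type flip computation on the restricted sum over $T_{\Deltilo}$ (the flips relating two adjusted Delaunay triangulations never touch the punctured face, so Lemma~\ref{lem:Hsf} applies verbatim to the restricted functional). You instead try to inherit the $C^{2}$ regularity from $\Ecal_{\Theta,\Delta,\lambda}$ (Proposition~\ref{prop:Ecal}) via uniform $C^{2}$ convergence of the slices $u\mapsto\Ecal_{\Theta,\Delta,\lambda}(u|_{u_{v_\infty}=x})$ as $x\to+\infty$. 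That is a reasonable alternative, and it has the virtue of not re-proving a differentiability lemma. But the two steps you wave at are precisely the technical crux and are not obviously cheaper than the paper's route: (a)~``Proposition~\ref{prop:Dellim} with compactness yields a single $M_K$'' needs an argument---Proposition~\ref{prop:Dellim} is pointwise in $u$, the Penner cells are not linear in $u$ (they are linear in $e^{-u}$, with the limit sitting on the boundary $e^{-u_{v_\infty}}=0$), and you must rule out $M(u)\to\infty$ as $u$ approaches the boundary of the intersected closed cell; and (b)~the claim that after the cancellations of Lemma~\ref{lem:limEcal0} the Delaunay-triangulation expression converges ``together with all $u$-derivatives uniformly'' requires showing that the contributions from the degenerating triangles incident with $v_\infty$ cancel not only in value but also through two $u$-derivatives, uniformly on compacta; this is exactly as delicate as the calculation one is trying to avoid. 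Neither gap is fatal, but if you want this route you should carry out those two estimates; otherwise the simpler path is to observe that any two adjusted Delaunay triangulations of the same partially decorated surface differ by flips of nonessential edges lying entirely in $\Deltilo$, so Lemma~\ref{lem:Hsf} (applied to the restricted sums in~\eqref{eq:Ecalbar}) directly gives the $C^{2}$ matching at cell boundaries, which is what the paper means by ``as for $\Hcal_{\Theta,\Delta}$.''
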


\begin{proof}
  Statement (i) follows from Lemma~\ref{lem:limEcal0}. By direct
  calculations, one obtains equations~\eqref{eq:dEcalbar}
  and~\eqref{eq:d2Ecalbar} in the interior of Penner cells. As for
  $\Hcal_{\Theta,\Delta}$, one finds that the first and second
  derivatives are continuous at the boundaries of Penner cells. This
  implies the differentiability statement (ii). 

  Statement (iii) follows from the corresponding properties of
  $\Ecal_{\Theta,\Delta,\lambda}$ (see Proposition~\ref{prop:Ecal}),
  because convexity and the scaling relation survive taking the
  limit~\eqref{eq:Ecalbarlim}. Note that
  \begin{equation*}
    \sum_{v\in V}\Theta_{v}=2\pi(|V|-1) 
  \end{equation*}
  due to~\eqref{eq:ThetaEcalbar}, and hence 
  \begin{equation*}
    |T_{\Deltil}|-|E_{\Deltil}|+\frac{1}{2\pi}\sum_{v\in V}\Theta_{v} 
    = |T_{\Deltil}|-|E_{\Deltil}|+|V|-1=1,
  \end{equation*}
  because $\Deltil$ is a triangulation of a sphere. Alternatively, one
  can also deduce statement (v) directly from
  equation~\eqref{eq:Ecalbar}.
\end{proof}

\begin{theorem}[variational principle for Problem~\ref{prob:realize2}]
  \label{thm:variational}
  Let $S\in\Teich_{0,n}$ be a complete finite area hyperbolic surface
  of genus $0$ with $n\geq 3$ cusps, and let $(\Delta,\lambda)$ be
  Penner coordinates for $S$, decorated with arbitrary horocycles.
  Let $\Vo=V\setminus\{v_{\infty}\}$ for some distinguished vertex
  $v_{\infty}\in V$. 
  \begin{compactenum}[(i)]
  \item If the function $\Ecalbar^{v_{\infty}}_{\Delta,\lambda}$ attains
    its minimum under the constraints
    \begin{equation}
      \label{eq:boxconstraint}
      u_{v} \geq \delta_{\Delta,\lambda}(v,v_{\infty}).
    \end{equation}
    at the point $u\in\Vo$, then $(\Deltil,\lamtil)$ defined
    by~\eqref{eq:Deltillamtil} are realizable coordinates with
    distinguished vertex $v_{\infty}$.
  \item Up to equivalence (see Definition~\ref{def:realizequiv}), all
    realizable coordinates with distinguished vertex $v_{\infty}$
    correspond to constrained minima of
    $\Ecalbar^{v_{\infty}}_{\Delta,\lambda}$ as in (i).
  \end{compactenum}
\end{theorem}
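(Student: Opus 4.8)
The two statements of the theorem are the two directions of a single equivalence, which is what I would aim to establish: for a feasible $u\in\R^{\Vo}$, writing $(\Deltil,\lamtil):=\Del(\Delta,\lambda,u|_{u_{v_\infty}=+\infty})$, the condition ``$u$ minimizes $\Ecalbar^{v_\infty}_{\Delta,\lambda}$ subject to~\eqref{eq:boxconstraint}'' is equivalent to ``$(\Deltil,\lamtil)$ are realizable coordinates with distinguished vertex $v_\infty$''. Since $\Ecalbar^{v_\infty}_{\Delta,\lambda}$ is $C^{2}$ and convex (Proposition~\ref{prop:Ecalbar}) and the feasible set is cut out by affine inequalities, a feasible $u$ is a constrained minimizer \emph{iff} the Karush--Kuhn--Tucker conditions hold: $\frac{\partial}{\partial u_v}\Ecalbar^{v_\infty}_{\Delta,\lambda}(u)\ge 0$ for all $v\in\Vo$, with equality at every $v$ where the constraint $u_v\ge\delta_{\Delta,\lambda}(v,v_\infty)$ is slack. (With affine constraints no constraint qualification is needed, so this is both necessary and sufficient.) Condition~(r1) holds for \emph{every} $u\in\R^{\Vo}$ by the definition of $\Del$ on partially decorated surfaces (Definition~\ref{def:Del2}): $\Deltil$ is an adjusted Delaunay triangulation, the only undecorated cusp is $v_\infty$ (as $u_v<+\infty$ for $v\in\Vo$), and $(\Deltil,\lamtil)$ are the corresponding generalized Penner coordinates. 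So the entire task is to match the KKT conditions with~(r2).

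Next I would do the combinatorics. The complex $\Deltilo$ is the complement in $S_{0}$ of the open star of $v_\infty$ in the sphere triangulation $\Deltil$; it is a triangulated closed disk unless every triangle of $\Deltil$ meets $v_\infty$, in which case --- since $\Deltil$ cone-triangulates the punctured Delaunay face of $v_\infty$ --- it collapses to a linear graph. Thus the alternative (r2a)/(r2b) is forced, not a hypothesis to check. For a triangulated disk, $\deg_2(v)=\deg_1(v)$ at interior vertices and $\deg_2(v)=\deg_1(v)-1$ at boundary vertices; for a linear graph $\deg_2(v)=0$ with $\deg_1(v)=2$ in the interior and $1$ at the two ends. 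Substituting into~\eqref{eq:dEcalbar}, the KKT inequality becomes $\Thetatil_v\le 2\pi$ at interior vertices and $\Thetatil_v\le\pi$ at boundary vertices of a disk $\Deltilo$, while in the linear-graph case $\Thetatil_v=0$ for all $v$ and the inequalities read $0\ge 0$ (path interior) and $\pi\ge 0$ (ends), automatically true. So in the linear-graph case the KKT inequalities yield~(r2a) for free; in the disk case they yield~\eqref{eq:Thetatilineq} and the \emph{inequality} $\Thetatil_v\le 2\pi$ at interior vertices, and what remains is to upgrade the latter to the \emph{equality}~\eqref{eq:Thetatileq} by complementary slackness.

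That upgrade is the one genuinely geometric step, and I would carry it out via Proposition~\ref{prop:punctured}: the peripheral vertices of the punctured Delaunay face of the undecorated cusp $v_\infty$ --- equivalently, the boundary vertices of $\Deltilo$ in case~(r2b) --- are exactly the vertices whose displaced horocycle is nearest to $v_\infty$, hence nearest to the fixed reference horocycle of $(\Delta,\lambda)$ at $v_\infty$; tracking the sign conventions of Definition~\ref{def:delta} and Proposition~\ref{prop:fiber}, ``nearest'' translates into $u_v=\delta_{\Delta,\lambda}(v,v_\infty)$ and ``strictly farther'' into $u_v>\delta_{\Delta,\lambda}(v,v_\infty)$. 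Hence at any feasible $u$ the active constraints sit only at vertices of $\Deltilo$ adjacent to $v_\infty$, so complementary slackness forces $\frac{\partial}{\partial u_v}\Ecalbar^{v_\infty}_{\Delta,\lambda}(u)=0$, i.e.\ $\Thetatil_v=2\pi$, at every interior vertex. This completes the equivalence KKT $\Leftrightarrow$ (r2) and proves~(i). For~(ii), a given tuple of realizable coordinates $(\Deltil,\lamtil)$ is, by~(r1), of the form $\Del(\Delta,\lambda,u|_{u_{v_\infty}=+\infty})$ for some $u\in\R^{\Vo}$, unique up to the additive constant in $\lamtil$ and the choice of adjusted Delaunay triangulation --- i.e.\ up to the equivalence of Definition~\ref{def:realizequiv}, which amounts to replacing $u$ by $u+h\,\mathbf{1}_{\Vo}$; choosing $h$ so that $u$ becomes feasible with the peripheral constraints tight (possible because, by Proposition~\ref{prop:punctured}, those distances are all equal and are the minimal ones), conditions~\eqref{eq:Thetatileq}--\eqref{eq:Thetatilineq} of~(r2b), or the empty conditions of~(r2a), convert back through~\eqref{eq:dEcalbar} into the KKT conditions at $u$, so $u$ is a constrained minimum.

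The hard part is the dictionary of the previous paragraph: pinning down, with correct signs, how the constraint $u_v\ge\delta_{\Delta,\lambda}(v,v_\infty)$ and the size of its slack encode the position of $v$ relative to the Delaunay face of the undecorated cusp $v_\infty$; and, in the degenerate case~(r2a), checking that the single free additive constant $h$ can be chosen to place \emph{both} ends of the path $\Deltilo$ simultaneously on the constraint boundary. Once this dictionary is in place, the rest --- convex-optimization bookkeeping together with the Euler-type count for $\Deltilo$ --- is routine.
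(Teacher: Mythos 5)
Your overall strategy (KKT conditions for the convex function $\Ecalbar^{v_{\infty}}_{\Delta,\lambda}$ under the box constraints, the observation that (r1) holds for every $u$ by construction, the identification via Proposition~\ref{prop:punctured} of the active constraints with the vertices adjacent to $v_{\infty}$, and the translation of stationarity/nonnegativity of the partial derivatives~\eqref{eq:dEcalbar} into the angle conditions) is exactly the paper's strategy. But there is a genuine gap at the step you declare to be automatic: the claim that the alternative (r2a)/(r2b) is ``forced, not a hypothesis to check'', i.e.\ that $\Deltilo$, the complement of the open star of $v_{\infty}$ in $\Deltil$, is always either a triangulated closed disk or a linear graph. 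This is false for a general adjusted Delaunay triangulation: ideal triangulations of $(S_{0},V)$ are not simplicial, and the boundary of the punctured Delaunay face at $v_{\infty}$ need not be an embedded curve. For example, the boundary walk of that face can traverse both sides of an edge $e$ with $v_{\infty}\notin e$; then $e$ lies in $E_{\Deltilo}$ but in no triangle of $T_{\Deltilo}$, and $\Deltilo$ is a disk with a pendant edge. Similarly $\Deltilo$ can be two triangulated disks sharing only a vertex (a pinch point, where $\deg_1(v)\geq\deg_2(v)+2$), and in the case $T_{\Deltilo}=\emptyset$ the edge set can a priori be a branching tree rather than a path. Your degree identities ($\deg_2=\deg_1$ at interior vertices, $\deg_2=\deg_1-1$ at boundary vertices, $\deg_1\in\{1,2\}$ on a path) presuppose the disk/path structure, so at the crucial point the argument becomes circular.

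In the paper, deriving this dichotomy is the main content of the proof of (i), and it comes out of the minimality conditions themselves, not out of combinatorics: at a vertex $v$ adjacent to $v_{\infty}$ (an active constraint) one has $-\Thetatil_{v}+\pi(\deg_{2}(v)-\deg_{1}(v)+2)\geq 0$; if $\deg_{2}(v)=0$ this gives $\deg_{1}(v)\leq 2$, which combined with the connectivity of $\Deltilo$ (its complement is an open disk) forces the linear graph; if $\deg_{2}(v)>0$ it gives $\deg_{1}(v)<\deg_{2}(v)+2$, while adjacency to $v_{\infty}$ gives $\deg_{1}(v)\geq\deg_{2}(v)+1$, hence $\deg_{1}(v)=\deg_{2}(v)+1$ at every such vertex, which (again using that the complement is an open disk) rules out pendant edges and pinch points and shows $\Deltilo$ is a triangulated closed disk. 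Note that these degenerate configurations do satisfy (r1), so they must be excluded by the minimality of $u$; your proposal never excludes them. Only after this structure is established do the KKT conditions translate into~\eqref{eq:Thetatileq} and~\eqref{eq:Thetatilineq} as you describe. Your sketch of part (ii) (which the paper omits as easy) and your remarks on the sign dictionary for the constraints are consistent with the paper's intent, but to repair the proof of (i) you must add the degree argument above rather than assert the disk/path dichotomy as given.
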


\begin{proof}
  We will show (i) and omit the proof of the converse statement (ii)
  because it is easier and no new ideas are required. So assume
  $\Ecalbar^{v_{\infty}}_{\Delta,\lambda}$ attains a minimum under the
  constraints~\eqref{eq:boxconstraint} at $u\in \R^{\Vo}$. We have to
  show conditions (r1) and (r2) of
  Definition~\ref{def:realizable}. Since condition (r1) obviously
  holds by construction, it remains to show (r2).
 
  First, note that the convex function
  $\Ecalbar^{v_{\infty}}_{\Delta,\lambda}$ attains a minimum under the
  constraints~\eqref{eq:boxconstraint} at $u\in \R^{\Vo}$ if
  and only if for all $v\in\Vo$:
  \begin{alignat}{2}
    \label{eq:minimum_int}
    \frac{\partial}{\partial u_{v}}\,
    \Ecalbar^{v_{\infty}}_{\Delta,\lambda}(u) 
    &=0 
    &\qquad
    &\text{if\; }u_{v}>\delta_{\Delta,\lambda}(v,v_{\infty})\\
    \label{eq:minimum_bdy}
    \frac{\partial}{\partial u_{v}}\,
    \Ecalbar^{v_{\infty}}_{\Delta,\lambda}(u) 
    &\geq 0 
    &\qquad
    &\text{if\; }u_{v}=\delta_{\Delta,\lambda}(v,v_{\infty})
  \end{alignat}
  The scaling relation~\eqref{eq:Ecalbarscale} implies that if a
  constrained minimum is attained at~$u$ then $u$ satisfies at least
  one constraint~\eqref{eq:boxconstraint} with equality. The vertices
  $v\in\Vo$ for which the constraint~\eqref{eq:boxconstraint} is
  satisfied with equality are precisely the vertices
  adjacent to $v_{\infty}$ in $\Deltil$
  (see Proposition~\ref{prop:punctured}). Now let $v\in V$ be a vertex
  adjacent to $v_{\infty}$. By equation~\eqref{eq:dEcalbar} and
  inequality~\eqref{eq:minimum_bdy}, we have
  \begin{equation}
    \label{eq:min_bdy_subst}
    -\Thetatil_{v}+\pi(\deg_{2}(v)-\deg_{1}(v)+2)\geq 0.
  \end{equation}
  Consider two cases separately:
  \begin{compactenum}[(a)]
  \item $\deg_{2}(v)=0$: In this case $\Thetatil_{v}=0$ because there
    are no triangles $t\in T_{\Deltilo}$ incident with
    $v$. Inequality~\eqref{eq:min_bdy_subst} implies
    \begin{equation*}
      \deg_{1}(v)\leq 2.
    \end{equation*}
    Using the following two observations, one deduces that the cell
    complex $\Deltilo=(\Vo, E_{\Deltilo},T_{\Deltilo})$ is a linear
    graph:
    \begin{compactenum}[(1)]
    \item Any vertex $v'\not=v_{\infty}$ adjacent to $v$ also
      satisfies $\deg_{2}(v)=0$.
    \item The cell complex $\Deltilo$ is connected because its complement
      in the sphere~$S_{0}$ is an open disk.
    \end{compactenum}
    This proves condition (r2a) of Definition~\ref{def:realizable}.
  \item $\deg_{2}(v)>0$: In this case $\Thetatil_{v}>0$, so
    inequality~\eqref{eq:min_bdy_subst} implies
    \begin{equation*}
      \deg_{1}(v)<\deg_{2}(v)+2.
    \end{equation*}
    On the other hand, because $T_{\Deltilo}$ does not contain all
    triangles of $T_{\Deltil}$ incident with $v$, we have
    \begin{equation*}
      \deg_{1}(v)\geq\deg_{2}(v)+1,
    \end{equation*}
    and therefore
    \begin{equation}
      \label{eq:deg1}
      \deg_{1}(v)=\deg_{2}(v)+1.
    \end{equation}
    Because the complement of $\Deltilo$ in the sphere~$S_{0}$ is an
    open disk, this implies that the cell complex $\Deltilo$ is a
    triangulation of a closed disk. With~\eqref{eq:deg1},
    inequality~\eqref{eq:min_bdy_subst}
    implies~\eqref{eq:Thetatilineq}, and~\eqref{eq:minimum_int}
    implies~\eqref{eq:Thetatileq}.  This proves condition (r2b).
  \end{compactenum}
  This concludes the proof of (i).
\end{proof}

\section{The differentiability lemma}
\label{sec:difflemma}

In this section we treat Lemma~\ref{lem:Hsf}, which proves the
well-definedness and differentiability statement of Definition and
Proposition~\ref{defprop:Hcal}.

\begin{lemma}
  \label{lem:Hsf}
  Suppose $\Delta_{1}$ and $\Delta_{2}$ are both Delaunay
  triangulations for the decorated surface with Penner coordinates
  $(\Delta,\lambda^{*})$, and let
  $\tau_{12}=\tau_{\Delta_{1},\Delta_{2}}$ be the chart transition
  function $\R^{E_{\Delta_{1}}}\rightarrow\R^{E_{\Delta_{2}}}$ mapping
  Penner coordinates with respect to $\Delta_{1}$ to Penner
  coordinates with respect to $\Delta_{2}$. Then the function values
  and the first and second derivatives of
  $\Hsf_{\Theta}(\Delta_{1},\,\cdot\,)$ and
  $\Hsf_{\Theta}(\Delta_{2},\tau_{12}(\,\cdot\,))$ at $\lambda^{*}$
  are equal:
  \begin{align}
    \label{eq:Hsfflip}
    \Hsf_{\Theta}(\Delta_{1},\lambda^{*})
    &=\Hsf_{\Theta}\big(\Delta_{2},\tau_{12}(\lambda^{*})\big),\\
    \label{eq:dHsfflip}
    d\Hsf_{\Theta}(\Delta_{1},\,\cdot\,)\big|_{\lambda^{*}}
    &=d\big(\Hsf_{\Theta}(\Delta_{2},\tau_{12}(\,\cdot\,))\big)
      \big|_{\lambda^{*}},\\
    \label{eq:d2Hsfflip}
    D^{2}\Hsf_{\Theta}(\Delta_{1},\,\cdot\,)\big|_{\lambda^{*}}
    &=D^{2}\big(\Hsf_{\Theta}(\Delta_{2},\tau_{12}(\,\cdot\,))\big)
      \big|_{\lambda^{*}}.
  \end{align}
\end{lemma}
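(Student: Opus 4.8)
The plan is to reduce the statement, in two steps, to an elementary circle‑geometry computation inside a single ideal quadrilateral.

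\emph{Reduction to a single flip.} Since $\Delta_{1}$ and $\Delta_{2}$ are both Delaunay triangulations of $(\Delta,\lambda^{*})$, they refine a common ideal Delaunay decomposition $D$ (Theorem~\ref{thm:idealdel}, Definition~\ref{def:deltriang}) and differ only in how the ideal‑polygonal faces of $D$ are triangulated. As the flip graph of triangulations of a polygon is connected, there is a chain $\Delta_{1}=\Delta^{(0)},\dots,\Delta^{(m)}=\Delta_{2}$ in which consecutive triangulations differ by a single flip of an edge not in $E_{D}$; every $\Delta^{(j)}$ still refines $D$, hence is a Delaunay triangulation of $(\Delta,\lambda^{*})$. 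Chart transitions compose, and ``$\Hsf_{\Theta}(\Delta^{(0)},\,\cdot\,)$ agrees to second order with $\Hsf_{\Theta}(\Delta^{(m)},\tau_{\Delta^{(0)},\Delta^{(m)}}(\,\cdot\,))$ at $\lambda^{*}$'' follows by the chain rule from the corresponding statement for each single flip in the chain (second‑order agreement of two functions at a point is inherited under precomposition with a smooth map). So it suffices to prove \eqref{eq:Hsfflip}--\eqref{eq:d2Hsfflip} when $\Delta_{1}$ and $\Delta_{2}$ differ by one flip.

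\emph{Localisation and the configuration at $\lambda^{*}$.} Say $\Delta_{2}$ arises from $\Delta_{1}$ by replacing the diagonal $e$ of an ideal quadrilateral with edges $a,b,c,d$ by the other diagonal $f$, so that $\tau_{12}$ fixes every coordinate except $\lambda_{e}\mapsto\lambda_{f}$, with $e^{\lambda_{f}/2}=e^{-\lambda_{e}/2}\bigl(e^{(\lambda_{a}+\lambda_{c})/2}+e^{(\lambda_{b}+\lambda_{d})/2}\bigr)$ by Ptolemy (Proposition~\ref{prop:ptolemy}). In $\Hsf_{\Theta}$ (see~\eqref{eq:Hsf}) the terms $\sum_{v}\Theta_{v}\log c_{v}$ are left unchanged by the flip, since by~\eqref{eq:c} $c_{v}$ is the total length of the horocycle at $v$, an intrinsic quantity of the decorated surface; all $2f$‑summands and $-\pi\lambda$‑terms from triangles and edges outside the quadrilateral are literally unchanged. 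Hence \eqref{eq:Hsfflip}--\eqref{eq:d2Hsfflip} reduce to the claim that the two functions of $(\lambda_{a},\lambda_{b},\lambda_{c},\lambda_{d},\lambda_{e})$
\begin{align*}
  \mathsf{G}_{1}(\lambda)&=2f\bigl(\tfrac{\lambda_{a}}{2},\tfrac{\lambda_{b}}{2},\tfrac{\lambda_{e}}{2}\bigr)
    +2f\bigl(\tfrac{\lambda_{c}}{2},\tfrac{\lambda_{d}}{2},\tfrac{\lambda_{e}}{2}\bigr)-\pi\lambda_{e},\\
  \mathsf{G}_{2}(\lambda)&=2f\bigl(\tfrac{\lambda_{a}}{2},\tfrac{\lambda_{d}}{2},\tfrac{\lambda_{f}}{2}\bigr)
    +2f\bigl(\tfrac{\lambda_{b}}{2},\tfrac{\lambda_{c}}{2},\tfrac{\lambda_{f}}{2}\bigr)-\pi\lambda_{f},
\end{align*}
with $\lambda_{f}$ the Ptolemy function above, have the same value, gradient and Hessian at $\lambda^{*}$. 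By Lemma~\ref{lem:triang} the lengths $\ell=e^{\lambda^{*}/2}$ satisfy the triangle inequalities for every triangle of $\Delta_{1}$ and $\Delta_{2}$; gluing the two Euclidean triangles of $\Delta_{1}$ along $\ell_{e}$ gives a Euclidean quadrilateral $Q$ with sides $\ell_{a},\ell_{b},\ell_{c},\ell_{d}$ and diagonal $\ell_{e}$. By Theorem~\ref{thm:idealeucdel}, $\Delta_{1}$ and $\Delta_{2}$ being ideal Delaunay means both triangulations of $Q$ are euclidean Delaunay; a quadrilateral has two distinct Delaunay triangulations only when its vertices are concyclic, so $Q$ is cyclic, and by the classical Ptolemy theorem its second diagonal has length $(\ell_{a}\ell_{c}+\ell_{b}\ell_{d})/\ell_{e}=e^{\lambda_{f}^{*}/2}$; thus the two Euclidean triangles of $\Delta_{2}$ recompose the same convex quadrilateral $Q$, now cut along the other diagonal.

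\emph{Value and first derivatives.} Recall (Definition~\ref{def:f}) that $f(x_{1},x_{2},x_{3})=\sum_{i}\alpha_{i}x_{i}+\sum_{i}\lob(\alpha_{i})$, where $\sum_{i}\lob(\alpha_{i})$ is the hyperbolic volume of the ideal tetrahedron of Figure~\ref{fig:idealtet} over the triangle with sides $e^{x_{i}}$. The union of the two ideal tetrahedra over the triangles of $\Delta_{1}$ is the cone from the common apex $v_{\infty}$ over $Q$; its volume is additive, hence independent of the triangulation of the convex polygon $Q$, so the $\lob$‑contributions to $\mathsf{G}_{1}(\lambda^{*})$ and $\mathsf{G}_{2}(\lambda^{*})$ agree. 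The remaining contributions match by circle geometry: by the inscribed‑angle theorem the angle subtended at the opposite vertex by each of $\ell_{a},\dots,\ell_{d}$ is the same in both triangulations of $Q$, while the coefficient of $\lambda_{e}$ in $\mathsf{G}_{1}$ and of $\lambda_{f}$ in $\mathsf{G}_{2}$ has the form (sum of the two angles opposite that diagonal)$\,-\pi$, which vanishes because those two angles are the angles of the cyclic quadrilateral $Q$ at a pair of opposite vertices. This gives $\mathsf{G}_{1}(\lambda^{*})=\mathsf{G}_{2}(\lambda^{*})$. The same two facts give the gradients: by $\partial f/\partial x_{i}=\alpha_{i}$ (Proposition~\ref{prop:f}(ii)) one has $\partial\mathsf{G}_{2}/\partial\lambda_{f}=0$ at $\lambda^{*}$ (again an ``opposite angles minus $\pi$''), so the chain‑rule terms through $\lambda_{f}$ drop out and the surviving partials of $\mathsf{G}_{1}$ and $\mathsf{G}_{2}$ with respect to $\lambda_{a},\dots,\lambda_{d}$ match pairwise as inscribed angles on a common chord, while $\partial\mathsf{G}_{1}/\partial\lambda_{e}$ is an ``opposite angles minus $\pi$'' which vanishes.

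\emph{The second derivative — the main obstacle.} For \eqref{eq:d2Hsfflip} one substitutes the cotangent formula~\eqref{eq:d2f} for $D^{2}f$ and the differentiated Ptolemy relation, $d\lambda_{f}=-d\lambda_{e}+p\,(d\lambda_{a}+d\lambda_{c})+q\,(d\lambda_{b}+d\lambda_{d})$ with $p=\ell_{a}\ell_{c}/(\ell_{e}\ell_{f})$, $q=\ell_{b}\ell_{d}/(\ell_{e}\ell_{f})$, $p+q=1$. Because $\partial\mathsf{G}_{2}/\partial\lambda_{f}=0$ at $\lambda^{*}$, the $D^{2}\lambda_{f}$‑term disappears, and $D^{2}\mathsf{G}_{2}|_{\lambda^{*}}$ and $D^{2}\mathsf{G}_{1}|_{\lambda^{*}}$ become two cotangent‑weighted sums of squared differences $(d\lambda_{i}-d\lambda_{j})^{2}$; matching their coefficients is once more a matter of inscribed‑angle relations for the four concyclic vertices of $Q$ (now for cotangents of the relevant angles, together with the chord‑length relations governing $p$ and $q$). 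I expect this bookkeeping, rather than any conceptual point, to be the bulk of the work. An alternative that would settle value, gradient and Hessian uniformly is to replace $f$ by its convex $C^{1}$ extension $f^{\mathrm{ext}}$ to $\R^{3}$ (which coincides with $f$ and all its derivatives at the interior points of $\Acal$ relevant at $\lambda^{*}$) and to verify that the correspondingly extended $\Hsf_{\Theta}$ is flip‑invariant on all of $\R^{E_{\Delta}}$; but that merely trades the problem for a global identity for $f^{\mathrm{ext}}$ which seems no easier to establish.
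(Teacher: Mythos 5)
Your reductions and your treatment of the value and the gradient are correct and are in fact the same route the paper takes: reduce to $\Theta=0$ because $c_{v}$ is intrinsic, reduce to a single flip of a nonessential edge (transitivity via the chain rule), observe that the flipped quadrilateral is euclidean-cyclic with the second diagonal given by Ptolemy, get equality of values from additivity of the volume of the ideal pyramid over the cyclic quadrilateral (Milnor's formula for $\lob(\alpha_1)+\lob(\alpha_2)+\lob(\alpha_3)$) together with the vanishing of the coefficient of $\lambda_{e}$, resp.\ $\lambda_{f}$ (opposite angles of a cyclic quadrilateral sum to $\pi$), and get equality of gradients from $\partial f/\partial x_{i}=\alpha_{i}$, the inscribed-angle theorem for the chords $a,b,c,d$, and the vanishing of $\partial/\partial\lambda_{f}$, which also kills the $D^{2}\tau_{12}$ term in the second-order chain rule.

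The gap is the proof of \eqref{eq:d2Hsfflip} itself. You set it up correctly (substitute \eqref{eq:d2f} and the differentiated Ptolemy relation, drop the $D^{2}\lambda_{f}$ term), but then only assert that matching the cotangent-weighted quadratic forms ``is once more a matter of inscribed-angle relations'' and that you expect the bookkeeping to work out. That assertion is not a proof, and it is not something one can take on faith here: second-order agreement is a genuinely delicate identity --- the paper points out (Remark~\ref{rem:difflem}(i)) that the analogous statement for \emph{third} derivatives is false, so no soft ``the geometry is the same cyclic quadrilateral'' argument can suffice; something must single out order two. In the paper this is an explicit computation: one writes the differentiated Ptolemy relation in the form \eqref{eq:dptolemy}/\eqref{eq:dlam}, with coefficients $\sin\alpha\sin\gamma/(\sin\alpha\sin\gamma+\sin\beta\sin\delta)$ and $\sin\beta\sin\delta/(\sin\alpha\sin\gamma+\sin\beta\sin\delta)$ rather than your $p,q$ expressed through edge lengths, expands the difference of the two Hessians, factors out $(-d\lambda_{a}+d\lambda_{b}-d\lambda_{c}+d\lambda_{d})$ using the cotangent identities \eqref{eq:cotidentity}, and finally uses $\sin(\alpha+\beta)\sin(\beta+\gamma)=\sin\alpha\sin\gamma+\sin\beta\sin\delta$ (a consequence of $\alpha+\beta+\gamma+\delta=\pi$, equation \eqref{eq:sinsum}) to see that the whole expression vanishes. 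Until you actually carry out this cancellation (or an equivalent argument, e.g.\ via Rippa's theorem for the piecewise-linear Dirichlet energy, as indicated in Remark~\ref{rem:difflem}(iii), though that only covers the restriction to a fiber), the central claim of the lemma remains unproved.
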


\begin{remarks}
  \label{rem:difflem}
  (i) It is easy to check numerically that the analogous equations for
  higher derivatives are in general false. In particular,
  \begin{equation*}
    D^{3}\Hsf_{\Theta}(\Delta_{1},\,\cdot\,)\big|_{\lambda^{*}}
    \not=D^{3}\big(\Hsf_{\Theta}(\Delta_{2},\tau_{12}(\,\cdot\,))
      \big)\big|_{\lambda^{*}},
  \end{equation*}
  so the third derivative of the function $\Hcal_{\Theta}$ on
  $\decTeich_{g,n}$ is generally discontinuous at the boundaries of
  Penner cells.

  (ii) The proof of equation~\eqref{eq:d2Hsfflip} given in this
  section consists of a straightforward but unilluminating
  calculation. A more conceptual argument would be desirable.

  (iii) In the variational principles of
  Theorems~\ref{thm:variational} and~\ref{thm:variational2}, only the
  function $\Ecal_{\Theta,\Delta,\lambda}(u)$ plays a role, which is
  the restriction of $\Hcal_{\Theta,\Delta}(\lambda)$ to one fiber of
  the decorated \Tm{} space $\decTeich_{g,n}$. In the context of
  realization and discrete uniformization theorems, it would be enough
  to show that $\Ecal_{\Theta,\Delta,\lambda}(u)$ is twice
  continuously differentiable. In view of
  Theorem~\ref{thm:idealeucdel} and Remark~\ref{rem:d2Esf}, this
  follows from Rippa's Minimal Roughness Theorem~\cite{rippa90} for
  the PL Dirichlet energy. (This is also proved by an unilluminating
  calculation.) Lemma~\ref{lem:Hsf} shows that the function
  $\Hcal_{\Theta,\Delta}$ and hence the function $\Hcal_{\Theta}$ of
  Corollary~\ref{cor:Hcal} is twice continuously differentiable on the
  whole decorated \Tm{} space. We believe this is of independent
  interest.
\end{remarks}

\noindent
The rest of this section is devoted to the proof of
Lemma~\ref{lem:Hsf}.

\paragraph{1.~Reduction to $\Theta=0$.}
Since the total length of the decorating horocycle at a vertex $v$
does not depend on the triangulation, we have
\begin{equation*}
  c_{v}(\Delta_{1},\lambda)=
  c_{v}(\Delta_{2},\tau_{12}(\lambda))
\end{equation*}
and hence trivially also
\begin{align}
  dc_{v}(\Delta_{1},\,\cdot\,)\big|_{\lambda}
  &=dc_{v}(\Delta_{2},\tau_{12}(\,\cdot\,))\big|_{\lambda},\\
  D^{2}c_{v}(\Delta_{1},\,\cdot\,)\big|_{\lambda}
  &=D^{2}c_{v}(\Delta_{2},\tau_{12}(\,\cdot\,))\big|_{\lambda}
\end{align}
for all ideal triangulations $\Delta_{1}$, $\Delta_{2}$ and all
$\lambda\in\R^{E_{\Delta_{1}}}$.  To prove Lemma~\ref{lem:Hsf}, it
is therefore enough to consider the function~$\Hsf_{0}$ with
$\Theta=0$.

\paragraph{2.~Reduction to a single edge flip.}
Without loss of generality, we may assume that $\Delta_{1}$ and
$\Delta_{2}$ differ by a single edge flip. Indeed, any two Delaunay
triangulations for the same decorated surface are related by a finite
sequence of flips of nonessential edges, and
equations~\eqref{eq:Hsfflip}--\eqref{eq:d2Hsfflip} have the necessary
transitivity property. To be more specific, assume $\Delta_{1}$,
$\Delta_{2}$, $\Delta_{3}$ are three ideal triangulations. To
abbreviate, we write $\Hsf_{i}$ for
$\Hsf_{\Theta}(\Delta_{i},\,\cdot\,)$ and $\tau_{ij}$ for
$\tau_{\Delta_{i},\Delta_{j}}$. Then, by a straightforward application
of the chain rules for first and second derivatives, the
equations
\begin{equation*}
\begin{aligned}
  \Hsf_{1}(\lambda^{*})&=\Hsf_{2}\circ\tau_{12}(\lambda^{*}),\\
  d\Hsf_{1}\big|_{\lambda^{*}}&=d(\Hsf_{2}\circ\tau_{12})\big|_{\lambda^{*}},\\
  D^{2}\Hsf_{1}\big|_{\lambda^{*}}&=D^{2}(\Hsf_{2}\circ\tau_{12})\big|_{\lambda^{*}},
\end{aligned}
\quad\text{and}\quad
\begin{aligned}
  \Hsf_{2}(\tau_{12}(\lambda^{*}))&=\Hsf_{3}\circ\tau_{23}(\tau_{12}(\lambda^{*})),\\
  d\Hsf_{2}\big|_{\tau_{12}(\lambda^{*})}&=d(\Hsf_{3}\circ\tau_{23})\big|_{\tau_{12}(\lambda^{*})},\\
  \quad
  D^{2}\Hsf_{2}\big|_{\tau_{12}(\lambda^{*})}&=D^{2}(\Hsf_{3}\circ\tau_{23})\big|_{\tau_{12}(\lambda^{*})}
\end{aligned}
\end{equation*}
imply
\begin{align*}
  \Hsf_{1}(\lambda^{*})&=\Hsf_{3}\circ\tau_{13}(\lambda^{*}),\\
  d\Hsf_{1}\big|_{\lambda^{*}}&=d(\Hsf_{3}\circ\tau_{13})\big|_{\lambda^{*}},\\
  D^{2}\Hsf_{1}\big|_{\lambda^{*}}&=D^{2}(\Hsf_{3}\circ\tau_{13})\big|_{\lambda^{*}}.
\end{align*}

\paragraph{3. Notation.} In the following we assume that $\Delta_{2}$
is the result of flipping edge~$e$ of $\Delta_{1}$, which is replaced
by edge $f$ in $\Delta_{2}$.
Let $a,b,c,d\in E_{\Delta_{1}}\cap E_{\Delta_{2}}$ be the adjacent edges of
$e$ and $f$ as in Figure~\ref{fig:ptolemy}, and let~$\ell$ be
defined by~\eqref{eq:ell}. By Lemma~\ref{lem:triang} and
Theorem~\ref{thm:idealeucdel}, the euclidean triangles with side
lengths $\ell_{a},\ell_{b},\ell_{e}$ and
$\ell_{e},\ell_{c},\ell_{d}$ form a cyclic quadrilateral as shown in
Figure~\ref{fig:quadnotation}.
\begin{figure}
  \labellist
  \small\hair 2pt
  \pinlabel {$\ell_{a}$} [t] at 64 23
  \pinlabel {$\ell_{b}$} [l] at 105 70
  \pinlabel {$\ell_{c}$} [b] at 55 109
  \pinlabel {$\ell_{d}$} [r] at 16 62
  \pinlabel {$\ell_{e}$} [br] at 46 57
  \pinlabel {$\ell_{f}$} [bl] at 74 52
  \pinlabel {$\alpha$} [ ] at 92 98
  \pinlabel {$\alpha$} [ ] at 22 86
  \pinlabel {$\beta$} [ ] at 31 31
  \pinlabel {$\beta$} [ ] at 34 98
  \pinlabel {$\gamma$} [ ] at 22 41
  \pinlabel {$\gamma$} [ ] at 102 41
  \pinlabel {$\delta$} [ ] at 80 106
  \pinlabel {$\delta$} [ ] at 92 30.5
  \endlabellist
  \centering
  \includegraphics[scale=0.9]{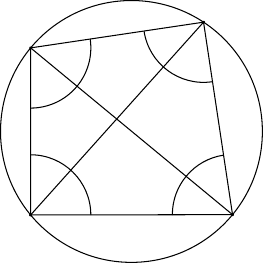}
  \caption{Lengths and angles in a euclidean cyclic quadrilateral.}
  \label{fig:quadnotation}
\end{figure}
By Ptolemy's theorem of euclidean geometry, $\ell_{f}$ is the length
of the other diagonal. 

\paragraph{4. Equality of function values.} To show
equation~\eqref{eq:Hsfflip}, we use the notation of
Figure~\ref{fig:quadnotation} for the angles. Writing
\begin{equation}
  \label{eq:Vtet}
  \Vtet(\alpha_{1},\alpha_{2},\alpha_{3})
  =\lob(\alpha_{1})+\lob(\alpha_{2})+\lob(\alpha_{3}),
\end{equation}
we obtain from equation~\eqref{eq:Hsf}
\begin{multline*}
  \Hsf_{0}(\Delta_{1},\lambda^{*})
  -\Hsf_{0}\big(\Delta_{2},\tau_{12}(\lambda^{*})\big)=
  \\
  2\Vtet(\alpha,\beta,{\gamma}+{\delta})
  +2\Vtet(\gamma,\delta,{\alpha}+{\beta})
  +({\alpha}+{\beta}+{\gamma}+{\delta}-\pi)\lambda^{*}_{e}\\
  -2\Vtet({\beta},{\gamma},\alpha+\delta)
  -2\Vtet({\delta},{\alpha},\beta+\gamma)
  -(\alpha+\beta+\gamma+\delta-\pi)\lambda^{*}_{f},
\end{multline*}
which is zero because 
\begin{equation}
  \label{eq:albegadesum}
  \alpha+\beta+\gamma+\delta=\pi
\end{equation}
and because
equation~\eqref{eq:Vtet} is Milnor's formula~\cite{milnor82} for the
volume $\Vtet$ of an ideal tetrahedron with dihedral angles
$\alpha_{1}$, $\alpha_{2}$, $\alpha_{3}$ as shown in
Figure~\ref{fig:idealtet}.  So
\begin{equation*}
  \Vtet(\alpha,\beta,{\gamma}+{\delta})
  +\Vtet(\gamma,\delta,{\alpha}+{\beta})\quad
  \text{and}\quad
  \Vtet({\beta},{\gamma},\alpha+\delta)
  +\Vtet({\delta},{\alpha},\beta+\gamma)
\end{equation*}
are two ways of writing the volume of an ideal quadrilateral pyramid
as the sum of the volumes of two tetrahedra.

\paragraph{5. Equality of first derivatives.} To show
equation~\eqref{eq:dHsfflip}, note that the chart transition
function~$\tau_{12}$ changes $\lambda^{*}_{e}$ to
$\lambda^{*}_{f}$ as determined by Ptolemy's
relation~\eqref{eq:ptolemy} and leaves the values of $\lambda^{*}$ for
all other edges unchanged. Now consider the partial derivatives of
both sides of equation~\eqref{eq:Hsfflip} at
$\lambda^{*}$. From~\eqref{eq:df} and~\eqref{eq:Hsf} one obtains by a
straightforward calculation
\begin{align}
  &\frac{\partial\Hsf_{0}(\Delta_{1},\,\cdot\,)}{\partial \lambda_{e}}
    \Big|_{\lambda^{*}}
    ={\alpha}+{\beta}+{\gamma}+{\delta}-\pi=0,\\
  \intertext{and similarly}
  \label{eq:dH0dlambdaf}
  &\frac{\partial\Hsf_{0}(\Delta_{2},\,\cdot\,)}{\partial \lambda_{f}}
    \Big|_{\tau_{12}(\lambda^{*})}
    =\delta+\alpha+\beta+\gamma-\pi=0,\\
  \intertext{which implies}
  &\frac{\partial\Hsf_{0}(\Delta_{2},\tau_{12}(\,\cdot\,))}{\partial
    \lambda_{e}}
    \Big|_{\lambda^{*}}
    =0,
\end{align}
and hence equality of the partial derivatives with respect to
$\lambda_{e}$. For the partial derivative with respect to $\lambda_{a}$ one obtains
\begin{equation}
  \frac{\partial\Hsf_{0}(\Delta_{1},\,\cdot\,)}{\partial \lambda_{a}}
  \Big|_{\lambda^{*}}
  =\alpha
  =\frac{\partial\Hsf_{0}(\Delta_{2},\tau_{12}(\,\cdot\,))}{\partial
    \lambda_{a}}
  \Big|_{\lambda^{*}}
\end{equation}
and similarly for $\lambda_{b}$, $\lambda_{c}$, $\lambda_{d}$. For all other edges
$\varepsilon\in E_{\Delta_{1}}\cap E_{\Delta_{2}}$, the difference of
partial derivatives is zero because all terms depending on
$\lambda_{\varepsilon}$ in the difference
$\Hsf_{0}(\Delta_{1},\lambda)
-\Hsf_{0}(\Delta_{2},\tau_{12}(\lambda))$ cancel. This
proves~\eqref{eq:dHsfflip}.

\paragraph{5. Some useful identities.} In the calculation proving
equation~\eqref{eq:d2Hsfflip}, we will use the identities
\begin{equation}
  \label{eq:dlam}
  \begin{split}
    -d\lambda_{a}-d\lambda_{c}+d\lambda_{e}+d\lambda_{f} &=
    \frac{\sin\beta\sin\delta}{\sin\alpha\sin\gamma+\sin\beta\sin\delta}\,
    (-d\lambda_{a}+d\lambda_{b}-d\lambda_{c}+d\lambda_{d}),\\
    d\lambda_{b}+d\lambda_{d}-d\lambda_{e}-d\lambda_{f} &=
    \frac{\sin\alpha\sin\gamma}{\sin\alpha\sin\gamma+\sin\beta\sin\delta}\,
    (-d\lambda_{a}+d\lambda_{b}-d\lambda_{c}+d\lambda_{d}),
  \end{split}
\end{equation}
which are valid at $\lambda^{*}$, as well as the simple trigonometric identities
\begin{equation}
  \label{eq:cotidentity}
  \cot x\pm \cot y=\frac{\sin(\pm x+y)}{\sin x\sin y}.
\end{equation}

To see~\eqref{eq:dlam}, take derivatives on both sides
of of Ptolemy's relation
\begin{equation*}
  e^{\frac{1}{2}(\lambda_{e}+\lambda_{f})}
  =e^{\frac{1}{2}(\lambda_{a}+\lambda_{c})}
  +e^{\frac{1}{2}(\lambda_{b}+\lambda_{d})}
\end{equation*}
to obtain
\begin{align*}
  d\lambda_{e}+d\lambda_{f}
  &=\frac{e^{\frac{1}{2}(\lambda_{a}+\lambda_{c})}}{e^{\frac{1}{2}(\lambda_{a}+\lambda_{c})}+e^{\frac{1}{2}(\lambda_{b}+\lambda_{d})}}
    \,(d\lambda_{a}+d\lambda_{c})
    +\frac{e^{\frac{1}{2}(\lambda_{b}+\lambda_{d})}}{e^{\frac{1}{2}(\lambda_{a}+\lambda_{c})}+e^{\frac{1}{2}(\lambda_{b}+\lambda_{d})}}
    \,(d\lambda_{b}+d\lambda_{d})\\
  &=\frac{1}{1+e^{\frac{1}{2}(-\lambda_{a}+\lambda_{b}-\lambda_{c}+\lambda_{d})}}
    \,(d\lambda_{a}+d\lambda_{c})
    +\frac{1}{1+e^{\frac{1}{2}(\lambda_{a}-\lambda_{b}+\lambda_{c}-\lambda_{d})}}
    \,(d\lambda_{b}+d\lambda_{d}), 
\end{align*}
then by the law of sines
\begin{multline}
  \label{eq:dptolemy}
  d\lambda_{e}+d\lambda_{f}
  =\frac{\sin\alpha\sin\gamma}{\sin\alpha\sin\gamma+\sin\beta\sin\delta}
  \,(d\lambda_{a}+d\lambda_{c})\\
  +\frac{\sin\beta\sin\delta}{\sin\alpha\sin\gamma+\sin\beta\sin\delta}
  \,(d\lambda_{b}+d\lambda_{d}).
\end{multline}
and finally the identities~\eqref{eq:dlam}.

\paragraph{6. Equality of second derivatives.} To show
equation~\eqref{eq:d2Hsfflip}, first consider the right hand
side. The chain rule for second derivatives says
\begin{multline}
    D^{2}\big(\Hsf_{0}(\Delta_{2},\tau_{12}(\,\cdot\,))\big)
    \big|_{\lambda^{*}}(v,w)=
    D^{2}\Hsf_{0}(\Delta_{2},\,\cdot\,)\big|_{\tau_{12}(\lambda^{*})}
    \big(d\tau_{12}|_{\lambda^{*}}(v),d\tau_{12}|_{\lambda^{*}}(w)\big)\\
    + \underbrace{d\Hsf_{0}(\Delta_{2},\,\cdot\,)\big|_{\tau_{12}(\lambda^{*})}
    \big(D^{2}\tau_{12}\big|_{\lambda^{*}}(v,w)\big)}_{=0},
\end{multline}
where the term involving $d\Hsf_{0}(\Delta_{2},\,\cdot\,)$ vanishes due to
equation~\eqref{eq:dH0dlambdaf}, because $D^{2}\tau_{12}$ only has a component in the direction of
$\tfrac{\partial}{\partial\lambda_{f}}$.

Now use~\eqref{eq:d2f}, \eqref{eq:Hsf}, and~\eqref{eq:albegadesum} to
obtain \begingroup \allowdisplaybreaks
\begin{align*}
  2\Big(
  & D^{2}\Hsf_{0}(\Delta_{1},\,\cdot\,)
    -D^{2}\big(\Hsf_{0}(\Delta_{2},\tau_{12}(\,\cdot\,))
    \big)\Big)\Big|_{\lambda^{*}}\\
  =\;
  &
    \cot\alpha\,\big((d\lambda_{b}-d\lambda_{e})^{2}-(d\lambda_{f}-d\lambda_{d})^{2}\big)
    +
    \cot\beta\,\big((d\lambda_{e}-d\lambda_{a})^{2}-(d\lambda_{c}-d\lambda_{f})^{2}\big)\\
  & +
    \cot\gamma\,\big((d\lambda_{d}-d\lambda_{e})^{2}-(d\lambda_{f}-d\lambda_{b})^{2}\big)
    +
    \cot\delta\,\big((d\lambda_{e}-d\lambda_{c})^{2}-(d\lambda_{a}-d\lambda_{f})^{2}\big)\\
  & +
    \cot(\alpha+\beta)\big((d\lambda_{c}-d\lambda_{d})^{2}-(d\lambda_{a}-d\lambda_{b})^{2}\big)\\
  & -
    \cot(\beta+\gamma)\big((d\lambda_{d}-d\lambda_{a})^{2}-(d\lambda_{b}-d\lambda_{c})^{2}\big)\\
  =\;
  & \cot\alpha\,
    (d\lambda_{b}+d\lambda_{d}-d\lambda_{e}-d\lambda_{f})
    (d\lambda_{b}-d\lambda_{d}-d\lambda_{e}+d\lambda_{f})\\
  & +
    \cot\beta\,
    (-d\lambda_{a}-d\lambda_{c}+d\lambda_{e}+d\lambda_{f})
    (-d\lambda_{a}+d\lambda_{c}+d\lambda_{e}-d\lambda_{f})\\  
  & +
    \cot\gamma\,
    (d\lambda_{b}+d\lambda_{d}-d\lambda_{e}-d\lambda_{f})
    (-d\lambda_{b}+d\lambda_{d}-d\lambda_{e}+d\lambda_{f})\\  
  & +
    \cot\delta\,
    (-d\lambda_{a}-d\lambda_{c}+d\lambda_{e}+d\lambda_{f})
    (d\lambda_{a}-d\lambda_{c}+d\lambda_{e}-d\lambda_{f})\\  
  & +
    \cot(\alpha+\beta)
    (d\lambda_{a}-d\lambda_{b}+d\lambda_{c}-d\lambda_{d})
    (-d\lambda_{a}+d\lambda_{b}+d\lambda_{c}-d\lambda_{d})\\  
  & -
    \cot(\beta+\gamma)
    (-d\lambda_{a}+d\lambda_{b}-d\lambda_{c}+d\lambda_{d})
    (-d\lambda_{a}-d\lambda_{b}+d\lambda_{c}+d\lambda_{d})\\
  =\;
  &
    (d\lambda_{b}+d\lambda_{d}-d\lambda_{e}-d\lambda_{f})\\
  & \qquad
    \big(
    (\cot\alpha-\cot\gamma)(d\lambda_{b}-d\lambda_{d})
    +
    (\cot\alpha+\cot\gamma)(-d\lambda_{e}+d\lambda_{f})
    \big)\\
  & +
    (-d\lambda_{a}-d\lambda_{c}+d\lambda_{e}+d\lambda_{f})\\
  & \hphantom{+}\qquad
    \big(
    (\cot\beta-\cot\delta)(-d\lambda_{a}+d\lambda_{c})
    +
    (\cot\beta+\cot\delta)(d\lambda_{e}-d\lambda_{f})
    \big)\\
  & +
    (d\lambda_{a}-d\lambda_{b}+d\lambda_{c}-d\lambda_{d})\\
  & \hphantom{+}\qquad
    \big(
    (\cot(\alpha+\beta)-\cot(\beta+\gamma))(d\lambda_{b}-d\lambda_{d})\\
  & \qquad\quad +
    (\cot(\alpha+\beta)+\cot(\beta+\gamma))(-d\lambda_{a}+d\lambda_{c})
    \big)\\
  \overset{5.}{=}\;
  & (-d\lambda_{a}+d\lambda_{b}-d\lambda_{c}+d\lambda_{d})\\
  & \Bigg(
    \frac{1}{\sin\alpha\sin\gamma+\sin\beta\sin\delta}
    \big(
    \sin(-\alpha+\gamma)(d\lambda_{b}-d\lambda_{d})
    + \sin(\alpha+\gamma)(-d\lambda_{e}+d\lambda_{f})\\
  & \qquad\qquad\qquad\qquad\qquad
    + \sin(-\beta+\delta)(-d\lambda_{a}+d\lambda_{c})
    + \sin(\beta+\delta)(d\lambda_{e}-d\lambda_{f})\big)\\
  \stepcounter{equation}\tag{\theequation}\label{eq:d2product}
  & 
    -\frac{1}{\sin(\alpha+\beta)\sin(\beta+\gamma)}
    \big(
    \sin(-\alpha+\gamma)(d\lambda_{b}-d\lambda_{d})
    + \sin(-\beta+\delta)(-d\lambda_{a}+d\lambda_{c})
    \big)
    \Bigg)\\
  = & \;0.
\end{align*}
\endgroup

To see equality ``$\overset{5.}{=}$'', use the
identities~\eqref{eq:dlam} and~\eqref{eq:cotidentity}.
To see the last equality, note that~\eqref{eq:albegadesum} implies
\begin{equation}
  \label{eq:sinsum}
  \sin(\alpha+\beta)\sin(\beta+\gamma)
  = \sin\alpha\sin\gamma+\sin\beta\sin\delta.
\end{equation}
This proves equation~\eqref{eq:d2Hsfflip} and completes the proof of
Lemma~\ref{lem:Hsf}.

\begin{remark}
  In connection with Remark~\ref{rem:difflem}~(iii), it is curious to note
  that for a vertical tangent vector in $T\decTeich_{g,n}$, that is, a
  vector tangent to the fiber over a point in $\Teich_{g,n}$, the first factor in~\eqref{eq:d2product},
  \begin{equation*}
    -d\lambda_{a}+d\lambda_{b}-d\lambda_{c}+d\lambda_{d},
  \end{equation*}
  vanishes as well.
\end{remark}

\section{Proof of Theorem~\ref{thm:rivin}}
\label{sec:proof}

In this section, we prove Theorem~\ref{thm:rivin} using the
variational principle of Theorem~\ref{thm:variational}. By
Propositions~\ref{prop:poly2realizable}
and~\ref{prop:realizable2poly}, Theorem~\ref{thm:rivin} is equivalent
to the following statement about the existence and uniqueness of
realizable coordinate:

\begin{proposition}
  \label{prop:realizable}
  Problem~\ref{prob:realize2} has a unique solution up to equivalence
  (see Definition~\ref{def:realizequiv}). 
\end{proposition}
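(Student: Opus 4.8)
The plan is to read off Proposition~\ref{prop:realizable} from the variational principle of Theorem~\ref{thm:variational}. Since Definition~\ref{def:realizable} is intrinsic, the set of realizable coordinates for $S$ with distinguished vertex $v_{\infty}$ depends only on $S$ and $v_{\infty}$, not on the auxiliary choice of $(\Delta,\lambda)$ and decoration; so I may fix Penner coordinates $(\Delta,\lambda)$ for $S$ with some decoration, set $\Vo=V\setminus\{v_{\infty}\}$, and let
\begin{equation*}
  C=\bigl\{\,u\in\R^{\Vo}\;\big|\;u_{v}\geq\delta_{\Delta,\lambda}(v,v_{\infty})\ \text{for all}\ v\in\Vo\,\bigr\}
\end{equation*}
be the constraint set of~\eqref{eq:boxconstraint}. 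By Theorem~\ref{thm:variational}, Problem~\ref{prob:realize2} is solvable precisely when $\Ecalbar^{v_{\infty}}_{\Delta,\lambda}$ attains its minimum on $C$, and every realizable coordinate for $S$ with vertex $v_{\infty}$ is equivalent to one obtained via~\eqref{eq:Deltillamtil} from a constrained minimizer. Hence it suffices to establish: (a) $\Ecalbar^{v_{\infty}}_{\Delta,\lambda}$ attains its minimum on $C$; (b) the minimizer is unique; (c) the coordinates~\eqref{eq:Deltillamtil} built from it are well defined up to equivalence.

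Claims (b) and (c) are short. For (b): if $u,u'\in C$ both minimize, then by convexity (Proposition~\ref{prop:Ecalbar}(v)) the function $\Ecalbar^{v_{\infty}}_{\Delta,\lambda}$ is constant along the segment $[u,u']$, so its second derivative in the direction $u'-u$ vanishes there; since $D^{2}\Ecalbar^{v_{\infty}}_{\Delta,\lambda}$ is positive semidefinite with kernel $\R\mathbf{1}_{\Vo}$ (Proposition~\ref{prop:Ecalbar}(iv)), we get $u'-u=h\,\mathbf{1}_{\Vo}$, and the scaling relation~\eqref{eq:Ecalbarscale} then forces $h=0$. For (c): any two adjusted Delaunay triangulations returned by $\Del(\Delta,\lambda,u|_{u_{v_{\infty}}=+\infty})$ refine the same ideal Delaunay decomposition of the partially decorated surface (Theorem~\ref{thm:idealdel2}), so the resulting Penner coordinates satisfy conditions (u1), (u2) and are equivalent in the sense of Definition~\ref{def:realizequiv}.

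The substance is claim (a). As $\Ecalbar^{v_{\infty}}_{\Delta,\lambda}$ is convex and $C^{2}$ (Proposition~\ref{prop:Ecalbar}) and $C$ is closed, convex, nonempty with recession cone $\R_{\geq 0}^{\Vo}$, it is enough to show $\Ecalbar^{v_{\infty}}_{\Delta,\lambda}(u_{0}+tw)\to+\infty$ as $t\to+\infty$ for every $u_{0}\in C$ and every $w\in\R_{\geq 0}^{\Vo}\setminus\{0\}$: then every sublevel set of $\Ecalbar^{v_{\infty}}_{\Delta,\lambda}|_{C}$ is closed, convex and has trivial recession cone, hence compact, and (being nonempty at large levels) yields a genuine minimum. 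I would separate the linear part: by~\eqref{eq:Ecalbar},
\begin{equation*}
  \Ecalbar^{v_{\infty}}_{\Delta,\lambda}(u)=\Phi(u)+2\pi\sum_{v\in\Vo}u_{v}+\text{const.},\qquad
  \Phi(u)=\lim_{x\to+\infty}\Ecal_{0,\Delta,\lambda}\bigl(u|_{u_{v_{\infty}}=x}\bigr),
\end{equation*}
where $\Phi$ is the triangulation-dependent sum $\sum_{t\in T_{\Deltilo}}2f(\cdots)-\pi\sum_{e\in E_{\Deltilo}}\lamtil_{e}$ appearing in~\eqref{eq:Ecalbar}. On $C$ the term $2\pi\sum_{v\in\Vo}u_{v}$ is bounded below and tends to $+\infty$ along every nonzero recession ray, so everything reduces to controlling $\Phi$. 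If $w$ has a vanishing coordinate, then at least one cusp of $\Vo$ stays decorated in the limit $t\to+\infty$, and the reasoning of Lemma~\ref{lem:limEcal0} — combined with Akiyoshi's finiteness (Theorem~\ref{thm:akiyoshi}), which confines the analysis to finitely many Delaunay decompositions — shows that $\Phi(u_{0}+tw)$ converges to the \emph{finite} value of the analogous sum over the sub-triangulation avoiding $v_{\infty}$ and the cusps sent to infinity; hence $\Ecalbar^{v_{\infty}}_{\Delta,\lambda}(u_{0}+tw)\to+\infty$. For a general $w$, write $w=c\,\mathbf{1}_{\Vo}+w'$ with $c=\min_{v}w_{v}$ and $w'\in\R_{\geq 0}^{\Vo}$ having a vanishing coordinate; the scaling relation~\eqref{eq:Ecalbarscale} gives $\Ecalbar^{v_{\infty}}_{\Delta,\lambda}(u_{0}+tw)=\Ecalbar^{v_{\infty}}_{\Delta,\lambda}(u_{0}+tw')+2\pi ct$, which tends to $+\infty$ by the case just treated if $w'\neq 0$, and directly if $w'=0$. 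This proves (a), and with (b), (c), Theorem~\ref{thm:variational} gives Proposition~\ref{prop:realizable}.

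The main obstacle is the finite-limit statement for $\Phi$ along degenerating rays: Lemma~\ref{lem:limEcal0} already computes the limit in which a prescribed set of cusps becomes undecorated, so what is needed is to verify that it applies along a ray on which several cusps of $\Vo$ are pushed to infinity simultaneously, at possibly different rates, together with $v_{\infty}$, and that the iterated limit ($t\to+\infty$ after $x\to+\infty$) coincides with the joint limit furnished by that lemma. The remaining ingredients — nonemptiness and finiteness of $\Ecalbar^{v_{\infty}}_{\Delta,\lambda}$ on $C$, and the convex-analysis facts about sublevel sets with trivial recession cone — are routine.
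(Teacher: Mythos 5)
Your overall route is the paper's: reduce to the constrained minimization via Theorem~\ref{thm:variational} and prove that problem has a unique solution. Your existence argument (a) is essentially the paper's coercivity argument recast in recession-cone language --- the key input in both is the finite limit of Lemma~\ref{lem:limEcal0} (via Akiyoshi's finiteness) plus the linear growth of the $2\pi\sum u_{v}$ term; the iterated-versus-joint limit point you flag is routine and is also implicitly present when the paper applies Corollary~\ref{cor:limEcal} to $\Ecalbar^{v_{\infty}}_{\Delta,\lambda}$. Step (c) is fine.

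The genuine gap is in your uniqueness step (b). You invoke ``$D^{2}\Ecalbar^{v_{\infty}}_{\Delta,\lambda}$ is positive semidefinite with kernel $\R\mathbf{1}_{\Vo}$'' and attribute this to Proposition~\ref{prop:Ecalbar}(iv); but that proposition only gives the cotangent formula~\eqref{eq:d2Ecalbar}, and the kernel claim is false in general. The quadratic form~\eqref{eq:d2Ecalbar} only involves edges and triangles of $\Deltilo$, so whenever some vertex of $\Vo$ lies in no triangle of $T_{\Deltilo}$ the corresponding coordinate direction is in the kernel; in the extreme case $T_{\Deltilo}=\emptyset$ (the two-sided ideal polygon case, which Rivin's theorem explicitly allows and which occurs at the minimizer exactly when the realization degenerates), $\Ecalbar^{v_{\infty}}_{\Delta,\lambda}$ is affine on the corresponding Penner cell and its Hessian vanishes identically there. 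So constancy along a segment of minimizers does not force the direction to lie in $\R\mathbf{1}_{\Vo}$, and your argument does not rule out a positive-dimensional set of minimizers in the degenerate situation. The paper closes this by a case distinction at a minimizer $u$: if $\Deltilo$ is a linear graph, then every vertex of $\Vo$ is adjacent to $v_{\infty}$ in $\Deltil$, so by Proposition~\ref{prop:punctured} every bound constraint~\eqref{eq:boxconstraint} is active, which pins $u$ down uniquely without any Hessian argument; if $\Deltilo$ is a triangulated disk, then (and only then) the triangles of $T_{\Deltilo}$ cover and connect $\Vo$, so the Hessian at $u$ has kernel exactly $\R\mathbf{1}_{\Vo}$, and together with the first-order conditions~\eqref{eq:minimum_int}--\eqref{eq:minimum_bdy}, convexity, and the scaling relation this yields $\Ecalbar^{v_{\infty}}_{\Delta,\lambda}(u')>\Ecalbar^{v_{\infty}}_{\Delta,\lambda}(u)$ for every other feasible $u'$. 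You need to add this case analysis (or some substitute using the active constraints) to make (b) correct.
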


Proposition~\ref{prop:realizable} is in turn equivalent to the
following statement about the unique solvability of an optimization
problem with bounds constraints (see Theorem~\ref{thm:variational}):

\begin{proposition}
  \label{prop:minimum}
  Let $\Delta$ be a triangulation of $(S_{0},V)$, the oriented surface
  of genus~$0$ with $n=|V|\geq 3$ marked points, let
  $\lambda\in\R^{E_{\Delta}}$, let $v_{\infty}\in V$, and let~$\Vo$ be
  defined by~\eqref{eq:Vo}. Then there exists a unique solution to the
  minimization problem
  \begin{equation}
    \label{eq:optprob}
    \begin{aligned}
      &\text{minimize\ \;
          $\Ecalbar^{v_{\infty}}_{\Delta,\lambda}(u)$\ \; for\ \;$u\in\R^{\Vo}$,}\\
      &\text{subject to the bounds constraints~\eqref{eq:boxconstraint}}
    \end{aligned}
  \end{equation}
\end{proposition}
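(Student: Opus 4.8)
The plan is to prove \emph{existence} and \emph{uniqueness} of the constrained minimizer separately, relying on the properties of $\Ecalbar^{v_{\infty}}_{\Delta,\lambda}$ collected in Proposition~\ref{prop:Ecalbar}: it is convex and twice continuously differentiable on all of $\R^{\Vo}$, it is given by the explicit formula~\eqref{eq:Ecalbar}, and it satisfies the scaling relation $\Ecalbar^{v_{\infty}}_{\Delta,\lambda}(u+h\,\mathbf{1}_{\Vo})=\Ecalbar^{v_{\infty}}_{\Delta,\lambda}(u)+2\pi h$. The feasible set
\[
F=\bigl\{u\in\R^{\Vo}\;\big|\;u_v\geq\delta_{\Delta,\lambda}(v,v_{\infty})\text{ for all }v\in\Vo\bigr\}
\]
is a closed convex set, namely a translate of the nonnegative orthant, and its recession cone is $\R_{\geq 0}^{\Vo}$.

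For existence I would show that $\Ecalbar^{v_{\infty}}_{\Delta,\lambda}$ restricted to $F$ is proper, i.e.\ has compact sublevel sets; since $\Ecalbar^{v_{\infty}}_{\Delta,\lambda}$ is convex and continuous and $F$ is closed, this is equivalent to the recession (asymptotic slope) function of $\Ecalbar^{v_{\infty}}_{\Delta,\lambda}$ being strictly positive on every nonzero ray in $\R_{\geq 0}^{\Vo}$. Along the diagonal direction $d=\mathbf{1}_{\Vo}$ this is immediate from the scaling relation, with slope $2\pi>0$. For a general direction $d\in\R_{\geq 0}^{\Vo}\setminus\{0\}$ one has to analyse $\Ecalbar^{v_{\infty}}_{\Delta,\lambda}(u_0+td)$ as $t\to\infty$. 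Here I would use formula~\eqref{eq:Ecalbar}: the term $2\pi\sum_{v\in\Vo}u_v$ contributes linear growth with slope $2\pi\sum_v d_v>0$, while the combinatorial part $\sum_t 2f(\cdots)-\pi\sum_e\lamtil_e$, which by Akiyoshi's Theorem~\ref{thm:akiyoshi} changes only finitely often along the ray, is controlled via the asymptotics of $f$ on $\partial\!\Acal$ and at infinity (as in parts (a), (b) of the proof of Lemma~\ref{lem:limEcal0}) together with the derivative formula~\eqref{eq:dEcalbar} and Euler-characteristic identities for the degrees appearing there; the conclusion is that the linear term dominates and the recession slope is positive. This coercivity estimate is the step I expect to be the main obstacle: $\Ecalbar^{v_{\infty}}_{\Delta,\lambda}$ is certainly not coercive on all of $\R^{\Vo}$ (its Hessian has a nontrivial kernel, and in fact vanishes identically on the Penner cells of type (r2a)), so the argument must genuinely exploit the interplay between the linear term $2\pi\sum_{v\in\Vo}u_v$, the shape of the constraint cone, and the combinatorial degeneration of the ideal Delaunay triangulation as subsets of horocycles escape to infinity.

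For uniqueness, suppose the convex set $M\subseteq F$ of constrained minimizers contained two distinct points $u^0\neq u^1$; then $[u^0,u^1]\subseteq M$, and $\Ecalbar^{v_{\infty}}_{\Delta,\lambda}$ is constant, hence affine, on this segment. Pick $u^*$ in the relative interior of $[u^0,u^1]$ and set $w=u^1-u^0\neq 0$. By Theorem~\ref{thm:variational}(i) the minimizer $u^*$ yields realizable coordinates with distinguished vertex $v_{\infty}$, so $\Deltilo:=\Deltilo(u^*)$ satisfies either (r2a) or (r2b) of Definition~\ref{def:realizable}. If (r2b) holds, then $\Deltilo$ is a triangulation of a closed disk, hence connected, and the second derivative~\eqref{eq:d2Ecalbar} is the cotangent Laplacian (discrete Dirichlet energy, cf.\ Remark~\ref{rem:d2Ecalbar}) of this piecewise euclidean disk, whose kernel consists exactly of the functions constant on $\Vo$; since $\Ecalbar^{v_{\infty}}_{\Delta,\lambda}$ is affine on $[u^0,u^1]$ and twice continuously differentiable we get $D^2\Ecalbar^{v_{\infty}}_{\Delta,\lambda}\big|_{u^*}(w,w)=0$, so $w\in\R\,\mathbf{1}_{\Vo}$; but then the scaling relation forces $\Ecalbar^{v_{\infty}}_{\Delta,\lambda}(u^1)\neq\Ecalbar^{v_{\infty}}_{\Delta,\lambda}(u^0)$, contradicting that both are minimal. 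If instead (r2a) holds, then (as in case (a) of the proof of Theorem~\ref{thm:variational}) every vertex of $\Vo$ is adjacent to $v_{\infty}$ in the corresponding Delaunay triangulation, so by Proposition~\ref{prop:punctured} all bounds constraints~\eqref{eq:boxconstraint} are active at $u^*$; thus $u^*=\bigl(\delta_{\Delta,\lambda}(v,v_{\infty})\bigr)_{v\in\Vo}$ is an extreme point of $F$ and cannot lie in the relative interior of a nondegenerate segment contained in $F$ — again a contradiction. Hence $M$ is a single point, which together with existence proves Proposition~\ref{prop:minimum}.
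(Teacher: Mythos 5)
Your overall strategy matches the paper's --- split into uniqueness and existence, with the uniqueness case split further into (r2a) and (r2b) and an appeal to the Hessian structure and the scaling relation --- but there are two places where you need to be more careful.

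\textbf{Uniqueness.} Your segment-in-the-minimizer-set argument is essentially the paper's argument repackaged and is correct. One imprecision: in case (r2b) you identify the kernel of $D^{2}\Ecalbar^{v_{\infty}}_{\Delta,\lambda}\big|_{u^{*}}$ by calling it ``the cotangent Laplacian of a connected triangulated disk.'' That appeal is not valid as stated, because the cotangent weights $w_{e}$ in~\eqref{eq:d2Ecalbar} need not be nonnegative (interior edges can have $\cot\alpha_{e}+\cot\alpha'_{e}<0$, and boundary weights $\cot\alpha_{e}$ can be negative), so one cannot read off the kernel from a weighted-graph-Laplacian picture. The correct reasoning --- and the one the paper uses --- is that by~\eqref{eq:Ecalbar} the Hessian is a sum over triangles of pullbacks of $D^{2}f$, each of which is positive semidefinite with kernel spanned by $(1,1,1)$ by Proposition~\ref{prop:f}(iv); since $\Deltilo$ is connected when (r2b) holds, the intersection of these triangle-wise kernels is $\R\,\mathbf{1}_{\Vo}$. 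With this correction your chain ``affine on a segment $\Rightarrow$ direction lies in the Hessian kernel $\Rightarrow$ direction is $\mathbf{1}_{\Vo}$ $\Rightarrow$ contradicts the scaling relation'' goes through.

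\textbf{Existence.} This is where you have a genuine gap, and you say so yourself. You set up a recession-cone criterion and verify it only along the diagonal direction, deferring the general direction $d\in\R_{\geq 0}^{\Vo}\setminus\{0\}$ to a coercivity estimate that you describe as ``the main obstacle'' but do not complete. The paper closes exactly this step, but by a more direct route that bypasses the recession calculus: take an unbounded sequence $(u_{n})$ in the feasible set; since the constraints bound $u_{n}$ from below, pass to a subsequence along which each coordinate converges either to a finite limit or to $+\infty$; at least one coordinate goes to $+\infty$; and then invoke Corollary~\ref{cor:limEcal}, which (using $\Theta_{v}=2\pi>0$ on $\Vo$ and the fact that at least one coordinate stays finite) yields $\Ecalbar^{v_{\infty}}_{\Delta,\lambda}(u_{n})\to+\infty$. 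Corollary~\ref{cor:limEcal} already packages precisely the asymptotic control you were hoping to extract from Lemma~\ref{lem:limEcal0} parts (a) and (b). If you want to keep your recession-function framing, you could still use the corollary to show that the one-dimensional function $t\mapsto\Ecalbar^{v_{\infty}}_{\Delta,\lambda}(u_{0}+td)$ tends to $+\infty$ for every $d\in\R_{\geq 0}^{\Vo}\setminus\{0\}$, which for a convex function forces strictly positive recession slope; but either way, the missing ingredient is the corollary, and without it your existence proof is incomplete.
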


The rest of this section is concerned with proving
Proposition~\ref{prop:minimum}, first the uniqueness statement, then
the existence statement. 

\paragraph{Uniqueness.} Assume the minimization
problem~\eqref{eq:optprob} has a solution. To show the solution is
unique, assume $u\in\R^{\Vo}$ solves~\eqref{eq:optprob} and let
$(\Deltil,\lamtil)$ be defined by~\eqref{eq:Deltillamtil}. By
Theorem~\ref{thm:variational}, $(\Deltil,\lamtil)$ are realizable
coordinates with distinguished vertex $v_{\infty}$. 

Either the subcomplex $\Deltilo$ of cells not incident with
$v_{\infty}$ is a linear graph. In this case all
constraints~\eqref{eq:boxconstraint} are satisfied with equality,
which determines~$u$ uniquely.

Or $\Deltilo$ is a triangulation of a closed disk. Then the second
derivative of $\Ecalbar^{v_{\infty}}_{\Delta,\lambda}$ at $u$ is
positive semidefinite with one-dimensional kernel spanned by
$\mathbf{1}_{\Vo}$. This follows from~\eqref{eq:Ecalbar} and 
the convexity of $f$ (see Proposition\ref{prop:f}~(iii)). Together
with~\eqref{eq:minimum_int} and~\eqref{eq:minimum_bdy}, this implies
that 
\begin{equation*}
  \Ecalbar^{v_{\infty}}_{\Delta,\lambda}(u')>
  \Ecalbar^{v_{\infty}}_{\Delta,\lambda}(u)
\end{equation*}
for any $u'\in\R^{\Vo}\setminus\{u\}$ satisfying the
constraints~\eqref{eq:boxconstraint}.

\paragraph{Existence.} To show that the continuous function
$\Ecalbar^{v_{\infty}}_{\Delta,\lambda}$ attains its minimum on the
closed subset 
\begin{equation*}
  D=\big\{u\in\R^{\Vo}\,\big|\,\;
  u\text{ satisfies the bounds 
    constraints~\eqref{eq:boxconstraint}}\big\}\subseteq\R^{\Vo},
\end{equation*}
it is enough to show that every unbounded sequence
$(u_{n})$ in $D$ has a subsequence~$(u_{n_{k}})$ with
$\Ecalbar^{v_{\infty}}_{\Delta,\lambda}(u_{n_{k}})\rightarrow+\infty$. 

So let $(u_{n})$ be an unbounded sequence in $D$. Note that $(u_{n})$
is bounded from below by the
constraints~\eqref{eq:boxconstraint}. Hence, after taking a
subsequence if necessary, we may assume that for every $v\in\Vo$
either $u_{n}(v)$ converges to a finite limit, or
$u_{n}(v)\rightarrow+\infty$. Since $u_{n}(v)\rightarrow+\infty$ for
at least one $v\in\Vo$ and $\Theta_{v}=2\pi$,
Corollary~\ref{cor:limEcal} implies
\begin{equation*}
  \lim_{n}\Ecalbar^{v_{\infty}}_{\Delta,\lambda}(u_{n})=+\infty.
\end{equation*}
This concludes the proof of Proposition~\ref{prop:minimum}, and hence
of Theorem~\ref{thm:rivin}.

\section{Discrete conformal equivalence and the uniformization
  spheres}
\label{sec:uniform}

In this section we recall the basic definitions of discrete conformal
equivalence, and we discuss the equivalence of Rivin's
Theorem~\ref{thm:rivin} and the discrete uniformization theorem for
spheres (see Theorem~\ref{thm:unisphere}). The relation of discrete
conformal equivalence was first defined for triangulated piecewise
euclidean surfaces. As explained in
Definition~\ref{def:triangpweucsurf}, we use the notation
$(\Delta,\ell)$ for the triangulated piecewise euclidean surface with
triangulation $\Delta$ and edge lengths $\ell\in\R_{>0}^{E_{\Delta}}$.

\begin{definition}[discrete conformal equivalence of triangulated
  surfaces]
  \label{def:dce1}
  The triangulated piecewise euclidean surfaces $(\Delta,\ell)$ and
  $(\Delta,\elltil)$ are \emph{discretely conformally equivalent} if
  there is a function $u\in\R^{V_{\Delta}}$ such that for every edge
  $e\in E_{\Delta}$,
  \begin{equation}
    \label{eq:udc}
    \elltil(e)
    =e^{\frac{1}{2}(u_{v_{1}(e)}+u_{v_{2}(e)})}\ell(e).
  \end{equation}
\end{definition}

This definition is due to Luo~\cite{luo04}. It has the following
interpretation in terms of hyperbolic geometry:

\begin{proposition}[see {\cite[Theorem~5.1.2]{bobenko15}}]
  \label{prop:dcehyp}
  On a triangulated piecewise euclidean surface, one obtains a
  complete hyperbolic metric of finite area by equipping every
  triangle with the hyperbolic Klein metric induced by its
  circumcircle. Then the following statements are equivalent:
  \begin{compactenum}[(i)]
  \item The triangulated piecewise euclidean surfaces $(\Delta,\ell)$
    and $(\Deltil,\elltil)$ are discretely conformally equivalent.
  \item The triangulated piecewise euclidean surfaces $(\Delta,\ell)$
    and $(\Deltil,\elltil)$ are isometric with respect to the
    induced hyperbolic metrics.
  \end{compactenum}
\end{proposition}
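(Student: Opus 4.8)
The plan is to pass to Penner coordinates and reduce the statement to the fiberwise description of $\decTeich_{g,n}$ in Proposition~\ref{prop:fiber}. (Discrete conformal equivalence of triangulated surfaces is only defined when the two surfaces carry the same triangulation, so throughout $\Deltil$ denotes the same triangulation as $\Delta$, and I suppress it.) To a triangulated piecewise euclidean surface $(\Delta,\ell)$ I associate $\lambda\in\R^{E_{\Delta}}$ with $\ell_{e}=e^{\lambda_{e}/2}$, cf.~\eqref{eq:ell}; by the theory recalled in Section~\ref{sec:penner}, $(\Delta,\lambda)$ are Penner coordinates of a decorated complete finite area hyperbolic surface, and I write $S(\Delta,\ell)\in\Teich_{g,n}$ for its underlying (undecorated) hyperbolic structure, with $\Delta$ as an ideal triangulation.

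The geometric heart of the proof is the following fact, which is essentially the correspondence established in~\cite{bobenko15}: the complete hyperbolic metric on $(S_{g},V)$ obtained from $(\Delta,\ell)$ by equipping each triangle with the Klein metric of its circumcircle is isometric, by an isometry isotopic to the identity of $(S_{g},V)$, to $S(\Delta,\ell)$; equivalently, the two carry the same shear coordinates with respect to $\Delta$. To see this one computes the shear of the circumcircle gluing along an edge $e$: gluing the two adjacent triangles along their common euclidean edge (of length $\ell_{e}$ on both sides) identifies the corresponding hyperbolic geodesics of the two circumdisk Klein models by euclidean arc length, and a direct computation in the half-plane model shows that the resulting shear equals $\tfrac{1}{2}(\lambda_{a}-\lambda_{b}+\lambda_{c}-\lambda_{d})$ with $a,b,c,d$ the four edges surrounding $e$ as in Figure~\ref{fig:shear}, matching~\eqref{eq:shearfrompenner}; the circumradii drop out, so the construction is well defined. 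Completeness of the resulting metric, i.e.\ the vanishing~\eqref{eq:sigmasum} of the shear sum around each vertex, holds automatically for Penner coordinates; this also proves the first assertion of the proposition.

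Granting this fact, (i)$\Rightarrow$(ii) is immediate: if $\elltil_{e}=e^{\tfrac{1}{2}(u_{v_{1}(e)}+u_{v_{2}(e)})}\ell_{e}$ for some $u\in\R^{V}$, then $\lamtil_{e}:=2\log\elltil_{e}$ equals $\lambda_{e}+u_{v_{1}(e)}+u_{v_{2}(e)}=\Lambda^{\Delta,\lambda}_{e}(u)$, so by Proposition~\ref{prop:fiber} the decorated surfaces $(\Delta,\lambda)$ and $(\Delta,\lamtil)$ lie in one fiber of $\decTeich_{g,n}\to\Teich_{g,n}$; hence $S(\Delta,\elltil)=S(\Delta,\ell)$, and by the fact above the two circumcircle metrics are isometric via an isometry isotopic to the identity. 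For (ii)$\Rightarrow$(i), an isometry of the circumcircle metrics isotopic to the identity carries $\Delta$ to $\Delta$ up to isotopy, so composing with the isometries of the previous paragraph it identifies $S(\Delta,\ell)$ with $S(\Delta,\elltil)$ in $\Teich_{g,n}$; thus $(\Delta,\lambda)$ and $(\Delta,\lamtil)$ are decorated surfaces with the same ideal triangulation $\Delta$ lying in one fiber of $\decTeich_{g,n}$, and Proposition~\ref{prop:fiber} produces a $u\in\R^{V}$ with $\lamtil=\Lambda^{\Delta,\lambda}(u)$, that is, $2\log\elltil_{e}=2\log\ell_{e}+u_{v_{1}(e)}+u_{v_{2}(e)}$, which is~\eqref{eq:udc}.

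I expect the main obstacle to be the geometric fact in the second paragraph --- the identification of the circumcircle Klein construction with the Penner-coordinate surface; everything else is bookkeeping with Proposition~\ref{prop:fiber}. One either cites~\cite{bobenko15} for this identification or carries out the short half-plane-model computation of the shear. The one point requiring care is the meaning of \emph{isometric} in~(ii): it must be an isometry compatible with the marking (so that it preserves the combinatorial triangulation $\Delta$), which is exactly what the isometry produced in~(i) is, so no generality is lost.
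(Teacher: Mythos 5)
Your proposal is correct and follows essentially the same route as the paper: the paper's proof consists of citing \cite[Prop.~5.1.2]{bobenko15} together with the identification of the circumcircle Klein metric with the hyperbolic surface whose Penner coordinates are $(\Delta,\lambda)$, $\ell=e^{\lambda/2}$, after which discrete conformal equivalence~\eqref{eq:udc} is precisely the statement that $(\Delta,\lambda)$ and $(\Delta,\lamtil)$ lie in the same fiber of $\decTeich_{g,n}$ --- exactly your bookkeeping with Proposition~\ref{prop:fiber}. The only deviation is how the key identification is justified: the paper uses the decorated ideal tetrahedron of Figure~\ref{fig:idealtet} (projection from $v_{\infty}$ pulls the metric of the ideal triangle $v_{1}v_{2}v_{3}$ back to the circumcircle Klein metric on the horospherical euclidean triangle $A_{1}A_{2}A_{3}$), whereas you verify that the circumcircle gluing has shear $\tfrac{1}{2}(\lambda_{a}-\lambda_{b}+\lambda_{c}-\lambda_{d})$, in agreement with~\eqref{eq:shearfrompenner}; both arguments are valid, and your fiberwise reduction, including the marking caveat in~(ii) and the completeness observation via~\eqref{eq:sigmasum}, is sound.
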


The induced hyperbolic
metric has Penner coordinates $(\Delta,\lambda)$, where $\lambda$ and
$\ell$ are related by~\eqref{eq:ell}. Proposition~\ref{prop:dcehyp} can also be
seen by considering decorated ideal tetrahedra as shown in
Figure~\ref{fig:idealtet}: The projection form the point $v_{\infty}$
maps the euclidean triangle $A_{1}A_{2}A_{3}$ in the horosphere
centered at $v_{\infty}$ to the ideal triangle $v_{1}v_{2}v_{3}$. The
hyperbolic Klein metric induced on the euclidean triangle
$A_{1}A_{2}A_{3}$ by its circumcircle is the pullback of the
hyperbolic metric of the ideal triangle $v_{1}v_{2}v_{3}$.

Note that Definition~\ref{def:dce1} requires the triangulations of
both surfaces to be equal. Discrete conformal mapping problems based
on this notion of discrete conformal equivalence can be solved using
the variational principles introduced in~\cite{bobenko15}---provided a
solution exists. The variational principle also implies strong
uniqueness theorems for the solutions. But to prove any reasonable
existence theorem for discrete conformal maps, it seems necessary to
allow changing the triangulation.  Proposition~\ref{prop:dcehyp}
motivates the following definition, which leads to strong
uniformization theorems:

\begin{definition}[discrete conformal equivalence of piecewise
  euclidean surfaces]
  \label{def:dce2}
  Piecewise euclidean metrics $d$ and $\dtil$ on the oriented surface
  $(S_{g},V)$ of genus~$g$ with $n=|V|$ marked points are
  \emph{discretely conformally equivalent} if the Delaunay
  triangulations of $(S_{g},V)$ with respect to the metrics $d$ and
  $\dtil$ induce the same complete hyperbolic metric on the punctured
  surface $S_{g,n}=S_{g}\setminus V$.
\end{definition}

Definition~\ref{def:dce2} is equivalent to the definition of Gu
\etal~\cite{luo13}:

\begin{proposition}
  Two piecewise euclidean metrics $d$ and $\dtil$ on $(S_{g},V)$ are
  \emph{discretely conformally equivalent}, if there is a sequence of
  triangulated piecewise euclidean surfaces 
  \begin{equation*}
    (\Delta_{0},\ell_{0}),\ldots,(\Delta_{m},\ell_{m})
  \end{equation*}
  such that 
  \begin{compactenum}[(i)]
  \item The metric of $(\Delta_{0},\ell_{0})$ is $d$ and the metric of
    $(\Delta_{m},\ell_{m})$ is $\dtil$.
  \item Each $\Delta_{i}$ is a Delaunay triangulation of the piecewise
    euclidean surface $(\Delta_{i},\ell_{i})$.
  \item If $\Delta_{i}=\Delta_{i+1}$, then $(\Delta_{i},\ell_{i})$ and
    $(\Delta_{i+1},\ell_{i+1})$ are discretely conformally equivalent
    in the sense of Definition~\ref{def:dce1}.
  \item If $\Delta_{i}\not=\Delta_{i+1}$, then $(\Delta_{i},\ell_{i})$
    and $(\Delta_{i+1},\ell_{i+1})$ are the same piecewise euclidean
    surface with two different Delaunay triangulations $\Delta_{i}$
    and $\Delta_{i+1}$.
  \end{compactenum}
\end{proposition}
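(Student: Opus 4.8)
The claim is that Definition~\ref{def:dce2} and the existence of a sequence with properties (i)--(iv) are two equivalent characterizations of discrete conformal equivalence, and the plan is to prove both implications by translating everything into {\Tm} space. The key device is the dictionary, furnished by Theorem~\ref{thm:idealeucdel} and Proposition~\ref{prop:dcehyp}, between triangulated piecewise euclidean surfaces carrying a Delaunay triangulation and decorated hyperbolic surfaces carrying an ideal Delaunay triangulation: if $\Delta$ is a euclidean Delaunay triangulation of a piecewise euclidean surface with edge lengths $\ell$, then $(\Delta,\lambda)$ with $\ell=e^{\lambda/2}$ are Penner coordinates of a decorated hyperbolic surface for which $\Delta$ is an ideal Delaunay triangulation, and conversely. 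Two euclidean Delaunay triangulations of a fixed piecewise euclidean surface differ by flips of edges whose two incident triangles are cocircular, equivalently by flips of nonessential edges of the associated ideal Delaunay triangulation (Theorem~\ref{thm:localdel}); such a flip preserves the piecewise euclidean metric (euclidean Ptolemy) and, through~\eqref{eq:ptolemy}, the decorated hyperbolic surface. Hence every piecewise euclidean metric $d$ on $(S_g,V)$ determines a well-defined $\hat S_d\in\decTeich_{g,n}$ whose underlying point $\pi(\hat S_d)\in\Teich_{g,n}$ is exactly the complete hyperbolic metric assigned to $d$ in Definition~\ref{def:dce2}, and from any ideal Delaunay triangulation of $\hat S_d$ the dictionary returns $d$.

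Next I would read off what the two allowed moves in the sequence mean under this dictionary. A type-(iii) move keeps $\Delta$ fixed and rescales $\ell_e$ by $e^{(u_{v_1(e)}+u_{v_2(e)})/2}$, which by~\eqref{eq:ell} and~\eqref{eq:Lambda} amounts to $\lamtil=\Lambda^{\Delta,\lambda}(u)$; by Proposition~\ref{prop:fiber} this keeps $\hat S$ inside a single fiber of $\pi$ (only the decorating horocycles move), and conversely any two decorated surfaces in one fiber sharing an ideal Delaunay triangulation are related by such a move in that chart. A type-(iv) move keeps the piecewise euclidean surface and changes only its Delaunay triangulation, hence by the first paragraph keeps $\hat S$ unchanged entirely. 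With this the implication ``sequence exists $\Rightarrow$ Definition~\ref{def:dce2}'' is immediate: every move preserves the fiber over $\Teich_{g,n}$, so $\pi(\hat S_d)=\pi(\hat S_{\dtil})$, and $d$, $\dtil$ induce the same complete hyperbolic metric.

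For the converse I would produce a sequence as follows. If $\hat S_d$ and $\hat S_{\dtil}$ lie over the same $S\in\Teich_{g,n}$, they lie in one fiber $F$, which is connected by Proposition~\ref{prop:fiber}. By Akiyoshi's Theorem~\ref{thm:akiyoshi} there are only finitely many ideal Delaunay triangulations of $S$, hence only finitely many Penner cells (Theorem~\ref{thm:pennerdecomp}) meeting $F$; their traces on $F$ are closed, cover $F$, and $F$ is connected, so the intersection graph of these traces is connected and yields a finite chain of cells $\mathcal C(\Delta^{(0)}),\dots,\mathcal C(\Delta^{(N-1)})$ with $\hat S_d\in\mathcal C(\Delta^{(0)})$, $\hat S_{\dtil}\in\mathcal C(\Delta^{(N-1)})$, and consecutive cells meeting $F$ in a common point. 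Choosing $p_0=\hat S_d$, $p_N=\hat S_{\dtil}$, and $p_j$ in $\mathcal C(\Delta^{(j-1)})\cap\mathcal C(\Delta^{(j)})\cap F$ for $0<j<N$, the triangulation $\Delta^{(j)}$ is an ideal Delaunay triangulation of both $p_j$ and $p_{j+1}$. Reading the piecewise euclidean surfaces off from $p_j$ and $p_{j+1}$ in the chart $\Delta^{(j)}$ (which satisfy the triangle inequalities and have $\Delta^{(j)}$ as a euclidean Delaunay triangulation, by Theorem~\ref{thm:idealeucdel}), I would form the interleaving whose $j$-th block is a type-(iii) move $(\Delta^{(j)},\ell^{(j)}_{p_j})\to(\Delta^{(j)},\ell^{(j)}_{p_{j+1}})$ followed by a type-(iv) move $(\Delta^{(j)},\ell^{(j)}_{p_{j+1}})\to(\Delta^{(j+1)},\ell^{(j+1)}_{p_{j+1}})$ (two descriptions of the single decorated surface $p_{j+1}$), with no retriangulation after the last block. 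Properties (i)--(iv) then hold by construction, and the first and last surfaces are $d$ and $\dtil$ because the dictionary recovers them from $\hat S_d$ and $\hat S_{\dtil}$.

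The routine points I am suppressing are the cocircularity characterization of nonessential edges and the euclidean Ptolemy computation that a nonessential flip preserves the piecewise euclidean metric. The one genuinely load-bearing input, and where the argument would fail without it, is Akiyoshi's finiteness Theorem~\ref{thm:akiyoshi}: it is what guarantees that the constructed sequence can be taken finite.
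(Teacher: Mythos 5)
Your proposal is correct and takes essentially the same route as the paper, whose entire proof is a one-line citation of Proposition~\ref{prop:dcehyp} and Theorems~\ref{thm:pennerdecomp} and~\ref{thm:idealeucdel}: you simply spell out that dictionary and the resulting chain of Penner cells inside one fiber of the decorated \Tm{} space. The only minor deviation is that you lean on Akiyoshi's Theorem~\ref{thm:akiyoshi} to make the chain finite, where the cell-decomposition structure of Theorem~\ref{thm:pennerdecomp} (applied, say, to a compact segment in the fiber) would already suffice.
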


\begin{proof}
  This is a consequence of Proposition~\ref{prop:dcehyp} and
  Theorems~\ref{thm:pennerdecomp} and~\ref{thm:idealeucdel}.
\end{proof}

The connection of realization problems for ideal polyhedra and
discrete conformal equivalence in the sense of
Definition~\ref{def:dce1} was observed
in~\cite[Sec.~5.4]{bobenko15}. With Definition~\ref{def:dce2}, Rivin's
polyhedral realization Theorem~\ref{thm:rivin} is equivalent to the
following uniformization theorem for spheres:

\begin{theorem}[discrete uniformization of spheres]
  \label{thm:unisphere}
  For every piecewise euclidean metric $d$ on the $2$-sphere
  $(S_{0},V)$ with $n=|V|$ marked points, there is a realization of
  $(S_{0},V)$ as a convex euclidean polyhedron $P$ with vertex set
  $V$, such that all vertices lie on the unit sphere and the induced
  piecewise euclidean metric is discretely conformally equivalent to
  $d$. The polyhedron $P$ is unique up to projective transformations
  of $\RP^{3}\supseteq\R^{3}$ mapping the unit sphere to itself.
\end{theorem}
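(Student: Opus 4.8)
The plan is to translate the statement into the projective (Klein) model of hyperbolic space, where it becomes Rivin's Theorem~\ref{thm:rivin}. In that model $H^3$ is the open unit ball in $\R^3\subset\RP^3$, hyperbolic geodesics are straight chords, and the isometry group of $H^3$ is exactly the group of projective transformations of $\RP^3$ preserving the unit sphere. Hence a convex euclidean polyhedron with all vertices on the unit sphere \emph{is} a convex ideal polyhedron in $H^3$, and ``unique up to isometries of $H^3$'' reads as ``unique up to projective transformations of $\RP^3$ fixing the unit sphere''. The bridge between the metric data on the two sides is Proposition~\ref{prop:dcehyp}: a face of such a polyhedron is a convex euclidean polygon inscribed in the circle cut out of the unit sphere by its supporting plane, and since chords are Klein geodesics, that face equipped with the hyperbolic Klein metric of this circle coincides with the ideal polygon spanned by the same vertices. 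So the hyperbolic metric that the circumcircle construction of Proposition~\ref{prop:dcehyp} induces on the boundary of an inscribed convex polyhedron is precisely the intrinsic hyperbolic metric of the corresponding ideal polyhedron.

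For the existence half I would start from the given piecewise euclidean metric $d$ on $(S_0,V)$, choose a Delaunay triangulation $\Delta$ with edge lengths $\ell$ (automatically satisfying the triangle inequalities~\eqref{eq:triang}), and let $S\in\Teich_{0,n}$ be the complete finite-area hyperbolic surface with Penner coordinates $(\Delta,\lambda)$, $\lambda=2\log\ell$. By Proposition~\ref{prop:dcehyp} this $S$ is the circumcircle--Klein metric of $(\Delta,\ell)$, so by Definition~\ref{def:dce2} a piecewise euclidean metric on $(S_0,V)$ is discretely conformally equivalent to $d$ exactly when its Delaunay triangulation induces this same $S$. Applying Theorem~\ref{thm:rivin} to $S$ yields a convex ideal polyhedron $P^{\mathrm{hyp}}$ realizing it; reading $P^{\mathrm{hyp}}$ in the projective model gives a convex euclidean polyhedron $P$ inscribed in the unit sphere, whose vertices are the cusps of $S$, i.e.\ the marked points $V$. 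By the dictionary of the previous paragraph, the hyperbolic metric induced on $\partial P$ equals $S$; and since this induced hyperbolic metric depends only on the piecewise euclidean metric of $\partial P$ (Proposition~\ref{prop:dcehyp} together with Theorems~\ref{thm:pennerdecomp} and~\ref{thm:idealeucdel}), the metric of $P$ is discretely conformally equivalent to $d$.

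For uniqueness, two convex euclidean polyhedra with vertex set $V$ inscribed in the unit sphere whose induced metrics are both discretely conformally equivalent to $d$ are, by the same dictionary and Definition~\ref{def:dce2}, two convex ideal polyhedra realizing the \emph{same} hyperbolic surface $S$; Rivin's uniqueness statement then forces them to differ by an isometry of $H^3$, hence by a projective transformation of $\RP^3$ fixing the unit sphere. Reversing the construction --- starting from $S\in\Teich_{0,n}$, passing to an ideal Delaunay triangulation, forming the piecewise euclidean metric $(\Delta,e^{\lambda/2})$ via Lemma~\ref{lem:triang}, and applying Theorem~\ref{thm:unisphere} --- gives the converse implication, so the two theorems are genuinely equivalent.

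The two points I am glossing over carry the real content. First, one must verify carefully that equipping each face with the Klein metric of its circumcircle turns the boundary of an \emph{inscribed} convex euclidean polyhedron into precisely the intrinsic hyperbolic metric of the ideal polyhedron; this is where convexity is used (so that every face is a polygon inscribed in a circle whose sides are honest chords, hence Klein geodesics), and it is the hinge of the whole argument --- the step I expect a careful reader to want spelled out. Second, the degenerate case should be treated honestly: Theorem~\ref{thm:rivin} allows $P^{\mathrm{hyp}}$ to collapse to a two-sided ideal polygon, which here corresponds to $P$ degenerating to a convex polygon inscribed in a great circle, counted with two sides, and the statement of Theorem~\ref{thm:unisphere} should be read so as to permit this, exactly as in Rivin's theorem.
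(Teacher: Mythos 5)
Your proposal is correct, and at the top level it is the same proof as the paper's: the paper also treats Theorem~\ref{thm:unisphere} as equivalent to Rivin's Theorem~\ref{thm:rivin}, justifying the equivalence by citing Proposition~\ref{prop:dcehyp}, Theorem~\ref{thm:idealeucdel}, the M{\"o}bius invariance of discrete conformal equivalence, and the construction of~\cite[Sec.~5.4]{bobenko15}. Where you genuinely differ is in how the dictionary between inscribed euclidean polyhedra and ideal hyperbolic polyhedra is set up: instead of routing through the half-space/stereographic construction of the earlier article and M{\"o}bius invariance, you argue directly in the Klein ball model that the Klein metric of a face's circumcircle is the restriction of the ambient hyperbolic metric (distances along a chord are cross-ratios with the chord's endpoints on the sphere, which lie on the circumcircle of each incident face, so the edge gluings match as well). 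This gives a self-contained equivalence that avoids the external citations, at the cost of having to verify the plane-section/Hilbert-metric fact you flag as the hinge --- which does hold, and is the Klein-model counterpart of the paper's remark after Proposition~\ref{prop:dcehyp} about projecting from $v_{\infty}$ in the half-space model. One point to sharpen: the essential use of convexity is not that faces are inscribed polygons with chordal sides (any planar face with vertices on the sphere has that), but that the face decomposition of an inscribed \emph{convex} polyhedron is the Delaunay decomposition of its intrinsic piecewise euclidean metric; Definition~\ref{def:dce2} computes the induced hyperbolic metric from a Delaunay triangulation of $d_{P}$, so you need ``local convexity at an edge $\Leftrightarrow$ local Delaunay condition,'' which is exactly what the proof of Proposition~\ref{prop:poly2realizable} establishes via condition (lD2), combined with Theorem~\ref{thm:idealeucdel}, while Theorem~\ref{thm:pennerdecomp} disposes of the non-uniqueness of Delaunay triangulations. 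Your treatment of the degenerate case is consistent with the paper's convention (a doubly covered convex polygon inscribed in a planar section circle, not necessarily a great one --- immaterial up to the projective group).
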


The equivalence of both problems follows from
Proposition~\ref{prop:dcehyp}, the M{\"o}bius invariance of discrete
conformal equivalence~\cite[Sec.~2.5]{bobenko15}, the construction
described in~\cite[Sec.~5.4]{bobenko15}, and
Theorem~\ref{thm:idealeucdel}.

The constructive variational proof of Theorem~\ref{thm:rivin}
(see Section~\ref{sec:proof}) also shows that the uniformizing polyhedron
of a piecewise euclidean sphere with $n$ vertices can be computed by
solving a convex optimization problem with $n-1$ variables.

\section{Higher genus and prescribed cone angles}
\label{sec:highergenus}

The variational method of proving Theorems~\ref{thm:rivin}
and~\ref{thm:unisphere} extends to other polyhedral realization
and discrete uniformization problems. The following theorem was proved
by Gu~\etal~\cite{luo13}:

\begin{theorem}
  \label{thm:luo13}
  Let $d$ be a piecewise euclidean metric on $(S_{g},V)$, the surface
  of genus $g$ with $n=|V|$ marked points, and let $\Theta\in\R^{V}$
  satisfy $\Theta>0$ and the Gauss--Bonnet condition
  \begin{equation}
    \label{eq:GB}
    \frac{1}{2\pi}\sum_{v\in V}\Theta_{v}=2g-2+n.
  \end{equation}
  Then there exists a discretely conformally equivalent metric $\dtil$
  on $(S_{g},V)$ such that the cone angle at each $v\in V$ is
  $\Theta_{v}$. The metric $\dtil$ is uniquely determined up to scale.
\end{theorem}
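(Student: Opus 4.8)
The plan is to run the variational machinery of the proof of Theorem~\ref{thm:rivin}, but now directly with the convex function $\Ecal_{\Theta,\Delta,\lambda}$ of Definition~\ref{def:Ecal} and with no bounds constraints. First I would fix a Delaunay triangulation $\Delta$ of $(S_g,V)$ for the metric $d$, let $\ell\in\R_{>0}^{E_{\Delta}}$ be its edge lengths, and set $\lambda=2\log\ell$, so that~\eqref{eq:ell} holds; by Proposition~\ref{prop:dcehyp} and the discussion following it, $(\Delta,\lambda)$ are Penner coordinates for the decorated hyperbolic surface induced by $d$. For $u\in\R^V$ put $(\Deltil,\lamtil)=\Del(\Delta,\Lambda^{\Delta,\lambda}(u))$ and $\elltil=e^{\lamtil/2}$. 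Then $(\Deltil,\lamtil)$ are Penner coordinates of the decorated surface obtained from $(\Delta,\lambda)$ by the fiber move $\lambda\mapsto\Lambda^{\Delta,\lambda}(u)$ (Proposition~\ref{prop:fiber}) followed by Ptolemy flips to a Delaunay triangulation; this surface lies over the same point of $\Teich_{g,n}$ as $(\Delta,\lambda)$, so by Theorem~\ref{thm:idealeucdel} the triangulation $\Deltil$ is a euclidean Delaunay triangulation of the piecewise euclidean surface $(\Deltil,\elltil)$ and the latter induces the same complete hyperbolic metric on $S_{g,n}$ as $d$. Hence the metric $\dtil(u)$ of $(\Deltil,\elltil)$ is discretely conformally equivalent to $d$ in the sense of Definition~\ref{def:dce2}, and the problem reduces to finding $u\in\R^V$ for which $\dtil(u)$ has cone angle $\Theta_v$ at each vertex $v$. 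By Proposition~\ref{prop:Ecal}~(ii) this cone angle equals $\Theta_v-\frac{\partial}{\partial u_{v}}\Ecal_{\Theta,\Delta,\lambda}(u)$, so the condition to be solved is $\nabla\Ecal_{\Theta,\Delta,\lambda}(u)=0$.

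I would next record the relevant structure of $\Ecal_{\Theta,\Delta,\lambda}$. It is $C^2$ (also across Penner-cell boundaries, where the Delaunay triangulation is not unique) and convex, with positive semidefinite second derivative whose kernel is exactly $\R\,\mathbf{1}_{V}$ (Proposition~\ref{prop:Ecal}). The Gauss--Bonnet hypothesis~\eqref{eq:GB} together with Euler's formula for a triangulation of $S_g$, which gives $|T_\Delta|-|E_\Delta|=2-2g-n$, makes the coefficient in the scaling relation~\eqref{eq:Ecalscale} vanish, so $\Ecal_{\Theta,\Delta,\lambda}(u+h\,\mathbf{1}_{V})=\Ecal_{\Theta,\Delta,\lambda}(u)$ for all $h\in\R$. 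Thus $\Ecal_{\Theta,\Delta,\lambda}$ descends to a $C^2$ function on $\R^V/\R\,\mathbf{1}_{V}$ which is strictly convex, because its second derivative is positive definite in every direction transverse to $\mathbf{1}_{V}$ (an affine piece of the quotient function would force a transverse null direction of the second derivative). A minimizer of this quotient function is a critical point of $\Ecal_{\Theta,\Delta,\lambda}$, and it is unique; since replacing $u$ by $u+h\,\mathbf{1}_{V}$ rescales $\elltil$ by the factor $e^h$, this will yield both existence and the ``unique up to scale'' assertion as soon as a minimizer is shown to exist.

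The only genuine obstacle is the coercivity needed for existence of a minimizer. I would prove that $\Ecal_{\Theta,\Delta,\lambda}$ is proper on the slice $N=\{u\in\R^V\mid\min_{v\in V}u_v=0\}$, which meets every coset of $\R\,\mathbf{1}_{V}$. Given an unbounded sequence $(u_n)$ in $N$, pass to a subsequence so that the vertex attaining the minimum is a fixed $v_0$ (whence $u_n(v_0)=0$) and each coordinate $u_n(v)$ has a limit in $\Rbar$; unboundedness forces $u_n(v)\to+\infty$ for some $v$, and the hypothesis $\Theta>0$ gives $\Theta_v>0$ there. Then Corollary~\ref{cor:limEcal} yields $\Ecal_{\Theta,\Delta,\lambda}(u_n)\to+\infty$, so a minimum over $N$ is attained. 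This step is cleaner than the corresponding argument in the proof of Theorem~\ref{thm:rivin}: since $\Theta_v>0$ at every vertex, no vertex plays the special role of $v_\infty$, there is no need for the degeneration $u_{v_\infty}\to+\infty$, and the bounds constraints~\eqref{eq:boxconstraint} are not needed.

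Finally I would assemble the statement. At the minimizer $u^*\in N$, the first paragraph gives a piecewise euclidean metric $\dtil=\dtil(u^*)$ discretely conformally equivalent to $d$, and $\nabla\Ecal_{\Theta,\Delta,\lambda}(u^*)=0$ means its cone angles are $\Theta$. For uniqueness, let $\dtil'$ be any metric discretely conformally equivalent to $d$ with cone angles $\Theta$; its Delaunay triangulation induces the same point of $\Teich_{g,n}$ as $d$ (Definition~\ref{def:dce2}), hence by Proposition~\ref{prop:fiber} and Theorem~\ref{thm:idealeucdel} has Penner coordinates that arise as $\Del(\Delta,\Lambda^{\Delta,\lambda}(u))$ for some $u$; the cone-angle condition forces $\nabla\Ecal_{\Theta,\Delta,\lambda}(u)=0$, so $u-u^*\in\R\,\mathbf{1}_{V}$ by strict convexity on the quotient, i.e. $\dtil'$ is a rescaling of $\dtil$. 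Independence of the arbitrary initial choice of Delaunay triangulation $\Delta$ follows because $\Ecal_{\Theta,\Delta,\lambda}$ is the restriction to a single fiber of the chart-independent function $\Hcal_\Theta$ of Corollary~\ref{cor:Hcal}.
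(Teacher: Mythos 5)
Your proposal is correct and follows essentially the same route as the paper: the unconstrained variational principle for $\Ecal_{\Theta,\Delta,\lambda}$ (Theorem~\ref{thm:variational2}), with scale invariance from the Gauss--Bonnet condition via Euler's formula, uniqueness from convexity with kernel $\R\,\mathbf{1}_{V}$, and coercivity from Corollary~\ref{cor:limEcal} using $\Theta>0$ and boundedness below. You merely spell out in more detail the steps the paper sketches in the bullet points following Theorem~\ref{thm:variational2}.
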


The special case of $\Theta_{v}=2\pi$ for all $v$ is the
uniformization theorem for tori:

\begin{theorem}[uniformization theorem for tori]
  \label{thm:uniformtori}
  For every piecewise euclidean metric $d$ on $(S_{1},V)$, the torus
  with $n=|V|$ marked points, there exists a flat metric $\dtil$ on
  $(S_{1},V)$ that is discretely conformally equivalent to $d$. The
  metric $\dtil$ is uniquely determined up to scale. 
\end{theorem}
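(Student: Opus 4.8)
The plan is to obtain Theorem~\ref{thm:uniformtori} as the special case of Theorem~\ref{thm:luo13} in which the surface has genus $g=1$ and the prescribed cone angles are $\Theta_{v}=2\pi$ for every $v\in V$. First I would check that this choice satisfies the hypotheses of Theorem~\ref{thm:luo13}. Positivity $\Theta>0$ is immediate, and for the Gauss--Bonnet condition~\eqref{eq:GB} one notes that $g=1$ gives $2g-2+n=n$, while
\begin{equation*}
  \frac{1}{2\pi}\sum_{v\in V}\Theta_{v}=\frac{1}{2\pi}\,(2\pi n)=n,
\end{equation*}
so~\eqref{eq:GB} holds. Applying Theorem~\ref{thm:luo13} therefore produces a piecewise euclidean metric $\dtil$ on $(S_{1},V)$ that is discretely conformally equivalent to $d$ in the sense of Definition~\ref{def:dce2}, is unique up to scale, and has cone angle exactly $2\pi$ at each marked point.

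It then remains only to observe that such a metric is flat in the classical sense. A piecewise euclidean metric on the closed surface $S_{1}$ is locally euclidean away from $V$, and at a point $v\in V$ it has a cone singularity whose angle is the angle sum at $v$; when that angle equals $2\pi$ there is no singularity and the metric extends across $v$ as an ordinary locally euclidean metric. Hence $\dtil$ is a flat metric on the torus $S_{1}$, the marked points $V$ being irrelevant to its smooth structure. This establishes the existence statement, and the uniqueness-up-to-scale statement is inherited verbatim from Theorem~\ref{thm:luo13}.

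There is essentially no obstacle once Theorem~\ref{thm:luo13} is available; the only step that warrants a sentence is the passage from ``all cone angles equal $2\pi$'' to ``smooth flat metric'', which is elementary. If one preferred an argument not invoking the full genus-$g$ machinery, one could run the variational scheme of Section~\ref{sec:proof} with $\Teich_{0,n}$ replaced by $\Teich_{1,n}$: minimize the restriction $\Ecal_{\Theta,\Delta,\lambda}$ of the function $\Hcal_{\Theta}$ (see Corollary~\ref{cor:Hcal}), with $\Theta\equiv 2\pi$, to a fiber of $\decTeich_{1,n}$ over the hyperbolic metric determined by $d$. Here no $\Ecalbar$-type limit is needed, because for a torus $|T_{\Delta}|-|E_{\Delta}|+n=\chi(S_{1})=0$, so this energy is scale-invariant and descends to $\R^{V}/\R\mathbf{1}_{V}$; coercivity there follows from Corollary~\ref{cor:limEcal} (using $\Theta>0$), a minimizer exists, its vanishing gradient forces every angle sum to be $2\pi$ by Proposition~\ref{prop:Ecal}(ii), and uniqueness up to scale comes from the strict convexity modulo $\mathbf{1}_{V}$ in Proposition~\ref{prop:Ecal}(iv). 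But since the torus is a bona fide instance of Theorem~\ref{thm:luo13}, this would merely reprove a special case.
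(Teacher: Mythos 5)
Your proposal is correct and follows exactly the paper's route: the paper presents Theorem~\ref{thm:uniformtori} precisely as the special case $\Theta_{v}=2\pi$ of Theorem~\ref{thm:luo13}, and your verification of the Gauss--Bonnet condition (which in fact forces $g=1$ when all $\Theta_v=2\pi$) together with the observation that cone angle $2\pi$ means no singularity is all that is needed.
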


Theorem~\ref{thm:uniformtori} is equivalent to the following
polyhedral realization theorem:

\begin{theorem}
  \label{thm:realizetorus}
  Every oriented complete hyperbolic surface of finite area that is
  homeomorphic to a punctured torus $S_{1,n}$ can be realized as a
  convex polyhedral surface in $H^{3}$ that is invariant under a
  faithful action of the fundamental group $\pi_{1}(S_{1})$ on $H^{3}$
  by parabolic isometries.
\end{theorem}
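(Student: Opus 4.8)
The plan is to prove Theorem~\ref{thm:realizetorus} by establishing its equivalence with the uniformization Theorem~\ref{thm:uniformtori} (which is the special case $\Theta\equiv 2\pi$ of Theorem~\ref{thm:luo13}), so I would first set up the dictionary between invariant convex polyhedral surfaces and uniformizing flat metrics, and then give the variational argument for Theorem~\ref{thm:uniformtori}. Fix a complete finite-area hyperbolic surface $S$ homeomorphic to $S_{1,n}$. By Theorem~\ref{thm:idealdel} it has a Delaunay triangulation $\Delta$ for some decoration; with Penner coordinates $(\Delta,\lambda)$ and $\ell=e^{\lambda/2}$ (see~\eqref{eq:ell}), Lemma~\ref{lem:triang} yields a piecewise euclidean metric $d=(\Delta,\ell)$ on $(S_1,V)$ whose induced hyperbolic metric (Proposition~\ref{prop:dcehyp}) is $S$.

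A realization as in the theorem corresponds to ``realizable coordinates'' of genus $1$: a triangulation $\Deltil$ that is an adjusted Delaunay triangulation of $S$ for some decoration, together with its Penner coordinates $\lamtil$, such that the piecewise euclidean metric $\elltil=e^{\lamtil/2}$ has cone angle $\Thetatil_v=2\pi$ at every $v\in V$, i.e.\ is a flat metric $\dtil$ on the torus. Given such data one builds, over each triangle of $\Deltil$, a decorated ideal tetrahedron as in Figure~\ref{fig:idealtet} with apex a common ideal point $v_\infty$; the horosphere cross-sections at $v_\infty$ are euclidean triangles with side lengths $\elltil$ that assemble --- precisely because all cone angles equal $2\pi$ --- into the flat torus $\dtil$, so the tetrahedra fit around each edge $v_\infty v$ with no overlap or gap, giving a polyhedral surface invariant under the lattice of parabolic translations fixing $v_\infty$ (the translation lattice of $\dtil$); the triangulation being Delaunay makes it convex, exactly as in Proposition~\ref{prop:realizable2poly}, and in fact more simply, since there are no boundary vertices to control. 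Its intrinsic metric is the hyperbolic surface $(\Deltil,\lamtil)=S$. Conversely, intersecting such an invariant convex polyhedral surface with a horosphere centered at the parabolic fixed point produces a flat torus whose Delaunay triangulation, by Theorem~\ref{thm:idealeucdel}, is an ideal Delaunay triangulation of $S$; the horocycle change and Ptolemy flips relating it back to $(\Delta,\lambda)$ exhibit $\dtil$ as discretely conformally equivalent to $d$ in the sense of Definition~\ref{def:dce2}. Hence a realization exists if and only if $d$ has a discretely conformally equivalent flat metric, which is Theorem~\ref{thm:uniformtori}.

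To prove Theorem~\ref{thm:uniformtori} (and, with the identical argument, Theorem~\ref{thm:luo13} for any $\Theta>0$ satisfying~\eqref{eq:GB}), I would minimize the convex function $\Ecal_{\Theta,\Delta,\lambda}$ of Definition~\ref{def:Ecal} with $\Theta\equiv 2\pi$. Since $\chi(S_1)=0$ and $\tfrac{1}{2\pi}\sum_v\Theta_v=n$, the scaling coefficient $|T_\Delta|-|E_\Delta|+\tfrac{1}{2\pi}\sum_v\Theta_v$ in~\eqref{eq:Ecalscale} vanishes, so $\Ecal_{\Theta,\Delta,\lambda}$ descends to a convex function on the quotient $\R^{V}/\R\mathbf{1}_{V}$; in contrast to the genus-zero case there are no bounds constraints. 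By Proposition~\ref{prop:Ecal}(ii) a critical point $u$ is characterized by $\Thetatil_v=\Theta_v$ for all $v$, i.e.\ the metric $\elltil$ on the Delaunay triangulation $(\Deltil,\lamtil)=\Del(\Delta,\Lambda^{\Delta,\lambda}(u))$ has the prescribed cone angles; as moving horocycles by $u$ is a discrete conformal change~\eqref{eq:udc} and Delaunay (Ptolemy) flips preserve discrete conformal equivalence, $\elltil$ is the sought metric $\dtil$. Uniqueness up to scale follows from Proposition~\ref{prop:Ecal}(iv): the second derivative is positive semidefinite with kernel exactly $\R\mathbf{1}_{V}$, so the descended function is strictly convex. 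For existence, I would normalize representatives by $\min_v u_v=0$ and prove coercivity: an unbounded sequence then has some $u_v\to+\infty$ while some coordinate stays $0<+\infty$, so Corollary~\ref{cor:limEcal} (applicable since $\Theta_v=2\pi>0$) gives $\Ecal_{\Theta,\Delta,\lambda}(u)\to+\infty$ and the minimum is attained.

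The step doing the real work, and the main obstacle, is this coercivity: it is the content of Corollary~\ref{cor:limEcal}, which rests on the limit analysis of Lemma~\ref{lem:limEcal0} and ultimately on Akiyoshi's finiteness Theorem~\ref{thm:akiyoshi}; everything else is bookkeeping. A secondary point requiring care is that the ideal tetrahedra in the dictionary above assemble into an \emph{embedded} convex invariant surface rather than merely an immersed one, but this is handled exactly as in the sphere case by the global Delaunay condition, and is easier here because the horospherical cross-section is a closed flat torus with no boundary angles to bound. For the general prescribed-angle Theorem~\ref{thm:luo13} one repeats the variational argument verbatim with arbitrary $\Theta>0$; only the geometric interpretation changes (conical rather than smooth cross-sections), which is why the torus case is singled out here.
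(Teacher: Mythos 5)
Your proposal is correct and follows essentially the same route as the paper: reduce Theorem~\ref{thm:realizetorus} to the uniformization Theorem~\ref{thm:uniformtori} via the tetrahedral decomposition over a common ideal apex (the genus-one analogue of Propositions~\ref{prop:poly2realizable} and~\ref{prop:realizable2poly}), then prove Theorem~\ref{thm:uniformtori} as the $\Theta\equiv 2\pi$ case of Theorem~\ref{thm:luo13} by minimizing the scale-invariant convex function $\Ecal_{\Theta,\Delta,\lambda}$, with uniqueness from the kernel description in Proposition~\ref{prop:Ecal}(iv) and existence from the coercivity supplied by Corollary~\ref{cor:limEcal} after normalizing out the $\mathbf{1}_{V}$-direction. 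The paper merely asserts the equivalence of Theorems~\ref{thm:uniformtori} and~\ref{thm:realizetorus} and refers to Fillastre for an independent proof; your explicit dictionary between parabolic-invariant convex surfaces and flat conformal structures is a helpful unpacking of that assertion, not a departure from the method.
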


Theorem~\ref{thm:realizetorus} is a special case of a more general
result of Fillastre~\cite[Theorem~B]{fillastre08}, who used
Alexandrov's method to prove it. It seems the more general polyhedral
realization theorem that is equivalent to Theorem~\ref{thm:luo13} has
not been treated. It would involve hyperbolic manifolds with one cusp,
convex polyhedral boundary with ideal vertices, and with
``particles''. Izmestiev and Fillastre prove an analogous realization
theorem for polyhedral surfaces with finite vertices instead of ideal
ones~\cite{fillastre09}. They use a variational method that is
analogous to the method presented in this article. Since the vertices
are finite, they do not need the Epstein--Penner convex hull
construction.

The following variational principle for Theorem~\ref{thm:luo13} is
simpler than the variational principle for the uniformization of
spheres (see Theorem~\ref{thm:variational}) because the minimization
problem is unconstrained, no vertex is distinguished, and it involves
the function $\Ecal_{\Theta,\Delta,\lambda}$ instead of
$\Ecalbar^{v_{\infty}}_{\Delta,\lambda}$.

\begin{theorem}[variational principle for Theorem~\ref{thm:luo13}]
  \label{thm:variational2}
  Let $d$ be a piecewise euclidean metric on $(S_{g},V)$, the surface
  of genus $g$ with $n=|V|$ marked points, and let
  $\Theta\in\R^{V}$. Let $\Delta$ be a straight triangulation of
  $(S_{g},V)$ and for each edge $e$ let $\ell_{e}$ be length of edge
  $e$. Then the following statements are equivalent:
  \begin{compactenum}[(i)]
  \item The metric of the piecewise flat surface $(\Deltil,\elltil)$
    is discretely conformally equivalent to $d$ and has cone angle $\Theta_{v}$
    at each vertex $v\in V$.
  \item The function $\Ecal_{\Theta,\Delta,\lambda}$ attains its
    minimum at $u\in\R^{V}$, the realizable coordinates
    $(\Deltil,\lamtil)$ are equivalent to
    $\Del(\Delta,\Lambda^{\Delta,\lambda}(u))$ (see
    Definition~\ref{def:realizequiv}), and $\elltil$ satisfies
    equation~\eqref{eq:elltil}.
  \end{compactenum}
\end{theorem}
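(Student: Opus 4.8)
The plan is to derive Theorem~\ref{thm:variational2} as a close analogue of the variational principle of Theorem~\ref{thm:variational}, but simpler because the optimization is unconstrained. First I would observe that by Proposition~\ref{prop:Ecal}~(ii), the partial derivative of $\Ecal_{\Theta,\Delta,\lambda}$ with respect to $u_v$ equals $\Theta_v-\widetilde{\Theta}_v$, where $\widetilde{\Theta}_v$ is the cone angle at $v$ in the piecewise euclidean metric $(\Deltil,\elltil)$ obtained from $(\Deltil,\lamtil)=\Del(\Delta,\Lambda^{\Delta,\lambda}(u))$ via~\eqref{eq:elltil}. Hence $u$ is a critical point of $\Ecal_{\Theta,\Delta,\lambda}$ if and only if the cone angle at every vertex equals $\Theta_v$; and since $\Ecal_{\Theta,\Delta,\lambda}$ is convex by Proposition~\ref{prop:Ecal}~(iv), a critical point is exactly a minimum. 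This immediately gives the implication (ii)$\Rightarrow$(i): the minimizing $u$ produces a triangulation $\Deltil$ that is Delaunay (it is a Delaunay triangulation by the definition of $\Del$), whose induced piecewise flat metric has the prescribed cone angles, and which is discretely conformally equivalent to $d$ because, by construction, $(\Deltil,\lamtil)$ and $(\Delta,\lambda)$ describe the same hyperbolic surface (they differ only by a Ptolemy chart transition and a fiber shift), so by Proposition~\ref{prop:dcehyp} and the Delaunay property the two piecewise euclidean metrics are discretely conformally equivalent in the sense of Definition~\ref{def:dce2}.

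For the converse (i)$\Rightarrow$(ii), I would start from a piecewise flat metric $\dtil$ realized by $(\Deltil,\elltil)$ that is discretely conformally equivalent to $d$ with cone angles $\Theta$. Let $\Deltil^{*}$ be a Delaunay triangulation of $(\Deltil,\elltil)$; by Theorem~\ref{thm:idealeucdel} it is also an ideal Delaunay triangulation of the induced hyperbolic surface, which by Proposition~\ref{prop:dcehyp} and Definition~\ref{def:dce2} is the same hyperbolic surface $S$ determined by $(\Delta,\lambda)$. Thus there is a decoration of $S$ — equivalently a point $u\in\R^{V}$ in the fiber over $(\Delta,\lambda)$ — such that $\Del(\Delta,\Lambda^{\Delta,\lambda}(u))$ is (equivalent to) the generalized Penner coordinates of $(\Deltil^{*},\,\cdot\,)$, and such that the associated $\elltil$ from~\eqref{eq:elltil} reproduces the edge lengths of $\dtil$ up to the global scaling ambiguity. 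At this $u$ the cone angles are $\Theta_v$, so $\partial\Ecal_{\Theta,\Delta,\lambda}/\partial u_v = \Theta_v-\widetilde\Theta_v = 0$ for all $v$, hence $u$ is a critical point and therefore, by convexity, a minimum. The Gauss--Bonnet condition~\eqref{eq:GB} is exactly what makes the scaling relation~\eqref{eq:Ecalscale} degenerate (the coefficient $|T_\Delta|-|E_\Delta|+\tfrac1{2\pi}\sum_v\Theta_v$ vanishes), so $\Ecal_{\Theta,\Delta,\lambda}$ is invariant along $\mathbf 1_V$ and the critical point is well defined precisely up to the one-parameter scaling family — matching the ``unique up to scale'' clause of Theorem~\ref{thm:luo13}.

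The main obstacle I anticipate is the careful bookkeeping in the converse direction: translating ``discretely conformally equivalent with prescribed cone angles'' (a statement about a Delaunay triangulation of the \emph{target} metric $\dtil$, possibly different from $\Delta$) into ``$\Ecal_{\Theta,\Delta,\lambda}$ has a critical point at a specific $u\in\R^{V}$ in the fiber over $(\Delta,\lambda)$.'' This requires invoking Theorem~\ref{thm:idealeucdel} to pass between euclidean and ideal Delaunay triangulations, Proposition~\ref{prop:dcehyp} to identify the underlying hyperbolic surface, Theorem~\ref{thm:pennerdecomp} (the cell decomposition of $\decTeich_{g,n}$ into Penner cells) to locate the relevant cell, and the chain-rule computation~\eqref{eq:EcalEsf} showing that $\Ecal_{\Theta,\Delta,\lambda}$ is independent of which triangulation in the mapping-class-group orbit is used as the chart. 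One must also confirm that the equivalence relation in Definition~\ref{def:realizequiv} precisely captures the residual ambiguity (choice of Delaunay triangulation of a non-generic metric, and the global scaling $\lamtil\mapsto\lamtil+h\mathbf 1$), which is the content of the discussion following Proposition~\ref{prop:realizable2poly}.

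Finally, a brief remark: the existence half of Theorem~\ref{thm:luo13} — that $\Ecal_{\Theta,\Delta,\lambda}$ actually attains its minimum — is not part of Theorem~\ref{thm:variational2} itself, which is only the equivalence of (i) and (ii); it would be handled separately by a properness argument using Corollary~\ref{cor:limEcal} and the hypothesis $\Theta>0$, exactly as in the existence proof of Proposition~\ref{prop:minimum} in Section~\ref{sec:proof}, now without any bounds constraints. I would state this explicitly so the reader knows Theorem~\ref{thm:variational2} is a standalone equivalence and that Theorem~\ref{thm:luo13} follows by combining it with that properness argument.
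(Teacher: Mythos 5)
Your proposal is correct and takes essentially the same route as the paper, whose entire proof of Theorem~\ref{thm:variational2} is the one-line citation of Proposition~\ref{prop:Ecal} and Theorem~\ref{thm:idealeucdel}: you simply make explicit the ingredients that citation compresses, namely the derivative formula~\eqref{eq:Ecalpartial} plus convexity to identify minima with points of prescribed cone angle, and the passage between euclidean and ideal Delaunay triangulations (with Proposition~\ref{prop:dcehyp} and Definition~\ref{def:dce2}) to identify the underlying hyperbolic surface in both directions. Your closing remark that the attainment of the minimum is not part of the equivalence but belongs to the separate properness argument for Theorem~\ref{thm:luo13} also matches how the paper organizes Section~\ref{sec:highergenus}.
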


\begin{proof}
  This follows from Proposition~\ref{prop:Ecal} and Theorem~\ref{thm:idealeucdel}.  
\end{proof}

To prove Theorem~\ref{thm:luo13} using the variational principle of
Theorem~\ref{thm:variational2}, note the following:

\medskip
\begin{compactitem}
\item The Gauss--Bonnet condition~\eqref{eq:GB} is equivalent to the
  scale invariance of $\Ecal_{\Theta,\Delta,\lambda}$, that is,
  \begin{equation}
    \label{eq:scaleinv}
    \Ecal_{\Theta,\Delta,\lambda}(u+h \mathbf{1}_{V})=\Ecal_{\Theta,\Delta,\lambda}(u).
  \end{equation}
  This follows form~\eqref{eq:Ecalscale}.
\item The uniqueness statement of Theorem~\ref{thm:luo13} follows from
  the convexity of $\Ecal_{\Theta,\Delta,\lambda}$,
  Proposition~\ref{prop:Ecal}~(iii). If the Gauss--Bonnet condition is
  satisfied and $\Ecal_{\Theta,\Delta,\lambda}$ attains its minimum at
  $u$ and at $u'$, then $u-u'\in\R\mathbf{1}_{V}$. 
\item To prove the existence statement, proceed as in the proof of
  Theorem~\ref{thm:rivin} (see Section~\ref{sec:proof}). Note that due to
  the scale invariance~\eqref{eq:scaleinv} it is enough to consider
  unbounded sequences $(u_{n})$ in $\R^{V}$ that are bounded from below.
\end{compactitem}

\medskip%
A completely analogous theory of discrete conformal equivalence for
triangulated piecewise hyperbolic surfaces, including a convex
variational principle, was developed in~\cite[Sec.~6]{bobenko15}, see
also \cite{bobenko16}. A result analogous to Theorem~\ref{thm:luo13}
was proved by Gu~\etal~\cite{luo14}. A corresponding realization
result, analogous to Theorem~\ref{thm:realizetorus} for higher genus,
is also due to Fillastre~\cite[Theorem B$'$]{fillastre08}. To obtain a
variational proof and a practical method for computation, one can
translate the variational method developed here to the setting of
piecewise hyperbolic surfaces.  This is beyond the scope of this
article.

\paragraph{Acknowledgement.}
This research was supported by DFG SFB/Trans\-regio~109
``Discretization in Geometry and Dynamics''.

\begingroup
\small
\bibliographystyle{abbrv}
\bibliography{ipoly}
\endgroup

\vspace{3\baselineskip}\noindent%
Technische Universit{\"a}t Berlin,
Institut f{\"u}r Mathematik,
Strasse des 17. Juni 136,
10623 Berlin, Germany

\vspace{\baselineskip}\noindent%
\href{mailto:boris.springborn@tu-berlin.de}{\nolinkurl{boris.springborn@tu-berlin.de}}

\end{document}